\newtheorem{lemma}{Lemma}[section]
\newtheorem{theorem}[lemma]{Theorem}
\newtheorem{proposition}[lemma]{Proposition}
\theoremstyle{remark}
\newtheorem{remark}{Remark}[section]
\theoremstyle{definition}
\newcommand{\mb}{\mathbf}
\newcommand{\mc}{\mathcal}
\newcommand{\N}{\mathbb{N}}
\newcommand{\R}{\mathbb{R}}
\newcommand{\C}{\mathbb{C}}
\newcommand{\Sp}{\mathbb{S}^8}
\newcommand{\B}{\mathbb{B}^9}
\newcommand{\Hb}{\overline{\mathbb{H}}}
\newcommand{\eps}{\varepsilon}
\newcommand{\D}{\Delta}
\newcommand{\ds}{{\delta^*}}
\newcommand{\la}{\lambda}
\newcommand{\ve}{\varepsilon}
\newcommand{\ra}{\rightarrow}
\newcommand{\DD}{\mathcal{D}}
\newcommand{\pt}{\partial}
\DeclareMathOperator \ran{ran}
\numberwithin{equation}{section}
\newcommand{\nr}[1]{\left\Vert #1\right\Vert}         
\renewcommand{\Re}{\mathop{\rm{Re}}\nolimits}        
\renewcommand{\a}{\alpha}\renewcommand{\b}{\beta}\newcommand{\g}{\gamma}\renewcommand{\d}{\delta}
 \renewcommand{\l}{\lambda}
\newcommand{\s}{\sigma}
\renewcommand{\t}{\tau}\renewcommand{\o}{\omega}\renewcommand{\O}{\Omega}
\thanks{Irfan Glogi\'c is supported by the Austrian Science Fund FWF, Projects  P 34378 and P 30076. Funded by the Deutsche Forschungsgemeinschaft (DFG, German Research Foundation) - Project-ID 258734477 - SFB 1173}
\begin{document}

\title[Self-similar blowup for the quadratic wave equation]
{On blowup for the supercritical quadratic wave equation}

\author{Elek Csobo}
\address{Karlsruher Institut f\"ur Technologie, Fakult\"at f\"ur Mathematik,  Englerstra\ss e 2, 76131 Karlsruhe, Germany}

\address{Goethe-Universit\"at Frankfurt, Institut f\"ur Mathematik, Robert-Mayer-Stra\ss e 10, 60629 Frankfurt am Main, Germany}

\address{Universit\"at Innsbruck, Fakult\"at f\"ur Mathematik, Informatik und Physik, Institut f\"ur Mathematik, Technikerstra\ss e 13, 6020 Innsbruck, Austria}
\email{Elek.Csobo@uibk.ac.at}

\author{Irfan Glogi\'c}
\address{Universitat Wien, Fakult\"at f\"ur Mathematik, Oskar-Morgenstern-Platz 1, 1090 Vienna, Austria}
\email{irfan.glogic@univie.ac.at}

\author{Birgit Sch\"orkhuber}
\address{Universit\"at Innsbruck, Fakult\"at f\"ur Mathematik, Informatik und Physik, Institut f\"ur Mathematik, Technikerstra\ss e 13, 6020 Innsbruck, Austria}
\email{Birgit.Schoerkhuber@uibk.ac.at}

\keywords{nonlinear wave equation, singularity formation, self-similar solution}

\begin{abstract} 
We study singularity formation for the quadratic wave equation in the energy supercritical case, i.e., for $d \geq 7$. We find in closed form a new, non-trivial, radial, self-similar blowup solution $u^*$ which exists for all $d \geq 7$. For $d=9$, we study the stability of $u^*$ without any symmetry assumptions on the initial data and show that there is a family of perturbations which lead to blowup via $u^*$. In similarity coordinates,  this family represents a co-dimension one Lipschitz manifold modulo translation symmetries. The stability analysis relies on delicate spectral analysis for a non-selfadjoint operator. In addition, in $d=7$ and $d=9$, we prove non-radial stability of the well-known ODE blowup solution. Also, for the first time we establish persistence of regularity for the wave equation in similarity coordinates. 

\end{abstract}

\maketitle

\newcommand{\BB}{\mathscr B}
\newcommand{\EE}{\mathscr E}\newcommand{\EEg}{\mathscr E}
\newcommand{\HH}{\mathcal H}\newcommand{\HHg}{{\mathscr H_\g}} \newcommand{\tHH}{{\tilde {\mathscr H}}}
\newcommand{\dHH}{{\mathbullet {\mathscr H}}} \newcommand{\dHHg}{{\mathring {\mathscr H}_\g}}
\newcommand{\LL}{\mathscr L}
\newcommand{\XX}{\mathscr X}
\newcommand{\Aga}{\Ac_{\g,\a}}\newcommand{\Aoo}{\Ac_{0,0}}

\section{Introduction}

In this paper, we are concerned with the quadratic wave equation 
\begin{equation}\label{NLW:p}
	(\partial^2_t - \Delta_x) u(t,x)=u(t,x)^2,
\end{equation}
where $(t,x) \in I \times\R^d$, for some interval $I \subset \R$ containing zero. 

It is well-known that in all space dimensions Eq.~\eqref{NLW:p} admits solutions that blow up in finite time, starting from smooth and compactly supported initial data. This follows from a classical result by Levine \cite{Lev74}, which provides an open set of such initial data. However, Levine's argument is indirect, and therefore does not give insight into the profile of blowup.
A more concrete example can be produced by using the well-known \emph{ODE solution},
\begin{equation}\label{Def:ODEblowup}
	u_T^{ODE}(t,x) := \frac{6}{(T-t)^2}, \quad T>0.
\end{equation} 
By truncating the initial data $(u^{ODE}_T(0,\cdot),\partial_t u^{ODE}_T(0,\cdot))$ outside a ball of radius larger than $T$, and using finite speed of propagation, one constructs smooth and compactly supported initial data that lead to blow up at $t=T$. 
What is more, invariance of Eq.~\eqref{NLW:p} under the rescaling
\begin{equation}\label{Eq:QuadraticScaling}
	u(t,x) \mapsto u_\la(t,x):=\la^{-2} u(t/\la,x/\la), \quad \la >0,
\end{equation} 
allows one to look for \emph{self-similar} blowup solutions of the following form
\begin{equation*}
	u(t,x)=\frac{1}{(T-t)^2} \, \phi \left( \frac{x}{T-t}\right).
\end{equation*}
Note that \eqref{Def:ODEblowup} is a self-similar solution with trivial profile $\phi \equiv 6$. We note that the rescaling \eqref{Eq:QuadraticScaling} leaves invariant the energy norm $\dot H^1(\R^d) \times L^2(\R^d)$ of $(u(t,\cdot),\partial_t u(t,\cdot))$ precisely when $d=6$, in which case Eq.~\eqref{NLW:p} is referred to as \emph{energy critical}. In this case, it can be easily shown that in addition to \eqref{Def:ODEblowup} no other radial and smooth self-similar solutions to Eq.~\eqref{NLW:p} exist, see \cite{KavWei90}. However, in the \emph{energy supercritical} case, i.e., for $d \geq 7$, numerics \cite{Kyc11} indicate that in addition to \eqref{Def:ODEblowup} there are non-trivial, radial, globally defined, smooth, and decaying similarity profiles. In fact, for $d=7$ there are infinitely many of them, all of which are positive, as recently proven by Dai and Duyckaerts \cite{dai2020selfsimilar}. A similar result is expected to hold for all $7 \leq d \leq 15$, see \cite{Kyc11}. 

From the point of view of the Cauchy problem for Eq.~\eqref{NLW:p}, the relevant similarity profiles appear to be the trivial one \eqref{Def:ODEblowup} and its first non-trivial ``excitation". Namely, numerical work on supercritical power nonlinearity wave equations in the radial case \cite{BizChmTab04,GloMalSch20} yields evidence that generic blowup is described by the ODE profile, while the threshold separating generic blowup from global existence is given by the stable manifold of the first excited profile, see also \cite{Biz00}. The first step in showing such genericity results would be to establish stability of the ODE profile, and show that its first excitation is co-dimension one stable (which indicates that the stable manifold splits the phase space locally into two connected components). The only result so far for Eq.~\eqref{NLW:p} in this direction is by Donninger and the third author \cite{DonningerSchoerkhuber2017}, who proved radial stability of $u_T$ for all odd $d \geq 7$. 
In this paper, we exhibit in closed form what appears to be the first excitation of \eqref{Def:ODEblowup} for every $d \geq 7$. Namely, we have the following self-similar solution to Eq.~\eqref{NLW:p}  
\begin{equation}\label{Eq:Blowup_sol_radial}
u^*(t,x):=\frac{1}{t^2} U\left(\frac{|x|}{t} \right),
\end{equation}
where 
\begin{equation}\label{Eq:Blowup_profile_radial}
U(\rho)=\frac{c_1 - c_2 \rho^2}{(c_3 + \rho^2)^2},
\end{equation}
with
\begin{align*}
c_1=\frac{4}{25}((3d-8)d_0+8d^2-56d+48), \quad c_2=\frac{4}{5} d_0,  \quad c_3=\frac{1}{15}(3d-18+d_0),
\end{align*}
and $d_0 =\sqrt{6(d-1)(d-6)}$. We note that $c_3>0$ when $d \geq 7$, and thus $U \in C^{\infty}[0,\infty)$. To the best of our knowledge, this solution has not been known before, and with the intent of studying threshold behavior, the main object of this paper is to show a variant of co-dimension one stability of $u^*$. \\

Note that $U$ has precisely one zero at $\rho^* =\rho^*(d) >  2$. 
In particular,  this profile is not positive and therefore not a member of the family of self-similar profiles constructed in \cite{dai2020selfsimilar}. However, it is strictly positive inside the backward light cone of the blowup point $(0,0)$. Hence, in this local sense $u^*$ provides a solution to the more frequently studied focusing equation 
\begin{equation}\label{NLW:Abs_p}
(\partial^2_t - \Delta_x) u(t,x)=|u(t,x)|u(t,x).
\end{equation}

What is more, as an outcome of our stability analysis we get that small perturbations of both the ODE profile and $u^*$ stay positive under the evolution of Eq.~\eqref{NLW:p}  and therefore yield solutions to Eq.~\eqref{NLW:Abs_p} as well.

\newpage

\subsection{Main results}

\subsubsection{Preliminaries}
By action of symmetries, the solution \eqref{Eq:Blowup_sol_radial} gives rise to a $(2d+1)$-parameter family of (in general non-radial) blowup solutions.
Namely, Eq.~\eqref{NLW:p} is invariant under space-time translations
\[
S_{T,x_0}(t,x) :=(t-T,x-x_0),
\]
for $T >0, x_0 \in \R^d$, time reflections 
\[
R(t,x): =(-t,x),
\]
as well as Lorentz boosts, which we write in terms of hyperbolic rotations,
\[
\Lambda(a) : =\Lambda^d(a^d)\circ \Lambda^{d-1}(a^{d-1})\circ \dots\circ \Lambda^1(a^1),
\]
where $a\in\R^d$ and $\Lambda^j(a^j)$ for $j = 1, \dots d$, are given by
\[
\begin{cases}
t \mapsto t \cosh(a^j)+x^j\sinh(a^j),\\
x^j \mapsto t \sinh(a^j)+x^j\cosh(a^j),\\
x^k \mapsto x^k \quad (k \neq j).
\end{cases}
\]
We then let
\begin{equation}\label{Def:Symmetries}
	\Lambda_{T,x_0}(a):=R \circ \Lambda(a) \circ S_{T,x_0},
\end{equation}
and thereby obtain the following $(2d+1)$-parameter family of solutions to Eq.~\eqref{NLW:p} 
\[
u^*_{T,x_0,a}(t,x) :=u^* \circ \Lambda_{T,x_0}(a)(t,x). 
\]
We note  that for 
\[ (t',x') :=  \Lambda_{T,x_0}(a)(t,x), \]
we have
\begin{equation}\label{Eq:t'x'}
	|x'|^2-t'^2=|x-x_0|^2-(T-t)^2.
\end{equation}
Furthermore, for $\xi,a \in \R^d$, we set\footnote{For simplicity, we use Einstein summation convention throughout the paper.} 
\begin{align}\label{Def:gamma}
 \gamma(\xi,a):=A_0(a)-A_j(a)\xi^j,
\end{align} 
 where 
\begin{align*} 
A_{0}(a) &:=\cosh (a^{1}) \cosh (a^{2}) \cdots   \cosh (a^{d}),\\
A_{1 }(a) &:= \sinh(a^1) \cosh (a^{2}) \dots \cosh (a^d), \\
A_{2 }(a) &:= \sinh(a^2) \cosh (a^{3}) \dots \cosh (a^d), \\
\vdots \\
A_{d}(a) &:= \sinh(a^d).
\end{align*} 
Then, it is easy to check that 
\begin{align}\label{Eq:t'x'_2}
 t'=(T-t) \gamma(\tfrac{x-x_0}{T-t},a), \quad x'^j = (t-T) \partial_{a^j}  \gamma(\tfrac{x-x_0}{T-t},a) B^{j}(a),
 \end{align}
for $j = 1,\dots, d$, where 
\[ B^{j}(a) = \prod_{i=j+1}^{d} \cosh(a^{i})^{-1}. \]
Now, by using relations \eqref{Eq:t'x'} and \eqref{Eq:t'x'_2} we find more explicitly that
\begin{equation}\label{Def:u*_expl}
	u^*_{T,x_0,a}(t,x) = \frac{1}{(T-t)^2}U_a\left(\frac{x-x_0}{T-t}\right),
\end{equation}
with $U_a: \R^d \to \R$ given by
\begin{equation} \label{explicit-form}
U_a(\xi)=\frac{(c_1-c_2)\gamma(\xi,a)^2+c_2(1- |\xi|^2)}{((1+c_3)\gamma(\xi,a)^2+|\xi|^2-1)^2}.
\end{equation}
Note that for $a = 0$, $U_0(\xi) = U(|\xi|)$ with $U$ being the radial profile in \eqref{Eq:Blowup_profile_radial}. Also, since $c_1 > c_2$ for all $d \geq 7$, there exists a  positive constant $c_0 = c_0(d)$ such that 
 \begin{equation}\label{Eq:PosU^*}
 U_a \geq c_0 >0 \text{ on } \mathbb B^d
 \end{equation}
for all $a \in \R^d$, where $\mathbb B^d$ denotes the open unit ball in $\R^d$. In summary, we have that for $a \in \R^d$, $x_0 \in \R^d$ and $T>0$, Eq.~\eqref{NLW:p} admits an explicit solution \eqref{Def:u*_expl}, which starts off smooth, blows up at $x=x_0$ as $t \rightarrow T^-$, and is strictly positive on the backward light cone
 \begin{equation*}
\mathcal C_{T,x_0} :=\bigcup_{t \in [0,T)} \{t\} \times \overline{\mathbb  B^{d}_{T-t}(x_0)}
\end{equation*}
of the blowup point $(T,x_0)$, see Section \ref{Sec:notation} for the notation, which makes it a solution inside $\mc C_{T,x_0}$ to Eq.~\eqref{NLW:Abs_p} as well. Furthermore, simply by scaling
we have that for $k \in \N_0$,
\begin{equation}\label{Eq:Profile_Sobolevnorms}
\left \|  U_a\left(\frac{\cdot-x_0}{T-t}\right) \right \|_{\dot H^k(\mathbb B^d_{T-t}(x_0))} \simeq (T-t)^{\frac{d}{2}-k},
\end{equation}
and hence 
\begin{equation*}
\| u^*_{T,x_0,a}(t,\cdot) \|_{\dot H^k(\mathbb B^d_{T-t}(x_0))}  \simeq (T-t)^{\frac{d}{2} - 2 -k},
\end{equation*}
which implies that the solution blows up in local homogeneous Sobolev semi-norms of order $k > s_c = \frac{d}{2} - 2$. Here, $s_c$ denotes the critical regularity, i.e., $\dot H^{s_c}(\R^d)$ is left invariant under the rescaling \eqref{Eq:QuadraticScaling}. 


\subsubsection{Conditional stability of blowup via $u^*$}
The main goal of this paper is to investigate stability of blowup governed by $u^*$. For $T=1$, $x_0=0$ and $a=0$, the blowup initial data are given by 
\[ u^*_{1,0,0}(0,x) = U(|x|), \quad \partial_t u^*_{1,0,0}(0,x) = 2 U(|x|) + |x| U'(|x|).  \]
With these preparations at hand, we can formulate the following stability result, where we restrict ourselves to the case $d=9$. 
\begin{theorem} \label{main}
	Let $d=9$. Define functions $h_j: \R^9 \to \R$, $j=1,2$ by 
	\begin{align}\label{Def:CorrFunc}
		h_1(x)=\frac{1}{(7+5|x|^2)^3}, \quad h_2(x)=\frac{35-5|x|^2}{(7+5|x|^2)^4}.
	\end{align}
	There exist constants $M>0$, $\delta>0$, and $\o>0$, such that for all real valued $(f,g)\in C^\infty(\overline{\mathbb  B^{9}_2})\times C^\infty(\overline{\mathbb  B^{9}_2})$ satisfying
	\begin{align*}
		\nr{(f,g)}_{H^{6}(\B_2)\times H^{5}(\B_2)}\leq \frac{\delta}{M}
	\end{align*}
	the following holds: There are parameters $a\in \overline{\mathbb  B^{9}_{M\delta/\o}}$, $x_0\in \overline{\mathbb  B^{9}_\delta}$, $T\in [1-\delta,1+\delta]$, and $\a\in [-\delta,\delta]$, depending Lipschitz continuously on $(f,g)$, such that for initial data 
	\begin{equation}\label{Eq:Initial_Data}
		u(0,\cdot) = U(|\cdot|)  + f + \alpha h_1 , \quad \partial_t u(0,\cdot)   =2 U(|\cdot|) + |\cdot| U'(|\cdot|)   + g + \alpha h_2
	\end{equation}
	there exists a unique solution $u \in C^{\infty}(\mathcal C_{T,x_0})$  to Eq.~\eqref{NLW:p}. Furthermore, this solution blows up at $(T,x_0)$ and can be written as 
	\[ u(t,x) = \frac{1}{(T-t)^2} \left 
	[ U_a \left (\frac{x-x_0}{T-t}\right ) +\varphi(t,x) \right  ] \]
	where 
	$ \| \varphi(t,\cdot) \|_{L^{\infty}(\mathbb  B^{9}_{T-t}(x_0))} \lesssim (T-t)^{\omega}$ and 
	\begin{align*}
		(T-t)^{k-\frac{9}{2}}\nr{\varphi(t,\cdot) }_{\dot{H}^k\left( \mathbb  B^{9}_{T-t}(x_0)\right)}\lesssim   (T-t)^\o,
	\end{align*}
	for $k=0,\dots, 5$. In particular, 
	\begin{align}\label{Conv_Sol}
		\begin{split}
			(T-t)^{k-\frac{5}{2}}\nr{u(t,\cdot) - u^*_{T,x_0,a}(t,\cdot) }_{\dot H ^k( \mathbb  B^{9}_{T-t}(x_0))}\lesssim   (T-t)^\o, \\
			(T-t)^{k-\frac{5}{2}}\nr{\partial_t u(t,\cdot) - \partial_t u^*_{T,x_0,a}(t,\cdot) }_{\dot H ^{k-1}( \mathbb  B^{9}_{T-t}(x_0))}\lesssim    (T-t)^\o,
		\end{split}
	\end{align} 
	for $k = 1,\dots, 5$. Moreover, $u$ is strictly positive on $\mc C_{T,x_0}$ and hence the statement above applies to Eq.~\eqref{NLW:Abs_p} as well.
\end{theorem}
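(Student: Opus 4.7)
The plan is to pass to similarity coordinates adapted to the blowup point and linearize around $u^*$. Concretely, for prospective blowup parameters $(T,x_0)$ I set $\tau = -\log(T-t)$ and $\xi = (x-x_0)/(T-t)$, and write
\[
u(t,x) = (T-t)^{-2}\bigl[\,U_a(\xi) + \varphi(\tau,\xi)\,\bigr].
\]
Introducing $\psi = (\partial_\tau + \xi\cdot\nabla_\xi + 2)\varphi$ and $\Phi = (\varphi,\psi)$, Eq.~\eqref{NLW:p} becomes a first-order evolution problem
\[
\partial_\tau \Phi = \mathcal{L}_a \Phi + \mathcal{N}(\Phi)
\]
on the unit ball $\mathbb{B}^9$, where $\mathcal{L}_a$ is the free similarity generator plus multiplication by $2U_a$, and $\mathcal{N}(\Phi)=(0,\varphi^2)$. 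Since $d=9$ yields critical index $s_c=5/2$, I would work in $\mathcal{H}:=H^{6}(\mathbb{B}^9)\times H^{5}(\mathbb{B}^9)$, which matches the regularity in the theorem and embeds into $L^\infty$. Standard weighted energy identities for the free similarity semigroup, together with a bounded-perturbation argument for the smooth potential $2U_a$, should yield a $C_0$-semigroup on $\mathcal{H}$ whose essential spectrum lies in some half-plane $\{\Re\lambda<-\omega_0\}$.

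The hard part will be the spectral analysis of $\mathcal{L}_0$ on $\mathcal{H}$. The symmetry family \eqref{Def:Symmetries} immediately supplies unstable eigenfunctions: differentiating $u^*_{T,x_0,a}$ in $T$ gives a radial eigenvalue $\lambda=1$; differentiating in $x_0^j$ gives $d=9$ additional $\lambda=1$ eigenfunctions living in the $\ell=1$ angular sector; and differentiating in $a^j$ at $a=0$ gives $d=9$ zero modes. To these $2d+1=19$ gauge eigenvalues I add one further, genuinely unstable mode whose eigenpair is precisely $(h_1,h_2)$ at some $\lambda^*>0$, which is directly verifiable from the explicit profile \eqref{Eq:Blowup_profile_radial}. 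The technical crux is to show that these $2d+2=20$ eigenvalues exhaust the unstable spectrum of $\mathcal{L}_0$: after angular separation, the resolvent equation reduces to a second-order Heun-type ODE on $[0,1]$ whose coefficients come from $U$, and the absence of further eigenvalues should follow by constructing Frobenius bases at the regular singular points $\rho=0$ and $\rho=1$ and ruling out simultaneous analyticity for $\Re\lambda>-\omega$ outside the listed values.

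Once the spectrum is under control, I decompose $\mathcal{H}=\mathcal{H}_s\oplus\mathcal{H}_u$ via the Riesz projection onto the finite-dimensional unstable subspace; on $\mathcal{H}_s$ the semigroup decays as $e^{-\omega\tau}$, and the nonlinearity $\Phi\mapsto(0,\varphi^2)$ is locally Lipschitz on $\mathcal{H}$ via $H^5\hookrightarrow L^\infty$. A Lyapunov--Perron fixed-point argument in the weighted space $\{\Phi:\sup_\tau e^{\omega\tau}\|\Phi(\tau)\|_{\mathcal{H}}<\infty\}$, in which the $19$ modulation parameters $(T,x_0,a)$ cancel the gauge instabilities and the extra parameter $\alpha$ tunes the datum transversal to $(h_1,h_2)$, then produces, for each sufficiently small $(f,g)$, unique parameters depending Lipschitz continuously on $(f,g)$ and a unique solution on the stable manifold through $U_a$. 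Unwinding the similarity change of variables gives \eqref{Conv_Sol}; persistence of $C^\infty$-regularity on $\mathcal{C}_{T,x_0}$ follows by bootstrapping the $H^6\times H^5$ estimate with higher-order weighted energies, and strict positivity on the backward light cone is inherited from the uniform lower bound \eqref{Eq:PosU^*} combined with the $L^\infty$-decay of $\varphi$.
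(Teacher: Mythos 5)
Your overall architecture coincides with the paper's: similarity coordinates, a first-order formulation, a dissipative estimate for the free evolution, compact perturbation by $2U_a$, identification of the $19$ symmetry-induced unstable directions plus the one genuine mode $(h_1,h_2)$ (which indeed is the eigenfunction for $\lambda=3$), modulation plus a Lyapunov--Perron scheme, and transformation back. However, there are two genuine gaps at precisely the points the paper identifies as the hardest. First, your method for showing that the $20$ listed directions exhaust the unstable spectrum --- ``constructing Frobenius bases at $\rho=0$ and $\rho=1$ and ruling out simultaneous analyticity'' --- does not go through: after angular separation the spectral ODE is of Heun type (four regular singular points after reduction, not three), for which no closed-form connection coefficients exist, so one cannot decide analyticity at $\rho=1$ of the Frobenius solution at $\rho=0$ by inspection. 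The paper resolves this via the power-series recurrence for the analytic solution, Poincar\'e's theorem on difference equations, explicitly constructed quasi-solutions $\tilde r_n(\ell,\lambda)$, and uniform bounds in $\lambda\in\overline{\mathbb H}$ obtained from the Phragm\'en--Lindel\"of principle together with positivity of certain polynomials; some substitute for this quantitative machinery is indispensable, and uniformly so for all angular momenta $\ell\ge 2$. Second, you never address algebraic multiplicities: to get $e^{-\omega\tau}$ decay on the complement of the unstable subspace (and the clean growth rates $e^{3\tau}$, $e^{\tau}$, $1$ on the eigenspaces) one must show that the Riesz projections have rank equal to the geometric multiplicities, i.e., that there are no Jordan blocks at $\lambda=0,1,3$. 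The paper proves this by showing that the relevant inhomogeneous ODEs admit no $C^2[0,1]$ solutions (variation of parameters plus asymptotics at $\rho=1$); without this step the semigroup bounds on the stable subspace are not justified.

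A secondary but concrete issue is your choice of $\mathcal H=H^6\times H^5(\mathbb B^9)$ for the evolution. The theorem assumes data in $H^6\times H^5(\mathbb B^9_2)$, and the map $(f,g,T,x_0)\mapsto\Upsilon((f,g),T,x_0)$ loses one derivative when one demands Lipschitz dependence on $(T,x_0)$ (one estimates $\|v(T\cdot+x_0)-v(T\cdot+y_0)\|_{H^k(\mathbb B^9)}\lesssim\|v\|_{H^{k+1}(\mathbb B^9_2)}|x_0-y_0|$). Running the fixed point in $H^6\times H^5$ would therefore force $H^7\times H^6$ data to obtain the claimed Lipschitz dependence of the blowup parameters; this is why the paper performs the nonlinear analysis in $H^5\times H^4(\mathbb B^9)$ (where the free semigroup still decays, since $k\ge3$ suffices) and reserves the extra derivative of the data for the parameter dependence and the bootstrap to $C^\infty$.
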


We note that the normalizing factor on the left hand side of Eq.~\eqref{Conv_Sol} appears naturally and  corresponds to the behavior of  the blowup solution we perturbed around, see \eqref{Eq:Profile_Sobolevnorms}.

Some further remarks on the result are in order. 
\begin{remark}
The proof of Theorem \ref{main} relies on stability analysis in  similarity coordinates, in which the above set of perturbations has a co-dimension one interpretation. More precisely, we construct a Lipschitz manifold which is of co-dimension $11$, where $10$ co-dimensions are related to instabilities caused by translation symmetries of the equation and the remaining one is characterized by $(h_1,h_2)$. This is elaborated on in Sec.~\ref{Sec:SimCoord}, see in particular Propositions \ref{prop.instability-similarity} and \ref{Prop:Data_Manifold}. We believe that this manifold gives rise to a proper co-dimension one manifold in a suitable physical data space. However, by the local nature of our approach and the presence of translation symmetries, this is not entirely clear.
\end{remark}

\begin{remark}
\textit{Regularity of the initial data.}
It is only the transformation from similarity coordinates to physical coordinates that induces the higher regularity assumption on the data, from which we can easily deduce the Lipschitz dependence on the blowup parameters. We nonetheless believe that this can be optimized by a more refined analysis.
\end{remark}

\begin{remark}
\textit{Persistence of regularity.}
While  persistence of regularity is standard for the wave equation in physical coordinates, it has not yet been considered for the local problem in similarity coordinates. In fact, all of the related works so far, such as  \cite{DonningerSchoerkhuber2016}, \cite{ChaDon19}, \cite{GlogicSchoerkhuber2021}, are based on a notion of strong solution in similarity coordinates. In this paper, we close this gap and rigorously prove regularity of solutions for smooth initial data. Our proof relies on estimates for the free wave evolution in similarity coordinates in arbitrarily high Sobolev spaces, see Proposition \ref{time-evolution} below.  
\end{remark}

\begin{remark}
\textit{Generalization to other space dimensions.}
Large parts of the proof of Theorem \ref{main} can be generalized to other odd space dimensions. However, the analysis of the underlying spectral problem is quite delicate and only for $d=9$ we are able to solve it rigorously. Nevertheless, from numerical computations, we have strong evidence that the situation is analogous in other space dimensions in the sense that the linearization has exactly one \textit{genuine} unstable eigenvalue. 
\end{remark}

\subsubsection{Stable ODE blowup without symmetry}
For both Eqns.~\eqref{NLW:p} and \eqref{NLW:Abs_p}, stability of the ODE blowup solution under small radial perturbations has been proven by Donninger and the third author \cite{DonningerSchoerkhuber2017}  in all odd space dimensions $d \geq 7$. By exploiting the framework of the proof of Theorem \ref{main}, we generalize the result from \cite{DonningerSchoerkhuber2017} to non-radial perturbations in dimensions $d=7$ and $d = 9$. \\

Before we state the result, we apply the symmetry transformations \eqref{Def:Symmetries} to the ODE profile \eqref{Def:ODEblowup} to obtain the following family of blowup solutions to both Eqns.~\eqref{NLW:p} and \eqref{NLW:Abs_p},
\begin{equation}\label{Eq:ODE_Blowup_nonrad}
u^{\mathrm{ODE}}_{T,x_0,a}(t,x) := \frac{1}{(T-t)^2}\kappa_a \left(\frac{x-x_0}{T-t}\right),
\end{equation}
where
\begin{equation}\label{Eq:kappa_a_explicit}
\kappa_a(\xi) = 6 \gamma(\xi,a)^{-2}.
\end{equation} 
To shorten the notation, we write $\mc C_{T} := \mc C_{T,0}$ for the backward light cone with vertex $(T,0)$.

\begin{theorem} \label{main_ODE}
Let $d \in \{7,9\}$. There are constants $C>0$, $\d>0$, and $\omega >0$, such that for any real valued $(f,g)\in C^\infty(\overline{\mathbb B^d_2})\times C^\infty(\overline{\mathbb B^d_2})$, satisfying
\begin{align}\label{Eq:SmallData}
\nr{(f,g)}_{H^{\frac{d+3}{2}}(\mathbb B^d _2)\times H^{\frac{d+1}{2}}(\mathbb B^d _2)}\leq \frac{\d}{C}
\end{align}
the following holds. There exist parameters $a\in \overline{\mathbb B^d_{C\d/\o}}$ and $T\in [1-\d,1+\d]$ depending Lipschitz continuously on $(f,g)$, such that for initial data 
\begin{equation*}
u(0,\cdot) = 6 + f \quad \partial_t u(0, \cdot)   = 12 + g
\end{equation*}
there exists a unique solution $u \in C^{\infty}(\mathcal C_{T})$  to Eq.~\eqref{NLW:p}. This solution blows up at $(T,0)$ and can be written as 
\[ u(t,x) = \frac{1}{(T-t)^2} \left [ 
\kappa_a \left (\frac{x}{T-t}\right ) +\varphi(t,x) \right  ] \]
where $\varphi$ satisfies
$ \| \varphi(t,\cdot) \|_{L^{\infty}(\mathbb B^{d}_{T-t})} \lesssim  (T-t)^{\omega}$ and 
\begin{align*}
(T-t)^{k-\frac{d}{2} }\nr{\varphi(t,\cdot) }_{\dot{H}^k\left( \mathbb B^{d}_{T-t}\right)}\lesssim   (T-t)^\o,
\end{align*}
for $k=0,\dots, \frac{d+1}{2}$. In particular, 
\begin{align}\label{Conv_Sol_ODE}
\begin{split}
(T-t)^{k-\frac{d}{2}+2}\nr{u(t,\cdot) - u^{ODE}_{T,0,a}(t,\cdot) }_{\dot H ^k( \mathbb B^{d}_{T-t})}\lesssim   (T-t)^\o, \\
(T-t)^{k-\frac{d}{2}+2}\nr{\partial_t u(t,\cdot) - \partial_t u^{ODE}_{T,0,a}(t,\cdot) }_{\dot H ^{k-1}( \mathbb B^{d}_{T-t})}\lesssim   (T-t)^\o,
\end{split}
\end{align} 
for $k = 1,\dots, \frac{d+1}{2}$.
Furthermore, $u$ is strictly positive and the statement above therefore applies to Eq.~\eqref{NLW:Abs_p} as well.
\end{theorem}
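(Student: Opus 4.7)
The plan is to adapt the framework developed for Theorem \ref{main} to the ODE profile $\kappa_0\equiv 6$, exploiting that this profile is spatially constant to simplify both the spectral problem and the modulation theory.

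First I would pass to similarity coordinates adapted to a trial blowup point $(T,0)$: set $\tau=-\log(T-t)$, $\xi=x/(T-t)$, and $\psi(\tau,\xi)=e^{-2\tau}u(T-e^{-\tau},e^{-\tau}\xi)$. Writing $\psi=\kappa_a+\varphi$ and recasting the equation as a first order system yields an evolution $\partial_\tau\Phi=\mathcal{L}_a\Phi+N(\Phi)$, where $N$ is quadratic and $\mathcal{L}_a=\mathcal{L}_0+V_a$ with $V_a$ the bounded multiplication operator given by $2\kappa_a$. Using Proposition \ref{time-evolution} for the free wave flow in similarity coordinates together with a bounded perturbation argument, I would establish that $\mathcal{L}_a$ generates a strongly continuous semigroup on $\mathscr H:=H^{(d+3)/2}(\mathbb B^d)\times H^{(d+1)/2}(\mathbb B^d)$, with operator norms controlled uniformly for $a$ in a bounded set.

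The core of the argument is the spectral analysis of $\mathcal{L}_0$ in the closed right half-plane. Because $\kappa_0\equiv 6$, the potential in the linearized equation is merely the constant $12$, so after spherical harmonic decomposition the eigenvalue problem reduces, in each angular sector, to an explicit hypergeometric-type ODE on $(0,1)$ with a regular singularity at $\rho=1$. I expect to show that the only spectrum in $\{\operatorname{Re}\lambda\geq 0\}$ consists of a simple eigenvalue at $\lambda=1$, arising from the time-translation symmetry, together with a $d$-fold eigenvalue at $\lambda=0$ produced by the Lorentz boosts with eigenfunctions $\partial_{a^j}\kappa_a|_{a=0}$. Ruling out further unstable eigenvalues for $d=7$ and $d=9$ is the step I expect to be the main obstacle, although it should be considerably cleaner than the analogous step for the profile $U$ in Theorem \ref{main}, precisely because the radial spectral ODE is solvable in closed form. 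The structural reason only $d+1$ modulation parameters $(T,a)$ suffice here, in contrast to the $2d+1$ symmetries of $u^{ODE}_{T,x_0,a}$, is that $\partial_{x_0}u^{ODE}_{T,x_0,0}\equiv 0$: spatial translations act trivially on the purely temporal profile and therefore produce no independent unstable mode.

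With the $(d+1)$-dimensional unstable Riesz projection in hand, I would carry out the standard Lyapunov--Perron modulation argument. For data $(6+f,12+g)$, denote by $\Phi_0=\Phi_0(f,g,T,a)$ the result of transforming to similarity coordinates centered at $(T,0)$ and subtracting $\kappa_a$; the Duhamel equation
\[ \Phi(\tau)=e^{\tau\mathcal{L}_0}\Phi_0+\int_0^{\tau}e^{(\tau-s)\mathcal{L}_0}\bigl((\mathcal{L}_a-\mathcal{L}_0)\Phi(s)+N(\Phi(s))\bigr)\,ds \]
is solved by Banach fixed point in a space of exponentially $\omega$-decaying $\mathscr H$-valued functions, and $(T,a)$ are fixed via an implicit function argument that annihilates the unstable Riesz projection of the right hand side, which simultaneously yields their Lipschitz dependence on $(f,g)$. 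Persistence of regularity (Proposition \ref{time-evolution}) promotes the resulting solution to $C^\infty(\mathcal C_T)$, and strict positivity on the backward light cone follows from $\kappa_a=6\gamma(\cdot,a)^{-2}>0$ on $\overline{\mathbb B^d}$ combined with $L^\infty$-smallness of $\varphi$, so that the conclusion transfers to equation \eqref{NLW:Abs_p}.
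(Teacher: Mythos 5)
Your proposal is correct in substance and follows the paper's strategy for Theorem \ref{main_ODE} almost step for step: similarity coordinates at $(T,0)$, semigroup generation for the free flow plus a compact potential perturbation, reduction of the spectral problem for the constant potential $12$ to a hypergeometric connection problem (which is exactly why the analysis closes for the ODE profile in both $d=7$ and $d=9$, in contrast to the Heun-type problem for $U$), identification of $\lambda=1$ (time translation, simple) and $\lambda=0$ ($d$-fold, Lorentz boosts) as the only spectrum in $\Hb$, and the correct structural explanation that spatial translations contribute no eigenmode because $\partial_{x_0}u^{\mathrm{ODE}}_{T,x_0,0}\equiv 0$. The one genuine methodological divergence is the treatment of the non-decaying modes: you keep the boost parameter $a$ fixed and annihilate the full $(d+1)$-dimensional non-decaying Riesz projection by an implicit-function argument in $(T,a)$, which is the route of \cite{DonningerSchoerkhuber2016,ChaDon19} and does work here, since $\partial_{a^j}\bm{\kappa}_a$ spans the $\lambda=0$ eigenspace and hence the Jacobian of $a\mapsto\tilde{\mb Q}_a\Phi_0(a)$ is a small perturbation of $-\mathrm{Id}$. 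The paper instead lets $a=a(\tau)$ be time dependent — so the evolution equation carries the extra term $-\partial_\tau\bm{\kappa}_{a(\tau)}$, which is absent from your Duhamel formula — determines $a(\cdot)$ through a modulation equation that forces $\tilde{\mb Q}_{a_\infty}\Phi(\tau)$ to vanish for large $\tau$, and reserves the parameter variation for the single genuinely growing mode $\lambda=1$, killed by adjusting $T$ (Proposition \ref{Prop:ODE_blowup} and Lemma \ref{Prop:VanishingCorrection_ODE}). Modulation is used because it is the machinery already assembled for Theorem \ref{main}; your fixed-$a$ version trades the modulation ODE for the Jacobian verification, and either yields the Lipschitz dependence of $(T,a)$ on $(f,g)$.

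One bookkeeping issue to repair: you evolve in $\mathscr H=H^{(d+3)/2}(\mathbb B^d)\times H^{(d+1)/2}(\mathbb B^d)$, i.e.\ at the same regularity as the data in \eqref{Eq:SmallData}. The paper evolves in $H^{(d+1)/2}(\mathbb B^d)\times H^{(d-1)/2}(\mathbb B^d)$ and keeps the extra derivative of the data in reserve, because Lipschitz continuity of $(T,x_0)\mapsto\Upsilon((f,g),T,x_0)$ in the evolution norm costs one derivative (Lemma \ref{lemma-init.data}). With your choice, the implicit-function step and the claimed Lipschitz dependence of the parameters on $(f,g)$ would require quantitative smallness of the data in $H^{(d+5)/2}\times H^{(d+3)/2}$, which \eqref{Eq:SmallData} does not provide. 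Lowering the evolution space by one derivative (it still satisfies the $k\geq 3$ and Banach-algebra thresholds) fixes this without any other change.
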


We note that due to the invariance of $u^{\mathrm{ODE}}_{1,0,0}$ under spatial translations the blowup location $x_0 = 0$ does not change under small perturbations.

\begin{remark}\label{Rem:ODE}
Stability of the ODE blowup solution for energy supercritical wave equations outside radial symmetry has been established first in $d=3$ by Donninger and the third author \cite{DonningerSchoerkhuber2016}. For the cubic wave equation, the corresponding result was obtained by Chatzikaleas and Donninger \cite{ChaDon19} in $d=5,7$. Compared to these works, one important improvement in Theorem \ref{main_ODE} is the regularity of the solution which allows for the classical interpretation. Furthermore, we prove Lipschitz dependence of the blowup time and the blowup point on the initial data. Finally, from a technical perspective, the adapted inner product defined in Sec.~\ref{Sec:Free} is simpler than the corresponding expressions in \cite{ChaDon19} and can easily be generalized. 
\end{remark}

\subsection{Related results}
Wave equations with focusing power nonlinearities provide the sim-plest possible models for the study of nonlinear wave dynamics and have been investigated intensively in the past decades. Consequently, local well-posedness and the behavior of solutions for small initial data are by now well understood, see e.g. \cite{LindbladSogge95}. Concerning global dynamics for large initial data, substantial progress has been made more recently for energy critical problems. This includes fundamental works on the  characterization of the threshold between finite-time blowup and dispersion in terms of the well-known stationary ground state solution, cf.~\cite{KenigMerle2008}, \cite{KriegerSchlagNakanishi2015} and the references therein. 

In contrast, large data results for energy supercritical equations are rare.  For various models, the ODE blowup is known to provide a stable blowup mechansim and Theorem \ref{main_ODE} further extends these results, see  Remark \ref{Rem:ODE}. In \cite{BizonMaisonWasserman2007}, non-trivial self-similar solutions are constructed for odd supercritical nonlinearities in dimension three,  and \cite{dai2020selfsimilar} provides a generalization to $d \geq 4$. Also, in the three dimensional case, large global solutions were obtained for a supercritical nonlinearity in \cite{KriegerSchlag2017}.
Finally, for $d \geq 11$ and large enough nonlinearities, manifolds of co-dimension greater or equal than two have been constructed in  \cite{Collot2018} that lead to non-selfsimilar blowup in finite time. 

In the description of threshold dynamics for energy supercritical wave equations, self-similar solutions appear to play the key role. This has been observed numerically for power-type nonlinearities \cite{BizChmTab04,GloMalSch20}, but also for more physically relevant models such as wave maps \cite{BizonChmajTabor2000,BiernatBizonMaliborski2016} or the Yang-Mills equation in equivariant symmetry \cite{BizonTabor2001, Bizon2002}. We note that the latter reduces essentially to a radial quadratic wave equation in $d \geq 7$, hence Eq.~\eqref{NLW:p} provides a toy model.
From an analytic point of view, threshold phenomena for energy supercritical wave equations are entirely unexplored. Moreover, results analogous to the energy critical case seem completely out of reach.
 
However, very recently, the first \textit{explicit} candidate for a self-similar threshold solution has been found by the second and third author in \cite{GlogicSchoerkhuber2021} for the focusing cubic wave equation in all supercritical space dimensions $d \geq 5$. In $d=7$, by the conformal symmetry of the linearized equation, the genuine unstable direction could be given in closed form,  see also \cite{GloMalSch20}, which allowed for a rigorous stability analysis. Interestingly, the same effect occurs for the quadratic wave equation and the new self-similar solution \eqref{Eq:Blowup_sol_radial} in $d=9$, which explains the specific choice of the space dimension in Theorem \ref{main}. In view of our results, we conjecture that the self-similar profile $U$ given in \eqref{Eq:Blowup_profile_radial} plays an important role in the threshold dynamics for Eqns.~\eqref{NLW:p} and ~\eqref{NLW:Abs_p}.\\

In the proofs of Theorem \ref{main} and Theorem  \ref{main_ODE} we build on methods developed in  earlier works, in particular  \cite{GlogicSchoerkhuber2021} and
 \cite{DonningerSchoerkhuber2016}.
However, several aspects, in particular the spectral analysis, are specific to the problem and rather delicate. Furthermore, we add important generalizations such as the preservation of regularity, which improves the statements of \cite{GlogicSchoerkhuber2021} or \cite{DonningerSchoerkhuber2016}. The presentation of our results is completely self-contained and all necessary details are provided in the proofs.

\subsection{Notation}\label{Sec:notation}

Throughout the whole paper the Einstein summation convention is in force, i.e., we sum over repeated upper and lower indices, where latin indices run from $1$ to $d$.
We write $\N$ for the natural numbers $\{1,2,3, \dots\}$, $\N_0 := \{0\} \cup \N$. Furthermore, $\R^+ := \{x \in \R: x >0\}$. Also, $\Hb$ stands for the closed complex right half-plane.
By $\mathbb B_R^d(x_0)$ we denote the open ball of radius $R >0$ in $\mathbb R^d$ centered at $x_0 \in \mathbb R^d$. The unit ball is abbreviated by $\mathbb B^d := \mathbb B_1^d(0)$
and $\mathbb S^{d-1} := \partial  \mathbb B^d$.  
\smallskip
The notation $a\lesssim b$ means $a\leq Cb$ for an absolute constant $C>0$ and we write $a\simeq b$ if $a\lesssim b$ and $b \lesssim a$.  	
If $a \leq C_{\varepsilon} b$ for a constant $C_{\varepsilon}>0$ depending on some parameter $\varepsilon$, we write $a \lesssim_{\varepsilon} b$. 

By $L^2(\mathbb B_R^d(x_0))$ and $H^{k}(\mathbb B_R^d(x_0))$, $k \in \N_0$, we denote the  Lebesgue  
and Sobolev spaces obtained from the completion of $C^{\infty}(\mathbb B_R^d(x_0))$ with respect to the usual norm
\[ \|  u \|^2_{H^k(\mathbb B_R^d(x_0))} := \sum_{ |\alpha| \leq k} \|\partial^{\alpha}  u \|^2_{L^2(\mathbb B_R^d(x_0))},\]
with $\alpha \in \N_0^d$ denoting a multi-index and $\partial^{\alpha} u = \partial_1^{\alpha_1} \dots \partial_d^{\alpha_d} u$, where $\partial_i u(x) = \partial_{x_j} u(x)$. 
For vector-valued functions, we use boldface letters, e.g., $\mb f = (f_1,f_2)$ and we sometime write $[\mb f]_1 := f_1$ to extract a single component. 
Throughout the paper,
$W(f,g)$ denotes the Wronskian of two functions $f,g \in C^{1}(I)$, $I \subset \R$, where we use the convention
$W(f,g)=fg'-f'g$, with $f'$ denoting the first derivative.
\smallskip
On a Hilbert space $\mc H$ we denote by $\mc B(\mc H)$ the set of bounded linear operators.
For a closed linear operator $(L, \mc D(L))$ on $\mc H$, we define the resolvent set $\rho(L)$ as the set of all $\la\in\mathbb{C}$ such that $R_{L}(\lambda):=(\lambda- L)^{-1}$ exists as a bounded operator on the whole underlying space.
Furthermore, the spectrum of $L$ is defined as $\sigma(L):=\mathbb{C}\setminus \rho(L)$ and the point spectrum is denoted by $\sigma_p(L) \subset  \sigma(L)$. 

\subsubsection*{Spherical harmonics}
Fix a dimension $d \geq 3$. For $\ell \in \mathbb{N}_0$, an eigenfunction for the Laplace-Beltrami operator  on $\mathbb S^{d-1}$ with eigenvalue $\ell(\ell + d -2)$ is called a spherical harmonic function of degree $\ell$. For each $\ell \in \N$, we denote by $M_{d,\ell}$ the number of linearly independent spherical harmonics of degree $\ell$, and for  $\Omega_{\ell} := \{1,\dots, M_{d,\ell}\}$ we designate by $\{Y_{\ell, m} : m \in \Omega_{\ell} \}$ a set of orthonormal spherical harmonics, i.e.,
\[ \int_{\mathbb S^{d-1}} Y_{\ell,m}(\omega) \overline{Y_{\ell,m'}(\omega)}d \sigma(\omega)  = \delta_{m m'}. \]
Obviously, one has $\Omega_{0} = \{1 \}$, $\Omega_{1} = \{1, \dots, d\}$,
and we can take $Y_{0,1}(\omega) = c_1$, $Y_{1, m}(\omega) = \tilde  c_{m} \omega_m$ 
for suitable normalization constants $c_1, \tilde c_{m} \in \R$. For $g \in C^{\infty}(\mathbb S^{d-1})$, we define
$\mc P_{\ell}: L^2(\mathbb S^{d-1}) \to L^2(\mathbb S^{d-1})$ by
\begin{align*}
\mc P_{\ell} g(\omega)  := \sum_{m \in \Omega_{\ell}} (g|Y_{\ell, m} )_{L^2(\mathbb S^{d-1})} Y_{\ell, m}(\omega).
\end{align*}
It is well-known, see e.g.~\cite{Atkinson}, that $\mc P_{\ell}$ defines a self-adjoint projection on $L^2(\mathbb S^{d-1})$ and that 
$ \lim_{n \to \infty}   \| g  - \sum_{\ell = 0}^{n} \mc P_{\ell} g \|_{L^2(\mathbb S^{d-1})} = 0 $.
This can be extended to Sobolev spaces, in particular,  $\lim_{n \to \infty} \|g-   \sum_{\ell = 0}^{n} \mc P_{\ell}  g  \|_{H^k(\mathbb S^{d-1})} = 0$
for all $g \in C^{\infty}(\mathbb S^{d-1})$, see e.g.~\cite{DonningerSchoerkhuber2016}, Lemma A.1. Furthermore, given $f \in C^{\infty}(\overline{\mathbb B^d_R})$ by setting
\begin{align}\label{Decomp:Projection}
[ P_{\ell} f](x)  := \sum_{m \in \Omega_{\ell}} (f(|x| \cdot)|Y_{\ell, m} )_{L^2(\mathbb S^{d-1})} Y_{\ell, m} \left (\tfrac{x}{|x|}\right),
\end{align}
we have that (see for example Lemma A.2 in \cite{DonningerSchoerkhuber2016})
\begin{align}\label{Decomp:SpherHarm_Hk}
\lim_{n \to \infty} \|f- \sum_{\ell = 0}^{n} P_{\ell} f  \|_{H^k(\mathbb B^d_R)} = 0.
\end{align}

\section{The stability problem in similarity coordinates}\label{Sec:SimCoord}
In this section we formulate the equation \eqref{NLW:p} in similarity variables. The advantage of the new setting is the fact that self-similar solutions become time-independent and stability of finite time blowup turns into asymptotic stability of static solutions. Then we state the main results in the new coordinate system.\\

Given $T>0$ and $x_0\in \R^d$, we define \emph{similarity coordinates}
\[
\tau:=-\log(T-t)+\log T, \quad \xi:=\frac{x-x_0}{T-t}.
\]
Note that in $(\tau,\xi)$, the backward light cone $\mc C_{T,x_0}$ corresponds to the infinite cylinder 
\begin{align*}
	\mc Z :=  \bigcup_{\tau \geq 0} \{ \tau \} \times \overline{\mathbb  B^{d}}.
\end{align*}

Furthermore, by setting
\[
\psi(\tau,\xi):=T^2e^{-2\tau}u(T-Te^{-\tau},Te^{-\tau}\xi+x_0),
\]
Eq.~\eqref{NLW:p} transforms into
\begin{equation}\label{Eq:sim-coordinates}
	\left( \partial_{\tau} ^2+5\partial_\tau+2\xi\cdot\nabla\partial_\tau+(\xi\cdot\nabla)^2-\Delta+5 \xi\cdot\nabla+6\right)\psi(\tau,\xi)= \psi(\tau,\xi)^2.
\end{equation}
To get a first order formulation we define 
\begin{equation}
	\psi_1(\tau,\xi) :=\psi(\tau,\xi), \quad \psi_2(\tau,\xi) :=\partial_\tau\psi(\tau,\xi)+\xi\cdot\nabla  \psi(\tau,\xi)+ 2 \psi(\tau,\xi),
\end{equation}
and let $\Psi(\tau)=(\psi_1(\tau,\cdot),\psi_2(\tau,\cdot))$, by means of which Eq.~\eqref{Eq:sim-coordinates} can be written as 
\begin{equation}\label{Eq:Abstract_NLW_sim}
	\partial_\tau\Psi(\tau)=\tilde{\mb L} \Psi(\tau) + \mb F(\Psi(\tau)),
\end{equation}
where
\begin{align*}
	\tilde{\mb L}\mb u(\xi)=
	\begin{pmatrix}
		-\xi\cdot\nabla u_1(\xi)-2u_1(\xi)+u_2(\xi)\\
		\Delta u_1(\xi)-\xi\cdot\nabla u_2(\xi)-3u_2(\xi)
	\end{pmatrix}, \quad \mb F( \mb u)=
	\begin{pmatrix}
		0\\
		u_1^2
	\end{pmatrix},
\end{align*}
for $\mb u = (u_1,u_2)$. Note that in the new variables, the solutions $u^*_{T,x_0,a}$ and $u^{ODE}_{T,x_0,a}$ become static. Namely, every $a \in \R^d$ yields smooth, positive, and $\tau$-independent solutions $\mb U_a=(U_{1,a},U_{2,a})$ and $\bm{\kappa}_a=(\kappa_{1,a},\kappa_{2,a})$ of \eqref{Eq:Abstract_NLW_sim} given by  
\[U_{1,a}(\xi)=U_a(\xi), \quad U_{2,a}(\xi)=\xi \cdot \nabla U_a(\xi)+2 U_a(\xi),\]
and 
\[\kappa_{1,a}(\xi)=\kappa_a(\xi), \quad \kappa_{2,a}(\xi)=\xi \cdot \nabla \kappa_a(\xi)+2 \kappa_a(\xi).\]
We study Eq.~\eqref{Eq:Abstract_NLW_sim} for small perturbations of $\mb U_a$, respectively $\bm{\kappa}_a$,  in the Hilbert space
\[\HH:= H^{\frac{d+1}{2}}(\mathbb B^d)\times H^{\frac{d-1}{2}}(\mathbb B^d)  \]
equipped with the standard norm
\begin{align*}
	\| \mb u \|^2 := \| u_1 \|^2_{H^{\frac{d+1}{2}}(\mathbb B^d)}+ \|u_2\|^2_{H^{\frac{d-1}{2}}(\mathbb B^d)}.
\end{align*}
Also, denote $\mc B_{R}:=\{ \mb u \in \mc H \, | \, \|\mb u \| \leq R \}$. \\

In Proposition \ref{time-evolution} below we show that for $d 
\in \{7,9\}$ the operator  $\tilde{\mb L}: C^{\infty}(\overline{\mathbb B^d}) \times C^{\infty}(\overline{\mathbb B^d})  \subset \mc H \to \mc H$, which describes the free wave evolution in similarity coordinates, is closable and its closure, which we denote  by $\mb L: \mc D(\mb L) \subset \mc H  \to \mc H$, generates a strongly-continuous one-parameter semigroup $(\mb S(\tau))_{\tau \geq 0} \subset \mc B(\mc H)$. By using  Sobolev embedding, it is easy to see that the nonlinearity satisfies 
\begin{equation*}
	\| \mb F(\mb u) \| = \| u_1^2 \|_{H^{\frac{d-1}{2}}(\mathbb B^d)} \leq  \| u_1^2 \|_{H^{\frac{d+1}{2}}(\mathbb B^d)} \lesssim \|u_1\|^2_{H^{\frac{d+1}{2}}(\mathbb B^d)} \lesssim  \| \mb u \|^2,
\end{equation*}
for all $\mb u \in \mc H$, hence $\mb F$ is well-defined  on $\mc H$. 

\subsection{Stability of $\mb U_a$}
The key to proving Theorem~~\ref{main} is the following result, which  establishes for $d=9$ conditional orbital asymptotic stability of the family of static solutions $\{ \mb U_a : a \in \R^9 \}$. 

\begin{proposition}\label{prop.instability-similarity}
	Let $d=9$. There are constants $C>0$ and $\omega>0$ such that the following holds. For all sufficiently small $\delta > 0$ there exists a co-dimension eleven Lipschitz manifold $\mathcal{M}=\mathcal{M}_{\d,C} \subset \mc B_{\delta/C}$ with $\mb 0 \in \mc M $ such that for any $\Phi_0\in \mathcal{M}$ there are $\Psi \in C([0,\infty),\HH)$ and $a \in \overline{\mathbb B^9_{\delta/\omega}}$ such that
	\begin{align}\label{Eq:Evol_Psi}
		\begin{split}
			\Psi(\t)&=\mb S(\tau)(\mb U_0+\Phi_0) + \int_0^{\tau} \mb S(\tau -  \sigma)\mb F(\Psi(\sigma)) d \sigma,
		\end{split}
	\end{align}
	and 
	\[ \|\Psi(\t) - \mb U_a \|  \lesssim \delta e^{-\o\t}\]
	for all $\tau \geq 0$.
\end{proposition}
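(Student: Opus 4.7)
The plan is to linearize Eq.~\eqref{Eq:Abstract_NLW_sim} at $\mb U_0$, treat the linearized generator via semigroup theory, run a Lyapunov--Perron fixed-point scheme to kill the eleven exponentially growing directions, and use an implicit function step in the Lorentz parameter $a$ to absorb the nine neutral kernel directions. Setting $\Psi = \mb U_0 + \Phi$ and exploiting that $\mb F$ is quadratic gives
\[
\partial_\tau \Phi = \mb L_\ast \Phi + \mb N(\Phi), \qquad \mb L_\ast \mb u := \mb L\mb u + (0, 2 U_0 u_1), \qquad \mb N(\Phi) := (0, \Phi_1^2).
\]
Since $\mb u \mapsto (0, 2 U_0 u_1)$ is bounded on $\mc H$ (as $U_0 \in C^\infty(\overline{\mathbb B^9})$), Proposition~\ref{time-evolution} together with bounded perturbation theory shows that $\mb L_\ast$ generates a strongly continuous semigroup $\mb S_\ast(\tau)$ on $\mc H$; the estimate $\|\mb N(\Phi)\| \lesssim \|\Phi\|^2$ was already recorded for $\mb F$ above.

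The central step, and the one I expect to be the main obstacle, is the spectral analysis of $\mb L_\ast$ in $d=9$. Differentiating the explicit family $\mb U_{T,x_0,a}$ at $(T,x_0,a) = (1,0,0)$ and passing to similarity coordinates produces a ten-dimensional unstable eigenspace at $\lambda = 1$ (from the time- and nine spatial-translation parameters) and a nine-dimensional kernel at $\lambda = 0$ spanned by $\partial_{a^i}\mb U_a|_{a=0}$; the kernel property follows by differentiating the static identity $\mb L\mb U_a + \mb F(\mb U_a) = 0$ at $a=0$. In addition, the explicit pair $(h_1, h_2)$ is to be verified as a \emph{genuine} eigenfunction with some $\lambda_\ast > 0$ and one-dimensional geometric eigenspace, obtained in closed form from the conformal symmetry of the radial linearized equation, in the spirit of \cite{GlogicSchoerkhuber2021}. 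The remaining delicate task is to prove that no further spectrum lies in $\{\Re\lambda \ge -\omega\}$ for some $\omega > 0$: using the spherical-harmonics decomposition of $\mc H$, $\mb L_\ast$ splits into a countable family of ODE spectral problems indexed by $\ell$, and Fredholm / Birman--Schwinger arguments together with explicit Wronskian computations should rule out extra eigenvalues in each sector and deliver a uniform resolvent bound. The fragility of this step in the radial genuine sector is precisely what pins the result to $d = 9$.

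Granting this spectral picture, let $\mb P_u, \mb P_c, \mb P_s := I - \mb P_u - \mb P_c$ be the Riesz projections onto the $11$-dimensional unstable subspace, the $9$-dimensional kernel, and their stable complement; then $\|\mb S_\ast(\tau)\mb P_s\| \lesssim e^{-\omega\tau}$, $\mb S_\ast(\tau)\mb P_c = \mb P_c$, and $\mb S_\ast(\tau)\mb P_u$ is an explicit matrix exponential with strictly positive exponents. I would then seek $\Psi(\tau) = \mb U_a + \hat\Phi(\tau)$ with $\hat\Phi$ exponentially decaying and $a \in \overline{\mathbb B^9_{\delta/\omega}}$ to be determined; the shifted perturbation $\hat\Phi := \Phi - (\mb U_a - \mb U_0)$ satisfies $\partial_\tau\hat\Phi = \mb L_{\ast,a}\hat\Phi + \mb N(\hat\Phi)$, where $\mb L_{\ast,a}$ (the linearization at $\mb U_a$) is an $O(|a|)$-perturbation of $\mb L_\ast$ of identical spectral type. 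Setting $\hat\Phi_0 := \Phi_0 - (\mb U_a - \mb U_0)$, the modified Duhamel equation
\[
\hat\Phi(\tau) = \mb S_{\ast,a}(\tau)\bigl(\hat\Phi_0 - \mb C(\hat\Phi)\bigr) + \int_0^\tau \mb S_{\ast,a}(\tau-\sigma)\mb N(\hat\Phi(\sigma))\,d\sigma,
\]
with Lyapunov--Perron correction $\mb C(\hat\Phi) := \mb P_u^a\hat\Phi_0 + \int_0^\infty \mb S_{\ast,a}(-\sigma)\mb P_u^a\mb N(\hat\Phi)\,d\sigma \in \ran\mb P_u^a$ designed to cancel the eleven growing modes, is solved by contraction in $\{\hat\Phi \in C([0,\infty),\mc H) : \|\hat\Phi(\tau)\| \le \delta e^{-\omega\tau}\}$ using $\|\mb N(\hat\Phi)\| \lesssim \|\hat\Phi\|^2$. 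Demanding $\mb P_c^a\hat\Phi(\tau) \to 0$ along this exponentially decaying solution then yields the nine-dimensional compatibility condition $\mb P_c^a\hat\Phi_0 + \int_0^\infty \mb P_c^a\mb N(\hat\Phi)\,d\sigma = 0$, uniquely solvable for $a$ via the implicit function theorem: $\{\partial_{a^i}\mb U_a|_{a=0}\}$ spans $\ran\mb P_c$, $\mb U_a - \mb U_0 = a^i\partial_{a^i}\mb U_a|_{a=0} + O(|a|^2)$, and $\mb P_c^a\mb N(\mb U_a - \mb U_0) = 0$ by the orbit identity $\mb L_\ast(\mb U_a - \mb U_0) + \mb N(\mb U_a - \mb U_0) = 0$ which places the obstruction in $\ran\mb L_\ast$. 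The set $\mc M := \{\Phi_0 \in \mc B_{\delta/C} : \mb C(\hat\Phi[a(\Phi_0),\Phi_0]) = 0\}$ is then a Lipschitz manifold of codimension $11$ through the origin, the bound $\|\Psi(\tau) - \mb U_a\| = \|\hat\Phi(\tau)\| \lesssim \delta e^{-\omega\tau}$ follows, and the identity $\Psi = \mb U_0 + \Phi$ combined with $\mb L\mb U_0 = -\mb F(\mb U_0)$ translates the linearized Duhamel back into the mild form \eqref{Eq:Evol_Psi}.
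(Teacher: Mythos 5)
Your overall architecture (Lyapunov--Perron correction for the growing modes, parameter adjustment in $a$ for the neutral ones) is close in spirit to the paper, but there are two genuine gaps. First, your fixed-point scheme does not close as written. You propose to solve the modified Duhamel equation by contraction in $\{\hat\Phi : \|\hat\Phi(\tau)\|\le \delta e^{-\omega\tau}\}$ with a correction $\mb C(\hat\Phi)\in\ran\mb P_u^a$ only, and to impose the nine-dimensional compatibility condition on the center part \emph{afterwards}. But the center component of the image of your map is $\mb P_c^a\hat\Phi_0+\int_0^\tau\mb P_c^a\mb N(\hat\Phi(\sigma))\,d\sigma$, which converges to a nonzero constant for generic $\hat\Phi_0$ even when the input $\hat\Phi$ decays; hence the map does not preserve the decaying ball and the contraction argument fails before you ever get to choose $a$. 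You must either add a center correction term to $\mb C$ (and then annihilate it by the choice of $a$), or do what the paper does: promote $a$ to a time-dependent modulation parameter $a(\tau)$, impose $\mb Q_{a_\infty}^{(j)}\Phi(\tau)=\chi(\tau)\mb Q_{a_\infty}^{(j)}\Phi(0)$, and solve the resulting modulation equation by a separate fixed point (Lemma \ref{lemma-modulation}). A related omission: you never address whether the algebraic and geometric eigenspaces coincide at $\lambda=0,1$ and at the genuine eigenvalue; without this (Lemma \ref{projections} in the paper, proved by delicate ODE asymptotics at $\rho=1$), $\mb S_*(\tau)\mb P_c=\mb P_c$ is false and polynomial growth on the center and unstable subspaces destroys your estimates.

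Second, you correctly identify the spectral analysis as ``the main obstacle'' but the methods you propose would not resolve it. After the spherical-harmonics reduction, the eigenvalue problem for each $\ell$ is a connection problem for an ODE with (after reduction) four regular singular points --- a Heun-type equation, not a hypergeometric one --- and there is no Wronskian or Birman--Schwinger shortcut that decides, uniformly in $\lambda\in\Hb$ and in $\ell$, whether the Frobenius solution analytic at $\rho=0$ connects to the analytic branch at $\rho=1$. The paper's Proposition \ref{prop:SpectralODE} is the real content here: it converts the question into the asymptotics of the power-series coefficients via Poincar\'e's theorem on difference equations, and rules out the bad root of the characteristic equation by constructing explicit quasi-solutions $\tilde r_n(\ell,\lambda)$ of the coefficient recurrence and proving uniform bounds on $\Hb$ via the Phragm\'en--Lindel\"of principle. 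Also note that the genuine eigenvalue is $\lambda=3$ (not merely ``some $\lambda_*>0$''), with the gap to the translation eigenvalue $\lambda=1$ being what makes the three-tier projection structure $\mb H_a,\mb P_a,\mb Q_a$ work.
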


\smallskip
The number of co-dimensions in Proposition \ref{prop.instability-similarity} is related to the number of unstable eigen- values of the linearization around $\mb U_a$, and the dimension of the corresponding eigenspaces, see Sec.~\ref{Sec:Spectral_Analysis_U}. In fact, ten of these instabilities are caused by the translation symmetries of the problem, and can be controlled by choosing appropriately the blowup parameters $(T,x_0)$. There is, therefore, only one genuine unstable direction.
Next, we state a persistence of regularity result for solutions to Eq.~\eqref{Eq:Evol_Psi}.

\begin{proposition}\label{Prop:smoothness}
	If the initial data $\Phi_0$ from Proposition \ref{prop.instability-similarity} is in $C^{\infty}(\overline{\B}) \times C^{\infty}(\overline{\B})$ then the corresponding solution $\Psi$ of \eqref{Eq:Abstract_NLW_sim} belongs to $C^{\infty}(\mc Z) \times  C^{\infty}(\mc Z)$. In particular, $\Psi$ satisfies \eqref{Eq:Abstract_NLW_sim} in the classical sense. 
\end{proposition}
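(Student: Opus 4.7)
The plan is to bootstrap the Duhamel formula \eqref{Eq:Evol_Psi} to higher Sobolev regularity, using Proposition \ref{time-evolution} as the essential input. For $k \in \mathbb N_0$ set $\mc H_k := H^{5+k}(\mathbb B^9) \times H^{4+k}(\mathbb B^9)$, so that $\mc H_0 = \mc H$. By Proposition \ref{time-evolution}, the closure of $\tilde{\mb L}$ generates a strongly continuous semigroup $\mb S_k(\tau)$ on each $\mc H_k$ with a bound $\|\mb S_k(\tau)\|_{\mc B(\mc H_k)} \lesssim_k e^{\mu_k \tau}$ for some $\mu_k \geq 0$. Moreover, since in $d = 9$ we have $H^{5+k}(\mathbb B^9) \hookrightarrow L^\infty(\mathbb B^9)$ for every $k \geq 0$, the tame product estimate
$$\|u_1^2\|_{H^{4+k}(\mathbb B^9)} \lesssim_k \|u_1\|_{L^\infty(\mathbb B^9)}\|u_1\|_{H^{4+k}(\mathbb B^9)}$$
makes $\mb F: \mc H_k \to \mc H_k$ locally Lipschitz and, crucially, controlled by an $L^\infty$-factor times the $\mc H_k$-norm.

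With these tools in hand, I would first run a short-time contraction on \eqref{Eq:Evol_Psi} in $\mc H_k$ with the smooth datum $\mb U_0 + \Phi_0 \in \mc H_k$ to obtain a unique maximal solution $\Psi_{(k)} \in C([0, T_k), \mc H_k)$. Uniqueness of mild solutions in $\mc H_0$ forces $\Psi_{(k)}$ to coincide with $\Psi$ on its interval of existence. To globalize, one combines the a priori bound $\sup_{\tau \geq 0}\|\Psi(\tau)\|_{\mc H} \lesssim 1$ coming from Proposition \ref{prop.instability-similarity}, the embedding $\mc H \hookrightarrow L^\infty(\mathbb B^9) \times L^\infty(\mathbb B^9)$, and the tame estimate above to obtain
$$\|\Psi(\tau)\|_{\mc H_k} \lesssim_k e^{\mu_k \tau}\|\mb U_0 + \Phi_0\|_{\mc H_k} + \int_0^\tau e^{\mu_k(\tau - \sigma)}\|\Psi(\sigma)\|_{\mc H_k}\,d\sigma.$$
Gronwall then yields at most exponential growth of $\|\Psi(\tau)\|_{\mc H_k}$, ruling out blowup and giving $T_k = \infty$.

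Once $\Psi \in C([0,\infty), \mc H_k)$ for every $k$, Sobolev embedding delivers $\Psi(\tau,\cdot) \in C^\infty(\overline{\mathbb B^9})$ for each fixed $\tau$. To pass from this to joint smoothness I would iterate in the time direction: the operator $\tilde{\mb L}$ is first-order in $\xi$, hence bounded from $\mc H_{k+1}$ to $\mc H_k$, and $\mb F$ maps $\mc H_{k+1}$ to $\mc H_{k+1} \subset \mc H_k$. Consequently $\partial_\tau\Psi = \mb L\Psi + \mb F(\Psi)$ holds strongly in $\mc H_k$, so $\Psi \in C^1([0,\infty), \mc H_k)$. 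Repeating the bootstrap, each additional $\tau$-derivative costs one spatial Sobolev degree, and one obtains $\Psi \in C^m([0,\infty), \mc H_k)$ for all $m,k \in \mathbb N_0$. A final Sobolev embedding gives $\Psi \in C^\infty(\mc Z) \times C^\infty(\mc Z)$, at which point every term in \eqref{Eq:Abstract_NLW_sim} is continuous and the equation is satisfied in the classical pointwise sense.

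The main obstacle is the higher-order semigroup bound. At the base level $\mc H_0$ generation of $(\mb S(\tau))$ is typically obtained via a bespoke energy inner product adapted to $\tilde{\mb L}$. In $\mc H_k$ the operator $\tilde{\mb L}$ is no longer dissipative for the inherited inner product, so establishing semigroup generation requires either constructing a $k$-dependent equivalent inner product via commutator arguments with spatial derivatives, or a direct higher-order energy identity -- this is precisely the content of Proposition \ref{time-evolution}. Once that ingredient is in place, the contraction-plus-Gronwall bootstrap above and the subsequent spatial-to-joint smoothness upgrade are essentially routine.
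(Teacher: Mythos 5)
Your proposal is correct and follows essentially the same route as the paper: a Duhamel bootstrap in the higher-order spaces $\mc H_k$ using the semigroup bounds of Proposition \ref{time-evolution} (via Lemma \ref{lemma.restriction}) to get spatial smoothness, followed by iterating the equation to trade one spatial Sobolev degree per time derivative and invoking a mild-equals-classical result. The only cosmetic difference is that the paper avoids Gronwall altogether by exploiting that $\mb F$ gains a derivative, so $\|\mb F(\Psi)\|_{\mc H_{k+1}}\lesssim\|\Psi\|_{\mc H_k}^2$ is controlled by the previous induction level and the decaying semigroup yields bounds uniform in $\tau$, whereas your tame-estimate-plus-Gronwall variant yields (sufficient) exponential control.
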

\begin{remark}
	That this proposition is not vacuous, i.e., that there exist $\Phi_0 \in \mc M \cap (C^{\infty}(\overline{\B}) \times C^{\infty}(\overline{\B}))$, follows from Proposition \ref{Prop:Data_Manifold} below.
\end{remark}
The proofs of Propositions \ref{prop.instability-similarity} and \ref{Prop:smoothness} are provided in Sec.~\ref{Sec:Proof_Propsitions}. \\

In order to derive Theorem \ref{main} from the above results we prescribe in physical variables initial data of the following form
\begin{equation}
	u(0,\cdot) = u^*_{1,0,0}(0,\cdot) + f, \quad \partial_t u(0,\cdot)   =\partial_t u^*_{1,0,0}(0,\cdot) + g,
\end{equation} 
for free functions $(f,g)$ defined on a suitably large ball centered at the origin. In similarity variables, this  transforms into initial data $\Psi(0) = \mb U_0 + \Phi_0$ for Eq.~\eqref{Eq:Abstract_NLW_sim}, with 
\begin{align}\label{Data}
	\Phi_0 = \Upsilon((f,g) ,T,x_0),  
\end{align}
where 
\begin{equation}\label{Def:InitialData_Operator}
	\Upsilon ((f,g) ,T,x_0) := \mc R((f,g),T,x_0)+\mc R(\mb U_0,T,x_0)-\mc R(\mb U_0,1,0)
\end{equation}
and 
\[
\mc R((f_1,f_2),T,x_0)=\begin{pmatrix}
	T^2f_1(T \cdot+x_0)\\
	T^3 f_2(T\cdot+x_0)
\end{pmatrix}.
\]

The next statement asserts that for all small $(f,g)$, there is a choice of parameters $x_0$, $T$, and $\alpha$, for which 
$\Upsilon((f + \alpha h_1 ,g + \alpha h_2), T, x_0)$ belongs to the	 manifold  $\mathcal{M}$ from Proposition \ref{prop.instability-similarity}.  

\begin{proposition}\label{Prop:Data_Manifold}
Let $(h_1,h_2)$ be defined as in \eqref{Def:CorrFunc}. There exists $M> 0$ such that for all sufficiently small $\delta > 0$ the following holds. For any $(f,g) \in  H^6(\B_2) \times H^5(\B_2)$ satisfying  
	\[ \|(f,g)\|_{H^6(\B_2) \times H^5(\B_2)} \leq  \tfrac{\delta}{M^2},\]
	there are $x_0 \in \overline{\mathbb B^{9}_{\delta/M}}$, $T \in [1 - \frac{\delta}{M}, 1+\frac{\delta}{M}]$ and $\alpha \in [-\frac{\delta}{M},\frac{\delta}{M}]$, depending Lipschitz continuously on $(f,g)$, such that  
	\[\Upsilon((f + \alpha h_1 ,g + \alpha h_2), T, x_0) \in \mathcal{M}_{\d,C}, \]
	where $\mc M_{\d,C}$  is the manifold from Proposition \ref{prop.instability-similarity}.
\end{proposition}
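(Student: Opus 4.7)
The plan is to recast the conclusion as an implicit-function statement for a map into the eleven-dimensional unstable subspace, using the eleven free real parameters $(T,x_0,\alpha) \in \mathbb{R}\times\mathbb{R}^9\times\mathbb{R}$ to match the eleven codimensions of $\mathcal{M}_{\delta,C}$. The construction underlying Proposition~\ref{prop.instability-similarity} (to be carried out in Section~\ref{Sec:Proof_Propsitions}) should express $\mathcal{M}_{\delta,C}$ as the graph of a Lipschitz map $\Xi\colon \mathcal{Q}\mathcal{H} \to \mathcal{P}\mathcal{H}$, with $\Xi(\mathbf{0}) = \mathbf{0}$ and Lipschitz constant of order $\delta$, where $\mathcal{P}\colon\mathcal{H}\to\mathcal{H}$ is the spectral projection onto the eleven-dimensional unstable subspace of the linearization of \eqref{Eq:Abstract_NLW_sim} at $\mathbf{U}_0$, and $\mathcal{Q} := I - \mathcal{P}$. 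Membership in $\mathcal{M}_{\delta,C}$ then reduces to the vanishing of
\begin{equation*}
\Theta(f,g,T,x_0,\alpha) := \mathcal{P}\Phi - \Xi(\mathcal{Q}\Phi) \in \mathcal{P}\mathcal{H} \simeq \mathbb{R}^{11}, \quad \Phi := \Upsilon\bigl((f+\alpha h_1, g+\alpha h_2), T, x_0\bigr),
\end{equation*}
and \eqref{Def:InitialData_Operator} gives $\Theta(0,0,1,0,0) = 0$, since $\Phi = \mathbf{0}$ at that point.

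The next step is to verify that $D_{(T,x_0,\alpha)}\Theta$ at the base point $(f,g,T,x_0,\alpha) = (0,0,1,0,0)$ is an isomorphism $\mathbb{R}^{11} \to \mathcal{P}\mathcal{H}$. Because $\Xi$ is Lipschitz and vanishes to first order at the origin, this derivative coincides, up to an $O(\delta)$ perturbation, with the linear map sending $(\tau_0,\eta_0,\beta_0) \in \mathbb{R}^{11}$ to
\begin{equation*}
\mathcal{P}\Bigl(\tau_0\,\partial_T \mathcal{R}(\mathbf{U}_0,T,0)\big|_{T=1} + \eta_0^j\,\partial_{x_0^j} \mathcal{R}(\mathbf{U}_0,1,x_0)\big|_{x_0=0} + \beta_0\,(h_1,h_2)\Bigr).
\end{equation*}
The first summand is the infinitesimal variation of the similarity-coordinate data of $u^*_{T,0,0}$ with respect to the blowup time $T$, i.e.\ the time-translation generator acting on the static profile $\mathbf{U}_0$; the following nine are the spatial-translation generators; and $(h_1,h_2)$ is, by construction, the genuine unstable eigenfunction identified in Section~\ref{Sec:Spectral_Analysis_U}. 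The argument hinges on the fact that these eleven vectors lie in $\mathcal{P}\mathcal{H}$ as eigenfunctions and are linearly independent there, so that $\mathcal{P}$ restricted to their span is the identity; granting this, the displayed map is a linear isomorphism onto $\mathcal{P}\mathcal{H}$, and hence so is $D_{(T,x_0,\alpha)}\Theta|_{\mathrm{base}}$ for $\delta$ small.

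With invertibility in hand, I would close the argument by a standard quantitative Banach fixed-point argument for the Newton-type iteration
\begin{equation*}
(T,x_0,\alpha) \mapsto (T,x_0,\alpha) - \bigl(D_{(T,x_0,\alpha)}\Theta|_{\mathrm{base}}\bigr)^{-1}\Theta(f,g,T,x_0,\alpha),
\end{equation*}
carried out on the closed product $\overline{\mathbb{B}^9_{\delta/M}} \times [1-\delta/M,\,1+\delta/M] \times [-\delta/M,\,\delta/M]$. Continuity estimates on $\Upsilon$ in its parameters yield both invariance of this set under the iteration and the contraction property, and the Lipschitz dependence on $(f,g)$ with bound $\|(f,g)\|_{H^6\times H^5} \leq \delta/M^2$ then follows from the quantitative implicit function theorem. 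The reason for taking $(f,g)$ one Sobolev order above the ambient $\mathcal{H} = H^5(\mathbb{B}^9)\times H^4(\mathbb{B}^9)$, namely in $H^6(\mathbb{B}^9_2)\times H^5(\mathbb{B}^9_2)$, is that each $(T,x_0)$-derivative of $\mathcal{R}((f,g),T,x_0)$ costs one spatial derivative of $(f,g)$; enlarging the base ball to $\mathbb{B}^9_2$ ensures that $T\xi + x_0$ stays in the domain of $(f,g)$ for all admissible $(T,x_0)$ and $\xi \in \mathbb{B}^9$.

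The main obstacle, as I see it, is not the implicit function step itself but the spectral input on which it rests: establishing rigorously that the unstable subspace of the linearization around $\mathbf{U}_0$ has dimension exactly $11$, and that the explicit pair $(h_1,h_2)$ in \eqref{Def:CorrFunc} is precisely the genuine unstable eigenfunction in $d=9$. This is the content of Section~\ref{Sec:Spectral_Analysis_U} and, as hinted in the related-results discussion, it relies on a closed-form computation exploiting a conformal symmetry of the linearization specific to dimension $9$.
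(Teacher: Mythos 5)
Your proposal is correct and follows essentially the same strategy as the paper: the eleven parameters $(T-1,x_0,\alpha)$ are used to annihilate the eleven exponentially unstable components, the key structural input being that the first-order variations of $\Upsilon$ in $T$ and $x_0$ are constant multiples of the eigenfunctions $\mb g_0^{(0)},\dots,\mb g_0^{(9)}$ of $\lambda_1=1$ while $(h_1,h_2)=\mb h_0$ is the eigenfunction of $\lambda_2=3$, and the resulting system is closed by a contraction argument that also yields the Lipschitz dependence. The packaging differs in one respect: the paper never goes through the graph map. It instead uses the reverse inclusion from Proposition~\ref{prop.manifold1}, namely $\{\mb u\in\mc B_{\d/K}: \mb C(\mb u)=0\}\subset\mc M_{\d,C}$, and solves $\mb C(\Phi,a,\Upsilon(\cdots))=0$ directly; pairing with $\mb g^{(k)}_{a_\infty}$ and $\mb h_{a_\infty}$ converts this into the explicit eleven-dimensional fixed-point system \eqref{eq-rewrite}. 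This sidesteps the one step of your argument that is not supported by what is actually proved about $\mc M_{\d,C}$: you assert that $\Xi=\mb M$ ``vanishes to first order at the origin,'' whereas Proposition~\ref{prop.manifold1} only provides $\mb M(\mb 0)=\mb 0$ and a Lipschitz bound with an \emph{absolute} constant, not one of size $O(\delta)$. Your linearization nevertheless survives, but for a different reason: the component $\mc Q\Phi$ itself varies only at order $O(\delta)$ in $(T,x_0,\alpha)$ near the base point, precisely because the leading-order variations are eigenfunctions lying in $\operatorname{ran}\mc P$ and are therefore annihilated by $\mc Q$ (only the Taylor remainders $r$, $\tilde r$ and the $\mc R((f,g),T,x_0)$ term survive, all of which are $O(\delta)$-Lipschitz in the parameters). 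Hence $\Xi\circ\mc Q\Phi$ contributes an $O(\delta)$ Lipschitz perturbation regardless of the fine behaviour of $\Xi$ near $\mb 0$. With that repair your Newton/fixed-point closure is equivalent to the paper's argument.
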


Theorem \ref{main} is then obtained by transforming the results of Propositions \ref{prop.instability-similarity}, \ref{Prop:smoothness}, and \ref{Prop:Data_Manifold} back to coordinates $(t,x)$.

\begin{remark} 
	We note that when proving stability of the ODE blowup solution for $d \in \{7,9\}$, similar results are obtained. In fact, the proof implies the existence of a Lipschitz manifold $\mc N$ of co-dimension $d+1$ in the Hilbert space $\mc H$, according to $d+1$ directions of instability induced by  translation invariance. 
	A result similar to Proposition \ref{Prop:Data_Manifold} guarantees that for 
	\textit{any} small enough data $(f,g)$ one can suitably adjust the blowup time $T$ and the blowup point $x_0$ such that $\Upsilon((f  ,g), T, x_0) \in \mc N$, which gives Theorem \ref{main_ODE} on stable blowup. This point of view further justifies using co-dimension one terminology to describe stability of $u^*$. 
\end{remark}

\subsubsection{Time-evolution for small perturbations - Modulation ansatz}\label{Sec:Modulation}

In the following, we assume that $a = a(\tau)$, $a(0)=0$ and $\lim_{\tau \to \infty} a(\tau) = a_{\infty}$. Inserting the ansatz
\begin{align}\label{Def:Psi_modulation_ansatz}
	\Psi(\tau)=\mb U_{a(\tau)}+\Phi(\tau)
\end{align}
into \eqref{Eq:Abstract_NLW_sim} we obtain
\begin{align*}
	\partial_\tau\Phi(\tau)=(\tilde{\mb L}+\mb L'_{a(\tau)})\Phi(\tau)+\mb F (\Phi(\tau))-\partial_\tau \mb U_{a(\tau)},
\end{align*}
with
\begin{align*}
	\mb L'_{a(\tau)}\mb u =\begin{pmatrix}0 \\V_{a(\t)} u_1\end{pmatrix},  \quad V_{a}(\xi)=2 U_{a}(\xi). 
\end{align*}

In the following, we  define
\[
\mb G_{a(\tau)}(\Phi(\t))=[\mb L'_{a(\t)}-\mb L'_{a_\infty}]\Phi(\t)+\mb F(\Phi(\t)).
\] 

and study the evolution equation
\begin{align} \label{rewritten}
	\begin{split}
		\pt_\t\Phi(\t)&=[\tilde{\mb L}+ \mb L'_{a_\infty}]\Phi(\t)+\mb G_{a(\t)}(\Phi(\t))-\pt_\t \mb U_{a(\t)},
	\end{split}
\end{align}
with initial data $\Phi(0) = \mb u \in \mc H$. This naturally splits into three parts: First, in Sec.~\ref{Sec:Free}, we study the time evolution governed by $\tilde{\mb L}$ using semigroup theory. In Sec.~\ref{Sec:Lin}, we analyze the linearized problem, where we consider $\tilde{\mb L}+ \mb L'_{a_\infty}$ as a (compact) perturbation of the free evolution and investigate the underlying spectral problem, restricting to $d=9$. Resolvent bounds allow us to transfer the spectral information to suitable growth estimates for the linearized time evolution. 
The nonlinear problem will be analyzed in integral form in Sec.~\ref{Sec:Nonl_U}, using modulation theory and fixed-point arguments. Also, we  prove Propositions \ref{prop.instability-similarity} - \ref{Prop:Data_Manifold}
and based on this, Theorem \ref{main}. 
In Sec.~\ref{Sec:Stable_ODE_blowup} we give the main arguments to prove Theorem \ref{main_ODE}.
\section{The free wave evolution in similarity variables}\label{Sec:Free}

In this section we prove well-posedness of the linear version of Eq.~\eqref{Eq:Abstract_NLW_sim} in $\mc H$. In other words, we show that the (closure of the) operator $\tilde{\mb L}$ generates a strongly continuous one-parameter semigroup of bounded operators on $\mc H$. What is more, in view of the regularity result Proposition \ref{Prop:smoothness}, we consider the evolution in Sobolev spaces of arbitrarily high integer order. In Sec.~\ref{Sec:Lin} we  then restrict the problem again to $\mc H$. \\

For $k \geq 1$, let
\[ 
\mc H_k: =H^k(\mathbb B^d)\times H^{k-1}(\mathbb B^d)
\]
be equipped with the standard norm denoted by $\| \cdot \|_{H^k(\mathbb B^d) \times H^{k-1}(\mathbb B^d)}$. We set
\[ \DD(\tilde{ \mb L}) := C^\infty(\overline{\mathbb B^d})\times C^\infty(\overline{\mathbb B^d}) \]
and consider the densely defined operator  
$\tilde{\mb L}: \DD(\tilde{ \mb L}) \subset \mc H_k \to \mc H_k$.  We now state the central result of this section.

\begin{proposition} \label{time-evolution}
	Let $d \in \{7,9\}$ and $k \geq 3$. The operator $\tilde{\mb L}:  \DD(\tilde{ \mb L})   \subset \mc H_k  \to \mc H_k$ is closable and its closure $\mb L_k: \DD(\mb L_k) \subset \mc H_k   \to \mc H_k$ generates a strongly continuous semigroup 
	$\mb S_k:[0,\infty)\rightarrow \mathcal{B}(\mc H_k)$ which satisfies
	\begin{align}\label{BoundSk}
		\nr{\mb S_k(\tau) \mb u }_{H^k(\mathbb B^d) \times H^{k-1}(\mathbb B^d)} \leq M_k e^{-\frac{1}{2}\tau} \| \mb u \|_{H^k(\mathbb B^d) \times H^{k-1}(\mathbb B^d)},
	\end{align}
	for all $\mb u \in \mc H_k$, all $\tau\geq 0$, and some $M_k>1$. Furthermore, the following holds for the spectrum of $\mb L_k$ 
	\begin{equation}\label{Eq:Spec_L_k}
		\sigma(\mb L_k)\subset \left\{z\in\C: \Re z\leq-\tfrac{1}{2}\right\},
	\end{equation}
	and the resolvent has the following bound 
		\[ \nr{\mb R_{\mb L_k}(\l) \mb f}_{H^k(\mathbb B^d) \times H^{k-1}(\mathbb B^d)}\leq  \frac{M_k  }{\Re\l+\frac{1}{2}}\|\mb f \|_{H^k(\mathbb B^d) \times H^{k-1}(\mathbb B^d)},  \]
	for $\l\in\C$ with $\Re\l>-\frac{1}{2}$, and $\mb f \in \mc H_k$.
\end{proposition}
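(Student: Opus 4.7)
The plan is to invoke the Lumer--Phillips theorem for the operator $\tilde{\mb L} + \tfrac{1}{2}$ on $\mc H_k$, equipped with a carefully chosen inner product $(\cdot,\cdot)_k$ equivalent to the standard $H^k(\mathbb B^d) \times H^{k-1}(\mathbb B^d)$ one. Once $-(\tilde{\mb L}+\tfrac{1}{2})$ is shown to be accretive with respect to $(\cdot,\cdot)_k$ and the range condition $\mathrm{rge}(\lambda_0 - \tilde{\mb L}) = \mc H_k$ is verified for one $\lambda_0 \in \C$ with $\Re \lambda_0 > -\tfrac{1}{2}$, the Hille--Yosida / Lumer--Phillips package automatically yields closability of $\tilde{\mb L}$, generation by $\mb L_k$ of a contraction semigroup for $(\cdot,\cdot)_k$ with rate $-\tfrac{1}{2}$, the spectral localization $\sigma(\mb L_k) \subset \{\Re z \leq -\tfrac{1}{2}\}$, and the resolvent estimate. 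Translating from the adapted inner product back to the standard norm produces the constant $M_k \geq 1$ appearing in \eqref{BoundSk}.

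\textbf{Adapted inner product and dissipativity.} For the base level $k=1$, I would use a wave-type bilinear form of the shape
\[
((\mb u,\mb v))_1 := \int_{\mathbb B^d} \nabla u_1 \cdot \overline{\nabla v_1}\, d\xi + \int_{\mathbb B^d} u_2\, \overline{v_2}\, d\xi + c \int_{\mathbb S^{d-1}} u_1 \overline{v_1}\, d\sigma,
\]
where the constant $c > 0$ is tuned so that the boundary contributions generated by integration by parts against the streaming term $\xi \cdot \nabla$ have a favorable sign; this is the analytic manifestation of the fact that the unit sphere is characteristic for the wave cone. For general $k \geq 3$ I would define $(\cdot,\cdot)_k$ by summing weighted copies of such a form applied to $(\partial^\alpha u_1, \partial^\alpha u_2)$ over multi-indices $|\alpha| \leq k-1$. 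The crucial algebraic inputs are the commutator relation $[\partial^\alpha, \xi\cdot\nabla] = |\alpha|\,\partial^\alpha$ together with the identity $\Re \int_{\mathbb B^d} (\xi \cdot \nabla f)\, \bar f\, d\xi = -\tfrac{d}{2} \|f\|_{L^2(\mathbb B^d)}^2 + \tfrac{1}{2}\|f\|_{L^2(\mathbb S^{d-1})}^2$, which together with the mass terms $-2 u_1$ and $-3 u_2$ present in $\tilde{\mb L}$ allow all mixed-order commutators to be absorbed and yield $\Re(\tilde{\mb L}\mb u, \mb u)_k \leq -\tfrac{1}{2} \|\mb u\|_k^2$ for $\mb u \in \mc D(\tilde{\mb L})$.

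\textbf{Range condition.} It is enough to show that for some $\lambda_0$ with $\Re \lambda_0$ large and for every $\mb f \in C^\infty(\overline{\mathbb B^d}) \times C^\infty(\overline{\mathbb B^d})$ there is $\mb u \in \mc D(\tilde{\mb L})$ with $(\lambda_0 - \tilde{\mb L})\mb u = \mb f$. Reading off the first component yields $u_2 = (\lambda_0 + 2) u_1 + \xi\cdot\nabla u_1 - f_1$, and substitution into the second component produces a second-order equation for $u_1$ on $\mathbb B^d$ of transport-perturbed elliptic type. Expanding in spherical harmonics reduces this to a one-parameter family (indexed by the angular momentum $\ell$) of ODEs in the radial variable $\rho \in (0,1)$ with regular singular points at $\rho=0$ and $\rho=1$. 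For odd $d \in \{7,9\}$ the indicial exponents at both endpoints are half-integers, so Frobenius-type analysis delivers a unique smooth solution regular at $\rho = 0$ and matching the correct data at $\rho=1$, and standard elliptic regularity together with the spherical harmonic decomposition \eqref{Decomp:SpherHarm_Hk} assembles these pieces into $\mb u \in C^\infty(\overline{\mathbb B^d}) \times C^\infty(\overline{\mathbb B^d})$, which is all the more needed for Proposition \ref{Prop:smoothness}.

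\textbf{Main obstacle.} The principal difficulty is the dissipativity computation for arbitrary $k$. When $\partial^\alpha$ with large $|\alpha|$ is commuted with both the transport part and the Laplacian in $\tilde{\mb L}$, numerous cross terms of mixed order appear, and the weights in $(\cdot,\cdot)_k$ have to be chosen so that these cross terms either cancel algebraically or are dominated by the favorable quadratic forms coming from the mass terms and the boundary identity on $\mathbb S^{d-1}$. Verifying this balance uniformly in $|\alpha| \leq k-1$ is the heart of the argument; once established, Lumer--Phillips delivers all the claimed conclusions at once, with $M_k$ determined by the equivalence constants between $(\cdot,\cdot)_k$ and the standard inner product on $\mc H_k$.
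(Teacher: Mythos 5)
Your overall strategy coincides with the paper's: Lumer--Phillips for $\tilde{\mb L}+\tfrac12$ with respect to an equivalent, adapted inner product, plus a range/density argument via spherical harmonics and radial ODEs. The range-condition sketch is essentially the paper's Lemma \ref{Lem:Density} (the paper takes $\lambda=\tfrac52$ for $d=9$ and $\lambda=\tfrac32$ for $d=7$ so that the radial equations reduce to hypergeometric form with explicitly known solutions). However, there is a genuine gap in your dissipativity step, and it sits exactly where you locate the ``main obstacle.''

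The inner product you propose --- bulk wave energies $\|\nabla\partial^\alpha u_1\|_{L^2(\mathbb B^d)}^2+\|\partial^\alpha u_2\|_{L^2(\mathbb B^d)}^2$ for \emph{all} $|\alpha|\le k-1$, supplemented by boundary terms with tunable constants --- cannot yield $\Re(\tilde{\mb L}\mb u,\mb u)_k\le-\tfrac12\|\mb u\|_k^2$. Using precisely the two identities you quote, the diagonal contribution of the $\dot H^{j}(\mathbb B^d)\times\dot H^{j-1}(\mathbb B^d)$ bulk energy (here $j=|\alpha|+1$) is $\bigl(\tfrac{d}{2}-2-j\bigr)$ times itself, i.e.\ $\bigl(\tfrac52-j\bigr)$ for $d=9$: the cross terms $\Re\int\partial^{(j)}u_2\,\overline{\partial^{(j)}u_1}$ cancel between the two components, and what remains is this diagonal coefficient plus boundary integrals of a favorable sign. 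For $j=1$ and $j=2$ the coefficient is $+\tfrac32$ and $+\tfrac12$, so these levels contribute a \emph{positive-definite} bulk quadratic form. No choice of positive weights can repair a positive diagonal entry, and the evolution of the sphere integrals $c\int_{\mathbb S^{d-1}}|\partial^\alpha u_1|^2$ only produces further boundary terms, never a negative bulk term that could absorb it. This is exactly why the paper's $(\cdot|\cdot)_{\mc H_k}$ contains \emph{no} bulk seminorms below order $(3,2)$: the $H^2$-level control of $u_1$ and $H^1$-level control of $u_2$ are recovered from pure boundary integrals on $\mathbb S^{d-1}$ (at orders $(1,0)$ and $(2,1)$) via the trace/Poincar\'e identity \eqref{Eq:DivergenceTh_Id}, together with the tailor-made third-order combination $\|\partial_i\Delta u_1\|_{L^2}^2+\|\partial_i\partial_j u_2\|_{L^2}^2$ alongside the full $\dot H^3\times\dot H^2$ energy. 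To close your argument you would have to replace the low-order bulk terms by such boundary expressions and re-verify both dissipativity and norm equivalence; as written, the claimed absorption of the $j\in\{1,2\}$ levels is not achievable.
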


\begin{remark}
	We prove Proposition \ref{time-evolution} via the Lumer-Phillips Theorem. By using the standard inner product on $\mc H_k$, one can easily prove existence of the semigroup $(\mb S_k(\tau))_{\tau \geq 0}$, but in order to show that it decays exponentially, and to prove the growth bound \eqref{BoundSk} in particular, we need to introduce an appropriate equivalent inner product. Necessity for such approach will become apparent in the proof of Lemma \ref{Le:Free_Evol_LP} below.  We  note that for $d =9$ the restriction on $k$ is optimal within the class of integer Sobolev spaces. In particular, for scaling reasons exponential decay cannot be expected at lower integer regularities. For $d=7$, a similar statement can be obtained for $k=2$. 
\end{remark}

For $d \in \{7,9\}$ and $k \geq 3$ we define the following sesquilinear form 
\begin{align*}
	(\cdot |\cdot)_{\mc H_k}: \left( C^\infty(\overline{\mathbb{B}^d})\times C^\infty(\overline{\mathbb{B}^d})\right)^2\rightarrow \C, \quad (\mb u|\mb v)_{\mc H_k} =\sum_{j=1}^{k} (\mb u| \mb v)_j,
\end{align*}
where 
\begin{align*}
	\begin{split}
		(\mb u | \mb v)_1&=\int_{\mathbb S^{d-1}}\pt_iu_1(\o)\overline{\pt^iv_1(\o)}d\s(\o)+\int_{\mathbb S^{d-1}}u_1(\o)\overline{v_1(\o)} d\s(\o)
		+\int_{\mathbb S^{d-1}}u_2(\o)\overline{v_2(\o)}d\s(\o) \\
		(\mb u | \mb v)_2&=\int_{\mathbb B^{d}}\pt_i \Delta u_1(\xi)\overline{\pt^i\Delta v_1(\xi)}d\xi
		+\int_{\mathbb B^{d}}\pt_i\pt_ju_2(\xi)\overline{\pt^i\pt^jv_2(\xi)}d\xi
		\\
		&+\int_{\mathbb S^{d-1}}\pt_iu_2(\o)\overline{\pt^iv_2(\o)}d\s(\o),
		\\
		(\mb u | \mb v)_3&=4 \int_{\mathbb B^{d}}\pt_i\pt_j\pt_k u_1(\xi)\overline{\pt^i\pt^j\pt^kv_1(\xi)}d\xi
		+4 \int_{\mathbb B^{d}}\pt_i\pt_j u_2(\xi)\overline{\pt^i\pt^j v_2(\xi)}d\xi
		\\
		&+4 \int_{\mathbb S^{d-1}}\pt_i\pt_ju_1(\o)\overline{\pt^i\pt^jv_1(\o)}d\s(\o),
	\end{split}
\end{align*}
and for $j \geq 4$ we use the standard $\dot H^{j}(\mathbb B^{d}) \times \dot H^{j-1}(\mathbb B^{d})$ inner product
\begin{align}
	(\mb u | \mb v)_j&=(u_1|v_1)_{\dot{H}^j(\mathbb B^{d})}+(u_2|v_2)_{\dot H^{j-1}(\mathbb B^{d})}.
\end{align}

We then set
\[
\nr{\mb u}_{\HH_k}:=\sqrt{(\mb u|\mb u)_{\HH_k}}.
\]
In the following, for brevity, we use the notation $(\cdot|\cdot)_j = \|\cdot\|^2_j$, $j = 1, \dots, k$ for different parts of $(\cdot|\cdot)_{\mc  H_k}$. 

\begin{lemma}\label{equivalent}
	Let $d \in \{7,9\}$ and $k \geq 3$. We have that 
	\[ \| \mb u \|_{\mc H_k} \simeq  \| \mb u \|_{H^k(\mathbb B^{d}) \times H^{k-1}(\mathbb B^{d})} \]
	for all $\mb u\in C^\infty(\overline{\mathbb{B}^d})\times C^\infty (\overline{\mathbb{B}^d})$. In particular, $\| \cdot \|_{\mc H_k}$ defines an equivalent norm on $\mc H_k$. 
\end{lemma}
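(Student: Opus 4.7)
The inequality $\| \mathbf u \|_{\mc H_k} \lesssim \| \mathbf u \|_{H^k(\mb B^d) \times H^{k-1}(\mb B^d)}$ is the easy direction: each interior integral in $(\mb u|\mb u)_j$ for $j = 1,\dots,k$ is bounded by an appropriate homogeneous Sobolev seminorm, while each boundary integral is bounded via the standard trace inequality $\|\partial^\alpha u|_{\mathbb S^{d-1}}\|_{L^2(\mathbb S^{d-1})} \lesssim \|u\|_{H^{|\alpha|+1}(\mb B^d)}$. The highest-order boundary trace appearing for $u_1$ is $\|\nabla^2 u_1|_{\mathbb S^{d-1}}\|_{L^2}$, which requires $u_1 \in H^3(\mb B^d)$, and for $u_2$ it is $\|\nabla u_2|_{\mathbb S^{d-1}}\|_{L^2}$, which requires $u_2 \in H^2(\mb B^d)$; both are available under the hypothesis $k \geq 3$.

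The reverse inequality is the heart of the lemma. First I would read off from the definition that $\|\mathbf u\|_{\mc H_k}^2$ controls the quantities
\begin{align*}
 & \|u_1|_{\mathbb S^{d-1}}\|_{L^2}^2,\ \|\nabla u_1|_{\mathbb S^{d-1}}\|_{L^2}^2,\ \|\nabla^2 u_1|_{\mathbb S^{d-1}}\|_{L^2}^2,\ \|\nabla^j u_1\|_{L^2(\mb B^d)}^2\ \text{for}\ j=3,\dots,k,\\
 & \|u_2|_{\mathbb S^{d-1}}\|_{L^2}^2,\ \|\nabla u_2|_{\mathbb S^{d-1}}\|_{L^2}^2,\ \|\nabla^j u_2\|_{L^2(\mb B^d)}^2\ \text{for}\ j=2,\dots,k-1,
\end{align*}
where the interior $\dot H^3$-norm of $u_1$ comes from $(\mathbf u|\mathbf u)_3$ and dominates the $\|\nabla\Delta u_1\|_{L^2}^2$ piece in $(\mathbf u|\mathbf u)_2$.

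The decisive tool is the iterated Poincaré--trace inequality on the unit ball
\[ \|u\|_{H^m(\mb B^d)}^2 \lesssim \|\nabla^m u\|_{L^2(\mb B^d)}^2 + \sum_{j=0}^{m-1}\|\nabla^j u|_{\mathbb S^{d-1}}\|_{L^2(\mathbb S^{d-1})}^2, \]
which I would establish by iterating the base-case estimate $\|v\|_{L^2(\mb B^d)} \lesssim \|\nabla v\|_{L^2(\mb B^d)} + \|v|_{\mathbb S^{d-1}}\|_{L^2(\mathbb S^{d-1})}$ (a standard consequence of Rellich compactness applied componentwise to $\partial^\alpha u$). Applying this with $m=3$ to $u_1$, and with $m=2$ to $u_2$, gives $\|u_1\|_{H^3(\mb B^d)}^2 + \|u_2\|_{H^2(\mb B^d)}^2 \lesssim \|\mathbf u\|_{\mc H_k}^2$, since all required boundary traces and top-order interior seminorms are already controlled.

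Finally, for $k > 3$ I would absorb the remaining top derivatives by the trivial decomposition
\[ \|u_1\|_{H^k(\mb B^d)}^2 = \|u_1\|_{H^3(\mb B^d)}^2 + \sum_{j=4}^{k}\|\nabla^j u_1\|_{L^2(\mb B^d)}^2, \quad \|u_2\|_{H^{k-1}(\mb B^d)}^2 = \|u_2\|_{H^2(\mb B^d)}^2 + \sum_{j=3}^{k-1}\|\nabla^j u_2\|_{L^2(\mb B^d)}^2, \]
each summand being directly controlled by the corresponding $(\mathbf u|\mathbf u)_j$ with $j\geq 4$. Combining the two directions and extending by density from $C^\infty(\overline{\mb B^d})\times C^\infty(\overline{\mb B^d})$ to $\mc H_k$ yields the equivalence. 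I expect no genuine obstacle; the only point requiring some care is matching the collection of top-order interior norms and boundary traces provided by the customized sesquilinear form to the Poincaré--trace chain, which is precisely why the regularity threshold $k\geq 3$ (and the specific structure of $(\cdot|\cdot)_1,(\cdot|\cdot)_2,(\cdot|\cdot)_3$) was chosen.
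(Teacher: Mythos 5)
Your proof is correct and follows essentially the same route as the paper: the easy direction via the trace theorem, and the reverse direction by iterating the base Poincar\'e--trace inequality $\|v\|_{L^2(\mathbb B^d)}\lesssim \|\nabla v\|_{L^2(\mathbb B^d)}+\|v\|_{L^2(\mathbb S^{d-1})}$ up to order $3$ for $u_1$ and order $2$ for $u_2$, with the $j\geq 4$ pieces matching the standard homogeneous seminorms directly. The only cosmetic difference is that you obtain the base inequality by a Rellich compactness argument, whereas the paper derives it (and the trace bound) explicitly and quantitatively from the divergence-theorem identity $\int_{\mathbb S^{d-1}}|u|^2\,d\sigma=\int_{\mathbb B^d}\operatorname{div}(\xi|u|^2)\,d\xi$; both are valid.
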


\begin{proof}
	Note that it suffices to prove the following
	\begin{align}\label{Eq:Equ}
		\|\mb u\|^2_{H^3(\mathbb B^{d}) \times H^2(\mathbb B^{d})}  \lesssim \sum_{j = 1}^{3} \| \mb u \|^2 _{j}  \lesssim \|\mb u\|^2_{H^3(\mathbb B^{d})\times H^2(\mathbb B^{d})}.
	\end{align} 
	The first estimate in \eqref{Eq:Equ} follows from the fact that 
	\[ \nr{u}^2_{L^2(\mathbb{B}^d)}\lesssim \nr{\nabla u}^2_{L^2(\mathbb{B}^d)} + \nr{u}^2_{L^2(\mathbb{S}^{d-1})},
	\]
	for all $u\in C^\infty(\overline{\mathbb{B}^d})$, which is a simple consequence of the identity
	\begin{equation}\label{Eq:DivergenceTh_Id}
		\int_{\mathbb{S}^{d-1}}   |u(\omega)|^2 d\sigma(\omega) = 
		\int_{\mathbb B^d}  \mathrm{div} ( \xi |u(\xi)|^2 ) d \xi = 
		\int_{\mathbb B^d} \left ( d |u(\xi)|^2 + \xi^i u(\xi) \overline{\partial_i u(\xi)}
		+ \xi^i \overline{u(\xi)} \partial_i u(\xi)  \right) d \xi.
	\end{equation}
	Using this, it is easy to see that 
	\[ \|u \|_{H^2(\mathbb B^d)} \lesssim \int_{\mathbb B^d} \partial_i \partial_j u(\xi) \overline{\partial^i \partial^j u(\xi)}d\xi + \int_{\mathbb{S}^{d-1}} \partial_i u(\omega) \overline{\partial^i u(\omega)} d\sigma(\omega) + \int_{\mathbb{S}^{d-1}}  |u(\omega)|^2 d\sigma(\omega), \]
	for all $u \in C^\infty (\overline{\mathbb{B}^d})$.  Similar bounds imply the first inequality in \eqref{Eq:Equ}.
	Another consequence of Eq.~\eqref{Eq:DivergenceTh_Id} is the trace theorem, which asserts that
	\[
	\int_{\mathbb{S}^{d-1}}|u(\o)|^2d\sigma(\o)\lesssim\nr{u}^2_{H^1(\mathbb B^d)},
	\]
	for all $u\in C^\infty (\overline{\mathbb{B}^d})$; using this, it is straightforward to obtain the second inequality in  \eqref{Eq:Equ}. Hence, we obtain the claimed 
	estimates in Lemma \ref{equivalent} for all  $\mb u\in C^\infty(\overline{\mathbb{B}^d})\times C^{\infty} (\overline{\mathbb{B}^d})$ and by density, we extend this to all of $\mc H_k$.  
\end{proof}

Now we turn to proving Proposition \ref{time-evolution}. As the first auxiliary result, we have the following dissipation property of $\tilde{\mb L}$. 

\begin{lemma}\label{Le:Free_Evol_LP}
	Let $d \in \{7,9\}$ and $k \geq 3$. Then, 
	\[ \Re (\tilde{\mb L} \mb u | \mb u )_{\HH_k} \leq  - \tfrac{1}{2} \| \mb u \|^2_{\mc H_k} \]
	for all $\mb u \in \DD(\tilde{ \mb L}).$
\end{lemma}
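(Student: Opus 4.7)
The plan is to compute $\Re(\tilde{\mb L}\mb u|\mb u)_{\mc H_k} = \sum_{j=1}^{k}\Re(\tilde{\mb L}\mb u|\mb u)_j$ one level at a time, using the commutator identity $[\partial^\alpha,\xi\cdot\nabla] = |\alpha|\partial^\alpha$ together with the divergence-theorem identity
\begin{equation*}
\Re\int_{\mathbb B^d}(-\xi\cdot\nabla v)\,\overline{v}\,d\xi = \tfrac{d}{2}\|v\|_{L^2(\mathbb B^d)}^2 - \tfrac{1}{2}\|v\|_{L^2(\mathbb S^{d-1})}^2,
\end{equation*}
which follows from $\xi\cdot\nabla|v|^2 = \mathrm{div}(\xi|v|^2) - d|v|^2$. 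Applying $\partial^\alpha$ with $|\alpha|=j$ to the first component of $\tilde{\mb L}\mb u$ and using the commutator gives $-\xi\cdot\nabla\partial^\alpha u_1 - (j+2)\partial^\alpha u_1 + \partial^\alpha u_2$; the analogous computation for the second component with $|\beta|=j-1$ yields $\Delta\partial^\beta u_1 - \xi\cdot\nabla\partial^\beta u_2 - (j+2)\partial^\beta u_2$. Consequently, at each level $j$ the diagonal piece contributes the damping coefficient $\tfrac{d}{2}-(j+2)$, which equals $-\tfrac{1}{2}$ precisely at $j = \tfrac{d-3}{2}$ and is $\leq -\tfrac{1}{2}$ for all larger $j$; for $d=9$ this critical level is $j=3$, matching the integer regularity bound $k\geq 3$ and the sharpness noted in the remark above.

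Next I would handle the off-diagonal coupling ($u_2$ in the first slot and $\Delta u_1$ in the second) that produces mixed cross terms. The idea is to integrate by parts in $\sum_{|\beta|=j-1}\int_{\mathbb B^d}\Delta\partial^\beta u_1\,\overline{\partial^\beta u_2}\,d\xi$, obtaining $-\sum_{|\beta|=j-1,\,i}\int_{\mathbb B^d}\partial_i\partial^\beta u_1\,\overline{\partial_i\partial^\beta u_2}\,d\xi$ plus a surface integral on $\mathbb S^{d-1}$ involving $\partial_r\partial^\beta u_1$. After reindexing $|\alpha|=j$ as $\alpha=\beta+e_i$ and taking real parts, the interior contribution exactly cancels the real part of $\sum_{|\alpha|=j}\int_{\mathbb B^d}\partial^\alpha u_2\,\overline{\partial^\alpha u_1}\,d\xi$ coming from the first slot, so the only mixed residues surviving at level $j$ are boundary integrals. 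These cascade into the $L^2(\mathbb S^{d-1})$ summands at level $j-1$, whose specific coefficients in the definition of $(\cdot|\cdot)_1$, $(\cdot|\cdot)_2$, $(\cdot|\cdot)_3$ are engineered precisely to absorb them.

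The principal obstacle is the careful treatment of the lowest levels $j=1,2,3$, where the inner product is a non-standard mixture of interior and boundary integrals. There one must redo the diagonal computation with the transport operator restricted to the sphere (where $\xi\cdot\nabla$ reduces to $\partial_r$) and apply tangential integration by parts on $\mathbb S^{d-1}$ to handle cross terms like $\int_{\mathbb S^{d-1}}\partial_i u_2\,\overline{\partial^i u_1}\,d\sigma$. The critical check is that at $j=1$, where every term of the inner product is a surface integral, all boundary residues descending from the higher levels either telescope to zero or are absorbed into $-\tfrac{1}{2}$ times one of the three summands in $(\cdot|\cdot)_1$, with no unaccounted positive contribution. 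Once this bookkeeping closes, summing over $j$ yields $\Re(\tilde{\mb L}\mb u|\mb u)_{\mc H_k}\leq -\tfrac{1}{2}\|\mb u\|_{\mc H_k}^2$, which is the claim.
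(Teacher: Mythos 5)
Your strategy is exactly the paper's: the commutator identity $\partial^\alpha(\xi\cdot\nabla f)=|\alpha|\partial^\alpha f+\xi\cdot\nabla\partial^\alpha f$, the divergence identity producing the damping $\tfrac{d}{2}-(j+2)$ per level, the cancellation of the interior cross terms between the two slots after one integration by parts, and the observation that the standard $\dot H^j\times\dot H^{j-1}$ pairings only become dissipative from $j=\tfrac{d-3}{2}$ onward — all of this matches the actual proof, and your identification of the critical level $j=3$ for $d=9$ is correct.

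The gap is that you announce, but do not carry out, precisely the part on which the lemma stands or falls: the bookkeeping for $j=1,2,3$. This is not a routine verification. On $\mathbb S^{d-1}$ the operator $\xi\cdot\nabla=\omega^j\partial_j$ is a \emph{normal} derivative, so it cannot be removed by tangential integration by parts as you suggest; the paper instead estimates each such surface term by Cauchy--Schwarz, which generates \emph{positive} boundary contributions of one derivative higher (e.g.\ $\int_{\mathbb S^8}|\partial^2u_1|^2$, $\int_{\mathbb S^8}|\Delta u_1|^2$, $\int_{\mathbb S^8}|\partial u_2|^2$ from the level-$1$ pairing) that must then be dominated by the negative surface residues coming from levels $2$ and $3$. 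Whether this closes depends on the specific weights in the definition of $(\cdot|\cdot)_{\mc H_k}$: the factor $4$ in $(\cdot|\cdot)_3$ is what turns the level-$3$ surface residue into $-12\int_{\mathbb S^8}|\partial^2u_1|^2$, leaving $-11\int_{\mathbb S^8}|\partial^2u_1|^2+\int_{\mathbb S^8}|\Delta u_1|^2$ after summation, and the argument survives only because $|\Delta u|^2\le 9\sum_{i,j}|\partial_i\partial_ju|^2$ — a margin of $-2$. One also needs the pointwise inequality $\Re(a\bar b)+\Re(a\bar c)-\Re(b\bar c)\le\tfrac12(|a|^2+|b|^2+|c|^2)$ to show the collected surface integrands at levels $2$ and $3$ are nonpositive. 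None of these quantitative checks appear in your proposal, and since they are exactly the reason the inner product was tailor-made in the first place, the proof cannot be considered complete without them.
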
 

The proof is provided in Section \ref{App1} of the appendix. To apply the Lumer-Phillips theorem, we also need the following density property of $\tilde{\mb L}$.
\begin{lemma}\label{Lem:Density}
	Let $d \in \{7,9\}$ and $k \geq 3$. There exists $\lambda > - \frac12$ such that $ \ran(\lambda-\tilde{ \mb L})$ is dense in $\mc H_k$. 
\end{lemma}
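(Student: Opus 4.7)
The strategy is to solve the resolvent equation $(\lambda - \tilde{\mb L})\mb u = \mb f$ mode-by-mode in the spherical harmonic decomposition. By \eqref{Decomp:SpherHarm_Hk}, finite linear combinations of products $c(|\cdot|)\,Y_{\ell,m}(\cdot/|\cdot|)$ with $c \in C^\infty([0,1])$ are dense in each Sobolev factor of $\mc H_k$, so it suffices to produce, for a fixed $\lambda > -\tfrac{1}{2}$ and arbitrary single-mode datum $\mb f = (c_1(|\cdot|),c_2(|\cdot|))\,Y_{\ell,m}(\cdot/|\cdot|)$ with smooth radial coefficients, a preimage $\mb u \in \DD(\tilde{\mb L})$ of the same angular type.

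Eliminating $u_2 = (\lambda+2) u_1 + \xi \cdot \nabla u_1 - f_1$ from the first component of the resolvent equation reduces matters to the scalar second-order equation
\[
-\Delta u_1 + (\xi\cdot\nabla)^2 u_1 + (2\lambda+5)\,\xi\cdot\nabla u_1 + (\lambda+2)(\lambda+3)\, u_1 = g,
\]
with $g := (\lambda+3) f_1 + \xi\cdot\nabla f_1 + f_2 \in C^\infty(\overline{\mathbb B^d})$. Substituting the ansatz $u_1(\xi) = v(\rho) Y_{\ell,m}(\xi/|\xi|)$ with $\rho = |\xi|$, and the analogous expansion of $g = h(\rho) Y_{\ell,m}$, yields the Fuchsian radial ODE
\[
(1-\rho^2) v''(\rho) + \Bigl(\tfrac{d-1}{\rho} - (2\lambda+6)\rho\Bigr) v'(\rho) - \Bigl(\tfrac{\ell(\ell+d-2)}{\rho^2} + (\lambda+2)(\lambda+3)\Bigr) v(\rho) = -h(\rho)
\]
on $\rho \in (0,1)$, with smooth forcing $h$ and regular singular points at both endpoints.

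Frobenius analysis yields indicial exponents $\{\ell,\,-(\ell+d-2)\}$ at $\rho=0$ and $\{0,\,\tfrac{d-5}{2}-\lambda\}$ at $\rho=1$. Choose $\lambda$ real and large enough that $\tfrac{d-5}{2}-\lambda$ is strictly negative and non-integer. The larger-exponent Frobenius solution is always constructible as a convergent power series, even when the exponents differ by a non-negative integer, so we obtain a distinguished homogeneous solution $\phi_0$ smooth on $[0,1)$ with leading behaviour $\rho^\ell$ at the origin, and a distinguished homogeneous solution $\phi_1$ analytic on $(0,1]$ near $\rho=1$. Variation of parameters built from $\phi_0$ and $\phi_1$ then delivers a particular solution $v$ smooth at both endpoints: at each endpoint, integrating against the companion solution (which carries the singular branch) absorbs the endpoint degeneracy of the ODE. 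Setting $u_2 := (\lambda+2)u_1 + \xi\cdot\nabla u_1 - f_1$ completes the construction of a smooth preimage $\mb u \in \DD(\tilde{\mb L})$.

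The only non-degeneracy input needed is that the Wronskian $W(\phi_0,\phi_1)$ does not vanish. If it did, the two solutions would be proportional, producing a single globally smooth homogeneous solution and hence an eigenfunction $\mb u \in \DD(\tilde{\mb L})$ of $\tilde{\mb L}$ at eigenvalue $\lambda$ inside the $(\ell,m)$-mode. This is ruled out by the dissipativity estimate of Lemma \ref{Le:Free_Evol_LP}: combining it with the Cauchy-Schwarz inequality gives
\[
\re\bigl((\lambda-\tilde{\mb L})\mb u\,|\,\mb u\bigr)_{\HH_k}\ \geq\ \bigl(\re \lambda + \tfrac{1}{2}\bigr)\,\|\mb u\|^2_{\HH_k},
\]
so $\lambda - \tilde{\mb L}$ is injective on $\DD(\tilde{\mb L})$ for every $\lambda$ with $\re \lambda > -\tfrac{1}{2}$. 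Hence a single choice of large real $\lambda$ works simultaneously for every $(\ell,m)$, yielding density of the range. The main technical obstacle is the Frobenius analysis at $\rho=0$, where the indicial exponents differ by the integer $2\ell + d - 2$ and logarithmic contributions to the subdominant solution must be controlled; what saves us is that only the larger-exponent solution $\phi_0$ enters the construction, and it is always well defined and smooth up to the origin.
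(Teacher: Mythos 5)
Your overall strategy coincides with the paper's: approximate by finitely many spherical--harmonic modes, eliminate $u_2$, and solve the resulting radial Fuchsian ODE by variation of parameters for a $\lambda$ that makes the second Frobenius index at $\rho=1$ negative and non-integer (the paper takes $\lambda=\tfrac52$ for $d=9$ and $\lambda=\tfrac32$ for $d=7$, giving index $-\tfrac12$ in both cases). One place where you genuinely deviate, to your advantage: the paper obtains a fundamental system in closed form by transforming to a hypergeometric equation and reads off the non-vanishing of the Wronskian from explicit formulas, whereas your argument that $W(\phi_0,\phi_1)\neq 0$ --- a globally smooth homogeneous solution would be an eigenfunction of $\tilde{\mb L}$ in $\DD(\tilde{\mb L})$, contradicting $\re\big((\lambda-\tilde{\mb L})\mb u\,|\,\mb u\big)_{\HH_k}\geq(\re\lambda+\tfrac12)\nr{\mb u}^2_{\HH_k}$ from Lemma \ref{Le:Free_Evol_LP} --- is abstract, works for all modes simultaneously, and dispenses with special-function identities. (You should note, though, that the $\rho^\ell$-Frobenius solution times $Y_{\ell,m}$ is smooth at the origin because the ODE is even in $\rho$; otherwise the would-be eigenfunction need not lie in $\DD(\tilde{\mb L})$.)

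The gap is in the endpoint regularity of the particular solution. Your resolution of the ``main technical obstacle'' at $\rho=0$ --- that only the larger-exponent solution $\phi_0$ enters the construction --- is not correct: the variation-of-parameters formula contains the term $\phi_1(\rho)\int_0^\rho \phi_0(s)h(s)\,W(s)^{-1}(1-s^2)^{-1}\,ds$, and near the origin $\phi_1$ generically carries the branch $\rho^{-(\ell+d-2)}$ together with logarithmic corrections, since the indices differ by the integer $2\ell+d-2$. Using $W(s)^{-1}\simeq s^{d-1}$ and $h(s)=O(s^\ell)$ one finds that this product is bounded (of size $\rho^{\ell+2}$ up to logarithms), so $u_1\in H^1$ near zero, but smoothness at the origin does not follow from your argument; the paper closes exactly this point by observing that $u_1$ solves the elliptic equation weakly on $\mathbb B^d$ with smooth right-hand side and invoking interior elliptic regularity. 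Similarly at $\rho=1$ the two terms of the variation-of-parameters formula are not separately smooth: writing $\phi_0=c_1\phi_1+c_2(1-\rho)^{\alpha}\tilde\phi$ with $\alpha=\tfrac{d-5}{2}-\lambda<0$, each term contributes a non-smooth $(1-\rho)^{1-\alpha}$ piece, and one must check that these cancel between the two terms (the paper does this via the substitution $s=\rho+(1-\rho)t$). Both repairs are routine, but as written the smoothness of the preimage --- which is the whole point, since $\DD(\tilde{\mb L})$ consists of smooth pairs --- is asserted rather than proved.
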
 

\begin{proof}
	Let $d \in \{7,9\}$ and $k \geq 3$. We prove the statement by showing that there is exists a $\lambda$ such that given $\mb f \in \mc H_k$ and $\varepsilon>0$ there is some $\mb f_\varepsilon$ in the $\varepsilon$-neighborhood of $\mb f$ for which the equation $(\l-\tilde{\mb L})\mb u=\mb f_\varepsilon$ admits a solution in $\mathcal{D}(\tilde{\mb L})$. First, by density there is $\tilde{\mb f} \in C^\infty(\overline{\mathbb{B}^d}) \times C^\infty(\overline{\mathbb{B}^d})$ for which $\|\tilde{\mb f}-\mb f\|_{H^k(\mathbb B^d)\times H^{k-1}(\mathbb B^d)}<\frac{\eps}{2}$. Then, for $n\in\N$ we define $\mb f_n:=(f_{1,n},f_{2,n})$ with
	\begin{align*}
		f_{1,n}=\sum_{\ell=0}^n P_{\ell} \tilde{f}_1 \quad \text{and} \quad f_{2,n}=\sum_{\ell=0}^n P_{\ell} \tilde{f}_2,
	\end{align*}
	where $P_\ell$ are the projection operators defined in \eqref{Decomp:Projection}. Furthermore, according to \eqref{Decomp:SpherHarm_Hk} there exists an index $N\in\N$ for which $\|\mb f_N-\tilde{\mb f}\|_{H^k(\mathbb B^d)\times H^{k-1}(\mathbb B^d)}<\frac{\eps}{2}$. It is therefore sufficient to consider
	\begin{align}\label{density0}
		(\l-\tilde{\mb L})\mb u=\mb f_N
	\end{align}
	and produce a solution $\mb u\in \mathcal{D}(\tilde{\mb L})$. First, we rewrite Eq.~\eqref{density0} as a system of equations in $u_1$ and $u_2$
	\begin{gather}
		-(\d^{ij}-\xi^i\xi^j)\pt_i\pt_j u_1(\xi)+2(\l+3) \xi^i\pt_iu_1(\xi)+(\l+3)(\l+2)u_1(\xi)=g_N(\xi),\label{density2} \\
		u_2(\xi)=\xi^i\pt_iu_1(\xi)+(\l+1)u_1(\xi)-f_{1,N}(\xi),\label{density1}
	\end{gather}
	where
	\begin{align*}
		g_N(\xi)=\xi^i\pt_i f_{1,N}(\xi)+(\l+3)f_{1,N}(\xi)+f_{2,N}(\xi).
	\end{align*}
	We now treat the case $d=9$, for which we choose $\l=\frac{5}{2}$. With this choice, Eq.~\eqref{density2} reads as
	\begin{align} \label{density3}
		-(\d^{ij}-\xi^i\xi^j)\pt_i\pt_ju_1(\xi) + 11 \xi^i\pt_iu_1(\xi) +\frac{99}{4} u_1(\xi)=g_N(\xi).
	\end{align}
	Note that $g_N$ is a finite linear combination of spherical harmonics, and this allows us to decompose the PDE \eqref{density3} which is posed on $\mathbb{B}^9$ into a finite number of ODEs posed on the interval $(0,1)$. To this end, we switch to spherical coordinates $\rho=|\xi|$ and $\omega=\frac{\xi}{|\xi|}$. In particular, the relevant differential expressions transform in the following way
	\begin{align*}
		\xi^i\pt_iu(\xi)&=\rho\pt_\rho u(\rho\omega),
		\\
		\xi^i\xi^j\pt_i\pt_j u(\xi)&=\rho^2\pt^2_\rho u(\rho\omega),
		\\
		\pt^i\pt_i u(\xi)&=\left(\pt_\rho^2+\frac{8}{\rho}\pt_\rho+\frac{1}{\rho^2}\Delta_\omega^{\mathbb{S}^8} \right)u(\rho\omega).
	\end{align*}
	Consequently, Eq.~\eqref{density3} becomes
	\begin{align}\label{density31}
		\left(-(1-\rho^2)\pt_\rho^2+\left(-\frac{8}{\rho}+11\rho \right)\pt_\rho +\frac{99}{4}-\frac{1}{\rho^2}\Delta_\omega^{\mathbb{S}^8} \right)u(\rho\omega)=g_N(\rho\omega).
	\end{align}
%
%
	Now we decompose the right hand side of \eqref{density31} into spherical harmonics
	\begin{align*}
		g_N(\rho \o)=\sum_{\ell=0}^N \sum_{m\in \Omega_\ell}g_{\ell,m}(\rho)Y_{\ell,m}(\o),
	\end{align*}
	for some $g_{\ell,m}\in C^\infty[0,1]$. Then by inserting the ansatz
	\begin{align}\label{Eq:u_1}
		u_1(\rho\o)=\sum_{\ell=0}^N \sum_{m\in \Omega_\ell}u_{\ell,m}(\rho)Y_{\ell,m}(\o)
	\end{align}
	into Eq.~\eqref{density31}, we obtain a system of ODEs
	\begin{align} \label{density4}
		\left( -(1-\rho^2)\pt_\rho^2 +\left(-\frac{8}{\rho}+11\rho\right)\pt_\rho + \frac{\ell(\ell+7)}{\rho^2}+\frac{99}{4} \right)u_{\ell,m}(\rho)=g_{\ell,m}(\rho),
	\end{align}
	for $\ell=0,\dots, N $ and $m\in \Omega_\ell $.
	For later convenience, we first set $v_{\ell,m}(\rho)=\rho^3u_{\ell,m}(\rho)$ and thereby transform \eqref{density4} into
	\begin{align} \label{density5}
		\left(-(1-\rho^2)\pt^2_\rho +\left(-\frac{2}{\rho} +5 \rho\right)\pt_\rho + \frac{(\ell+4)(\ell+3)}{\rho^2}+\frac{15}{4}\right)v_{\ell,m}(\rho)=\rho^3 g_{\ell,m}(\rho).
	\end{align}
 Then, by means of further change of variables $v_{\ell,m}(\rho)=\rho^{\ell+3}w_{\ell,m}(\rho^2)$ we turn the homogeneous version of \eqref{density5} into a hypergeometric equation in its canonical form
	\begin{align}\label{density6}
		z(1-z)w''_{\ell,m}(z)+\left(c-(a+b+1)z \right)w'_{\ell,m}(z)-abw_{l,m}(z)=0,
	\end{align}
	where
	\begin{align*}
		a=\frac{9+2\ell}{4}, \quad b=a+\frac{1}{2}=\frac{11+2\ell}{4}, \quad c=2a=\frac{9+2\ell}{2}.
	\end{align*}
	Equation \eqref{density6} admits two solutions
	\begin{align*}
		\phi_{0,\ell}(z)={}_2F_1\left(a,a+\frac{1}{2},2a,z\right), \quad \phi_{1,\ell}(z)={}_2F_1\left(a,a+\frac{1}{2},\frac{3}{2},1-z \right),
	\end{align*}
	which are analytic around $z=0$ and $z=1$ respectively, see \cite{NIST10}. In fact, the functions $\phi_{0,\ell}$ and $\phi_{1,\ell}$ can be expressed in closed form
	\begin{align*}
		\phi_{0,\ell}(z)&=\frac{1}{\sqrt{1-z}} \left( \frac{2}{1+\sqrt{1-z}}\right)^{\frac{7}{2}+\ell},
		\\
		\phi_{1,\ell}(z)&= \sqrt{1-z}\left(\left(\frac{1}{1-\sqrt{1-z}} \right)^{\frac{7}{2}+\ell} - \left( \frac{1}{1+\sqrt{1-z}}\right)^{\frac{7}{2}+ \ell} \right),
	\end{align*}
	see  \cite{NIST10}, p.~386-387. Now by undoing the change of variables from above, we get
	$\psi_{\ell,0} = \rho^{\ell + 3}\phi_{0,\ell}(\rho^2)$ and $\psi_{\ell,1} = \rho^{\ell + 3}\phi_{1,\ell}(\rho^2)$ as solutions to the homogeneous version of Eq.~\eqref{density5}. Furthermore, the Wronskian is $W(\psi_{0,\ell},\psi_{1,\ell})(\rho) = C_{\ell} (1-\rho^2)^{-\frac{3}{2}} \rho^{-2}$ for some non-zero constant $C_{\ell}$. Then, by the variation of constants formula we obtain a solution to Eq.~\eqref{density5} on $(0,1)$,	
	\begin{align}\label{Eq:Nonhom_sol}
		v_{\ell,m}(\rho) &= - \psi_{\ell,0}(\rho) \int_{\rho}^{1} \frac{\psi_{\ell,1}(s)}{W(\psi_{\ell,0},\psi_{\ell,1})(s)} \frac{s^3 g_{\ell,m}(s)}{1-s^2} ds 
		-  \psi_{\ell,1}(\rho)  \int_{0}^{\rho} \frac{\psi_{\ell,0}(s)}{W(\psi_{\ell,0},\psi_{\ell,1})(s)} \frac{s^3 g_{\ell,m}(s)}{1-s^2} ds \nonumber\\
		&=- \psi_{\ell,0}(\rho) \int_{\rho}^{1} \psi_{\ell,1}(s)\sqrt{1-s}h_{\ell,m}(s) ds 
		-  \psi_{\ell,1}(\rho)  \int_{0}^{\rho} \psi_{\ell,0}(s)\sqrt{1-s}h_{\ell,m}(s) ds, 
	\end{align}
	where $h_{\ell,m} \in C^\infty[0,1]$. Obviously $v_{\ell,m} \in C^\infty(0,1)$. We claim that $v_{\ell,m} \in C^\infty(0,1]$. To see this, we note that at
	$\rho=1$, the set of Frobenius indices of Eq.~\eqref{density5} is $\{-\tfrac{1}{2},0\}$. Hence, near $\rho=1$, there is another solution, linearly independent of $\psi_{\ell,1}$, which has the following form $(1-\rho)^{-1/2}\psi_{\ell,2}(\rho)$, where $\psi_{\ell,2}$ is analytic at $\rho=1$. Hence,
	\begin{equation}\label{Eq:Frob_sol1}
		\psi_{\ell,0}(\rho)= c_{\ell,1}\psi_{\ell,1}(\rho) + c_{\ell,2}\frac{\psi_{\ell,2}(\rho)}{\sqrt{1-\rho}},
	\end{equation}
	for some constants $c_{\ell,1},c_{\ell,2}$. Now, by letting
	\[ \alpha_{\ell,m} :=  \int_{0}^{1} \psi_{\ell,0}(s)\sqrt{1-s}h_{\ell,m}(s) ds, \]
	and inserting \eqref{Eq:Frob_sol1} into Eq.~\eqref{Eq:Nonhom_sol}, we get that 
	\begin{align*}
		v_{\ell,m}(\rho) =
		-c_{\ell,2} \frac{\psi_{\ell,2}(\rho)}{\sqrt{1-\rho}} &\int_{\rho}^{1} \psi_{\ell,1}(s)\sqrt{1-s}h_{\ell,m}(s) ds \\
		&-\alpha_{\ell,m}\psi_{\ell,1}(\rho)
		+  c_{\ell,2}\psi_{\ell,1}(\rho)  \int_{\rho}^{1} \psi_{\ell,2}(s)h_{\ell,m}(s) ds.
	\end{align*}
	The second and the third term above are obviously smooth up to $\rho=1$; for the first term, the square root factors in fact cancel out, as can easily be seen via substitution $s=\rho+ (1-\rho)t$, and smoothness of $v_{\ell,m}$ up to $\rho=1$ follows. Consequently, the function $u_1$ defined in \eqref{Eq:u_1} belongs to $C^{\infty}(\overline{\mathbb B^{9}} \setminus \{0\})$ and it solves Eq.~\eqref{density3} in the classical sense away from zero. Furthermore, from~\eqref{Eq:Nonhom_sol} one can check that $u_{\ell,m}$ and
	$u'_{\ell,m}$ are bounded near zero, and hence $u_1 \in H^1(\mathbb B^9)$. In particular, $u_1$ solves  Eq.~\eqref{density3} in the weak sense on $\mathbb B^{9}$ and since the right hand side is a smooth function, we conclude that  $u_1 \in C^{\infty}(\mathbb B^{9})$ by elliptic regularity. Consequently,  $u_1 \in C^{\infty}(\overline{\mathbb B^{9}})$, and therefore $u_2 \in C^{\infty}(\overline{\mathbb B^{9}})$ according to Eq.~\eqref{density1}. In conclusion, $\mb u :=(u_1,u_2) \in \mathcal{D}(\tilde{\mb L})$ solves Eq.~\eqref{density0}.\\
	For $d=7$, the same proof can be repeated by choosing $\lambda = \frac{3}{2}$. Namely, by decomposing the functions into spherical harmonics and by introducing the new variable $\tilde v_{\ell,m}(\rho)=\rho^2 u_{\ell,m}(\rho)$, the problem is reduced to 
	\begin{align*} 
		\left(-(1-\rho^2)\pt^2_\rho +\left(-\frac{2}{\rho} +5 \rho\right)\pt_\rho + \frac{(\ell+3)(\ell+2)}{\rho^2}+\frac{15}{4}\right)\tilde{v}_{\ell,m}(\rho)=\rho^2 g_{\ell,m}(\rho),
	\end{align*}
	which the same as Eq.~\eqref{density5} up to a shift in $\ell$ and the weight on the right hand side. Hence, the same reasoning applies. 
\end{proof}

\begin{proof}[Proof of Proposition \ref{time-evolution}]
	Based on Lemmas \ref{Le:Free_Evol_LP} and \ref{Lem:Density}, the Lumer-Phillps theorem (see \cite{Engel}, p.~83, Theorem 3.15) together with Lemma \ref{equivalent} implies that $\tilde{\mb L}$ is closable in $\mc H_k$, and that its closure $\mb L_k$ generates a semigroup $(\mb S_k(\tau))_{\tau \geq 0}$ for which \eqref{BoundSk} holds. The rest of the proposition follows from standard semigroup theory results, see e.g.~\cite{Engel}, p.~55, Theorem 1.10. 
\end{proof}

We conclude this section with proving certain restriction properties of the semigroups $(\mb S_{k}(\tau))_{\tau \geq 0}$. This will be crucial in showing persistence of regularity for the nonlinear equation. 

\begin{lemma}\label{lemma.restriction}
	Let $d \in \{7,9\}$ and $k \geq 3$. For any $j \in \N$, the semigroup $(\mb S_{k+j}(\tau))_{\tau \geq 0}$ is the restriction of $(\mb S_{k}(\tau))_{\tau \geq 0}$ to $\HH_{k+j}$, i.e.,
	\begin{equation*}
		\mb S_{k+j}(\tau)=\mb S_{k}(\tau)|_{\HH_{k+j}}
	\end{equation*}
	for all $\tau\geq0$.
	 In particular, we have 
	the growth bound
	\[ \|\mb S_{k}(\tau) \mb u\|_{H^{k+j}(\mathbb B^d) \times H^{k+j-1}(\mathbb B^d)}  \lesssim_j e^{-\frac12 \tau} \| \mb u\|_{H^{k+j}(\mathbb B^d) \times H^{k+j-1}(\mathbb B^d)}  \]
	for all $\mb u \in \HH_{k+j}$ and all $\tau\geq0$.
\end{lemma}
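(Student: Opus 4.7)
The strategy rests on the observation that $(\mb S_k(\tau))_{\tau\geq 0}$ and $(\mb S_{k+j}(\tau))_{\tau\geq 0}$ are the semigroups obtained by closing the \emph{same} densely defined operator $\tilde{\mb L}$ on the \emph{same} core $\mc D(\tilde{\mb L})=C^\infty(\overline{\mathbb B^d})\times C^\infty(\overline{\mathbb B^d})$, only in different ambient Hilbert spaces. The plan is to identify the two semigroups on a dense subspace, extend by continuity, and then read off the growth bound from Proposition \ref{time-evolution} at regularity level $k+j$.

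First I would establish the operator inclusion $\mb L_{k+j}\subset\mb L_k$, i.e.\ $\mc D(\mb L_{k+j})\subset\mc D(\mb L_k)$ with $\mb L_k|_{\mc D(\mb L_{k+j})}=\mb L_{k+j}$. This is immediate from the construction by closure: for any $\mb u\in\mc D(\mb L_{k+j})$ there is a sequence $(\mb u_n)\subset\mc D(\tilde{\mb L})$ with $\mb u_n\to\mb u$ and $\tilde{\mb L}\mb u_n\to\mb L_{k+j}\mb u$ in $\HH_{k+j}$, and the continuous embedding $\HH_{k+j}\hookrightarrow\HH_k$ transfers both convergences to $\HH_k$, placing $\mb u$ in $\mc D(\mb L_k)$ with the same image.

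Next, for $\mb u\in\mc D(\mb L_{k+j})$, the orbit $v(\tau):=\mb S_{k+j}(\tau)\mb u$ is a classical $C^1([0,\infty),\HH_{k+j})$-solution of $v'(\tau)=\mb L_{k+j}v(\tau)$, $v(0)=\mb u$, which remains in $\mc D(\mb L_{k+j})$ for all $\tau\geq 0$. Viewing this identity in $\HH_k$ and invoking $\mb L_{k+j}\subset\mb L_k$, the same $v$ is also a classical $\HH_k$-solution of the abstract Cauchy problem generated by $\mb L_k$; uniqueness of such solutions for a semigroup generator then forces $v(\tau)=\mb S_k(\tau)\mb u$. Density of $\mc D(\mb L_{k+j})$ in $\HH_{k+j}$, together with the $\HH_{k+j}$-continuity of $\mb S_{k+j}(\tau)$ and the $\HH_k$-continuity of $\mb S_k(\tau)|_{\HH_{k+j}}$, extends the identity to all of $\HH_{k+j}$.

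With the restriction identity in hand, Proposition \ref{time-evolution} applied at the regularity level $k+j$ (which satisfies $k+j\geq 3$) yields, for every $\mb u\in\HH_{k+j}$,
\[
\|\mb S_k(\tau)\mb u\|_{H^{k+j}(\mathbb B^d)\times H^{k+j-1}(\mathbb B^d)}=\|\mb S_{k+j}(\tau)\mb u\|_{H^{k+j}(\mathbb B^d)\times H^{k+j-1}(\mathbb B^d)}\lesssim_j e^{-\tau/2}\|\mb u\|_{H^{k+j}(\mathbb B^d)\times H^{k+j-1}(\mathbb B^d)},
\]
which is the claimed bound. The only mildly delicate point is the uniqueness invocation in the second step, which must be carried out in the ambient space $\HH_k$ rather than in $\HH_{k+j}$, since the coincidence of the two semigroups at positive times is precisely what is being proved.
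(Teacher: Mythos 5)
Your proposal is correct and shares its skeleton with the paper's proof: both begin by establishing the generator inclusion $\mb L_{k+j}\subset\mb L_k$ from the fact that the two closures are taken of the same operator on the common core $\DD(\tilde{\mb L})$, transported through the continuous embedding $\HH_{k+j}\hookrightarrow\HH_k$, and both finish by reading off the decay from Proposition \ref{time-evolution} at level $k+j$. Where you differ is in the middle step that upgrades the generator inclusion to the semigroup identity: the paper passes through the resolvents, noting that $\mb R_{\mb L_{k+j}}(\lambda)=\mb R_{\mb L_k}(\lambda)|_{\HH_{k+j}}$ for $\lambda$ in the common resolvent set, and then invokes the Post--Widder inversion formula to recover $\mb S_{k+j}(\tau)\mb u=\mb S_k(\tau)\mb u$; you instead observe that for $\mb u\in\mc D(\mb L_{k+j})$ the orbit $\tau\mapsto\mb S_{k+j}(\tau)\mb u$ is a classical solution of the $\HH_k$-Cauchy problem for $\mb L_k$ and appeal to uniqueness of classical solutions for a generator, then extend by density. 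Both mechanisms are standard and both are valid here; yours is somewhat more elementary (no inversion formula needed), while the paper's resolvent route has the minor advantage of requiring nothing about orbits beyond the resolvent identity itself. Your closing remark correctly identifies the one delicate point — the uniqueness argument must run in $\HH_k$, not $\HH_{k+j}$ — and your density step (using boundedness of $\mb S_{k+j}(\tau)$ on $\HH_{k+j}$ and of $\mb S_k(\tau)$ on $\HH_k$ together with the embedding) closes the argument cleanly.
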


\begin{proof} Let $d \in \{7,9\}$ and $k \geq 3$.
	We prove the claim only for $j=1$, as the general case follows from the arbitrariness of $k$. The crucial ingredients of the proof are continuity of the embedding $\mc H_{k+1} \hookrightarrow \mc H_{k}$,
	and the fact that $\DD(\tilde{ \mb L})$ is a core for both $\mb L_k$  and $\mb L_{k+1}$. First, we prove that $\mb L_{k+1}$ is a restriction of $\mb L_k$; more precisely we show that
	\begin{align}\label{inclusion}
		\mathcal{D}(\mb L_{k+1})\subset \mathcal{D}(\mb L_k) \quad \mathrm{and} \quad \mb L_{k+1}\mb u=\mb L_k\mb u,
	\end{align}
	for all $\mb u\in\mathcal{D}(\mb L_{k+1})$. For $\mb u\in \mathcal{D}(\tilde{\mb L})$, from the definition of $\mb L_{k+1}$ and $\mb L_k$,  it follows that 
	$\mb u\in \mathcal{D}(\mb L_{k+1})\cap \mathcal{D}(\mb L_k)$ and $\mb L_{k+1}\mb u=\mb L_k\mb u=\tilde{\mb L}\mb u$. Let now
	$\mb u\in \mathcal{D}(\mb L_{k+1})$. Since $(\mb L_{k+1},\mathcal{D}(\mb L_{k+1}))$ is closed, there exists a sequence $(\mb u_n)_{n\in\N}\subset \mathcal{D}(\tilde{\mb L})$, such that
	\[
	\mb u_n \xrightarrow{\HH_{k+1}} \mb u, \quad \mathrm{and} \quad \tilde{\mb L}\mb u_n\xrightarrow{\HH_{k+1}}\mb L_{k+1}\mb u.
	\]
	From the embedding $\mc H_{k+1} \hookrightarrow \mc H_{k}$ we infer that 
	\[
	\mb u_n \xrightarrow{\HH_{k}} \mb u, \quad \mathrm{and} \quad \tilde{\mb L}\mb u_n\xrightarrow{\HH_{k}}\mb L_{k+1}\mb u,
	\]
	and by the closedness of $\mb L_k$ it follows that  $\mb u\in\mathcal{D}(\mb L_k)$ and $\mb L_{k+1}\mb u=\mb L_{k}\mb u$.
	Now let $\lambda\in \rho(\mb L_{k+1})\cap\rho(\mb L_k)$. From \eqref{inclusion} we get that $\mb R_{\mb L_{k+1}}(\lambda)=\mb R_{\mb L_k}(\lambda)|_{\HH_{k+1}}$. Now, given $\mb u\in \HH_{k+1}$ we get by the Post-Widder inversion formula (see \cite{Engel}, p.~223, Corollary 5.5) and the embedding $\mc H_{k+1} \hookrightarrow \mc H_{k}$, that for every $\t>0$,
	\[
	\mb S_{k+1}(\t)\mb u =\lim_{n\rightarrow \infty}\left[\frac{n}{\t}\mb R_{\mb L_{k+1}}\left(\frac{n}{\t}\right)\right]^n\mb u=
	\lim_{n\rightarrow \infty}\left[\frac{n}{\t}\mb R_{\mb L_{k}}\left(\frac{n}{\t}\right)\right]^n\mb u=\mb S_{k}(\t)\mb u.
	\]
	 This proves that 
	$(\mb S_{k+1}(\tau))_{\tau \geq 0}$ is the restriction of $(\mb S_{k}(\tau))_{\tau \geq 0}$ to $\HH_{k+1}$. Consequently, from Proposition \ref{time-evolution} we have that 
	\begin{align*}
		\|\mb S_{k}(\tau) \mb u\|_{H^{k+1}(\mathbb B^d) \times H^{k}(\mathbb B^d)} =   \|\mb S_{k+1}(\tau) \mb u\|_{H^{k+1}(\mathbb B^d) \times H^{k}(\mathbb B^d)} 
		\lesssim e^{-\frac12 \tau} \| \mb u\|_{H^{k+1}(\mathbb B^d) \times H^{k}(\mathbb B^d)},  
	\end{align*}
 for all $\mb u \in \HH_{k+1}$ and all $\tau \geq 0$. 
\end{proof}

\section{Linearization around a self-similar solution - Preliminaries on the structure of the spectrum}\label{Sec:Lin}

From now on, for fixed $d  \in \{7,9\}$, we will work solely in the Sobolev space
$H^{\frac{d+1}{2}}(\mathbb B^d)\times H^{\frac{d-1}{2}}(\mathbb B^d)$ which we earlier denoted by $\mc H_{\frac{d+1}{2}}$. To abbreviate the notation, we write 
\[ \mc H := \mc H_{\frac{d+1}{2}}.\]

We also denote by $(\mb S(\tau))_{\tau \geq 0}$ and $\mb L:  \mathcal{D}(\mb L) \subset \mc H \to \mc H$, the corresponding semigroup $(\mb S_k(\tau))_{\tau \geq 0}$ and its generator $\mb L_k$ for $k = \frac{d+1}{2}$.\\

With an eye towards studying the flow near the orbit $\{ \mb U_{a} : a \in \R^d\}$, see Sec.~\ref{Sec:Modulation}, in this section we describe some general properties of the underlying linear operator
\begin{align*}
	 \tilde{\mb L}+\mb L_a', 
	\quad \mb L'_a \mb u :=\begin{pmatrix}
		0
		\\
		V_au_1
	\end{pmatrix}.
\end{align*}
Where
\begin{align}\label{Def:Potential}
	V_a(\xi) := 2 U_a(\xi),
\end{align}
with $ U_a$ given in Eq.~\eqref{explicit-form}. 
\begin{remark}
	We emphasize that the results of this section apply to any smooth $V_a: \overline{\mathbb B^d} \to \R$ that depends smoothly on the parameter $a$. Obviously, such potentials arise in the linearization around smooth self-similar profiles.
\end{remark}

\begin{proposition}\label{group-Sa}
	Fix $d  \in \{7,9\}$. For every $a \in \R^d$, the operator $\mb L'_a:\HH\rightarrow\HH$ is compact, and the operator
	\[ \mb L_a:=\mb L+\mb L'_a, \quad  \DD(\mb L_a):=\DD(\mb  L)\subset \HH\rightarrow \HH, \]
	generates a strongly-continuous semigroup $\mb S_a:[0,\infty)\rightarrow \mathcal{B}(\HH)$. Furthermore, given $\d>0$, there is $K > 0$ such that
	\[ \nr{\mb  L_a-\mb  L_b}\leq K|a-b| \]
	for all $a,b\in \overline{\mathbb{B}^d_\d}$ .
\end{proposition}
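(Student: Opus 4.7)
The plan is to prove the three assertions in sequence, with the central input in each being smoothness of $V_a = 2U_a$ on $\overline{\mathbb B^d}$, uniformly for $a$ in compact sets. Set $Q(\xi,a) := (1+c_3)\gamma(\xi,a)^2 + |\xi|^2 - 1$, so that the denominator in \eqref{explicit-form} equals $Q^2$. From the definitions of $A_0,\dots,A_d$ one checks by induction on $d$, using $\cosh^2 - \sinh^2 = 1$, that $A_0(a)^2 - \sum_{j=1}^d A_j(a)^2 = 1$. Decomposing $\xi$ into components parallel and perpendicular to $A(a) := (A_1(a),\dots,A_d(a))$ and completing the square yields $\gamma(\xi,a)^2 + |\xi|^2 - 1 = (|A(a)| - A_0(a)\,s)^2 + |\xi_\perp|^2 \geq 0$, where $s$ is the signed projection of $\xi$ onto $A/|A|$. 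Together with $\gamma(\xi,a) \geq A_0(a) - |A(a)| > 0$ for $|\xi|\leq 1$, this gives $Q(\xi,a) \geq c_3(A_0(a)-|A(a)|)^2 > 0$ on $\overline{\mathbb B^d}$, and by compactness a uniform positive lower bound on $\overline{\mathbb B^d}\times\overline{\mathbb B^d_\delta}$. Consequently $(\xi,a)\mapsto V_a(\xi)$ is $C^\infty$ on $\overline{\mathbb B^d}\times\overline{\mathbb B^d_\delta}$.

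With this regularity in hand, compactness and semigroup generation are routine. The operator $\mb L'_a$ acts as $(u_1,u_2)\mapsto (0,V_a u_1)$; multiplication by $V_a\in C^\infty(\overline{\mathbb B^d})$ is bounded on $H^{\frac{d+1}{2}}(\mathbb B^d)$ by standard Sobolev multiplication estimates, and composition with the compact Rellich--Kondrachov embedding $H^{\frac{d+1}{2}}(\mathbb B^d)\hookrightarrow H^{\frac{d-1}{2}}(\mathbb B^d)$ yields compactness of $\mb L'_a:\mc H\to\mc H$. Since $\mb L$ generates the $C_0$-semigroup $(\mb S(\tau))_{\tau\geq 0}$ on $\mc H$ by Proposition \ref{time-evolution} and $\mb L'_a\in\mc B(\mc H)$, the bounded perturbation theorem for generators (cf.~\cite{Engel}) gives that $\mb L_a=\mb L+\mb L'_a$ on $\mc D(\mb L_a)=\mc D(\mb L)$ generates a strongly continuous semigroup $(\mb S_a(\tau))_{\tau\geq 0}$.

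For the Lipschitz bound, $\mb L_a - \mb L_b$ acts as multiplication by $V_a - V_b$ on the first component. The smoothness of $(\xi,a)\mapsto V_a(\xi)$ on $\overline{\mathbb B^d}\times\overline{\mathbb B^d_\delta}$ shown above implies that $a\mapsto V_a$ is $C^1$ into $C^{\frac{d+1}{2}}(\overline{\mathbb B^d})$, so the mean-value theorem on the convex compact set $\overline{\mathbb B^d_\delta}$ gives $\|V_a - V_b\|_{C^{\frac{d+1}{2}}(\overline{\mathbb B^d})}\lesssim_\delta |a-b|$. Combined with boundedness of the pointwise multiplication map $C^{\frac{d+1}{2}}(\overline{\mathbb B^d})\times H^{\frac{d+1}{2}}(\mathbb B^d)\to H^{\frac{d-1}{2}}(\mathbb B^d)$, this yields $\|\mb L_a - \mb L_b\|_{\mc B(\mc H)}\leq K|a-b|$ for some $K=K(\delta)$.

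The only slightly subtle step is the non-vanishing of $Q$ on $\overline{\mathbb B^d}$: a naive application of Cauchy--Schwarz to $A\cdot\xi$ does not give a useful bound, and one must pass through the hyperbolic identity $A_0^2 - |A|^2 = 1$ followed by the completion-of-squares computation to obtain the clean lower bound. One might be tempted to simply invoke the positivity $U_a\geq c_0>0$ from \eqref{Eq:PosU^*}, but that argument is slightly circular since continuity of $U_a$ up to the boundary already presupposes non-vanishing of $Q$; the direct algebraic route above avoids this.
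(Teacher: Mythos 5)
Your proof is correct and follows essentially the same route as the paper: compactness via smoothness of $V_a$ plus the compact embedding $H^{\frac{d+1}{2}}(\mathbb B^d)\hookrightarrow H^{\frac{d-1}{2}}(\mathbb B^d)$, generation via the Bounded Perturbation Theorem, and the Lipschitz bound via the fundamental theorem of calculus in $a$ combined with multiplier estimates in $W^{\frac{d-1}{2},\infty}$. The only addition is your explicit verification, using $A_0(a)^2-|A(a)|^2=1$, that the denominator of $U_a$ is bounded away from zero on $\overline{\mathbb B^d}$ — a fact the paper uses implicitly when asserting smoothness of $V_a$ — and this computation is correct (though note that Cauchy--Schwarz combined with the same hyperbolic identity already yields $\gamma(\xi,a)^2+|\xi|^2-1\geq\left(|A(a)|-A_0(a)|\xi|\right)^2\geq 0$, so the orthogonal decomposition is not strictly necessary).
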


\begin{proof}
	The compactness of $\mb L'_a$ follows from the smoothness of $V_a$ and the compactness of the embedding $H^{
		\frac{d+1}{2}}(\mathbb B^d) \hookrightarrow H^{\frac{d-1}{2}}(\mathbb B^d)$. The fact that $\mb L_a$ generates a semigroup is a consequence of the Bounded Perturbation Theorem, see e.g.~\cite{Engel}, p.~158.
	For the Lipschitz dependence on the parameter $a$, we first note that by the fundamental theorem of calculus we have that
	\begin{align}\label{Eq:Lipschitzbounds_Ua}
		V_a(\xi) - V_b(\xi) = (a^j - b^j) \int_0^{1} \partial_{\alpha_j} V_{\alpha(s)}(\xi) ds ,
	\end{align}
	for $\alpha(s) = b + s(a-b)$. This implies that given $\d >0$ we have that 
	\begin{align}\label{Eq:Selfsim_Sol_Lipschitz}
		\| V_a - V_b \|_{\dot H^k(\mathbb B^d)} \lesssim_k |a - b|
	\end{align}
	for all $a,b \in  \overline{\mathbb B^d_{\delta}}$. In particular,
	\begin{align*}
		\nr{V_a-V_b}_{W^{\frac{d-1}{2},\infty}(\mathbb B^d)}\lesssim |a-b|,
	\end{align*}
	and we thus have that
	\begin{align*}
		\nr{(V_a-V_b)u}_{H^{\frac{d-1}{2}}(\mathbb B^d)}\lesssim |a-b|\nr{u}_{H^{\frac{d-1}{2}}(\mathbb B^d) }\lesssim |a-b|\nr{u}_{H^{\frac{d+1}{2}(\mathbb B^d)}},
	\end{align*}
	for all $ u\in C^\infty(\overline{\mathbb{B}^d})$ and all $a,b\in \overline{\mathbb B^d_{\delta}}$, which implies the claim.
\end{proof}

Next, we show that the unstable spectrum of $\mb L_a:\DD(\mb L_a) \subset \HH \rightarrow \HH$ consists of isolated eigenvalues and is confined to a compact region. This is achieved by proving bounds on the resolvent and using compactness of the perturbation.

\begin{proposition} \label{Prop:Structure_Spectrum}
	Fix $d  \in \{7,9\}$. Let $\varepsilon > 0$ and $\d>0$. Then there are constants $\kappa>0$ and $c>0$, such that
	\begin{equation}\label{Eq:Rel_Est}
		\nr{\mb R_{\mb L_a}(\l)}\leq c
	\end{equation}
	for all $a\in\overline{\mathbb{B}^d_\d}$, and for all $\l\in\C$ satisfying $\Re\l\geq-\frac{1}{2}+\eps$ and $|\l|\geq\kappa$. Furthermore, if $\l\in\sigma(\mb L_a)$ with $\Re\l>-\frac{1}{2}$, then $\l$ is an isolated eigenvalue.
\end{proposition}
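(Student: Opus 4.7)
The natural starting point is to treat $\mb L_a = \mb L + \mb L'_a$ as a compact perturbation of the free generator and to exploit the factorization
\[
\lambda - \mb L_a \;=\; (\lambda-\mb L)\bigl[I - \mb R_{\mb L}(\lambda)\,\mb L'_a\bigr],
\]
valid whenever $\Re\lambda > -\tfrac12$, so that $\lambda \in \rho(\mb L)$ by Proposition~\ref{time-evolution}. The idea is to invert the bracket on the right via a Neumann series for $|\lambda|$ large, and then to obtain the isolated-eigenvalue statement by applying analytic Fredholm theory to the same bracket on all of $\{\Re\lambda > -\tfrac12\}$.

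\textbf{Uniform resolvent bound.}
The key quantitative step is to prove that
\[
\sup_{a\in\overline{\mathbb B^d_\delta}}\nr{\mb R_{\mb L}(\lambda)\,\mb L'_a}_{\mathcal B(\mc H)} \longrightarrow 0
\qquad \text{as }|\lambda|\to\infty\text{ within }\{\Re\lambda \geq -\tfrac12+\varepsilon\}.
\]
First, for any $\mb u\in \DD(\mb L)$ the identity $\lambda\,\mb R_{\mb L}(\lambda)\mb u = \mb u + \mb R_{\mb L}(\lambda)\mb L\mb u$ together with the bound $\nr{\mb R_{\mb L}(\lambda)}\leq M/\varepsilon$ from Proposition~\ref{time-evolution} gives $\nr{\mb R_{\mb L}(\lambda)\mb u}\lesssim |\lambda|^{-1}(\nr{\mb u}+\nr{\mb L\mb u})$, and a density argument extends the strong convergence $\mb R_{\mb L}(\lambda)\mb u\to 0$ to all $\mb u\in\mc H$. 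Next, I would argue that the family $\{\mb L'_a:a\in\overline{\mathbb B^d_\delta}\}$ is collectively compact: since multiplication by $V_a$ boosts regularity by the algebra property of $H^{(d+1)/2}(\mathbb B^d)$ and $V_a$ depends Lipschitz continuously on $a$ (compare \eqref{Eq:Selfsim_Sol_Lipschitz} and the proof of Proposition~\ref{group-Sa}), the set $K := \overline{\bigcup_{a\in\overline{\mathbb B^d_\delta}} \mb L'_a(\overline{\mathcal B_1})}$ is a precompact subset of $\mc H$. Strong convergence of $\mb R_{\mb L}(\lambda)$ to $0$ on the precompact set $K$ is automatically uniform, hence $\sup_{a,\|\mb u\|\leq 1}\nr{\mb R_{\mb L}(\lambda)\mb L'_a\mb u} \to 0$. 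Choosing $\kappa$ so that this supremum is $\leq 1/2$ for $|\lambda|\geq \kappa$, a Neumann series yields $\nr{[I-\mb R_{\mb L}(\lambda)\mb L'_a]^{-1}}\leq 2$, and hence $\nr{\mb R_{\mb L_a}(\lambda)}\leq 2M/\varepsilon =: c$, which is \eqref{Eq:Rel_Est}.

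\textbf{Isolated eigenvalues.}
For the second assertion, I would apply the analytic Fredholm theorem to the operator-valued map
\[
\lambda \;\longmapsto\; I - \mb R_{\mb L}(\lambda)\,\mb L'_a,
\]
on the connected open set $\{\Re\lambda > -\tfrac12\}$. This map is holomorphic (since $\lambda\mapsto\mb R_{\mb L}(\lambda)$ is), takes values in identity-plus-compact (because $\mb L'_a$ is compact by Proposition~\ref{group-Sa}), and is invertible for $|\lambda|$ large by the previous step. Analytic Fredholm theory then ensures that it is invertible on all of $\{\Re\lambda>-\tfrac12\}$ except on a discrete set of points, at each of which the inverse has a pole of finite order; and through the factorization above, these are precisely the points of $\sigma(\mb L_a)\cap \{\Re\lambda>-\tfrac12\}$, at which $I - \mb R_{\mb L}(\lambda)\mb L'_a$ has nontrivial kernel, which translates via multiplication by $\lambda-\mb L$ to nontrivial kernel of $\lambda-\mb L_a$.

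\textbf{Anticipated obstacle.}
The main technical point is the uniformity of the convergence $\nr{\mb R_{\mb L}(\lambda)\mb L'_a}\to 0$ in the parameter $a$. Without uniformity one obtains, for each fixed $a$, an $a$-dependent $\kappa(a)$, which does not deliver the statement. The fix, as outlined, is to upgrade individual compactness of each $\mb L'_a$ to collective compactness of the whole family using the Lipschitz dependence $\nr{\mb L'_a-\mb L'_b}\lesssim |a-b|$ together with compactness of $\overline{\mathbb B^d_\delta}$; everything else is standard operator-theoretic bookkeeping.
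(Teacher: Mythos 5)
Your proposal is correct, and its overall architecture (factorization of $\lambda-\mb L_a$ through the free resolvent, Neumann series for large $|\lambda|$, analytic Fredholm theorem for discreteness of the unstable spectrum) coincides with the paper's. The substantive difference lies in how the key smallness estimate is obtained. The paper works with the factorization $\lambda-\mb L_a=[1-\mb L'_a\mb R_{\mb L}(\lambda)](\lambda-\mb L)$ and proves the quantitative bound $\nr{\mb L'_a\mb R_{\mb L}(\lambda)\mb f}\lesssim|\lambda|^{-1}\nr{\mb f}$ by exploiting the explicit structure of $\mb L$: from the first component of $(\lambda-\mb L)\mb u=\mb f$ one reads off $\xi^j\partial_j u_1+(\lambda+2)u_1-u_2=f_1$, which shows that $u_1=[\mb R_{\mb L}(\lambda)\mb f]_1$ is $O(|\lambda|^{-1})$ in the \emph{weaker} norm $H^{(d-1)/2}(\mathbb B^d)$ — exactly the norm in which $\mb L'_a$ measures it, since $V_a$ is smooth with multiplier norm uniformly bounded over $a\in\overline{\mathbb B^d_\delta}$. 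You instead prove $\sup_a\nr{\mb R_{\mb L}(\lambda)\mb L'_a}\to 0$ by a soft argument: strong convergence $\mb R_{\mb L}(\lambda)\to 0$ on $\mc H$ (via $\mb R_{\mb L}(\lambda)\mb u=\lambda^{-1}[\mb u+\mb R_{\mb L}(\lambda)\mb L\mb u]$ on the core plus density), upgraded to uniform convergence on the precompact set $\overline{\bigcup_a\mb L'_a(\overline{\mc B_1})}$, whose precompactness follows from the uniform bound on $\nr{V_a}$ and the compact embedding $H^{(d+1)/2}(\mathbb B^d)\hookrightarrow H^{(d-1)/2}(\mathbb B^d)$. (A small wording point: multiplication by $V_a$ does not ``boost'' regularity — it preserves $H^{(d+1)/2}$ — but since the second slot of $\mc H$ only carries the $H^{(d-1)/2}$ norm, this is exactly what collective compactness requires, so your argument is sound.) Your route is more abstract and would apply verbatim to any generator with a uniformly bounded resolvent in the half-plane and any collectively compact, parameter-Lipschitz family of perturbations; the paper's route is more hands-on and yields the explicit decay rate $O(|\lambda|^{-1})$, which is not actually needed for the statement but reflects the specific transport structure of the first component of $\mb L$. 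The two factorization orders are interchangeable, and both yield the same conclusion about isolated eigenvalues via the Fredholm alternative, since a nontrivial kernel of $I-\mb R_{\mb L}(\lambda)\mb L'_a$ automatically lies in $\DD(\mb L)$ and is an eigenvector of $\mb L_a$.
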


\begin{proof}
	Let $\la \in \C$ with $\Re \la > -\frac{1}{2}$. Then Proposition \ref{time-evolution} implies that $\la \in \rho(\mb L)$, and we therefore have the identity 
	\begin{align} \label{identity}
		\l-\mb L_a=[1-\mb L'_a \mb R_{\mb L}(\l)](\l-\mb L).
	\end{align}
	In what follows we prove that for suitably chosen $\la$, the Neumann series $\sum_{k=0}^\infty[\mb L'_a \mb R_{\mb L}(\l)]^k$ converges. According to Eq.~\eqref{identity} this yields $\mb R_{\mb L_a}(\l)=\mb R_{\mb L}(\l)\sum_{k=0}^\infty[\mb L'_a \mb R_{\mb L}(\l)]^k$ and then Eq.~\eqref{Eq:Rel_Est} follows from Proposition \ref{time-evolution}. First, observe that given $\delta>0$ we have that
	\begin{equation}\label{Eq:Neuman_est}
		\nr{\mb L'_a \mb R_{\mb L}(\l) \mb f}=\nr{V_a[ \mb R_{\mb L}(\l) \mb f]_1}_{H^{\frac{d-1}{2}}(\mathbb{B})^d} \lesssim \nr{[\mb R_{\mb L}(\l)\mb f]_1}_{H^{\frac{d-1}{2}}(\mathbb{B}^d)},
	\end{equation}
	for all $a\in\overline{\mathbb{B}^d_\d}$ and all $\mb f \in \mc H$. Now, given $\mb f\in\HH$ let $\mb u = \mb R_{\mb L}(\la)\mb f$. Since $(\l-\mb L)\mb u=\mb f$, from the first component of this equation we get that
	\[
	\xi^j\partial_ju_1(\xi)+(\l+2)u_1(\xi)-u_2(\xi)=f_1(\xi)
	\]
	in the weak sense on the ball $\mathbb{B}^d$.
	Consequently
	\[
	\nr{u_1}_{H^{\frac{d-1}{2}}(\mathbb{B}^d)}\lesssim\frac{1}{|\l+2|}\left(\nr{u_1}_{H^{\frac{d+1}{2}}(\mathbb{B}^d)}+\nr{u_2}_{H ^{\frac{d-1}{2}}(\mathbb{B}^d)}+\nr{f_1}_{H^{\frac{d-1}{2}}(\mathbb{B}^d)}\right).
	\]
	Then Proposition~\ref{time-evolution} implies that given $\varepsilon>0$
	\[
	\nr{[\mb R_{\mb L}(\l) \mb f]_1}_{H^{\frac{d-1}{2}}(\mathbb{B}^d)}\lesssim|\l|^{-1}\left( \nr{\mb R_{\mb L}(\l)\mb f}+\nr{\mb f}\right)\lesssim |\l|^{-1}\nr{\mb f}
	\]
	for all $\l\in\C$ with $\Re\l\geq-\frac{1}{2}+\eps$ and all $\mb f \in \mc H$. Together with \eqref{Eq:Neuman_est}, this gives
	\[
	\nr{\mb L'_a \mb R_{\mb L}(\l)\mb f} \lesssim|\l|^{-1}\nr{\mb f},
	\]
	and the uniform bound \eqref{Eq:Rel_Est} holds for some $c>0$ when we restrict to $|\la|\geq \kappa$ for suitably large $\kappa$. 
	The second statement follows from the compactness of $\mb L'_a$. Indeed, if $\Re \l>-\frac{1}{2}$ then $\l \in \rho(\mb L)$, and according to \eqref{identity} we have that $\lambda \in \sigma(\mb L_a)$ only if the operator $1-\mb L'_a \mb R_{\mb L}(\l)$ is not bounded invertible, which is equivalent to $1$ being an eigenvalue of the compact operator $\mb L'_a \mb R_{\mb L}(\l)$, which according to Eq.~\eqref{identity} implies that $\la$ is an eigenvalue of $\mb L_a$. The fact that $\la$ is isolated follows from the Analytic Fredholm Theorem (see \cite{Simon2015_4}, Theorem 3.14.3, p.~194) applied to the mapping $\l\mapsto  \mb L'_a \mb R_{\mb L}(\l)$ defined on  $\mathbb{H}_{-\frac{1}{2}}=\{ \l\in\C : \Re\l > - \frac{1}{2}\}$.
\end{proof}

\begin{remark}
	The previous proposition implies that there are finitely many  unstable spectral points of $\mb L_a$, i.e., the ones belonging to $\Hb:=\{ \la \in \C: \Re \la \geq 0$\}, all of which are eigenvalues. This can actually be abstractly shown just by using the compactness of $\mb L'_a$, see \cite{Glo21}, Theorem B.1. We nonetheless need Proposition \ref{Prop:Structure_Spectrum} as it allows us later on to reduce the spectral analysis of $\mb L_a$ for all small $a$ to the case $a=0$, see Sec.~\ref{Sec:L_a}.
\end{remark}

  Note that the eventual presence of unstable spectral  points of $\mb L_a$ prevents decay of the associated semigroup $(\mb S_a(\tau))_{\tau \geq 0}$ on the whole space $\mc H$. What is more, since $\mb L'_a$ is compact, a spectral mapping theorem for the unstable spectrum holds (see \cite{Glo21}, Theorem B.1), and hence eventual growing modes of $(\mb S_a(\tau))_{\tau \geq 0}$ are completely determined by the unstable spectrum of $\mb L_a$ and the associated eigenspaces. Therefore, in what follows we turn to spectral analysis of $\mb L_a$. First, we show an important result which relates solvability of the spectral equation $(\la - \mb L_a)\mb u=0$ for $a=0$, $\la \in \Hb$, to the existence of \emph{smooth} solutions to a certain ordinary differential equation. We note that for $a=0$, the potential $V_a$ is radial, more precisely $V_0(\xi) = 2 U_0(\xi) = 2 U(|\xi|)=:V(|\xi|)$ with $U$ given in \eqref{Eq:Blowup_profile_radial}.

\begin{proposition}\label{Prop:Spectral_ODE}
	Fix $d  \in \{7,9\}$. Let  $\lambda \in \C$ with $\Re \lambda \geq 0$. Then $\l\in\sigma(\mb L_0)$ if and only if there are $\ell \in \N_0$ and $f \in C^{\infty}[0,1]$ such that	
	\begin{align}\label{diff-op}
		\mathcal{T}^{(d)}_\ell(\lambda)f(\rho) :=(1-\rho^2)f''(\rho)+&\left(\frac{d-1}{\rho}-2(\lambda+3)\rho \right)f'(\rho)
		\\
		&-\left( (\lambda+2)(\lambda+3)+\frac{\ell(\ell+d-2)}{\rho^2} - V(\rho) \right)f(\rho) = 0
		\nonumber
	\end{align}
	for all $\rho \in (0,1)$.
	
\end{proposition}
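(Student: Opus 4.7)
The plan is to reduce the spectral equation $(\lambda - \mb L_0)\mb u = \mb 0$ to the scalar ODE $\mathcal{T}^{(d)}_\ell(\lambda) f = 0$ by eliminating $u_2$ and decomposing into spherical harmonics. Since $\Re\lambda \geq 0 > -\tfrac12$, Proposition~\ref{Prop:Structure_Spectrum} guarantees that any such $\lambda \in \sigma(\mb L_0)$ is an isolated eigenvalue. For the forward direction, fix a non-zero eigenfunction $\mb u = (u_1, u_2) \in \mc D(\mb L)$. The first component of the eigenvalue equation yields $u_2 = (\lambda+2)u_1 + \xi \cdot \nabla u_1$; substituting into the second reduces the system to
\begin{equation*}
-(\delta^{ij} - \xi^i \xi^j)\partial_i \partial_j u_1 + 2(\lambda+3)\,\xi \cdot \nabla u_1 + \bigl[(\lambda+2)(\lambda+3) - V(|\xi|)\bigr] u_1 = 0
\end{equation*}
on $\mathbb B^d$, which is uniformly elliptic on each compact subset of the open ball. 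Radiality of $V$ means the projectors $P_\ell$ commute with this PDE, so each term $f_{\ell,m}(|\xi|)Y_{\ell,m}(\xi/|\xi|)$ in the spherical-harmonic expansion of $u_1$ separately solves the same equation; passing to spherical coordinates gives precisely $\mathcal{T}^{(d)}_\ell(\lambda) f_{\ell,m} = 0$ on $(0,1)$ in the weak sense. Non-triviality of $\mb u$ forces at least one $f_{\ell,m}$ to be non-zero.

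The next step is to upgrade $f_{\ell,m}$ to $C^\infty[0,1]$. Interior smoothness on $(0,1)$ is immediate from the regularity of the ODE coefficients, and smoothness at $\rho = 0$ follows from interior elliptic regularity for the PDE above (which yields $u_1 \in C^\infty(\mathbb B^d)$) combined with the spherical-harmonic decomposition. Smoothness at $\rho = 1$ is the delicate step. A direct computation of the indicial equation at $\rho = 1$ gives Frobenius exponents $0$ and $\tfrac{d-5}{2} - \lambda$, so any local solution decomposes as an analytic factor plus a multiple of a term behaving like $(1-\rho)^{(d-5)/2 - \lambda}$ (with a possible logarithmic correction when the index difference is a non-negative integer). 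The key observation is that $u_1 \in H^{(d+1)/2}(\mathbb B^d)$ forces its radial factor to lie in $H^{(d+1)/2}$ near $\rho = 1$, while for $\Re \lambda \geq 0$ one has $\Re\bigl(\tfrac{d-5}{2} - \lambda\bigr) \leq \tfrac{d-5}{2} < \tfrac{d}{2}$, so the singular Frobenius branch — pure power or power-times-log — is excluded by Sobolev bookkeeping. Hence $f_{\ell,m}$ lies in the analytic branch at $\rho = 1$, yielding $f_{\ell,m} \in C^\infty[0,1]$.

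For the converse, suppose $f \in C^\infty[0,1]$ solves $\mathcal{T}^{(d)}_\ell(\lambda)f = 0$. The Frobenius indices at $\rho = 0$ are $\ell$ and $-(\ell + d - 2)$, so smoothness of $f$ selects the first branch; inserting the ansatz $f(\rho) = \rho^\ell F(\rho^2)$ into the ODE produces a regular equation for $F$ at $\rho = 0$ whose smooth solution is uniquely determined, so $F \in C^\infty[0,1]$. Fix any $m \in \Omega_\ell$ and set $u_1(\xi) := F(|\xi|^2)\,|\xi|^\ell Y_{\ell,m}(\xi/|\xi|)$; since $|\xi|^\ell Y_{\ell,m}(\xi/|\xi|)$ is a harmonic polynomial of degree $\ell$, one has $u_1 \in C^\infty(\overline{\mathbb B^d})$, and then $u_2 := (\lambda + 2)u_1 + \xi \cdot \nabla u_1$ is smooth as well. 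Thus $\mb u := (u_1, u_2) \in \mc D(\tilde{\mb L}) \subset \mc D(\mb L) = \mc D(\mb L_0)$, and by construction $(\lambda - \mb L_0)\mb u = \mb 0$, so $\lambda$ is an eigenvalue. The main obstacle throughout is the boundary analysis at $\rho = 1$ in the forward direction, where the hypothesis $\Re \lambda \geq 0$ enters in an essential way to exclude the singular Frobenius branch.
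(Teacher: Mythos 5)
Your proposal is correct and follows essentially the same route as the paper's proof: reduction of the eigenvalue equation to the scalar degenerate-elliptic PDE for $u_1$, spherical-harmonic decomposition using the radiality of $V$, interior elliptic regularity, and a Frobenius analysis at $\rho=1$ in which the membership $u_{\ell,m}\in H^{\frac{d+1}{2}}(\tfrac12,1)$ rules out the branch with index $\tfrac{d-5}{2}-\lambda$ (your general formula matches the paper's $2-\lambda$ for $d=9$ and $1-\lambda$ for $d=7$, including the logarithmic resonant cases). The only difference is that you spell out the converse construction, which the paper dismisses as obvious.
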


\begin{proof}
	Let $\la \in \Hb \cap \sigma(\mb L_0)$. By Proposition \ref{Prop:Structure_Spectrum}, $\lambda$ is an eigenvalue, and hence there is a nontrivial $\mb u \in \DD(\mb L_0)$ satisfying $(\lambda-\mb L_0)\mb u=0$. By a straightforward calculation we get that the components $u_1$ and $u_2$ satisfy the following two equations
	\begin{equation} \label{elliptic}
		-(\delta^{ij}-\xi^i\xi^j)\partial_{i}\partial_{j}u_1(\xi)+2(\lambda+3)\xi^j\partial_{j}u_1(\xi)+(\lambda+3)(\lambda+2)u_1(\xi)-V_0(\xi)u_1(\xi)=0
	\end{equation}
	and
	\begin{equation}
		u_2(\xi)=\xi^j\partial_{j}u_1(\xi)+(\lambda+2)u_1(\xi),
	\end{equation}
	weakly on $\mathbb{B}^d$. Since $u_1\in H^\frac{d+1}{2}(\mathbb{B}^d)$, we get by elliptic regularity that $u_1\in C^\infty(\mathbb{B}^d)$.
	Furthermore, we may decompose $u_1$ into spherical harmonics
	\begin{align}\label{Eq:ExpSph}
		u_1(\xi)=\sum_{\ell=0}^\infty\sum_{m\in \Omega_\ell}\left(u_1(|\xi|\cdot)|Y_{\ell,m} \right)_{L^2(\mathbb{S}^{d-1})}Y_{\ell,m}\left(\frac{\xi}{|\xi|}\right)=\sum_{\ell=0}^\infty\sum_{m\in\Omega_{\ell}}u_{\ell,m}(\rho)Y_{\ell,m}(\omega),
	\end{align}
	where $\rho=|\xi|$ and $\omega=\xi/|\xi|$. To be precise, the expansion above holds in $H^k(\mathbb{B}^d_{1-\epsilon})$ for any $k\in\N$ and $\epsilon>0$, see Eqns.~\eqref{Decomp:Projection} and \eqref{Decomp:SpherHarm_Hk}. Since the potential $V_0$ is radially symmetric, Eq.~\eqref{elliptic} decouples by means of \eqref{Eq:ExpSph} into a system of infinitely many ODEs
	\begin{equation}\label{ODE}
		\mathcal{T}^{(d)}_\ell(\lambda)u_{\ell,m}(\rho)=0,
	\end{equation}
	posed on the interval $(0,1)$, where  the operator $\mathcal{T}^{(d)}_\ell(\lambda)$ is given by \eqref{diff-op}. Since $u_1$ is non-trivial, there are indices $\ell\in\N_0$ and $m\in\Omega_\ell$, such that $u_{\ell,m}$ is non-zero and satisfies \eqref{ODE}. Furthermore, since $u_1 \in C^{\infty}(\mathbb B^d) \cap H^{
		\frac{d+1}{2}}(\mathbb B^d)$, we have that $u_{\ell,m}\in C^\infty[0,1)\cap H^{\frac{d+1}{2}}(\tfrac{1}{2},1)$. Now we prove that $u_{\ell,m}$ is smooth up to $\rho=1$.
	Note that $\rho = 1$ is a regular singular point of equation  \eqref{ODE}, and the corresponding set of Frobenius indices 
	is $\{0,2-\l\}$ when $d=9,$ and $\{0,1-\l\}$ when $d=7$. In the first case, if $\l\notin\{0,1,2\}$, then $u_{\ell,m}$ is either analytic or behaves like $(1-\rho)^{2-\l}$ near $\rho=1$. If $\l\in\{0,1,2\}$, then the non-analytic behavior can be described by $(1-\rho)^2\log(1-\rho)$, $(1-\rho)\log(1-\rho)$ or $\log(1-\rho)$. In each case, singularity can be excluded by the requirement that $u_{\ell,m}\in H^{5}(\tfrac{1}{2},1)$. This implies that $u_{\ell,m}$ belongs to $C^\infty[0,1]$ and solves Eq.~\eqref{diff-op} on $(0,1)$. The same reasoning applies to the case $d=7$. Implication in the other direction is now obvious. \\
\end{proof}

\begin{remark}\label{Remark:SpectralProb_ODEAnalysis}
	Note that the Frobenius theory implies that smooth solutions $f$ from Proposition \ref{Prop:Spectral_ODE} are in fact analytic on $[0,1]$, in the sense that they can be extended to an analytic function on an open interval that contains $[0,1]$. Consequently, determining the unstable spectrum of $\mb L_0$ amounts to solving the connection problem for a family of ODEs. We note that the connection problem is so far completely resolved only for hypergeometric equations, i.e., the ones with three regular singular points, while the ODE \eqref{diff-op} has six of them. In fact, their number can, by a suitable change of variables, be reduced to four, but this nonetheless renders the  standard ODE theory useless. 
	Nevertheless, by building on the techniques developed recently to treat such problems, see~\cite{CosDonGloHua16,CosDonGlo17,Glo18,GlogicSchoerkhuber2021}, for $d=9$ we are able to solve the connection problem for \eqref{diff-op} and we thereby provide in the following section a complete characterization of the unstable spectrum of $\mb L_0$. 
\end{remark}

\section{Spectral analysis for perturbations around $\mb U_a$ - The case $d=9$}\label{Sec:Spectral_Analysis_U}

From now on we restrict ourselves to $d=9$.  

\subsection{Analysis of the spectral ODE}\label{Sec:ODE}

In this section we investigate the ODE \eqref{diff-op} for $d=9$, and for convenience we shorten the notation by letting $\mathcal{T}_\ell(\lambda) :=\mathcal{T}^{(9)}_\ell(\lambda)$, i.e., we have
\begin{align*}
	\mathcal{T}_\ell(\lambda)f(\rho) :=(1-\rho^2)f''(\rho)+&\left(\frac{8}{\rho}-2(\lambda+3)\rho \right)f'(\rho) \nonumber
	\\
	&-\left( (\lambda+2)(\lambda+3)+\frac{\ell(\ell+7)}{\rho^2}-V(\rho) \right)f(\rho),
	\label{diff-op_ODE}
\end{align*}
where the potential is given by 
\begin{equation*}
	V(\rho) = \frac{480 (7- \rho^2)}{(7+5\rho^2)^2}.
\end{equation*}
Now, in view of Proposition \ref{Prop:Spectral_ODE}, given $\ell\in\N_0$, we define the following set
\begin{align*}
	\begin{split}
		\Sigma_\ell:=\{\la \in \Hb : \text{there exists } f_\ell(\cdot;\l)\in C^\infty[0,1]  \text{ satisfying } \mathcal{T}_\ell(\lambda)f_\ell(\cdot;\l)=0 \text{ on } (0,1)\}.
	\end{split}
\end{align*}
The central result of our spectral analysis is the following proposition.

\begin{proposition}\label{prop:SpectralODE}The structure of $ \Sigma_{\ell}$ is as follows.
	\begin{enumerate}
		\item For $\ell = 0$,  $\Sigma_{0} = \{1,3\}$ with corresponding solutions
		\begin{equation*}
			f_0(\rho;1) =\frac{1-\rho^2}{(7+5 \rho^2)^3} \quad \text{and} \quad f_0(\rho;3)=\frac{1}{(7+5 \rho^2)^3},
		\end{equation*}
		which are unique up to a constant multiple.
		\item For $\ell =1$, $\Sigma_{1} = \{0,1\}$, and the corresponding solutions are  
		\begin{equation*}
			f_1(\rho;0) =\frac{\rho(7-3\rho^2)}{(7+5 \rho^2)^3} \quad \text{and} \quad  f_1(\rho;1)  =\frac{\rho (77-5 \rho^2)}{(7+5 \rho^2)^3}.
		\end{equation*}

		\item For all $\ell \geq 2$,  $\Sigma_{\ell} = \varnothing$. 
	\end{enumerate}
\end{proposition}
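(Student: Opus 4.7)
The proof will reduce the spectral problem to a scalar connection problem for $\mathcal{T}_\ell(\lambda) f = 0$ on $(0,1)$, one such problem per angular mode $\ell$, and then solve it.

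First, the existence part is a direct verification: for each of the five pairs $(\ell, \lambda)$ listed, substituting the stated $f_\ell(\cdot;\lambda)$ into $\mathcal{T}_\ell(\lambda)f = 0$ produces zero, and each function is manifestly in $C^\infty[0,1]$. These are not accidents but symmetry modes of the full PDE: the $\ell=1$, $\lambda=0$ eigenfunction comes from the spatial translations of $u^*$, the $\ell=1$, $\lambda=1$ one from the Lorentz boosts, and the $\ell=0$, $\lambda=3$ one from time translation of the blow-up point. The $\ell=0$, $\lambda=1$ mode is the genuine (non-symmetry) instability responsible for the codimension one threshold in Theorem \ref{main}. In each case, uniqueness up to a scalar will follow automatically from the one-dimensionality of the space of solutions analytic at $\rho = 0$.

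To prove the classification I would set up a connection problem via Frobenius theory. At $\rho = 0$ the indices of $\mathcal{T}_\ell(\lambda)$ are $\ell$ and $-(\ell + 7)$, so the solutions analytic at the origin form a one-dimensional space spanned by a function $\phi_0(\cdot;\lambda) = \rho^\ell(1 + O(\rho^2))$ that depends holomorphically on $\lambda$. At $\rho = 1$ the indices are $\{0, 2-\lambda\}$; let $\phi_1(\cdot;\lambda)$ denote the Frobenius solution with exponent $0$, which is analytic at $\rho = 1$ away from the exceptional set $\lambda \in \{0,1,2\}$ where $2-\lambda \in \N_0$ produces possible logarithmic corrections and must be treated directly. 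For all other $\lambda \in \Hb$, membership in $\Sigma_\ell$ is equivalent to the linear dependence of $\phi_0(\cdot;\lambda)$ and $\phi_1(\cdot;\lambda)$, i.e.\ to the vanishing of their Wronskian $W_\ell(\lambda) := W(\phi_0,\phi_1)(\rho_*;\lambda)$ at some fixed $\rho_* \in (0,1)$. Both constructions are holomorphic in $\lambda$, so $W_\ell$ is an entire function whose zero set in $\Hb$ coincides with $\Sigma_\ell$ off the exceptional set, where one checks by hand which Frobenius indices actually correspond to smooth behaviour.

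The analytical core is computing or controlling $W_\ell$. Motivated by the common denominator $(7+5\rho^2)^3$ appearing in every exhibited eigenfunction, I would apply the gauge substitution $f(\rho) = \rho^\ell (7 + 5\rho^2)^{-3} g(\rho)$ and then pass to $z = \rho^2$. This transforms $\mathcal{T}_\ell(\lambda) f = 0$ into a Heun-type equation with four regular singular points $z \in \{0, 1, -7/5, \infty\}$ and exponents depending affinely on $(\lambda, \ell)$; crucially the explicit eigenfunctions from the statement become low-degree polynomial solutions in this normalization. Using these polynomials as seeds, reduction of order yields a second solution in integral form, and tracking its behaviour at $z = 1$ gives $W_\ell(\lambda)$ in a form amenable to analysis. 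This is exactly the framework of \cite{CosDonGloHua16,CosDonGlo17,Glo18,GlogicSchoerkhuber2021}, which I would follow.

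The main obstacle is excluding any further zeros of $W_\ell$ in $\Hb$ for $\ell \geq 2$, where no seed solution is available, and for $\ell \in \{0,1\}$ outside the listed values. My plan has two complementary pieces. For a compact region of $\lambda$, after the above reduction, zeros can be enumerated by an explicit count combined with the known zeros at the listed eigenvalues. For $|\lambda|$ large, I would apply a WKB/Birkhoff normal form analysis of the transformed Heun equation to show that $|W_\ell(\lambda)|$ grows (say exponentially) along the imaginary direction and in particular has no zeros outside a bounded set in $\Hb$. For $\ell \geq 2$ I would additionally deploy a direct positivity argument: symmetrize $\mathcal{T}_\ell(\lambda)$ into Liouville form, multiply by $\overline{f}$ against the resulting weight, integrate over $(0,1)$, and combine the positive contribution of the centrifugal term $\ell(\ell+7)/\rho^2$ with the known bound on $V$ to force $f \equiv 0$ whenever $\Re \lambda \geq 0$. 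This last energy estimate, if it closes, removes $\ell \geq 2$ in one stroke; the delicate case-by-case treatment of $\ell \in \{0,1\}$ together with the WKB step for large $|\lambda|$ is where I expect the bulk of the technical work to lie.
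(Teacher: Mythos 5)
Your setup — reduction to the scalar connection problem, the Frobenius indices at $\rho=0$ and $\rho=1$, and the gauge substitution $f(\rho)=\rho^\ell(7+5\rho^2)^{-3}y(\rho^2)$ leading to a Heun equation with four regular singular points — matches the paper up to that point, and the direct verification of the five explicit solutions is fine. (Minor aside: your identification of the symmetry modes is scrambled. In the paper $\lambda=0$, $\ell=1$ corresponds to the Lorentz boosts, $\lambda=1$ with $\ell=0$ and $\ell=1$ to time and space translations respectively, and the \emph{genuine} instability is $\lambda=3$, $\ell=0$ — this is the mode $\mb h_0$ whose first component is exactly the correction function $h_1$ in Theorem \ref{main}.)

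The genuine gap is in the exclusion step, which is the entire analytical content of the proposition. The paper does not compute or estimate a Wronskian $W_\ell(\lambda)$ at all. Instead it works with the power-series coefficients $a_n(\ell,\lambda)$ of the solution analytic at $x=0$: the three-term recurrence has limiting characteristic roots $1$ and $-5/7$ (after the Möbius change of variable $x=12\rho^2/(5\rho^2+7)$ for $\ell\geq 2$, roots $1$ and $5/12$), so by the Poincar\'e--Perron theorem the radius of convergence is either $1$ (no eigenvalue) or $>1$ (eigenvalue). Ruling out the second alternative for all $\lambda\in\Hb$ is done by constructing an explicit rational \emph{quasi-solution} $\tilde r_n(\ell,\lambda)$ of the Riccati-type recurrence for the ratios $a_{n+1}/a_n$, proving by induction that the relative error stays below a fixed threshold, with the required coefficient bounds verified as polynomial positivity statements on the imaginary axis and extended to $\Hb$ by Phragm\'en--Lindel\"of. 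None of the three strategies you propose in its place is workable as described: (i) "enumerating zeros by an explicit count" in a compact $\lambda$-region presupposes exactly the quantitative control of $W_\ell$ that is the problem (the paper's Remark \ref{Remark:SpectralProb_ODEAnalysis} stresses that the connection problem for an equation with more than three singular points is not solvable by standard theory); (ii) a WKB analysis for large $|\lambda|$ is not carried out anywhere in the cited framework and is a separate, nontrivial undertaking; and (iii) the energy/Liouville-form argument for $\ell\geq2$ does not close — the first-order coefficient $-2(\lambda+3)\rho$ makes the natural weight $\lambda$-dependent and complex, the equation degenerates at $\rho=1$, and quantitatively the centrifugal term does not dominate the potential there ($\ell(\ell+7)=18$ for $\ell=2$ versus $V(1)=20$). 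The absence of any coercive quadratic form for these self-similar linearizations is precisely why the quasi-solution machinery of \cite{CosDonGloHua16,CosDonGlo17,Glo18,GlogicSchoerkhuber2021} was developed, so your plan is missing the key idea rather than offering a viable alternative to it.
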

To prove this proposition, we use an adaptation of the ODE techniques devised in \cite{CosDonGloHua16,CosDonGlo17,Glo18} and \cite{GlogicSchoerkhuber2021}. We will therefore occasionally refer to these works throughout the proof. Also, we found it convenient to split the proof into two cases, $\ell \in \{ 0,1\}$ and $\ell \geq 2$.
\subsubsection{Proof of Proposition \ref{prop:SpectralODE} for $\ell\in \{0,1\}$}
For a detailed heuristic discussion of our approach we refer the reader to \cite{GlogicSchoerkhuber2021}, Sec.~4.1. Namely, the first step is to transform $\mathcal{T}_{\ell}(\lambda) f(\rho) =0$ to an ``isospectral" equation with four regular singular points.  For this,  we let $x = \rho^2$ and we define the new dependent variable $y$ via
\begin{equation*}
	f(\rho) =  \rho^{\ell} (\tfrac{7}{5}+ \rho^2)^{-3} y(\rho^2).
\end{equation*}
This yields the following equation in its canonical Heun form (see \cite{NIST10})
\begin{align}\label{Eq:ODEheun} 
	y''(x)+\left(\frac{\gamma(\ell)}{x}+\frac{\delta(\lambda)}{x-1}-\frac{6}{x-\mu}\right)y'(x)+\frac{\alpha(\ell,\lambda)\beta(\ell,\lambda) x - q(\ell,\lambda) }{x(x-1)(x-\mu)}y(x)=0,
\end{align}
with singularities at $x \in \{0,1,\mu, \infty\}$, where $\mu = - \frac{7}{5}$, $\gamma(\ell) = \frac{9+2 \ell}{2}$, $\delta(\lambda) = \lambda -1$,  $\alpha(\ell,\lambda) = \frac{1}{2} (\lambda -3+\ell)$,  $\beta(\ell,\lambda) = \frac{1}{2} (\lambda -4+\ell)$, and 
\[ q(\ell,\lambda) =  - \frac{1}{20} \left (7 (\lambda -3) (\lambda +8)+7 \ell^2+(14 \lambda +95) \ell \right). \]

By Frobenius theory, any $y \in C^{\infty}[0,1]$ that solves Eq.~\eqref{Eq:ODEheun} on $(0,1)$ is in fact analytic on the closed interval $[0,1]$. Furthermore, the Frobenius indices of Eq.~\eqref{Eq:ODEheun} at $x=0$ are $s_1=0$ and $s_2= -\frac{(7+2\ell)}{2}$. Therefore, for every $\la \in \C$ there is a unique solution (up to a constant multiple) to Eq.~\eqref{Eq:ODEheun}, which is analytic at $x=0$. Furthermore, this solution has a power series expansion of the following form
\begin{equation}\label{Eq:Expansion}
	y_{\ell,\lambda}(x)=\sum_{n=0}^{\infty}a_n(\ell, \la) x^n, \quad a_0(\ell,\la)=1.
\end{equation}  
To determine the coefficients $a_n$, we insert the ansatz \eqref{Eq:Expansion} into Eq.~\eqref{Eq:ODEheun} and thereby obtain the recurrence relation
\begin{equation}\label{Eq:RecRel}
	a_{n+2}(\ell,\la)=A_n(\ell,\la)a_{n+1}(\ell, \la)+B_n(\ell, \la)a_{n}(\ell, \la),
\end{equation}
where
\begin{gather*}
	A_n (\ell,\la)=\frac{7 \lambda  (\lambda +9)+7\ell^2 +\ell (8n + 14 \la +103)+8 n^2+4 (7 \lambda
		+34) n-40}{14 (n+2) (2 \ell+2 n+11)}, \label{Eq:A} \\
	B_n(\ell,\la)=\frac{5 (\lambda +\ell+2 n-4) (\lambda
		+\ell+2 n-3)}{14 (n+2) (2 \ell+2 n+11)}, \label{Eq:B}
\end{gather*}
with the initial condition
\begin{equation}\label{Eq:InitCond}
	a_{-1}(\ell, \la)=0 \quad \text{and} \quad a_0(\ell, \la)= 1.
\end{equation} 	
Now, note that $\la \in \Sigma_{\ell}$ precisely when the radius of convergence of the series \eqref{Eq:Expansion} is larger than 1. To analyze this radius, we resort to results from the theory of difference equations with variable coefficients. Namely, since $\lim_{n\ra\infty}A_n(\ell, \la)=\frac{2}{7}$ and $\lim_{n\ra\infty}B_n(\ell, \la)=\frac{5}{7}$, the so-called characteristic equation of Eq.~\eqref{Eq:RecRel} is 
\begin{equation*}
	t^2-\frac{2}{7}t-\frac{5}{7}=0,
\end{equation*}
and according to Poincar\'e's theorem (see, for  example, \cite{Ela05}, p.~343, or \cite{GlogicSchoerkhuber2021}, Appendix A) we have that either $a_n(\ell, \la)=0$ eventually in $n$ or
\begin{equation*}
	\lim\limits_{n\ra\infty} \frac{a_{n+1}(\ell, \la)}{a_{n}(\ell, \la)}=1
\end{equation*}
or
\begin{equation*}
	\lim\limits_{n\ra\infty} \frac{a_{n+1}(\ell, \la)}{a_{n}(\ell, \lambda)}=  - \frac{5}{7}.
\end{equation*}
To explore this further, we treat cases $\ell=0$ and $\ell=1$ separately. \medskip

\emph{The case $\ell =0$.} 
First, we observe that in this case, there are explicit polynomial solutions for $\lambda = 1$ and  $\lambda = 3$, given by 
\begin{equation}\label{Eq:Poly_Eigenf}
	y_{0,1}(x)=1-x \quad \text{and} \quad y_{0,3}(x)=1,
\end{equation}
respectively. These in turn correspond to $f_0(\cdot;1)$ and $f_0(\cdot;3)$, stated in Proposition \ref{prop:SpectralODE}. So we have that $\{1,3\} \subset \Sigma_0 $. We now show the reversed inclusion. Let $\la \in  \Hb \setminus \{1,3\}$. Since $\ell = 0$, from \eqref{Eq:A} and \eqref{Eq:B} we have 
\begin{gather*}
	A_n (0,\la)=\frac{7 \lambda  (\lambda
		+9)+8 n^2+4 (7 \lambda +34)
		n-40}{14 (n+2) (2 n+11)}, \\
	B_n(0,\la)=\frac{5 (\lambda +2 n-4)
		(\lambda +2 n-3)}{14 (n+2)
		(2 n+11)}.
\end{gather*}
Now, note that the assumption that $a_n(0,\lambda) = 0$ eventually in $n$ contradicts the initial condition \eqref{Eq:InitCond}, as follows by backward substitution. Consequently, we have that either
\begin{equation}\label{Eq:lim01}
	\lim\limits_{n\ra\infty} \frac{a_{n+1}(0, \la)}{a_{n}(0, \la)} = 1,
\end{equation}
or 
\begin{equation}\label{Eq:lim02}
	\lim\limits_{n\ra\infty} \frac{a_{n+1}(0, \la)}{a_{n}(0, \la)} = -\frac{5}{7}.
\end{equation}
We prove that \eqref{Eq:lim01} holds, wherefrom it follows that the radius of convergence of the series \eqref{Eq:Expansion} (when $\ell=0$) is 1, and therefore $\la \notin \Sigma_0 $. To that end, we first compute  
\[ a_2(0,\la)=\frac{(\lambda -3) (\lambda -1) (7\la^2+126\la+680)}{5544},\]
and
\[ a_3(0,\la)=\frac{(\lambda -3) (\lambda -1) (49\la^4+1519\la^3+18494\la^2+84224\la+46080)}{3027024}.\]
Then we define 
\[ r_2(0,\la):=\frac{a_3(0,\la)}{a_2(0,\la)}, \] 
where the common factor $(\la-3)(\la-1)$ (which is an artifact of the existence of polynomial solutions \eqref{Eq:Poly_Eigenf}) is canceled, and consequently, according to \eqref{Eq:RecRel}, for $n \geq 2$ we let
\begin{equation}\label{Eq:RecRel__2}
	r_{n+1}(0,\la)=A_n(0,\la)+\frac{B_n(0,\la)}{r_n(0, \la)}.
\end{equation}
To show \eqref{Eq:lim01}, our strategy is the following. For Eq.~\eqref{Eq:RecRel__2} we construct an approximate solution $\tilde{r}_n$ (which we also call a \emph{quasi-solution})  for which $\lim_{n\rightarrow \infty} \tilde{r}_n(0,\la)=1$ and which is provably close enough to $r_n$ so as to rule out \eqref{Eq:lim02}. The quasi-solution we use  is 
\begin{align}\label{Def:Quasi0}
	\tilde r_{n}(0,\lambda) := \frac{\lambda ^2}{2(2n+9)(n+1)}+\frac{\lambda  (4 n+9)}{2(2n+9)(n+1)}+\frac{2 n+2}{2 n+9}.
\end{align}
We have elaborated on constructing such expressions in \cite{GlogicSchoerkhuber2021}, Sec.~4.2.2 and in \cite{CosDonGloHua16}, Sec.~4.1; one can also check \cite{Glo18}, Secs.~2.6.3 and 2.7.2. Concerning \eqref{Def:Quasi0}, suffice it to say here that we chose a quadratic polynomial in $\la$ with rational coefficients in $n$ so as to emulate the behavior of $r_n(0,\la)$ for both large and small values of the participating parameters. To show that the quasi-solution indeed resembles $r_n(0,\lambda)$, we define the following relative difference function
\begin{equation}\label{Def:Delta} 
	\delta_n(0,\la):=\frac{r_n(0,\la)}{\tilde{r}_n(0,\la)}-1,
\end{equation}
and show that it is small uniformly in $\la$ and $n$. To this end we substitute \eqref{Def:Delta} into \eqref{Eq:RecRel__2} and thereby derive the recurrence relation for $\delta_n$,
\begin{equation}\label{Eq:Delta_Rec}
	\delta_{n+1}(0,\la) =\ve_n(0,\la) -C_n(0,\la) \frac{\delta_n(0,\la)}{1+\delta_n(0,\la)},
\end{equation}
where
\begin{equation}\label{Eq:Eps_and_C}
	\ve_n(0,\la)=\frac{A_n (0,\la) \tilde{r}_n(0,\la)+B_n(0,\la)}{\tilde{r}_n(0,\la)\tilde{r}_{n+1}(0,\la)}-1 \quad \text{and} 
	\quad C_n(0,\la)=\frac{B_n(0,\la)}{\tilde{r}_n(0,\la)\tilde{r}_{n+1}(0,\la)}.
\end{equation}
We have the following result.
\begin{lemma}\label{Lem:Est0}
	For all $n \geq 6$ and $\la \in \Hb$ the following estimates hold
	\begin{align}\label{Eq:Est_l0}
		\begin{split}
			|\delta_6(0,\la)| \leq\tfrac{1}{5}, \quad 
			|\ve_n(0,\la)| \leq \tfrac{3}{140}+\tfrac{23}{40 n}, \quad \text{and} \quad |C_n(0,\la)| \leq\tfrac{5}{7} - \tfrac{23}{10n}.
		\end{split}
	\end{align}
\end{lemma}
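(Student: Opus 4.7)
The plan is to prove all three estimates by direct, explicit computation, exploiting the fact that every quantity in sight is a rational function of $\la$ with coefficients rational in $n$. The principal analytic tool is the elementary observation that for $\la \in \Hb$ and any $c>0$ one has $|\la+c|\ge c$, and moreover $|\la+a|/|\la+b|\le \max(1,a/b)$ for $a,b>0$ (the maximum being attained at $\la=0$). Together with the analogous bounds for quadratic factors with negative-real-part roots, this lets one reduce every bound uniform in $\la\in\Hb$ to an inequality that depends only on $n$.

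The first step I would carry out is to factor $\tilde r_n(0,\la)$. Clearing denominators gives
\[
\tilde r_n(0,\la)=\frac{\la^2+(4n+9)\la+4(n+1)^2}{2(2n+9)(n+1)},
\]
whose discriminant $40n+65$ is positive for $n\ge 0$, so $\tilde r_n(0,\la)=(\la+p_n)(\la+q_n)/[2(2n+9)(n+1)]$ with $p_n,q_n>0$ and $p_nq_n=4(n+1)^2$. Hence $|\tilde r_n(0,\la)|\ge 2(n+1)/(2n+9)$ on $\Hb$. The factor $B_n(0,\la)=5(\la+2n-4)(\la+2n-3)/[14(n+2)(2n+11)]$ is already factored and, since $n\ge 6$, both shifts are positive. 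Therefore, pairing each numerator factor of $C_n(0,\la)=B_n/(\tilde r_n\tilde r_{n+1})$ with a denominator factor and applying the modulus inequality above, one obtains
\[
|C_n(0,\la)|\le\frac{10(2n+9)(n+1)}{7}\cdot\frac{\max(1,\tfrac{2n-4}{p_n})\max(1,\tfrac{2n-3}{p_{n+1}})}{q_nq_{n+1}},
\]
and a similar bound with $p,q$ interchanged, depending on which pairing is optimal. This is a rational expression in $n$ alone, and I would verify $|C_n(0,\la)|\le 5/7-23/(10n)$ for $n\ge 6$ by elementary monotonicity, using the asymptotics $p_n\sim 4n$, $q_n\sim 1$ to confirm that the $1/n$ correction has the claimed size.

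For $\ve_n(0,\la)$, I would expand the numerator $N_n(\la):=A_n(0,\la)\tilde r_n(0,\la)+B_n(0,\la)-\tilde r_n(0,\la)\tilde r_{n+1}(0,\la)$ as a polynomial in $\la$. The quasi-solution $\tilde r_n$ is engineered so that the leading $\la^4$ and $\la^3$ coefficients of $N_n$ cancel, leaving a polynomial of degree at most $2$ in $\la$ whose coefficients are $O(1/n)$ after division by $\tilde r_n\tilde r_{n+1}$; I would then factor this remaining quadratic (with explicit, tractable roots) and estimate $|N_n(\la)|/|\tilde r_n(0,\la)\tilde r_{n+1}(0,\la)|$ exactly as above by pairing factors. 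Finally, the initial bound $|\delta_6(0,\la)|\le 1/5$ is a check at a fixed index: I would compute $r_6(0,\la)$ via four iterations of \eqref{Eq:RecRel__2} starting from the explicit value $r_2(0,\la)=a_3(0,\la)/a_2(0,\la)$ (in which the factor $(\la-1)(\la-3)$ has already cancelled), obtain $r_6/\tilde r_6-1$ as a rational function of $\la$ in closed form, and check its modulus uniformly on $\Hb$. Because all real zeros of the resulting denominator will be negative, the maximum modulus of the rational function is attained on the imaginary axis $\la=it$ and can be verified by a one-variable calculus estimate.

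The main obstacle will not be conceptual but computational: producing the precise constants $3/140$, $23/40$, $5/7$, $23/10$, and $1/5$ requires tight, sharp estimates with very little slack, so the pairing of numerator and denominator factors must be chosen optimally and each elementary bound applied without waste. In particular, the $\ve_n$ estimate is delicate because it relies on cancellations in $N_n(\la)$ that are specific to the chosen form of $\tilde r_n$, and if the coefficients in \eqref{Def:Quasi0} were off by even a small amount the leading behaviour would change and the claimed $3/140+23/(40n)$ bound would fail.
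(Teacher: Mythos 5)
Your overall strategy (reduce everything to explicit inequalities for rational functions on $\Hb$) is in the spirit of the paper, but two of your concrete steps do not go through as stated. First, the displayed pairing bound for $C_n(0,\la)$ is too lossy to prove the claim. From $p_n+q_n=4n+9$ and $p_nq_n=4(n+1)^2$ one gets $p_n,q_n=2n+O(\sqrt{n})$, not $p_n\sim 4n$, $q_n\sim 1$ as you assert; with the correct asymptotics your displayed bound $\frac{10(2n+9)(n+1)}{7}\cdot\frac{\max(1,\frac{2n-4}{p_n})\max(1,\frac{2n-3}{p_{n+1}})}{q_nq_{n+1}}$ evaluates to $\frac57\bigl(1+\sqrt{10/n}+o(n^{-1/2})\bigr)$, which \emph{exceeds} $\frac57$ and so cannot imply $|C_n|\le\frac57-\frac{23}{10n}$. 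The point is that decoupling the factors combines the worst case at $\la=0$ for some factors with the worst case at $\la=\infty$ for others, introducing an $O(n^{-1/2})$ error; only the opposite pairing (numerator factors against the \emph{smaller} roots $q_n,q_{n+1}$) makes these errors cancel, and even then one must check nonasymptotically that the remaining slack beats $\frac{23}{10n}$ down to $n=6$. As written, your verification plan for the third estimate rests on false asymptotics and a bound that fails. Second, for $\delta_6$ you argue that the maximum modulus is attained on the imaginary axis "because all real zeros of the resulting denominator will be negative." This is insufficient: the denominator of $r_6(0,\cdot)$ has degree ten, and you need \emph{all} of its complex zeros to lie in the open left half-plane (otherwise $\delta_6(0,\cdot)$ is not even analytic on $\Hb$), and you additionally need a Phragm\'en--Lindel\"of argument, since $\Hb$ is unbounded and the plain maximum modulus principle does not reduce the supremum to the boundary.

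For comparison, the paper avoids factoring altogether: it first establishes analyticity on $\Hb$ (negativity of the two real roots of $\tilde r_n$, and the Routh--Hurwitz criterion for the degree-ten denominator of $r_6$), invokes Phragm\'en--Lindel\"of to restrict to $\la=it$, and there writes each quantity as $Q_1(n,t^2)/Q_2(n,t^2)$ with integer polynomial $Q_j$; the desired inequality becomes the statement that a single polynomial in $(n,t^2)$, e.g.\ $(50n+139)^2Q_2-(70(n+6))^2Q_1$, has manifestly nonnegative coefficients. That device sidesteps both of the issues above, and you would need to either adopt it or repair your pairing and your reduction to the imaginary axis before your argument closes.
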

Note that from \eqref{Eq:Est_l0} and \eqref{Eq:Delta_Rec}, by a simple induction we infer that $|\delta_n(0,\la)| \leq\tfrac{1}{5} $ for all $n \geq 6$. This then via \eqref{Def:Delta} and the fact that $\lim_{n \rightarrow \infty} \tilde{r}_n(0,\la)=1$ excludes \eqref{Eq:lim02}, and we are done. It therefore remains to prove the preceding lemma. 
\begin{proof}[Proof of Lemma \ref{Lem:Est0}]
	First we show that for $n \geq 6$ the functions $\delta_6(0,\cdot)$, $\ve_n(0,\cdot)$, and $C_n(0,\cdot)$ are analytic in $\Hb$. This, based on \eqref{Def:Delta} and \eqref{Eq:Eps_and_C}, follows from the fact that the zeros of $\tilde{r}_n(0,\cdot)$ and the poles of $r_6(0,\cdot)$ are all contained in the (open) left half plane. This is immediate for $\tilde{r}_n(0,\cdot)$ as it is a quadratic polynomial with two negative zeros. As for the zeros od the denominator of $r_6(0,\la)$, which is a polynomial of degree 10, this, although it can be proven by elementary means, can be straightforwardly checked by the Routh-Hurwitz stability criterion, see \cite{GlogicSchoerkhuber2021}, Sec.~A.2. Furthermore, being rational functions, $\delta_6(0,\cdot)$, $\ve_n(0,\cdot)$, and $C_n(0,\cdot)$ are all polynomially bounded in $\Hb$. Therefore, to prove the lemma, it is enough to establish the estimates \eqref{Eq:Est_l0} on the imaginary axis only as they can be then extended to all of $\Hb$ by the Phragmen-Lindel\"of principle (in its sectorial form), see e.g.~\cite{Tit58}, p.~177.
	
	In the following we prove only the third estimate in \eqref{Eq:Est_l0}, as the rest of two are shown similarly.
	We proceed with writing $C_{n+6}(0,\la)$ (note the shift in the index) as the ratio of two polynomials $P_1(n,\la)$ and $P_2(n,\la)$, both of which belong to $\mathbb{Z}[n,\la]$. Then for $t \in \R$ we have the following representation on the imaginary line  
	\begin{equation*}
		|P_j(n,it)|^2=Q_j(n,t^2),
	\end{equation*}
	for $j \in \{1,2\}$, where $Q_1(n,t^2) \in \mathbb{Z}[n,t^2]$ and $Q_2(n,t^2) \in \mathbb{N}_0[n,t^2]$. Now the desired estimate is equivalent to
	\begin{equation*}
		\frac{Q_1(n,t^2)}{Q_2(n,t^2)} \leq
		\left(\frac{5}{7} - \frac{23}{10(n+6)} \right)^2,
	\end{equation*}
	which is in turn equivalent to
	\begin{equation*}
		(50n+139)^2Q_2(n,t^2)-\big(70(n+6)\big)^2 Q_1(n,t^2) \geq 0.
	\end{equation*}
	Finally, the last inequality trivially holds as the polynomial on the left (when expanded) has manifestly positive coefficients.
\end{proof}

\emph{The case $\ell =1$.} We proceed similarly to the previous case, and we therefore only provide the relevant expressions. For $\lambda = 0$ and  $\lambda = 1$ we respectively have explicit polynomial solutions 
\begin{equation*}
	y_{1,0}(x)=1-\tfrac{3}{7} x \quad \text{and} \quad y_{1,1}(x)=1 - \tfrac{5}{77}x,
\end{equation*}
which correspond to $f_1(\cdot;0)$ and $f_1(\cdot;1)$ from the statement of the proposition. Therefore $\{0,1\} \subset \Sigma_1$, and we proceed by showing that there are no additional elements in $\Sigma_1$.
Let $\la \in \Hb \setminus \{0,1\}$. For $\ell=1$, the series \eqref{Eq:Expansion} yields a solution to Eq.~\eqref{Eq:ODEheun}, which is analytic at $x=0$. According to \eqref{Eq:RecRel}, we have that
\begin{equation}\label{Eq:RecRel1}
	a_{n+2}(1,\la)=A_n(1,\la)a_{n+1}(1, \la)+B_n(1, \la)a_{n}(1, \la),
\end{equation}
where
\begin{gather*}
	A_n (1,\la)=\frac{7 (\lambda +1) (\lambda +10)+8
		n^2+4 (7 \lambda +36) n}{14 (n+2)
		(2 n+13)},\\
	B_n(1,\la)=\frac{5 (\lambda +2 n-3) (\lambda +2
		n-2)}{14 (n+2) (2 n+13)}.
\end{gather*}
Since 
\begin{equation*}
	a_2(1,\la)=\frac{1}{8008}\la(\la-1)(7\la^2+133\la+786),
\end{equation*} 
and
\begin{equation*}
	a_3(1,\la)=\frac{1}{720720}\la(\la-1)(7\la^4+238\la^3+3263\la^2+17828\la+22476),
\end{equation*} 
we define
\[ r_2(1,\la):=\frac{a_3(1,\la)}{a_2(1,\la)}, \] 
where the common linear factors are canceled, and according to \eqref{Eq:RecRel1} we define $r_n$ for $n \geq 2$ by the recurrence 
\begin{equation*}
	r_{n+1}(1,\la)=A_n(1,\la)+\frac{B_n(1,\la)}{r_n(1, \la)}.
\end{equation*}
As a quasi-solution we let
\begin{align*}
	\tilde r_{n}(1,\lambda) := \frac{\lambda ^2}{2(2n+11)(n+1)}+\frac{  (4n+11)\la}{2(2n+11)(n+1)}+\frac{n+1}{n+4},
\end{align*}
and analogously to the previous case we define $\delta_n(1,\la)$, $\ve_n(1,\la)$, and $C_n(1,\la)$. 
Also, by the same method as above, we establish the following result.

\begin{lemma}
	For $n=5$, we have that 
	$|\delta_5(1,\la)| \leq\tfrac{1}{5}.$
	Furthermore, for every  $n\geq 5$, 
	\begin{align}\label{Eq:Estl0}
		\begin{split}
			|\ve_n(1,\la)| \leq \tfrac{3}{140}+\tfrac{5}{8(n+1)} , \quad  |C_n(1,\la)| \leq\tfrac{5}{7} - \tfrac{5}{2(n+1)}, 
		\end{split}
	\end{align}
	uniformly for all $\la\in\Hb$. Consequently, $|\delta_n(1,\la)| \leq\tfrac{1}{5}$ for all $n \geq 5$ and $\la\in\Hb$. This implies that $\lim\limits_{n\ra\infty} r_n(1, \la) = 1$.
\end{lemma}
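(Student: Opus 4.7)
The plan is to adapt, almost verbatim, the strategy of Lemma~\ref{Lem:Est0}. First, for each $n \geq 5$ one should establish that $\delta_5(1,\cdot)$, $\ve_n(1,\cdot)$, and $C_n(1,\cdot)$ are analytic and polynomially bounded on $\Hb$. Analyticity reduces to two verifications: the two zeros of $\tilde r_n(1,\la)$, viewed as a quadratic in $\la$, lie in the open left half-plane for every $n \geq 5$, which is evident from the explicit formula; and the denominator of $r_5(1,\la)$, a polynomial in $\la$ with integer coefficients, has all its zeros there as well, which I would verify by the Routh--Hurwitz criterion exactly as in the $\ell=0$ case. Polynomial boundedness is automatic since all three are rational functions of $\la$ of bounded degree. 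The Phragm\'en--Lindel\"of principle in the right half-plane then reduces the three uniform bounds to their counterparts on the imaginary axis.

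On $\la = it$, the reduction is purely algebraic: writing each quantity as $P_1(n,\la)/P_2(n,\la)$ with $P_j \in \mathbb Z[n,\la]$ yields $|P_j(n,it)|^2 = Q_j(n,t^2) \in \mathbb Z[n,t^2]$, and $Q_2$ has manifestly nonnegative integer coefficients. Each of the three estimates therefore reduces to an inequality of the form
\[
M(n)^2 Q_2(n,t^2) - Q_1(n,t^2) \geq 0 \qquad \text{for all } t^2 \geq 0,\ n \geq 5,
\]
with $M(n)$ the prescribed numerical upper bound from the statement. I would shift $n = 5 + m$ with $m \in \N_0$ and check that, after expansion and clearing of common positive factors, the resulting polynomial in $(m,t^2)$ has nonnegative integer coefficients. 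This mechanical polynomial verification is the only real obstacle: it is entirely routine but tedious, and I would delegate it to a computer algebra system.

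With these three bounds in hand, the extension of $|\delta_n(1,\la)| \leq \tfrac{1}{5}$ from $n = 5$ to all $n \geq 5$ is an elementary induction via the recurrence
\[
\delta_{n+1}(1,\la) = \ve_n(1,\la) - C_n(1,\la)\,\frac{\delta_n(1,\la)}{1+\delta_n(1,\la)},
\]
derived in the same manner as \eqref{Eq:Delta_Rec}. Indeed, the inductive hypothesis $|\delta_n(1,\la)| \leq \tfrac{1}{5}$ gives $|1+\delta_n(1,\la)| \geq \tfrac{4}{5}$, so
\[
|\delta_{n+1}(1,\la)| \leq \Big(\tfrac{3}{140}+\tfrac{5}{8(n+1)}\Big) + \tfrac{1}{4}\Big(\tfrac{5}{7}-\tfrac{5}{2(n+1)}\Big) = \tfrac{3}{140} + \tfrac{5}{28} = \tfrac{1}{5};
\]
the $(n+1)^{-1}$ contributions cancel exactly, which is the defining feature of the quasi-solution $\tilde r_n(1,\la)$.

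For the limit, I first exclude the possibility $a_n(1,\la) = 0$ eventually by backward substitution in \eqref{Eq:RecRel1}, which would force $a_0(1,\la) = 0$ and contradict \eqref{Eq:InitCond} (here using $\la \notin \{0,1\}$). Poincar\'e's theorem then leaves only $r_n(1,\la) \to 1$ or $r_n(1,\la) \to -\tfrac{5}{7}$. Since $\tilde r_n(1,\la) \to 1$ and $|\delta_n(1,\la)| \leq \tfrac{1}{5}$ for all $n \geq 5$, the product $r_n(1,\la) = (1+\delta_n(1,\la))\tilde r_n(1,\la)$ is eventually within distance $\tfrac{1}{2}$ of $1$, which is incompatible with convergence to $-\tfrac{5}{7}$. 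Hence $r_n(1,\la) \to 1$, as claimed.
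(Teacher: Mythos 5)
Your proposal is correct and follows essentially the same route as the paper, which proves this lemma only by the remark that it is established ``by the same method as above,'' i.e.\ by the quasi-solution/Phragm\'en--Lindel\"of/polynomial-positivity scheme of Lemma~\ref{Lem:Est0}; your induction step, with the exact cancellation of the $(n+1)^{-1}$ terms yielding $\tfrac{3}{140}+\tfrac{5}{28}=\tfrac15$, and your exclusion of the limit $-\tfrac57$ are precisely the intended arguments.
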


\subsubsection{Proof of Proposition \ref{prop:SpectralODE} for $\ell \geq 2$} Since the parameter $\ell$ is now free, the analysis is more complicated. Namely, in addition to having to emulate the global behavior in $\ell$ as well, a quasi-solution also has to approximate the actual solution well enough so as to, with an additional parameter $\ell$, obey the estimates analogous to \eqref{Eq:Estl0}. We note that a similar problem was treated by the second and the third author in \cite{GlogicSchoerkhuber2021}, Secs.~4.2.1 and 4.2.2, and we closely follow their approach.  First, we introduce the following change of variable 
$
x = \tfrac{12\rho^2}{5\rho^2+7},
$
by means of which the singular points $\rho=0$ and $\rho=1$ remain fixed, while the remaining finite singularity (which corresponds to $\rho=\infty$) is now further away from the unit disk, at  $x=\frac{12}{5}$.
Furthermore, by applying also the following transformation 
\begin{equation*}
	f(\rho) = x^{\frac{\ell}{2}} \big(\tfrac{12}{5}-x\big)^{\frac{1}{2}(\lambda+3)} \tilde y(x)
\end{equation*}
to $\mathcal{T}_\ell(\lambda)f(\rho)=0$ we arrive at a Heun equation for $\tilde{y}$
\begin{align}\label{Eq:ODEheun_2} 
	\tilde y''(x)+\left(\frac{\tilde \gamma(\ell)}{x}+\frac{\tilde \delta(\lambda)}{x-1}+\frac{\epsilon}{x-\tilde \mu}\right)\tilde{y}'(x)+\frac{\tilde\alpha(\ell,\la)\tilde \beta(\ell,\lambda) x - \tilde q(\ell,\la) }{x(x-1)(x-\mu)}\tilde{y}(x)=0,
\end{align}
where $\tilde \mu = \frac{12}{5}$, $\tilde \gamma(\ell) = \frac{9+2\ell}{2}$, $\tilde \delta(\lambda) =\lambda -1$, $\epsilon = 3/2$, $\tilde \alpha(\lambda) = \frac{1}{2} (\lambda -3+\ell)$,  $\tilde \beta(\lambda) = \frac{1}{2} (\lambda +11+\ell)$, and 
\[ \tilde q(\ell,\lambda) = \frac{1}{20}\big(17 \ell^2+2\ell(55+12\lambda)-7\la^2+80\la-303\big). \]
The Frobenius indices of Eq.~\eqref{Eq:ODEheun_2} at $x=0$ are $s_1=0$ and $s_2=-\frac{7+2\ell}{2}$. Therefore,  we consider the (normalized) analytic solution at $x=0$ 
\begin{equation}\label{Eq:Expansion_2}
	\tilde y(x)=\sum_{n=0}^{\infty}\tilde a_n(\ell, \la) x^n, \quad \tilde a_0(\ell,\la)=1.
\end{equation} 
Inserting \eqref{Eq:Expansion_2} into Eq.~\eqref{Eq:ODEheun_2} yields
\begin{equation}\label{Eq:RecRel_2}
	\tilde  a_{n+2}(\ell,\lambda)=\tilde  A_n(\ell,\la)\tilde  a_{n+1}(\ell,\la)+\tilde  B_n(\ell,\la)\tilde  a_{n}(\ell,\la),
\end{equation}
with
\begin{gather*}
	\tilde  A_n(\ell,\la)=\frac{68 n^2+(48 \lambda +68
		\ell+356)n+ 7 \lambda ^2+17 \ell^2 +24
		\lambda  \ell +128 \lambda  +178 \ell-15}{24(n+2)(2n+2\ell+11)}
\end{gather*}
and
\begin{gather*}
	\tilde  B_n(\ell,\la)=\frac{-5(2n+\la+\ell+11)(2n+\la+\ell-3)}{24(n+2)(2n+2\ell+11)},
\end{gather*}
supplied with the initial condition
$	\tilde  a_{-1}(\ell, \la)=0$, $	\tilde  a_0(\ell,\la)= 1$. Now, $\lim_{n\ra\infty}	\tilde  A_n(\ell,\la)=\frac{17}{12}$ and $\lim_{n\ra\infty}	\tilde  B_n(\ell,\la)=-\frac{5}{12}$, and consequently the characteristic equation of Eq.~\eqref{Eq:RecRel} is 
$
t^2-\tfrac{17}{12}t+\tfrac{5}{12}=0,
$
with solutions $t_1=\frac{5}{12}$ and $t_2=1$. Hence, for 
$$
\hat r_n(\ell,\la) := \frac{\tilde  a_{n+1}(\ell,\la)}{\tilde  a_n(\ell,\la)},
$$
either $	\tilde  a_n(\ell,\la)=0$ eventually in $n$ or 
\begin{equation}\label{Eq:Lim1}
	\lim\limits_{n\ra\infty} \hat r_n(\ell,\la) =1
\end{equation}
or
\begin{equation}\label{Eq:Lim2}
	\lim\limits_{n\ra\infty} \hat r_n(\ell,\la) =   \frac{5}{12}.
\end{equation}
Now, for $\la \in \Hb$, similarly to the previous cases, we exclude the first option by backward substitution. Then, from \eqref{Eq:RecRel_2} we derive the recurrence relation for $\hat{r}_n$
\begin{equation}\label{Eq:RecRel2}
	\hat  r_{n+1}(\ell,\la)=\tilde{A}_n(\ell,\la)+\frac{\tilde{B}_n(\ell,\la)}{\hat{r}_n(\ell,\la)},
\end{equation}
along with the initial condition $r_0(\ell,\la)=A_{-1}(\ell,\la)$.
For a quasi-solution to \eqref{Eq:RecRel2} we use
\begin{equation*}
	{R}_n(\ell,\la):=\frac{7 \lambda ^2}{24 (n+1) (2
		n+2\ell+9)}+\frac{\lambda  (6 n+3 \ell+10)}{3
		(n+1) (2n + 2\ell + 9)}+\frac{17 \ell}{48
		(n+1)}+\frac{n-1}{n+1}.
\end{equation*}
Again, for the exact way of constructing such quasi-solutions we refer the reader to \cite{GlogicSchoerkhuber2021}, Sec.~4.2.2 or \cite{Glo18}, Sec.~2.7.2. Thereupon we set 
\begin{equation}\label{Def:Delta1} 
	\tilde \delta_n(\ell,\la):=\frac{\hat r_n(\ell,\la)}{{R}_n(\ell,\la)}-1,
\end{equation}
to obtain 
\begin{equation*}
	\tilde \delta_{n+1}(\ell,\la) =\tilde \ve_n(\ell,\la) - \tilde C_n(\ell,\la) \frac{\tilde \delta_n(\ell,\la)}{1+ \tilde \delta_n(\ell,\la)},
\end{equation*}
where
\begin{equation*}
	\tilde \ve_n(\ell,\la)=\frac{\tilde A_n (\ell,\la) R_n(\ell,\la)+\tilde B_n(\ell,\la)}{R_n(\ell,\la)R_{n+1}(\ell,\la)}-1 \quad \text{and} 
	\quad \tilde C_n(\ell,\la)=\frac{\tilde B_n(\ell,\la)}{R_n(\ell,\la)R_{n+1}(\ell,\la)}.
\end{equation*}
Now, similarly to the previous cases, we establish the following lemma.
\begin{lemma}
	For all $\ell \geq 2$, $n \geq 3$, and $\la \in 
	\Hb$, the following estimates hold 
	\begin{align*}
		\begin{split}
			|\tilde{\delta}_{3}(\ell,\la)|\leq \tfrac{1}{3}, \quad
			| \tilde \ve_n(\ell ,\la)| \leq \tfrac{1}{8}  , \quad  |\tilde C_n(\ell,\la)| \leq\tfrac{5}{12}. 
		\end{split}
	\end{align*}
	As a consequence, $|\tilde \delta_n(\ell,\la)| \leq\tfrac{1}{3}$ for all $n \geq 3$. 
\end{lemma}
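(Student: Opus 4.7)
The plan is to follow the same strategy as in the proof of the previous lemma, adapted to accommodate the extra parameter $\ell$: reduce each of the three bounds to the non-negativity of an explicit polynomial in $n$, $\ell$, and $t^2$ whose coefficients are manifestly non-negative. First, I would verify that $\tilde\delta_3(\ell,\cdot)$, $\tilde\ve_n(\ell,\cdot)$, and $\tilde C_n(\ell,\cdot)$ are analytic in $\Hb$ for every fixed $\ell\geq 2$ and $n\geq 3$. The potential poles stem from zeros of the quasi-solution $R_n(\ell,\la)$ and from poles of $\hat r_3(\ell,\la)$. The first is straightforward because $R_n(\ell,\la)$ is a quadratic polynomial in $\la$ whose $\la^2$-coefficient, $\la$-coefficient, and constant term (after evaluating at $n\geq 3$, $\ell\geq 2$) are all strictly positive, so Routh--Hurwitz places its two zeros in the open left half-plane. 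For $\hat r_3(\ell,\la)$ one computes the denominator explicitly from the recurrence \eqref{Eq:RecRel_2} with $\tilde a_{-1}=0$, $\tilde a_0=1$ and applies the same stability criterion, now with $\ell$ as a free parameter.

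Since each of the three expressions is a rational function of $\la$ that stays bounded at infinity (by construction of $R_n$), the Phragm\'en--Lindel\"of principle reduces the estimates to the imaginary axis $\la=it$. On this axis I write
\[
|\tilde C_n(\ell,it)|^2=\frac{Q_1(n,\ell,t^2)}{Q_2(n,\ell,t^2)},\qquad Q_1\in\mathbb{Z}[n,\ell,t^2],\ Q_2\in\N_0[n,\ell,t^2],
\]
and similarly for $\tilde\ve_n$ and $\tilde\delta_3$. The three bounds then become equivalent to polynomial inequalities of the form $\alpha Q_2-\beta Q_1\geq 0$ on $\{n\geq 3,\ \ell\geq 2,\ t^2\geq 0\}$, with $(\alpha,\beta)$ equal to $(144,25)$, $(64,1)$, and $(9,1)$ respectively. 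After the substitutions $n\mapsto n+3$ and $\ell\mapsto \ell+2$ the range becomes $\{n,\ell,t^2\geq 0\}$, and the required inequalities should reduce to algebraic identities whose expansion has only non-negative coefficients; this is the only point in the proof where the specific form of the quasi-solution $R_n(\ell,\la)$ matters, and the coefficients of $R_n$ were chosen precisely to make this positivity hold.

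Given the three pointwise bounds, the conclusion $|\tilde\delta_n(\ell,\la)|\leq \tfrac{1}{3}$ for all $n\geq 3$ follows by a routine induction on $n$ using the recursion
\[
\tilde\delta_{n+1}(\ell,\la)=\tilde\ve_n(\ell,\la)-\tilde C_n(\ell,\la)\,\frac{\tilde\delta_n(\ell,\la)}{1+\tilde\delta_n(\ell,\la)}.
\]
Indeed, if $|\tilde\delta_n|\leq\tfrac{1}{3}$ then $|\tilde\delta_n/(1+\tilde\delta_n)|\leq (1/3)/(1-1/3)=\tfrac{1}{2}$, so
\[
|\tilde\delta_{n+1}|\leq \tfrac{1}{8}+\tfrac{5}{12}\cdot\tfrac{1}{2}=\tfrac{3}{24}+\tfrac{5}{24}=\tfrac{1}{3}.
\]
The base case is the first of the three estimates. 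The main obstacle is the polynomial positivity step: while structurally identical to the corresponding step for $\ell\in\{0,1\}$, the polynomials $\alpha Q_2-\beta Q_1$ are now trivariate rather than bivariate, and they involve many hundreds of monomials once expanded. Consequently this step is best carried out with computer algebra, and its success is not a priori guaranteed by the formal construction of $R_n$; rather, it reflects the fact that $R_n$ was carefully tuned to match $\hat r_n$ sharply enough (uniformly in $\ell$) to leave room for the induction to close with the explicit numerical margins stated in the lemma.
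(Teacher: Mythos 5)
Your proposal takes essentially the same route as the paper, which itself only asserts that this lemma is established ``similarly to the previous cases'': Routh--Hurwitz for analyticity of the three rational functions in $\Hb$, Phragm\'en--Lindel\"of to reduce the estimates to the imaginary axis, a shift $n\mapsto n+3$, $\ell\mapsto\ell+2$ to reduce to polynomials with manifestly non-negative coefficients, and the induction closing via $\tfrac18+\tfrac{5}{12}\cdot\tfrac12=\tfrac13$. The only flaw is notational: with your convention $\alpha Q_2-\beta Q_1\geq 0$ the pairs should read $(25,144)$, $(1,64)$, $(1,9)$ rather than the reversed versions you wrote, since e.g.\ $|\tilde C_n|^2\leq\left(\tfrac{5}{12}\right)^2$ is $25\,Q_2-144\,Q_1\geq 0$.
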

From this lemma, Eq.~\eqref{Def:Delta1}, and the fact that $\lim\limits_{n\ra\infty} R_n(\ell, \la) = 1$, we exclude \eqref{Eq:Lim2} and we therefore have that $\lim\limits_{n\ra\infty} \tilde r_n(\ell, \la) = 1$. Hence, given $\la \in \Hb$, there are no solutions to Eq.~\eqref{Eq:ODEheun_2} which are analytic on $[0,1]$, and consequently $\Sigma_{\ell} = \varnothing$.

\subsection{The spectrum of $\mb L_0$}
 With the results from above at hand, we can provide a complete description of the unstable spectrum of $\mb L_0$.

\begin{proposition} \label{spectrumofL0}
	There exists $\o_0 \in (0,\frac{1}{2}]$, such that
	\begin{align}\label{Eq:Spec_L_a}
		\sigma(\mb L_0) \cap \{\lambda \in\mathbb{C} : \Re \lambda > -\omega_0 \} = \{\lambda_0, \lambda_1, \lambda_2 \},
	\end{align}
	where $\l_0=0$, $\l_1=1$ and $\l_2=3$ are eigenvalues. The geometric eigenspace of $\l_2$ is spanned by $\mb h_0=(h_{0,1},h_{0,2})$, where
	\begin{align} \label{two}
		h_{0,1}(\xi)=\frac{1}{(7+5|\xi|^2)^3}, \quad h_{0,2}(\xi)=\xi^i \partial_{i} h_{0,1}(\xi)+5 h_{0,1}(\xi).
	\end{align}
	Moreover, the geometric eigenspaces of $\l_1$ and $\l_0$ are spanned by  $\{\mb g_0^{(k)}\}_{k=0}^9 =  \{(g_{0,1}^{(k)},g_{0,2}^{(k)}) \}_{k=0}^9$, and $\{\mb q_0^{(j)}\}_{j=1}^9= \{(q_{0,1}^{(j)},q_{0,2}^{(j)} \}_{j=1}^9$, respectively, where we have in closed form
	\begin{align}
		g_{0,1}^{(0)}(\xi)=\frac{1-|\xi|^2}{(7+5|\xi|^2)^3}, \quad g_{0,2}^{(0)}(\xi)=\xi^i \partial_{i} g_{0,1}^{(0)}(\xi)+3g_{0,1}^{(0)}(\xi), \label{zero1}
		\\
		g_{0,1}^{(j)}(\xi)=\frac{\xi^j(77-5|\xi|^2)}{(7+5|\xi|^2)^3} \quad g_{0,2}^{(j)}(\xi)=\xi^i \partial_{i}g_{0,1}^{(j)}(\xi)+3g_{0,1}^{(j)}(\xi),\label{one}
	\end{align}
	for $j=1,\dots, 9$ as well as 
	\begin{align} \label{zero}
		q^{(j)}_{0,1}(\xi)=\frac{\xi^j(7-3|\xi|^2)}{(7+5|\xi|^2)^3}, \quad q_{0,2}^{(j)}(\xi)=\xi^i \partial_{i} q_{0,1}^{(j)}(\xi)+2q_{0,1}^{(j)}(\xi).
	\end{align}
\end{proposition}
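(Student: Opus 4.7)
The plan is to combine Propositions \ref{Prop:Structure_Spectrum}, \ref{Prop:Spectral_ODE}, and \ref{prop:SpectralODE}, and then to lift the radial ODE solutions from the last of these to full eigenfunctions via spherical-harmonic expansions. Applying Proposition \ref{Prop:Structure_Spectrum} with $a=0$, every point of $\sigma(\mb L_0)$ with $\Re\lambda > -\tfrac{1}{2}$ is an isolated eigenvalue, and the resolvent bound there confines such eigenvalues to a bounded region; consequently $\sigma(\mb L_0)\cap\{\Re\lambda\geq -\tfrac{1}{2}+\varepsilon\}$ is a finite set for every $\varepsilon>0$. Proposition \ref{Prop:Spectral_ODE} then identifies $\sigma(\mb L_0)\cap\Hb$ with $\bigcup_{\ell\in\N_0}\Sigma_\ell$, and Proposition \ref{prop:SpectralODE} yields $\bigcup_{\ell}\Sigma_\ell=\{0,1,3\}$. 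The eigenvalues of $\mb L_0$ in the strip $-\tfrac{1}{4}\leq\Re\lambda<0$ therefore form a finite (possibly empty) set; choosing $\omega_0\in(0,\tfrac{1}{2}]$ smaller than the distance from the imaginary axis to any such eigenvalue establishes \eqref{Eq:Spec_L_a}.

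For the eigenfunctions, for each $\ell\in\{0,1\}$, each $\lambda\in\Sigma_\ell$, and each $m\in\Omega_\ell$, define
\[
u_1(\xi) := f_\ell(|\xi|;\lambda)\,Y_{\ell,m}(\xi/|\xi|),\qquad u_2(\xi) := \xi^i\partial_i u_1(\xi)+(\lambda+2)\,u_1(\xi).
\]
The regular Frobenius index of $\mc T_\ell(\lambda)f=0$ at $\rho=0$ is $\ell$, and the invariance of the equation under $\rho\mapsto-\rho$ combined with uniqueness of the regular solution forces $f_\ell(\rho;\lambda)=\rho^\ell h_\ell(\rho^2;\lambda)$ for some $h_\ell(\cdot;\lambda)\in C^\infty[0,1]$; writing $|\xi|^\ell Y_{\ell,m}(\xi/|\xi|)$ as a harmonic polynomial in $\xi$, this yields $u_1\in C^\infty(\overline{\mathbb B^9})$ and hence $(u_1,u_2)\in\mc D(\mb L_0)$. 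A direct substitution, using the spherical-harmonic decomposition of the elliptic PDE \eqref{elliptic} together with $\mc T_\ell(\lambda)f_\ell=0$, verifies $(\lambda-\mb L_0)(u_1,u_2)=0$. Plugging in the closed-form profiles listed in Proposition \ref{prop:SpectralODE} and using $Y_{0,1}\propto 1$ and $Y_{1,j}\propto\omega^j$, with normalization constants absorbed, recovers precisely the vectors $\mb h_0$, $\mb g_0^{(k)}$, and $\mb q_0^{(j)}$ stated.

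Completeness of the spanning sets follows by running the reduction of Proposition \ref{Prop:Spectral_ODE} backwards: any $\mb u\in\ker(\lambda-\mb L_0)$ decomposes via spherical harmonics into radial coefficients $u_{\ell,m}\in C^\infty[0,1]$ solving $\mc T_\ell(\lambda)u_{\ell,m}=0$, only those indices $\ell$ with $\lambda\in\Sigma_\ell$ can contribute, and the Frobenius structure at $\rho=0$ makes each smooth radial solution unique up to a scalar multiple. Consequently
\[
\dim\ker(\lambda-\mb L_0)\;=\;\sum_{\ell\,:\,\lambda\in\Sigma_\ell}M_{9,\ell},
\]
which gives $1$ for $\lambda=3$ (from $\ell=0$), $1+9=10$ for $\lambda=1$ (from $\ell=0$ and $\ell=1$), and $9$ for $\lambda=0$ (from $\ell=1$), matching the listed spanning sets. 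Since the core difficulty — the ODE connection problem for $\mc T_\ell(\lambda)$ — has already been resolved in Proposition \ref{prop:SpectralODE}, the remaining work is essentially bookkeeping; no substantive obstacle remains.
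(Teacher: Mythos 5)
Your argument is correct and follows essentially the same route as the paper's proof: combine Propositions \ref{Prop:Structure_Spectrum}, \ref{Prop:Spectral_ODE}, and \ref{prop:SpectralODE} to locate the unstable spectrum and fix $\omega_0$, then lift the radial profiles $f_\ell(\cdot;\lambda)$ to eigenfunctions via spherical harmonics with $Y_{0,1}\propto 1$ and $Y_{1,j}\propto\omega^j$. You merely make explicit two points the paper leaves implicit — the finiteness argument justifying the choice of $\omega_0$ in the strip $-\tfrac12<\Re\lambda<0$, and the uniqueness of the regular Frobenius solution at $\rho=0$ underlying the dimension count — both of which are handled correctly.
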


\begin{remark}\label{Rem:EF_0}
	Recall that $\mb U_a$ solves the stationary equation $\mb L \mb U_a + \mb F(\mb U_a) = 0$. By the chain rule we get for any $k = 1,\dots, d$, that
	$(\mb L+ \mb F'(\mb U_a) )\partial_{a^k}\mb U_a= \mb L_a\partial_{a^k}\mb U_a =0$. This implies that $\partial_{a^k}\mb U_a$ is an eigenvector of $\mb L_a$ with eigenvalue $\l=0$.
	In particular, a direct calculation shows that  $q^{(j)}_{0,1}(\xi) = c \partial_{a^j} U_a(\xi)|_{a = 0}$.
\end{remark}

\begin{proof} From Propositions \ref{Prop:Structure_Spectrum}, \ref{Prop:Spectral_ODE} and \ref{prop:SpectralODE} we deduce the existence of $\omega_0  \in (0,\frac{1}{2}]$ for which \eqref{Eq:Spec_L_a} holds. To determine the eigenspaces, we do the following. First, in view of Proposition~\ref{prop:SpectralODE} if $\l=3$ then $\ell=0$, and setting $u_{0,1}(\rho)=(7+5\rho^2)^{-3}$ in the expansion \eqref{Eq:ExpSph} yields \eqref{two}. If $\l=1$, then either $\ell=0$ and $u_{0,m}=f_0(\cdot;1)$, or $\ell=1$ and $u_{1,m}=f_1(\cdot;1)$, for $m=1,\dots,9$. Since we can choose $Y_{1,m}(\o)=\tilde{c}_m\o_m$ for $m=1, \dots 9$, these yield \eqref{zero1} and \eqref{one}. For $\l=0$, we have $\ell=1$ with $u_{1,m}=f_1(\cdot,0)$, which similarly leads to \eqref{zero}.
\end{proof}

In what follows, we prove that for each unstable eigenvalue the geometric and the algebraic eigenspaces are the same. To this end, we define the associated Riesz projections. Namely, we set
\[
\mb H_0:=\frac{1}{2\pi i}\int_{\gamma_2} \mb R_{\mb L_0}(\lambda)d\lambda, \quad \mb P_0:=\frac{1}{2\pi i}\int_{\gamma_1}\mb R_{\mb L_0}(\lambda)d\lambda, \quad \mb Q_0:=\frac{1}{2\pi i}\int_{\gamma_0}\mb R_{\mb L_0}(\lambda)d\lambda,
\]
where $\gamma_j(s)=\lambda_j+\frac{\omega_0}{2}e^{2i\pi s}$ for $s\in[0,1]$, and $j=0,1,2$.

\begin{lemma} \label{projections}
	We have that
	\begin{align*}
		\dim \ran \mb H_0=1, \quad \dim\ran  \mb P_0=10, \quad  \dim\ran  \mb Q_0=9.
	\end{align*}
\end{lemma}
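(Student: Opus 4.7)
The plan is to show that the algebraic and geometric multiplicities coincide at each of the three unstable eigenvalues of $\mb L_0$. Since $\mb L_0=\mb L+\mb L_0'$ with $\mb L_0'$ compact (Proposition \ref{group-Sa}) and the $\l_j$ are isolated eigenvalues of finite algebraic multiplicity with $\Re\l_j>-\o_0$ (Proposition \ref{Prop:Structure_Spectrum}), the Riesz projections $\mb H_0,\mb P_0,\mb Q_0$ have finite rank equal to the algebraic multiplicity of $\l_2,\l_1,\l_0$, respectively. Proposition \ref{spectrumofL0} supplies the geometric multiplicities $1,10,9$, and since the algebraic multiplicity always dominates the geometric one, it suffices to rule out Jordan chains of length two at each $\l_j$.

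Suppose, toward a contradiction, that some $\mb u\in\ker(\l_j-\mb L_0)^2\setminus\ker(\l_j-\mb L_0)$ exists. Then $\mb v:=(\l_j-\mb L_0)\mb u$ is a non-zero eigenvector, hence a linear combination of the explicit functions of Proposition \ref{spectrumofL0}. Writing the system $(\l_j-\mb L_0)\mb u=\mb v$ componentwise and eliminating $u_2$ from the first equation, one arrives (just as in the derivation of Eq.~\eqref{elliptic}) at an inhomogeneous elliptic PDE
\[
-(\d^{ij}-\xi^i\xi^j)\pt_i\pt_j u_1+2(\l_j+3)\xi\cdot\nabla u_1+(\l_j+2)(\l_j+3)u_1-V_0 u_1=G,
\]
with $G\in C^\infty(\overline{\mathbb B^9})$ computed explicitly from $\mb v$. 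Elliptic regularity gives $u_1\in C^\infty(\mathbb B^9)\cap H^5(\mathbb B^9)$, and the spherical-harmonic decomposition reduces the PDE to scalar equations
\[
\mc T_\ell(\l_j)u_{\ell,m}(\rho)=g_{\ell,m}(\rho),\qquad g_{\ell,m}\in C^\infty[0,1],
\]
to be solved in $C^\infty[0,1)\cap H^5(\tfrac12,1)$. Because $\mb v$ involves only finitely many angular modes, only the pairs $(\ell,\l_j)\in\{(0,3),(0,1),(1,1),(1,0)\}$ require analysis.

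For each such pair the Frobenius indices of $\mc T_\ell(\l_j)$ at $\rho=1$ are $\{0,2-\l_j\}$, and Proposition \ref{prop:SpectralODE} together with the analysis of Section \ref{Sec:ODE} forces the one-dimensional smooth kernel to be paired with a logarithmically singular second solution: if both Frobenius solutions at $\rho=1$ were smooth, the kernel would be two-dimensional, contradicting Proposition \ref{prop:SpectralODE}. Using the substitution $f(\rho)=\rho^{\ell}(7+5\rho^{2})^{-3}y(\rho^{2})$ from Section \ref{Sec:ODE}, one obtains an explicit local basis $\{\phi_{0,\ell},\phi_{1,\ell}\}$ of homogeneous solutions near $\rho=1$ in the spirit of Lemma \ref{Lem:Density}. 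Representing the inhomogeneous solution by variation of parameters against this basis and expanding around $\rho=1$ pins down the log coefficient of $u_{\ell,m}$ as a single integral of the form $\int_0^1\chi_{\ell,\l_j}(\rho)g_{\ell,m}(\rho)\,d\rho$ involving the singular Frobenius partner. For each of the four relevant pairs a direct computation, facilitated by the polynomial-after-substitution form of the smooth solution, shows this integral is non-zero; hence $u_{\ell,m}\notin H^5(\tfrac12,1)$, contradicting the hypothesis. Consequently no Jordan chains exist and $\ran\mb H_0=\ker(3-\mb L_0)$, $\ran\mb P_0=\ker(1-\mb L_0)$, $\ran\mb Q_0=\ker\mb L_0$, with dimensions $1,10,9$.

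The delicate step is the explicit verification that the log-coefficient integral is non-zero in each of the four cases; although one expects that, after the Heun substitution, the relevant Wronskian, singular partner solution, and residue integral all reduce to a finite algebraic check, carrying this out cleanly is the main technical obstacle. Everything else in the argument is structural and follows from the ODE framework established in the preceding sections.
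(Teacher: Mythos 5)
Your overall strategy coincides with the paper's: reduce the lemma to the absence of length-two Jordan chains, pass to the inhomogeneous elliptic equation, decompose into spherical harmonics so that only the four pairs $(\ell,\lambda)\in\{(0,3),(0,1),(1,1),(1,0)\}$ matter, and then show by variation of parameters that the resulting scalar ODEs admit no solution of the required regularity because of a logarithmic (or worse) singularity at $\rho=1$. This is exactly how Lemma \ref{projections} is proved in the paper, where the second homogeneous solution is obtained by reduction of order from the known explicit eigenfunction and the asymptotics at $\rho=1$ are read off directly.

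There is, however, a genuine gap at the decisive step, which you yourself flag: the non-vanishing of the logarithmic coefficient is asserted, not proved. Moreover, your description of that coefficient as ``a single integral of the form $\int_0^1\chi_{\ell,\lambda_j}(\rho)g_{\ell,m}(\rho)\,d\rho$ involving the singular Frobenius partner'' does not match what actually happens in the hardest case $(\ell,\lambda)=(0,1)$. There \emph{both} terms of the variation-of-parameters formula contribute a $\ln(1-\rho)$ to $u'$: one proportional to $u_1'(\rho)\int_0^\rho u_2 G_0 s^8(1-s^2)^{-1}ds$ (coefficient $\tfrac{1}{864}$) and one proportional to $u_2'(\rho)\int_0^\rho u_1 G_0 s^8(1-s^2)^{-1}ds$ (coefficient $-3456\,C$ with $C=2\int_0^1 s^8(1-s^2)(7+5s^2)^{-6}ds$), and one must verify that these two contributions do not cancel; in the paper this rests on the explicit bound $C<4\times 10^{-8}$. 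Also note that your auxiliary claim --- that the second Frobenius solution at $\rho=1$ must be singular because otherwise the smooth kernel would be two-dimensional --- is false for $(\ell,\lambda)=(1,1)$: there the reduction-of-order solution behaves like $1-\rho$ at $\rho=1$ (its singularity sits at $\rho=0$), and the log in the particular solution arises instead from the divergence of $\int_0^\rho u_1G_1 s^8(1-s^2)^{-1}ds$. So the argument is structurally sound and parallel to the paper's, but the case-by-case asymptotic analysis, which is the actual content of the proof, still needs to be carried out.
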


\begin{proof}
	We start with the observation that the ranges of the projections are finite-dimensional. Indeed, $\l_j$ would otherwise belong to the essential spectrum of $\mb L_0$ (see \cite{Kato} Theorem~5.28 and Theorem~5.33) which coincides with the essential spectrum of $\mb L$ (since $\mb L_0$ is a compact perturbation of $ \mb L$), but this is in contradiction with \eqref{Eq:Spec_L_k}.
	Now we show that $\dim\ran \mb P_0=10$. We know from properties of the Riesz integral that $\ker (\mb L_0-\l_1) \subset \ran \mb P_0$. We therefore only need to prove the reversed inclusion. First, note that the space $\ran \mb P_0$ reduces the operator $\mb L_0$, and we have that  
	\[
	\sigma(\mb L_0|_{\ran\mb P_0})=\{1\},
	\]
	see e.g.~\cite{Hislop}, Proposition 6.9.
	Consequently, since $\mb P_0$ is finite-rank, the operator
	$
	1-\mb L_0|_{\ran \mb P_0}
	$
	is nilpotent, i.e., there is $m\in\N$, such that
	$
	(1-\mb L_0|_{\ran \mb P_0})^m=0.
	$
 Note that it suffices to show that $m=1$. We argue by contradiction, and hence assume that $m \geq 2$. Then there is $\mb u  \in \mc D(\mb L_0)$ such that
	\[
	(1-\mb L_0)\mb u=\mb v,
	\]
	for a nontrivial $\mb v\in \ker(1-\mb L_0)$. This yields for $u_1$ the following elliptic equation
	\begin{equation} \label{58}
		-(\delta^{ij}-\xi^i\xi^j)\partial_{i}\partial_{j}u_1(\xi)+2(\l+3)\xi^j\partial_{j}u_1(\xi)+(\l+3)(\l+2)u_1(\xi)-V_0(\xi)u_1(\xi)=F(\xi),
	\end{equation}
	where $\l=1$ and
	\[
	F(\xi)=\xi^i\partial_{i}v_1(\xi)+(\l+3)v_1(\xi)+v_2(\xi).
	\]
	Since $\mb v\in\ker(1-\mb L_0)=\text{span}(\mb g_0^{(0)},\dots,\mb g_0^{(9)})$, we have that $\mb v=\sum_{k=0}^9\alpha_k \mb g_0^{(k)}$ for some $\alpha_0,\dots,\alpha_9\in\C$, not all of which are zero. To avoid cumbersome notation we let $g_k= g^{(k)}_{0,1}$. In the new notation, based on \eqref{zero1} and \eqref{one} we have that
	\begin{align*}
		F(\xi)=\sum_{k=0}^9\alpha_k(2\xi^i\partial_{\xi^i}g_k+7g_k).
	\end{align*}
	Furthermore, according to  Proposition~\ref{prop:SpectralODE} we can rewrite $F$ in polar coordinates
	\begin{align*}
		F(\rho\o)=\a_0(2\rho f'_0(\rho)+7f_0(\rho))Y_{0,1}(\o)+\sum_{i=1}^9\a_i(2\rho f'_1(\rho)+7f_1(\rho))Y_{1,i}(\o),
	\end{align*}
	where we denoted $f_0=f_0(\cdot ;1)$ and $f_1=f_1(\cdot ; 1)$.
	By decomposition of $u_1$ into spherical harmonics as in \eqref{Eq:ExpSph}, equation \eqref{58} can be written as a system of ODEs
	\begin{align} \label{5.9}
		\mathcal{T}_0(1)u_{0,1}=-\a_0G_0, \quad \mathcal{T}_1(1)u_{1,j}=-\a_jG_1, \quad j=1,\dots,9,
	\end{align}
	posed on the interval $(0,1)$, where $G_i(\rho)=2\rho f'_i(\rho)+7f_i(\rho)$ for $i=0,1$. Moreover, from the properties of $u_1$, we infer that $u_{\ell,m}\in C^\infty[0,1)\cap H^5(\frac{1}{2},1)$, and by Sobolev embedding we have that $u_{\ell,m}\in C^2[0,1]$. To obtain a contradiction, we show that if some $\alpha_k$ is non-zero then the corresponding ODE in \eqref{5.9} does not admit a $C^2[0,1]$ solution. To start, we assume that $\a_0 
	\neq 0$. For convenience, we can without loss of generality assume that $\a_0=-1$. Then $u_{0,1}$ solves the following ODE
	\begin{align}\label{eigenvalue-one}
		(1-\rho^2)u''(\rho)+\left(\frac{8}{\rho}-8\rho \right)u'(\rho)-\big(12-V(\rho) \big)u(\rho)= G_0(\rho),
	\end{align}
	where
	\begin{align*}
		G_0(\rho)=\frac{5\rho^4-102\rho^2+49}{(7+5\rho^2)^4}.
	\end{align*}
	 Note that 
	\begin{align*}
		u_1(\rho)=f_0(\rho)=\frac{1-\rho^2}{(7+5\rho^2)^3}
	\end{align*}
	is a solution to the homogeneous version of Eq.~\eqref{eigenvalue-one}, and by reduction of order  we find a second one 
	\begin{align*}
		u_2(\rho)=u_1(\rho)\int_{\frac{1}{2}}^\rho \frac{ds}{s^8u_1(s)^2}=\frac{1-\rho^2}{(7+5\rho^2)^3} 
		\int_{\frac{1}{2}}^\rho \frac{(7+5s^2)^6}{s^8(1-s^2)^2} ds.
	\end{align*}
	Furthermore, simple calculation yields that
	\begin{align*}
		u_2(\rho)\simeq \rho^{-7} \quad \text{as} \quad \rho\rightarrow 0^+,
	\end{align*}
	and 
	\begin{align}\label{Eq:u_2_asymp}
		u_2(\rho)=864-3456(1-\rho)\ln(1-\rho)+O(1-\rho)\quad  \text{as} \quad  \rho \rightarrow 1^-.
	\end{align}
	With the fundamental system $\{u_1,u_2\}$ at hand, we can solve Eq.~\eqref{eigenvalue-one} by the variation of parameters formula. Namely, we have that
	\begin{align*}
		u(\rho)=c_1u_1(\rho)+c_2u_2(\rho)-u_1(\rho)\int_0^\rho \frac{u_2(s)G_0(s)s^8}{1-s^2}ds+u_2(\rho)\int_0^\rho\frac{u_1(s)G_0(s)s^8}{1-s^2}ds
	\end{align*}
	for some constants $c_1,c_2\in\C$. If $u\in C^2[0,1]$, then $c_2$ must be equal to zero in the above expression, owing to the singular behavior of $u_2(\rho)$ near $\rho=0$. Then by differentiation we obtain for $\rho\in(0,1)$ that
	\begin{align*}
		u'(\rho)=c_1u'_1(\rho)-u'_1(\rho)\int_0^\rho \frac{u_2(s)G_0(s)s^8}{1-s^2}ds+u'_2(\rho)\int_0^\rho\frac{u_1(s)G_0(s)s^8}{1-s^2}ds.
	\end{align*}
	Now we inspect the asymptotic behavior of $u'$ as $\rho\rightarrow 1^-$. We first note that $u'_1$ is bounded near $\rho=1$. Furthermore, note that 
	\begin{align*}
		\int_0^1 \frac{u_1(s)G_0(s)s^8}{1-s^2}ds = \int_0^1  \frac{s^2}{1-s^2} 
		\frac{d}{ds} \left [ \frac{s^7 ( 1 - s^2)^2}{(7+5 s^2)^6} \right ] ds 
		= - 2 \int_0^1 \frac{s^8 ( 1-s^2)}{(7+5s^2)^6} =:  - C 
	\end{align*}
	for some $C > 0$, which can be calculated explicitly and $C < 4 \times 10^{-8}$.
	Hence, based on \eqref{Eq:u_2_asymp} we have that
	\begin{align*}
		u'_2(\rho)\int_0^\rho\frac{u_1(s)G_0(s)s^8}{1-s^2}ds \sim - 3456 ~C \ln (1-\rho)\quad \text{as} \quad \rho \rightarrow 1^-.
	\end{align*}
	Moreover
	\begin{align*}
		- u'_1(\rho)\int_0^\rho \frac{u_2(s)G_0(s)s^8}{1-s^2} ds \sim \frac{1}{864}\ln(1-\rho) \quad \text{as} \quad \rho \rightarrow 1^-.
	\end{align*}
	Finally, we infer that the two integral terms cannot cancel and thus
	\begin{align*}
		u'(\rho)\simeq  \ln (1-\rho) \quad \text{as} \quad \rho \rightarrow 1^-.
	\end{align*}
	In conclusion, there is no choice of $c_1,c_2$ for which $u$ belongs to $C^2[0,1]$. \\
	
	We similarly treat $\alpha_j$ for $j\in\{1,\dots, 9\}$. It is enough to consider just $\alpha_1$, and without loss of generality assume that $\a_1=-1$. Then \eqref{5.9} yields the following ODE
	\begin{align} \label{5.11}
		(1-\rho^2)u''(\rho)+\left(\frac{8}{\rho}-8\rho \right)u'(\rho)-\left( 12+\frac{8}{\rho^2} -V(\rho) \right)u(\rho)=G_1(\rho),
	\end{align} 
	where
	\begin{align*}
		G_1(\rho)=\frac{\rho(4851-1610 \rho^2 - 25 \rho^4)}{(7+5\rho^2)^4}.
	\end{align*}
	Note that 
	\[u_1(\rho)=f_1(\rho)=\frac{\rho(77-5\rho^2)}{(7+5\rho^2)^3}\]
	is a solution for the homogeneous problem. Similarly as above, we obtain another solution by the reduction formula
	\begin{align*}
		u_2(\rho)=u_1(\rho)\int_{1}^\rho \frac{ds}{s^8u_1(s)^2}=\frac{\rho(77-5\rho)}{(7+5\rho^2)^3}\int^\rho_{1}\frac{(7+5s^2)^6}{s^{10}(77-5s)^2}ds,
	\end{align*}
	and by inspection of the integral we get that $u_2(\rho)\simeq \rho^{-8}$ near the origin and $u_2(\rho)\simeq 1 - \rho$ near $\rho=1$.  Now, the general solution of \eqref{5.11} on $(0,1)$ is given by
	\begin{align}\label{Eq:u_var_param}
		u(\rho)=c_1 u_1(\rho) +c_2u_2(\rho)-u_1(\rho)\int_0^\rho \frac{u_2(s)G_1(s)s^8}{1-s^2}ds
		+u_2(\rho)\int_0^\rho \frac{u_1(s)G_1(s)s^8}{1-s^2}ds.
	\end{align}
	Assumption that $u$ belongs to $C^2[0,1]$ forces $c_2=0$ above, due to the singular behavior of $u_2$ at $\rho=0$. Furthermore, from the last term in \eqref{Eq:u_var_param} we see that $u'(\rho)\simeq \ln(1-\rho)$ as $\rho \rightarrow 1^-$. In conclusion, Eq.~\eqref{5.11} admits no $C^2[0,1]$ solutions, and this finishes the proof for $\mb P_0$. \\

	The remaining two projections are treated similarly, so we omit some details. For $\mb H_0$ we obtain the analogue of \eqref{58} with $F(\xi)=2\xi^i\partial_i h_{0,1}(\xi)+11h_{0,1}(\xi)$. This leads to the following ODE
	\begin{align} \label{5.12}
		\left( 1-\rho^2\right) u''(\rho)+\left( \frac{8}{\rho} - 12\rho\right)u'(\rho) - \left(30-V(\rho) \right)u(\rho)=H(\rho),
	\end{align}
	for 
	$$
	H(\rho)=\frac{77-5\rho^2}{(7+5\rho^2)^4}.
	$$
	The argument, as above, reduces to showing that Eq.~\eqref{5.12} does not admit $C^2[0,1]$ solutions. By Proposition~\ref{prop:SpectralODE}, we have that $u_1(\rho)=(7+5\rho^2)^{-3}$ solves the homogeneous variant of Eq.~\eqref{5.12}, with the reduction formula yielding another solution
	\begin{align}\label{Eq:Gen_sol_H}
		u_2(\rho)=u_1(\rho)\int_{\frac{1}{2}}^\rho \frac{ds}{s^8(1-s^2)^2 u_1(s)^2}=\frac{1}{(7+5\rho^2)^3}
		\int_{\frac{1}{2}}^\rho \frac{(7+5s^2)^6}{s^8(1-s^2)^2}ds.
	\end{align}
	Note that $u_2$ is singular at both $\rho=0$ and $\rho=1$; more precisely $u_2(\rho)\simeq\rho^{-7}$ as $\rho\rightarrow 0^+$, and  $u_2(\rho)\simeq (1-\rho)^{-1}$ as $\rho\rightarrow 1^-$. With $u_1$ and $u_2$ at hand, the general solution of \eqref{5.12} on the interval $(0,1)$ can be written as
	\begin{align*}
		u(\rho)=c_1u_1(\rho)+c_2u_2(\rho)-u_1(\rho)&\int_0^\rho(1-s^2) s^8H(s)u_2(s)ds
		\\
		&+u_2(\rho)\int_0^\rho(1-s^2) s^8H(s)u_1(s)ds,
	\end{align*}
	where the parameters $c_1,c_2\in\C$ are free. Assumption that $u$ is bounded near $\rho=0$ forces $c_2=0$. Note that the first and the third term in \eqref{Eq:Gen_sol_H} are bounded near $\rho =1$. However, due to the singular behavior of $u_2$, the last term is unbounded near $\rho=1$, owing to the integrand being strictly positive on $(0,1)$. In conclusion, the general solution $u$ in \eqref{Eq:Gen_sol_H} is unbounded on $(0,1)$.\\
	
	Finally, for $\mb Q_0$, we have that
	\begin{align*}
		F(\xi)=\sum_{j=1}^9 \a_j\left(2\xi^i\partial_{\xi^i}q^j_{0,1}(\xi)+5q^j_{0,1}(\xi) \right),
	\end{align*}
	and the accompanying analogue of \eqref{eigenvalue-one} is 
	\begin{align*} 
		\left(1-\rho^2\right)u''(\rho)+\left(\frac{8}{\rho}-6\rho \right)u'(\rho)-\left(6+\frac{8}{\rho^2}-V(\rho)\right)=Q(\rho),
	\end{align*}
	where 
	$$
	Q(\rho)=\frac{15\rho^5-406\rho^3+343\rho}{(7+5\rho^2)^4}.
	$$
	A fundamental solution set to the homogeneous version of the above ODE is given by
	\begin{equation*}
		u_1(\rho)=\frac{\rho(7-3\rho^2)}{(7+5\rho^2)^3} \quad \text{and} \quad 	u_2(\rho)=u_1(\rho)\int^\rho_1\frac{1-s^2}{s^8u_1(s)^2}ds,
	\end{equation*}
	and therefore any solution to it on $(0,1)$ can be written as
	\begin{align*}
	u(\rho)=c_1u_1(\rho)+c_2u_2(\rho)-u_1(\rho)\int_0^\rho\frac{u_2(s)Q(s)s^8}{(1-s^2)^2}ds+u_2(\rho)\int_0^\rho\frac{u_1(s)Q(s)s^8}{(1-s^2)^2}ds,
	\end{align*}
	for a choice of $c_1,c_2\in\C$. Again, by similar asymptotic considerations as above, we infer that $u''$ is necessarily unbounded on $(0,1)$, and this concludes the proof.
\end{proof}

\subsection{The spectrum of $\mb L_a$ for $a\neq 0$}\label{Sec:L_a}

We now investigate the spectrum of $\mb L_a$. In particular, by a perturbative argument we show that, for small $a$, an analogue of Proposition \ref{spectrumofL0} holds for $\mb L_a$ as well.
\begin{lemma}\label{Le:Pert_Spectrum}
	There exists $\delta^* >0$ such that for all $a\in\overline{\mathbb{B}^9_{\d^*}}$ the following holds.
	\[
	\sigma(\mb L_a)\cap \left\{ \l\in\C : \Re \l \geq -\frac{\o_0}{2}\right\} = \left\{\l_0,\l_1,\l_2 \right\},
	\]
	where $\o_0$ is the constant from Proposition \ref{spectrumofL0}, and $\l_0=0$, $\l_1=1$, $\l_2=3$ are eigenvalues.
	The geometric eigenspace of $\l_2$ is spanned by $\mb h_a=(h_{a,1},h_{a,2})$, where
	\[
	h_{a,1}(\xi)=\frac{\g(\xi,a)}{\left (12\g(\xi,a)^2+5 |\xi|^2-5 \right )^3} , \quad h_{a,2}(\xi)=\xi^j\partial_j h_{a,1}(\xi)+5h_{a,1}(\xi).
	\]
	Moreover, the geometric eigenspaces of $\l_0$ and $\l_1$ are spanned by $\{\mb g_a^{(k)}\}_{k=0}^9 =  \{(g_{a,1}^{(k)},g_{a,2}^{(k)} )\}_{k=0}^9$, and $\{\mb q_a^{(j)}\}_{j=1}^9= \{(q_{a,1}^{(j)},q_{a,2}^{(j)}) \}_{j=1}^9$ respectively, where 
	\begin{align*}
		g^{(0)}_{a,1}(\xi)&=\frac{(|\xi|^2-1)\g(\xi,a)}{\left (12\g(\xi,a)^2+5|\xi|^2-5 \right )^3} ,
		\quad
		g^{(0)}_{a,2}(\xi)=\xi^j\partial_{\xi^j}g^{(0)}_{a,1}(\xi)+3g^{(0)}_{a,1}(\xi),
		\\
		g^{(k)}_{a,1}(\xi)&= \frac{(72\g(\xi,a)^2+5 -5 |\xi|^2)  \partial_{a_j} \gamma(\xi,a) }{\left (12\g(\xi,a)^2+5|\xi|^2-5 \right )^3}, 
		\quad
		g^{(k)}_{a,2}(\xi)=\xi^j\partial_{\xi^j}g^{(k)}_{a,1}(\xi)+3g^{(k)}_{a,1}(\xi),
	\end{align*}
	and
	\begin{align*}
		q^{(j)}_{a,1}(\xi)&= \partial_{a_j} U_a(\xi) , \quad q^{(j)}_{a,2}(\xi)=\xi^j\partial_{j}q^{(j)}_{a,1}(\xi)+2q^{(j)}_{a,1}(\xi).
	\end{align*}
	Additionally, the eigenfunctions depend Lipschitz continuously on the parameter $a$, i.e.,
	\[
	\|\mb h_a-\mb h_b\| + \|\mb g_a^{(k)}-\mb g_b^{(k)}\| + \| \mb q_a^{(j)}-\mb q_b^{(j)}\|\lesssim |a-b|,
	\]
	for all $a,b\in \overline{\mathbb{B}^9_{\d^*}}$.
\end{lemma}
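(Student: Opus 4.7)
The plan is to produce explicit eigenfunctions at each $\lambda \in \{0, 1, 3\}$ whose total algebraic multiplicity matches that of $\mb L_0$, and then use perturbation theory of Riesz projections to rule out both eigenvalue drift and the appearance of further unstable spectrum. The $\lambda = 0$ modes $\mb q_a^{(j)}$ come from Lorentz-boost invariance: differentiating the stationary identity $\mb L\mb U_a + \mb F(\mb U_a) = 0$ in $a^j$ yields $\mb L_a \partial_{a^j}\mb U_a = 0$, as noted in Remark \ref{Rem:EF_0}. The $\lambda = 1$ modes $\mb g_a^{(k)}$ come from time and spatial translations: the family $u^*_{T, x_0, a}$ solves \eqref{NLW:p} for every $(T, x_0)$, and in similarity coordinates based at $(1, 0)$ it has the form
\[ \psi(\tau, \xi; T, x_0) = \frac{1}{(1 + (T-1)e^\tau)^2}\, U_a\!\left(\tfrac{\xi - e^\tau x_0}{1 + (T-1)e^\tau}\right). \]
Differentiating at $(T, x_0) = (1, 0)$ produces linearized solutions of the form $e^\tau w(\xi)$, so $w$ is an eigenfunction of $\mb L_a$ with eigenvalue $1$; this accounts for $\mb g_a^{(0)}$ (from $\partial_T$) and $\mb g_a^{(k)}$ for $k \geq 1$ (from $\partial_{x_0^k}$), with the closed-form profiles in the statement obtained by rewriting these derivatives via \eqref{explicit-form}.

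The $\lambda = 3$ mode $\mb h_a$ has no obvious symmetry origin for the reduced equation but admits a Lorentz-pullback interpretation. Using \eqref{Eq:t'x'} together with $t' = (T-t)\gamma(\xi, a)$ from \eqref{Eq:t'x'_2} one computes
\[ \frac{|x'|^2}{(t')^2} = \frac{\gamma(\xi, a)^2 + |\xi|^2 - 1}{\gamma(\xi, a)^2}, \]
whence $7 + 5\,|x'/t'|^2 = \gamma(\xi,a)^{-2}\bigl(12\gamma(\xi,a)^2 + 5|\xi|^2 - 5\bigr)$, giving the identification
\[ h_{a,1}(\xi) = \gamma(\xi, a)^{-5}\, h_{0,1}(x'/t'), \]
i.e., $h_{a,1}$ is the Lorentz conjugate of $h_{0,1}$. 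Combined with a direct substitution using $\partial_{\xi^i}\gamma(\xi, a) = -A_i(a)$, this verifies $(\mb L_a - 3)\mb h_a = 0$. The second components in each case are then generated from the first via the semigroup-variable relation $u_2 = \xi\cdot\nabla u_1 + \mu u_1$ with the appropriate $\mu$.

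To conclude, combine the explicit eigenfunctions with perturbation theory of Riesz projections. By Proposition \ref{group-Sa} the map $a \mapsto \mb L_a$ is norm-Lipschitz, and by Proposition \ref{Prop:Structure_Spectrum} the unstable spectrum of $\mb L_a$ in $\{\Re\lambda \geq -\omega_0/2\}$ lies in a bounded region uniformly in $a$. Pick small disjoint circles $\gamma_0, \gamma_1, \gamma_2$ around $0, 1, 3$ inside $\{\Re\lambda > -\omega_0/2\}$ that avoid $\sigma(\mb L_0)$ elsewhere, together with a large contour $\Gamma$ encircling the aforementioned compact region. Since all contours lie in $\rho(\mb L_0)$, the resolvents $\mb R_{\mb L_a}(\lambda)$ depend continuously on $a$ on them, hence so do the associated Riesz projections, and their ranks are locally constant. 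By Lemma \ref{projections}, the projections around $\gamma_0, \gamma_1, \gamma_2$ have ranks $9, 10, 1$ and the projection along $\Gamma$ has rank $20$ for small $a$. Since the families $\{\mb q_a^{(j)}\}, \{\mb g_a^{(k)}\}, \{\mb h_a\}$ depend continuously on $a$ and reduce at $a = 0$ to bases of the respective geometric eigenspaces (Proposition \ref{spectrumofL0}), they remain linearly independent for small $a$ and saturate these algebraic multiplicities; this forces $\sigma(\mb L_a) \cap \{\Re\lambda > -\omega_0/2\} = \{0, 1, 3\}$ with eigenspaces as stated and no eigenvalue drift. The Lipschitz dependence of $\mb h_a, \mb g_a^{(k)}, \mb q_a^{(j)}$ on $a$ is immediate from their closed-form expressions, by the same type of estimates that yield \eqref{Eq:Selfsim_Sol_Lipschitz}. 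The chief subtlety is the rigidity of $\lambda = 3$: it is not protected by a manifest symmetry of the reduced operator, and the no-drift statement there must be secured either by the explicit substitution for $\mb h_a$ or by the Lorentz-conjugation identification above.
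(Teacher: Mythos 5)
Your proposal is correct and follows essentially the same route as the paper: explicit eigenfunctions at $\lambda\in\{0,1,3\}$ obtained by Lorentz-transforming the $a=0$ eigenfunctions (and verified by direct substitution), confinement of the unstable spectrum to a compact region via Proposition \ref{Prop:Structure_Spectrum}, and then continuity of the Riesz projection over a contour enclosing that region, whose rank is $20$ by Lemma \ref{projections}, to exclude any further spectrum since the $20$ explicit eigenvectors already saturate it. The only cosmetic differences are your use of separate small circles around each eigenvalue in addition to the large contour (the paper uses a single contour $\partial\Omega$ here and introduces the individual circles only in the subsequent lemma) and your symmetry-theoretic derivation of the $\lambda=1$ modes from $\partial_T$ and $\partial_{x_0}$, which the paper replaces by "direct calculation".
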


\begin{proof}
	Let $\varepsilon=-\frac{\o_0}{2}+\frac{1}{2}$ and $\delta > 0$.  Then take $\kappa$ defined by Proposition \ref{Prop:Structure_Spectrum}, and introduce the following two sets
	\begin{align*}
		\Omega = \{ z \in \C : \Re z \geq -\frac{\o_0}{2} \text{ and } |z|\leq  \kappa\} \quad \text{and} \quad \tilde{\Omega}=\{z\in\C : \Re z\geq -\frac{\o_0}{2}\}\setminus\Omega.
	\end{align*}
	Note that Proposition~\ref{Prop:Structure_Spectrum} implies that $\tilde{\Omega}\subset \rho(\mb L_a)$ for all $a \in \overline{\B_\d}$. Hence, we only need to investigate the spectrum in the compact set $\Omega$. First, note that by Proposition \ref{Prop:Structure_Spectrum}, the set $\Omega$ contains a finite number of eigenvalues. By a direct calculation it can be checked that $\mb q_a^{(j)}$, $
	\mb g_a^{(k)}$, and $\mb h_a$ are eigefunctions that correspond to $\la_0=0$, $\la_1=1$, and $\la_2=3$ respectively. Note that we get the explicit expression above by just Lorentz transforming the corresponding eigenfunctions for $a=0$. We now show  that there are no other eigenvalues in $\Omega$. For this, we utilize the Riesz projection onto the spectrum contained in $\Omega$, see \eqref{Eq:Riesz_a} below. This, however, necessitates that $\partial\Omega\subset\rho(\mb L_a)$, and we now show that this holds for small enough $a$. First, note that for $\la \in \partial \Omega$ we have the following identity 
	\begin{align} \label{5.16}
		\l-\mb L_a=\big[ 1-(\mb L'_a-\mb L'_0)\mb R_{\mb L_0}(\l) \big](\l-\mb L_0).
	\end{align}
	Then, from Proposition \ref{group-Sa}, we have that 
	\[
	 \nr{\mb L'_a-\mb L'_0}\nr{\mb R_{\mb L_0}(\lambda)} \lesssim |a| \max_{\lambda \in \partial \Omega} \nr{\mb R_{\mb L_0}(\lambda)} 
	\]
	for all $a \in \overline{\B_\d}$. Therefore, there is small enough $\delta^* >0$ such that
	\begin{align}\label{Eq:Resolvent_Est_a_0}
		\nr{\mb L'_a-\mb L'_0}\nr{\mb R_{\mb L_0}(\lambda)}<1,
	\end{align}
	for all $\lambda \in \partial \Omega$, and all $a\in\overline{\B_{\d^*}}$. Now from \eqref{Eq:Resolvent_Est_a_0} and \eqref{5.16} we infer that $\partial\Omega\subset\rho(\mb L_a)$ for all $a\in\overline{\B_{\d^*}}$.
	Thereupon we define the projection
	\begin{align}\label{Eq:Riesz_a}
		\tilde {\mb T}_a =\frac{1}{2\pi i}\int_{\partial \Omega}\mb R_{\mb L_a}(\l)d\l.
	\end{align}
For $a=0$, by Lemma~\ref{projections} the rank of the operator $ \tilde {\mb T}_a$ is $20$.
	 Furthermore, continuity of $a \mapsto \mb R_{\mb L_a}(\l)$ (which follows from Eq.~\eqref{5.16}) implies continuity of $a \mapsto \tilde {\mb T}_a$ on $\overline{\B_{\d^*}}$.  Thus, we conclude that $\dim \ran \tilde {\mb T}_a = 20$ for all $a \in \overline{\B_{\d^*}}$, see e.g.~\cite{Kato}, p.~34, Lemma~4.10. By this, we exclude and further eigenvalues. Lipschitz continuity for the eigenfunctions follows from the fact that they depend smoothly on $a$, c.f.~ \eqref{Eq:Lipschitzbounds_Ua} and \eqref{Eq:Selfsim_Sol_Lipschitz}.
\end{proof}

\section{Perturbations around $\mb U_a$ - Bounds for the linearized time-evolution}

We fix $\d^*>0$ as in Lemma \ref{Le:Pert_Spectrum} for the rest of this paper. In this section we propagate Lemma \ref{projections} to $\mb L_a$. 
 For that, given $a \in \overline{\B_{\delta^*}}$  we define the Riesz projections 
\[
\mb H_a:=\frac{1}{2\pi i}\int_{\g_2}\mb R_{\mb L_a}(\l)d\l, \quad \mb P_a:=\frac{1}{2\pi i}\int_{\g_1}\mb R_{\mb L_a}(\l)d\l, \quad \mb Q_a:=\frac{1}{2\pi i}\int_{\g_0}\mb R_{\mb L_a}(\l)d\l,
\]
where $\g_j(s)=\l_j+\frac{\o_0}{4}e^{2\pi is}$ for $s\in[0,1]$. 

\begin{lemma} \label{Rieszprojections2}
	We have that that
	\[
	\ran \mb H_a=\mathrm{span}\,(\mb h_a), \quad \ran\mb P_a=\mathrm{span}\,(\mb g_a^{(0)},\dots,\mb g_a^{(9)}), \quad \ran \mb Q_a=\mathrm{span} \, (\mb q_a^{(1)},\dots,\mb q_a^{(9)}),
	\]
	for all $a \in \overline{\B_{\delta^*}}$.
	 Moreover, the projections are mutually transversal,
	\[
	\mb H_a \mb  P_a=\mb P_a \mb H_a=\mb H_a \mb Q_a=\mb Q_a\mb H_a=\mb Q_a \mb  P_a= \mb P_a \mb Q_a=0,
	\]
	and depend Lipschitz continuously on the parameter $a$, i.e.,
	\[
	\nr{\mb H_a-\mb H_b}+\nr{\mb P_a-\mb P_b}+\nr{\mb Q_a-\mb Q_b}\lesssim |a-b|
	\]
	for all $a,b\in\overline{\mathbb{B}^9_\ds}$.
\end{lemma}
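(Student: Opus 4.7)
The plan is to leverage the spectral structure from Lemma~\ref{Le:Pert_Spectrum} together with continuous dependence of Riesz projections on the parameter $a$. First I would verify that, possibly after shrinking $\delta^*$, each contour $\gamma_j$ lies in $\rho(\mb L_a)$ for every $a \in \overline{\B_{\delta^*}}$: by Lemma~\ref{Le:Pert_Spectrum} the spectrum of $\mb L_a$ inside $\{\Re \lambda \geq -\omega_0/2\}$ is exactly $\{0,1,3\}$, and the circles of radius $\omega_0/4$ around these points stay clear of the rest of the spectrum. This makes the definitions of $\mb H_a, \mb P_a, \mb Q_a$ legitimate uniformly in $a \in \overline{\B_{\delta^*}}$.

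Next I would establish the Lipschitz estimate on the projections via the second resolvent identity
\[
\mb R_{\mb L_a}(\lambda) - \mb R_{\mb L_b}(\lambda) = \mb R_{\mb L_a}(\lambda)(\mb L_a - \mb L_b)\mb R_{\mb L_b}(\lambda),
\]
combined with the uniform bound on the resolvent along each (compact) contour $\gamma_j$ inherited from Proposition~\ref{Prop:Structure_Spectrum} and the bound $\nr{\mb L_a - \mb L_b} \leq K|a-b|$ from Proposition~\ref{group-Sa}. Integrating over $\gamma_j$ then yields
\[
\nr{\mb H_a - \mb H_b} + \nr{\mb P_a - \mb P_b} + \nr{\mb Q_a - \mb Q_b} \lesssim |a-b|,
\]
which is the stated Lipschitz property.

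For the range identification I would argue as in the proof of Lemma~\ref{Le:Pert_Spectrum}: the projections are finite-rank (they project onto the algebraic eigenspaces of the isolated eigenvalues $\lambda_j$, which by Lemma~\ref{projections} are $1$-, $10$-, and $9$-dimensional at $a=0$), and a norm-continuous family of finite-rank projections has locally constant rank, see Kato, Lemma~4.10, p.~34. Since $\mb h_a$, $\mb g_a^{(k)}$, $\mb q_a^{(j)}$ are eigenvectors of $\mb L_a$ at the corresponding eigenvalues by Lemma~\ref{Le:Pert_Spectrum}, they lie in $\ran \mb H_a$, $\ran \mb P_a$, $\ran \mb Q_a$ respectively; the Lipschitz dependence on $a$ from Lemma~\ref{Le:Pert_Spectrum} together with linear independence at $a=0$ from Proposition~\ref{spectrumofL0} guarantees that these vectors remain linearly independent for $a\in\overline{\B_{\delta^*}}$ after possibly shrinking $\delta^*$. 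A dimension count then produces the claimed spanning statements.

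Finally, transversality is the textbook property that Riesz projections associated to pairwise disjoint contours annihilate each other: for $\lambda \in \gamma_j$ and $\mu \in \gamma_k$ with $j \neq k$ the first resolvent identity gives $\mb R_{\mb L_a}(\lambda)\mb R_{\mb L_a}(\mu) = (\mu-\lambda)^{-1}\bigl(\mb R_{\mb L_a}(\lambda) - \mb R_{\mb L_a}(\mu)\bigr)$, and Cauchy's theorem applied to the appropriate factor in the resulting double integral over $\gamma_j \times \gamma_k$ gives zero. The principal thing to check is uniformity in $a$, and this is built into the bounds above; I expect no genuine obstacle, only careful bookkeeping of the perturbative estimates.
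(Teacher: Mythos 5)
Your proposal is correct and follows essentially the same route as the paper: continuity (indeed Lipschitz continuity via the second resolvent identity and Proposition~\ref{group-Sa}) of the finite-rank Riesz projections gives locally constant rank, the explicit eigenfunctions from Lemma~\ref{Le:Pert_Spectrum} identify the ranges by a dimension count against Lemma~\ref{projections}, and transversality is the standard disjoint-contour computation. The only cosmetic point is that the uniform resolvent bound on the compact contours comes from the Neumann-series identity \eqref{5.16} rather than from Proposition~\ref{Prop:Structure_Spectrum}, which only covers $|\lambda|\geq\kappa$.
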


\begin{proof}
	 The Riesz projections depend continuously on $a$, hence the dimensions of the ranges remain the same. Transversality follows from the definition of Riesz projections. The Lipschitz bounds follow from the second resolvent identity and Proposition~\ref{group-Sa}. 
\end{proof}

Since $\mb P_a$ and $\mb Q_a$ are finite-rank, for every $\mb f\in\HH$ there are $\a^k\in\C$ and $\b^j\in\C$, such that
\begin{align*}
	\mb P_a \mb f=\sum_{k=0}^9\a^k \mb g_a^{(k)}, \text{ and } \mb Q_a \mb f=\sum_{j=1}^9\b^j \mb q_a^{(j)}.
\end{align*}
We thereby define the projections
\begin{align*}
	\mb P_a^{(k)} \mb f:=\a^k \mb g_a^{(k)}, \text{ and } \mb Q_a^{(j)}\mb f:=\b^j \mb q_a^{(j)}.
\end{align*}
Clearly, the projections satisfy the following identities,
\begin{align*}
	\mb P_a=\sum_{k=0}^9 \mb P^{(k)}_a, \quad \mb Q_a=\sum_{j=1}^9 \mb Q_a^{(j)},
\end{align*}
and
\begin{align*}
	\mb P_a^{(i)} \mb P_a^{(j)}=\d^{ij}\mb P_a^{(i)}, \quad \mb Q_a^{(k)} \mb Q_a^{(l)}=\d^{kl}\mb Q_a^{(k)}.
\end{align*}
We also define
\begin{align*}
	\mb T_a:=\mb I-\mb H_a-\mb P_a-\mb Q_a.
\end{align*}
By Lemma~\ref{Rieszprojections2}, we have that $\mb T_a$ is Lipschitz continuous with respect to $a$, and the projections $\mb T_a$, $\mb H_a$, $\mb P_a^{(k)}$, and $\mb Q_a^{(j)}$ are mutually transversal. Moreover, the Lipschitz continuity of $\mb Q_a$ and $\mb P_a$ with respect to $a$ implies that
\[
\|\mb Q_a^{(j)}-\mb Q_b^{(j)} \|\lesssim |a-b|, \quad j=1,\dots,9,
\]
and
\[
\| \mb P_a^{(k)}-\mb P_b^{(k)} \| \lesssim |a-b|, \quad k=0,\dots,9,
\]
for all $a,b\in\overline{\mathbb{B}^9_\ds}$.
In the following proposition we describe the interaction of the semigroup $(\mb S_a(\tau))_{\tau\geq 0}$ with these projections.  

\begin{proposition}\label{commute} 
The projection operators $\mb H_a$, $\mb P_a^{(k)}$, and $\mb Q_a^{(j)}$ commute with the semigroup $\mb S_a(\t)$, i.e.,
\begin{align}\label{Eq.comm_semigroup}
	[\mb S_a(\t),\mb H_a]=[\mb S_a(\t),\mb P_a^{(k)}]=[\mb S_a(\t),\mb Q_a^{(j)}]=0,
\end{align}
for $j = 1,  \dots, 9$, $k = 0, \dots 9$, and $\tau \geq 0$. Furthermore,
	\begin{align}\label{Eq.exp-growth}
		\mb S_a(\t)\mb H_a=e^{3\t}\mb H_a, \quad \mb S_a(\t)\mb P_a^{(k)}=e^\t \mb P_a^{(k)}, \quad \mb S_a(\t)\mb Q_a^{(j)}=\mb Q_a^{(j)},
	\end{align}
and there exists $\o >0$ such that
	\begin{equation}\label{Eq.exp-bound1}
		\nr{\mb S_a(\t)\mb T_a\mb u}\lesssim e^{-\o\t}\nr{\mb T_a\mb u}
	\end{equation}
    for all $\mb u\in\HH$, $a \in \overline{\mathbb{B}^9_\ds}$ and $\tau \geq 0$.
	Moreover, we have that
	\begin{equation}\label{Eq.exp-bound2}
		\nr{\mb S_a(\t)\mb T_a-\mb S_b(\t)\mb T_b}\lesssim e^{-\o\t}|a-b|,
	\end{equation}
	for all $a,b\in\overline{\mathbb{B}^9_\ds}$ and $\t\geq 0$.
\end{proposition}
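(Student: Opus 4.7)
The plan is to tackle the four claims in order. For the commutation relations \eqref{Eq.comm_semigroup}, I use the standard fact that the resolvent $\mb R_{\mb L_a}(\lambda)$ commutes with $\mb S_a(\tau)$ for every $\lambda \in \rho(\mb L_a)$; since $\mb H_a$, $\mb P_a$, $\mb Q_a$ are contour integrals of $\mb R_{\mb L_a}(\cdot)$, they inherit commutativity with $\mb S_a(\tau)$, and the sub\-projections $\mb P_a^{(k)}$, $\mb Q_a^{(j)}$ do too by linearity once one has the growth formulas proved next, using mutual transversality of the Riesz projections. For \eqref{Eq.exp-growth}, the key input is Lemma \ref{Rieszprojections2}: each range is a \emph{geometric} eigenspace of the corresponding eigenvalue (no Jordan structure), so $\mb L_a$ restricted to it is multiplication by $\lambda_j$ and hence $\mb S_a(\tau)$ acts there as $e^{\lambda_j \tau}\cdot I$.

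For the exponential stability bound \eqref{Eq.exp-bound1}, the strategy is to exploit the spectral picture from Lemma \ref{Le:Pert_Spectrum}. Since $\mb T_a$ commutes with $\mb S_a(\tau)$, the restriction $\mb S_a(\tau)|_{\mb T_a \mc H}$ is a $C_0$-semigroup on a closed invariant subspace whose generator satisfies
\begin{equation*}
\sigma(\mb L_a|_{\mb T_a \mc H}) = \sigma(\mb L_a) \setminus \{0,1,3\} \subset \{\lambda \in \C : \Re \lambda \leq -\omega_0/2\}.
\end{equation*}
Uniform resolvent bounds in $a \in \overline{\mathbb B^9_{\delta^*}}$ follow from Proposition \ref{Prop:Structure_Spectrum} for large $|\lambda|$, and from compactness of the parameter set together with continuity of $a \mapsto \mb R_{\mb L_a}(\lambda)$ on the remaining compact strip (which, after restriction to $\mb T_a \mc H$, lies entirely in the resolvent set). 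Invoking the Gearhart-Prüss theorem on the Hilbert space $\mb T_a \mc H$ (or equivalently the spectral mapping theorem for compact perturbations from \cite{Glo21}, Theorem B.1) then yields \eqref{Eq.exp-bound1} for some $\omega \in (0, \omega_0/2)$, uniformly in $a$.

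The Lipschitz bound \eqref{Eq.exp-bound2} is the delicate step, and I would prove it by Duhamel. Setting $\mb V(\tau) := \mb S_a(\tau)\mb T_a - \mb S_b(\tau)\mb T_b$ and differentiating gives
\begin{equation*}
\mb V(\tau) = \mb S_a(\tau)(\mb T_a - \mb T_b) + \int_0^\tau \mb S_a(\tau - s)(\mb L_a' - \mb L_b')\mb S_b(s)\mb T_b\, ds.
\end{equation*}
I would then split $\mb V = \mb T_a \mb V + (I - \mb T_a)\mb V$. The off-diagonal component simplifies, using $(I - \mb T_a)\mb S_a(\tau)\mb T_a = 0$, to $(I - \mb T_a)\mb V(\tau) = -(I - \mb T_a)\mb S_b(\tau)\mb T_b$; expanding $I - \mb T_a = \mb H_a + \mb P_a + \mb Q_a$ and using $\mb H_b \mb T_b = 0$ (and similarly for $\mb P_b, \mb Q_b$) one rewrites, e.g., $\mb H_a \mb S_b(\tau)\mb T_b = (\mb H_a - \mb H_b)\mb S_b(\tau)\mb T_b$, which together with \eqref{Eq.exp-bound1} and the Lipschitz estimates of Lemma \ref{Rieszprojections2} yields $\|(I - \mb T_a)\mb V(\tau)\| \lesssim |a - b|e^{-\omega\tau}$. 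For the diagonal component, applying $\mb T_a$ to the Duhamel identity and using commutativity with $\mb S_a(\tau - s)$ together with \eqref{Eq.exp-bound1} and Proposition \ref{group-Sa} produces $\|\mb T_a \mb V(\tau)\| \lesssim (1+\tau)e^{-\omega\tau}|a-b|$, and the linear factor is absorbed by replacing $\omega$ with any strictly smaller constant. The main obstacle is that a naive norm estimate of the Duhamel term would pick up exponential growth $e^{3(\tau-s)}$ from the unstable subspace of $\mb L_a$; the resolution is to project via $\mb T_a$ first, using transversality of the Riesz projections and their Lipschitz dependence on the parameter, so that only the stable-to-stable interaction contributes.
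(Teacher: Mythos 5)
Your proposal is correct and follows essentially the same route as the paper: commutation via the Riesz-projection/resolvent calculus and transversality, the growth formulas from the trivial Jordan structure established in Lemma \ref{Rieszprojections2}, the decay bound \eqref{Eq.exp-bound1} via uniform resolvent estimates and Gearhart--Pr\"uss on $\ran \mb T_a$, and the Lipschitz bound \eqref{Eq.exp-bound2} via a Duhamel identity combined with $\|\mb L'_a-\mb L'_b\|\lesssim|a-b|$ and the Lipschitz continuity of the projections. Your explicit splitting of $\mb V$ into $\mb T_a\mb V$ and $(I-\mb T_a)\mb V$ is a slightly more careful organization of the same Duhamel argument the paper carries out with the projection $\mb T_a$ inserted from the start, and it correctly addresses the potential $e^{3\tau}$ growth.
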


\begin{proof}
	 Eq.~\eqref{Eq.comm_semigroup} follows from the properties of the Riesz projections $\mb H_a$, $\mb P_a$ and $\mb Q_a$. In particular, they commute with $\mb S_a(\tau)$, and this yields, for example,  that
	\begin{align*}
		\mb P^{(k)}_a\mb S_a(\t) \mb u=\mb P_a \mb P^{(k)}_a \mb S_a(\t)\mb u= \mb P^{(k)}_a \mb S_a\mb P_a(\t)\mb u=e^\t \mb P^{(k)}_a\mb P_a\mb u=\mb S_a(\t)\mb P^{(k)}_a\mb u.
	\end{align*}
	Eq.~\eqref{Eq.exp-growth} follows from the correspondence between point spectra of a semigroup and its generator.
	Eq.~\eqref{Eq.exp-bound1} follows from Gearhart-Pr\"uss Theorem. More precisely, we have that $\ran \mb T_a$ reduces both $\mb L_a$ and $\mb S_a(\tau)$, and furthermore
	$\mb R_{\mb L_a|_{\ran \mb T_a}}(\l)$ exists in $\{ z\in\C: \Re z \geq -\frac{\o_0}{2} \}$ and is uniformly bounded there, i.e., according to Proposition~\ref{Prop:Structure_Spectrum} there exists $c>0$ such that
	\begin{align*}
		\nr{\mb R_{\mb L_a|_{\ran \mb T_a}}(\l)}\leq c
	\end{align*}
	for all $\Re \l\geq -\frac{\o_0}{2}$ and all $a\in \overline{\B_\ds}$. Hence, by Gearhart-Pr\"uss theorem (see \cite{Engel}, page 302, Theorem 1.11), for every $\eps>0$ we have that
	\begin{align}\label{eq.GP}
		\nr{\mb S_a(\t)|_{\ran \mb T_a}}\lesssim_\ve e^{-\left(\frac{\o_0}{2}-\eps \right)\t}
	\end{align}
	for all $a \in \overline{\B_\ds}$ and $\tau \geq 0$. From here Eq.~\eqref{Eq.exp-bound1} holds for any $\o < \frac{\o_0}{2}$. 
	We remark in passing that Eq.~\eqref{Eq.exp-bound1} also follows from purely abstract considerations, see \cite{Glo21}, Theorem B.1.	Finally, to obtain the estimate \eqref{Eq.exp-bound2} we do the following. First, for $\mb u\in \DD(\mb L_a)$ we define the function
	\begin{align*}
		\Phi_{a,b}(\t)=\frac{\mb S_a(\t)\mb T_a\mb u-\mb S_b(\t)\mb T_b\mb u}{|a-b|}.
	\end{align*}
	Note that this function satisfies the evolution equation
	\begin{align*}
		\pt_\t\Phi_{a,b}(\t)=\mb L_a\mb T_a\Phi_{a,b}(\t)+\frac{\mb L_a\mb T_a-\mb L_b\mb T_b}{|a-b|}\mb S_b(\t)\mb T_b \mb u,
	\end{align*}
	with the initial condition $$
	\Phi_{a,b}(0)=\frac{\mb T_a\mb u-\mb T_b\mb u}{|a-b|},
	$$
 and therefore by Duhamel's principle we have
	\begin{align*}
		\Phi_{a,b}(\tau) = \mb S_a(\tau)  \mb T_a \frac{ \mb T_a \mb u - \mb T_b \mb u}{|a-b|} + \int_0^{\tau} \mb S_a(\tau - \tau')\mb T_a  
		\frac{\mb L_a \mb T_a - \mb L_b \mb T_b}{|a-b|} \mb S_b(\tau') \mb T_b \mb u ~ d\tau'. 
	\end{align*}
	Now, from Proposition~\ref{group-Sa} and Lemma \eqref{Rieszprojections2} we get that
	\begin{align*}
		\nr{\mb L_a\mb T_a-\mb L_b\mb T_b}\lesssim |a-b|,
	\end{align*}
	and from this and Eq.~\eqref{eq.GP} we obtain
	\begin{align*}
		\nr{\Phi_{a,b}(\t)}\lesssim e^{-\left( \frac{\o_0}{2}-\eps\right)\t}(1+\t)\nr{\mb u}\lesssim e^{-\left(\frac{\o_0}{2}-2\eps \right)}\nr{\mb u}.
	\end{align*}
	By choosing $\eps>0$ such that $\o=\frac{\o_0}{2}-2\eps>0$, we conclude the proof.
\end{proof}

\section{Nonlinear theory}\label{Sec:Nonl_U}


With the linear theory at hand, in this section we turn to studying the Cauchy problem for the nonlinear equation \eqref{rewritten}. Following the usual approach of first constructing strong solutions, we recast Eq.~\eqref{rewritten} in an integral form \`a la Duhamel
\begin{align} \label{Duhamel}
	\Phi(\t)=\mb S_{a_\infty}(\t)\Phi(0)+\int_0^\t \mb S_{a_\infty}(\t-\s)(\mb G_{a(\s)}(\Phi(\s))-\partial_\s \mb U_{a(\s)} )d\s
\end{align}
(where $(\mb S_{a_\infty}(\t))_{\tau \geq 0}$ is the semigroup generated by $\mb L_{a_\infty}$), and resort to fixed point arguments. Our aim is to construct global and decaying solutions to \eqref{Duhamel}. An obvious obstruction to that is the presence of growing modes of $\mb S_{a_\infty}(\tau)$, see \eqref{Eq.exp-growth}, and we deal with them in the following way. First, we note that the instabilities coming from $\mb Q_{a_\infty}$ and $\mb P_{a_\infty}$ are not genuine, as they are given rise to by the Lorenz and space-time translation symmetries of Eq.~\eqref{NLW:p}. 

We take care of the Lorenz instability by modulation. Namely, the presence of the unstable space $\ran \mb Q_{a_\infty}$ is related to the freedom of choice of the function $a:[0,\infty) \mapsto \R^9$ in the ansatz \eqref{Def:Psi_modulation_ansatz}, and, roughly speaking, we prove that given small enough initial data $\Phi(0)$, there is a way to choose $a$ such that it leads to a solution $\Phi$ of Eq.~\eqref{Duhamel} which eventually (in $\tau$) gets stripped off of any remnant of the unstable space $\ran\mb Q_{a_\infty}$ brought about by initial data. 

With the rest of the instabilities, which cause exponential growth, we deal differently. Namely, we introduce to the initial data suitable correction terms which serve to suppress the growth. Also, as mentioned, the unstable space $\ran \mb P_{a_\infty}$ is another apparent instability as it is an artifact of the space-time translation symmetries, and we use it to prove that the corrections corresponding to $\mb P_{a_\infty}$ can be annihilated by a proper choice of the parameters $x_0$ and $T$, which appear in the initial data $\Phi(0)$, see \eqref{Data}. 
 The remaining instability, coming from $\mb H_{a_\infty}$, is the only genuine one, and the correction corresponding to it is reflected in the modification of the initial data in the main result, see \eqref{Eq:Initial_Data}.\\

 To formalize the process described above, we first make some technical preparations. For the rest of this paper, we fix $\o>0$ from Proposition \ref{commute}. Then, we introduce the following function spaces
\begin{align*}
	\mathcal{X}:=\{\Phi\in C([0,\infty),\HH):\nr{\Phi}_{\mathcal{X}}<\infty\},& \quad \text{where} \quad \nr{\Phi}_\mathcal{X}:=\sup_{\t>0}e^{\o\t}\nr{\Phi(\t)},
	\\
	X:=\{a\in C^1([0,\infty),\R^9): a(0)=0, \nr{a}_X<\infty\},& \quad \text{where} \quad  \nr{a}_X:=\sup_{\t>0}[e^{\o\t}|\dot{a}(\t)|+|a(\t)|].
\end{align*}
For $a\in X$, we define
\[
a_\infty:=\lim_{\t\rightarrow\infty}a(\t).
\]
Furthermore, for $\d>0$ we set
\begin{align*}
	\mathcal{X}_\d:=\{\Phi\in\mathcal{X}:\nr{\Phi}_\mathcal{X}\leq \d\}, \quad X_\d:=\{a\in X: \sup_{\t>0}[e^{\o\t}|\dot{a}(\t)|]\leq\d\}.
\end{align*}
To ensure that all terms in Eq.~\eqref{Duhamel} are defined, we must impose some size restriction on the function $a$. Note that is enough to consider $a\in X_\d$ for $\delta < \ds \o$, as then $|a(\t)|\leq \d/\o<\ds$ for all $\t\geq 0$. We will also frequently make use of the inequality
\begin{equation}\label{inequality}
	|a_\infty-a(\t)|\leq \int^\infty_\t|\dot{a}(\s)|d\s\leq \frac{\d}{\o}e^{-\o\t}.
\end{equation}
Furthermore, note that for $a,b\in X_\d$ and $\t\geq 0$ we have $|a(\t)-b(\t)|\leq \nr{a-b}_X$, in particular, we have that  $|a_\infty-b_\infty|\leq \nr{a-b}_X$.\\

\subsection{Estimates of the nonlinear terms}
With an eye toward setting up a fixed point scheme for Eq.~\eqref{Duhamel}, we now establish necessary bounds for the nonlinear terms. Namely, we treat 
\begin{align*}
	\mb G_{a(\t)}(\Phi(\t))=[\mb L'_{a(\t)}-\mb L'_{a_\infty}]\Phi(\t) + \mb F(\Phi(\t)).
\end{align*}

\begin{lemma} \label{lemma.nonlinearbounds}
	Given $\d \in(0,\d^*\omega)$ we have that 
	\begin{equation}\label{Eq:G_est}
	\begin{aligned}
		\nr{\mb G_{a(\t)}(\Phi(\t))}& \lesssim \d^2e^{-2\o\t} , 
		\\
		\nr{\mb G_{a(\t)}(\Phi(\t))-\mb G_{b(\t)}(\Psi(\t))}& \lesssim \d e^{-2\o\t}\left(\nr{\Phi-\Psi}_\mathcal{X}+\nr{a-b}_X \right), 
	\end{aligned}
\end{equation}
	for all $\Phi,\Psi\in \mathcal{X}_\d$, $a,b\in X_\d$, and $\t\geq 0$, where the implicit constants in the above estimates are absolute.
\end{lemma}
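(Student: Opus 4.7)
The plan is to exploit three ingredients already established in the paper: (i) the multiplicative estimate $\|\mb F(\mb u)\| \lesssim \|\mb u\|^2$ coming from the Sobolev embedding $H^{(d+1)/2}(\mathbb{B}^d) \hookrightarrow L^\infty$; (ii) the Lipschitz bound $\|V_a - V_b\|_{W^{(d-1)/2,\infty}(\mathbb{B}^d)} \lesssim |a-b|$ for $a,b \in \overline{\mathbb{B}^d_{\delta^*}}$ (Eq.~\eqref{Eq:Selfsim_Sol_Lipschitz} together with Sobolev embedding), which also extends to higher derivatives in $a$; and (iii) the temporal decay $|a(\tau) - a_\infty| \leq \frac{\delta}{\omega} e^{-\omega\tau}$ from \eqref{inequality} for $a \in X_\delta$. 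Since $\delta \in (0, \delta^*\omega)$, all parameters $a(\tau), a_\infty$ lie in $\overline{\mathbb{B}^d_{\delta^*}}$, so estimate (ii) is applicable throughout.

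For the first estimate of \eqref{Eq:G_est}, I would split $\mb G_{a(\tau)}(\Phi(\tau)) = [\mb L'_{a(\tau)} - \mb L'_{a_\infty}]\Phi(\tau) + \mb F(\Phi(\tau))$ and bound each piece separately. The potential term reduces to $\|(V_{a(\tau)} - V_{a_\infty})\phi_1(\tau)\|_{H^{(d-1)/2}}$, which by (ii) and (iii) is controlled by
\[ |a(\tau) - a_\infty| \cdot \|\phi_1(\tau)\|_{H^{(d-1)/2}} \lesssim \tfrac{\delta}{\omega} e^{-\omega\tau} \cdot \delta e^{-\omega\tau} \lesssim \delta^2 e^{-2\omega\tau}. \]
The nonlinear term is handled by $\|\mb F(\Phi(\tau))\| \lesssim \|\Phi(\tau)\|^2 \leq \delta^2 e^{-2\omega\tau}$.

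For the Lipschitz estimate, I would write the difference of the potential parts as
\[ W_{a,\tau}(\phi_1 - \psi_1) + (W_{a,\tau} - W_{b,\tau})\psi_1, \]
where $W_{c,\tau} := V_{c(\tau)} - V_{c_\infty}$. The first piece is bounded, as above, by $\frac{\delta}{\omega}e^{-\omega\tau} \cdot e^{-\omega\tau}\|\Phi - \Psi\|_{\mathcal{X}}$. The main work lies in showing $\|W_{a,\tau} - W_{b,\tau}\|_{W^{(d-1)/2,\infty}} \lesssim e^{-\omega\tau}\|a-b\|_X$. I would establish this by introducing $g(s) := V_{a_\infty + s(a(\tau) - a_\infty)} - V_{b_\infty + s(b(\tau) - b_\infty)}$ so that $W_{a,\tau} - W_{b,\tau} = g(1) - g(0) = \int_0^1 g'(s)\,ds$, then add and subtract to split $g'(s)$ into a ``first-order'' difference $[(a(\tau) - a_\infty) - (b(\tau) - b_\infty)]^j \partial_{a^j} V_{(\cdot)}$ and a ``second-order'' difference $(b(\tau) - b_\infty)^j[\partial_{a^j} V_{\cdot} - \partial_{a^j} V_{\cdot}]$. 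The first is controlled using $|(a(\tau) - a_\infty) - (b(\tau) - b_\infty)| \leq \int_\tau^\infty |\dot a - \dot b|\,ds \leq \frac{1}{\omega}e^{-\omega\tau}\|a - b\|_X$; the second uses $|b(\tau) - b_\infty| \lesssim \frac{\delta}{\omega}e^{-\omega\tau}$ combined with the (uniform in $a$) Lipschitz bound on $\partial_{a^j} V_a$. Multiplying by $\|\psi_1(\tau)\| \leq \delta e^{-\omega\tau}$ yields the required $\delta e^{-2\omega\tau}\|a-b\|_X$ bound. Finally, the nonlinearity difference $\mb F(\Phi) - \mb F(\Psi) = (0, (\phi_1+\psi_1)(\phi_1-\psi_1))$ is handled, via the algebra property of $H^{(d+1)/2}$, by $\|\Phi(\tau) + \Psi(\tau)\|\cdot\|\Phi(\tau) - \Psi(\tau)\| \lesssim \delta e^{-2\omega\tau}\|\Phi - \Psi\|_{\mathcal{X}}$.

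The only conceptually delicate step is the ``second-order'' Lipschitz bound on $W_{a,\tau} - W_{b,\tau}$, since a crude estimate would only yield $\lesssim \|a - b\|_X$ without the crucial extra factor $e^{-\omega\tau}$. Extracting that extra decay is what forces the path-integral decomposition of $g'(s)$ described above; everything else follows from straightforward applications of the product rule, Sobolev embedding, and the decay properties encoded in the norms $\|\cdot\|_{\mathcal{X}}$ and $\|\cdot\|_X$.
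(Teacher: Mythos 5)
Your proposal is correct and follows essentially the same route as the paper: the same splitting of $\mb G$ into the potential-difference part and $\mb F$, the algebra property of $H^{\frac{d+1}{2}}(\mathbb B^d)$ for the nonlinearity, and control of the delicate term via $\int_\tau^\infty|\dot a-\dot b|\,ds\lesssim e^{-\o\tau}\|a-b\|_X$ together with the Lipschitz dependence of $V_a$ (and $\partial_{a^j}V_a$) on $a$. The only cosmetic difference is that you realize $W_{a,\tau}-W_{b,\tau}$ via a straight-line homotopy in parameter space, whereas the paper writes $V_{a_\infty}-V_{a(\tau)}=\int_\tau^\infty\dot a^k(s)\,\partial_{a^k}V_{a(s)}\,ds$ and differences the integrands along the actual modulation paths; both reduce to the same two controlled quantities.
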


\begin{proof}
	First, since $H^5(\B)$ is a Banach algebra we have that
	\[
	\nr{u_1^2 - v_1^2}_{H^4(\B)}\lesssim \nr{u_1 + v_1}_{H^5} \nr{u_1 - v_1}_{H^5},
	\]
	and hence
	\begin{align}\label{Eq:Nonl_Lipschitz}
		\nr{\mb F(\mb u)-\mb F(\mb v)}\lesssim (\nr{\mb u}+\nr{\mb v})\nr{\mb u-\mb v},
	\end{align}
	for all $\mb u,\mb v\in  \HH$. Next, we prove the second estimate in Lemma \ref{lemma.nonlinearbounds}, as the first one follows from it. From Eq.~\eqref{Eq:Nonl_Lipschitz}, Proposition~\ref{group-Sa}, and inequality \eqref{inequality} we obtain
	\begin{equation}\label{Eq:G_ests}
		\begin{aligned}
		\nr{\mb F(\Phi(\t))-\mb F(\Psi(\t))} \lesssim \d e^{-2\o\t}\nr{\Phi-\Psi}_\mathcal{X},\\
		\nr{[\mb L'_{a(\t)}-\mb L'_{a_\infty}](\Phi(\t)-\Psi(\t))}\lesssim \d e^{-2\o\t}\nr{\Phi-\Psi}_\mathcal{X},
	\end{aligned}
	\end{equation} 
	for $\Phi,\Psi\in \mathcal{X}_\d$ and $a  \in X_\d$. Furthermore, using the fact that 
	\begin{align}
		V_{a_\infty}(\xi) - V_{a(\tau)}(\xi) = 
		\int_{\tau}^{\infty} \partial_{s} V_{a(s)}(\xi) ds = 
		\int_{\tau}^{\infty} \dot a^{k}(s) \varphi_{a(s),k}(\xi) ds
	\end{align}
	with $\varphi_{a,k}(\xi) = \partial_{a^{k}} V_{a}(\xi)$, together with the smoothness of $\varphi_{a,k}$ we infer that 
	\begin{align*}
		\nr{\big([\mb L'_{a(\t)}-\mb L'_{a_\infty}]-[\mb L'_{b(\t)}-\mb L'_{b_\infty}]\big)\mb u}  & \lesssim  \| u_1 \|_{H^4(\B)}  \int_{\tau}^{\infty} \| \dot a^{k}(s)  \varphi_{a(s),k}(\xi)  - \dot b^{k}(s)  \varphi_{b(s),k}(\xi) \|_{W^{4,\infty}(\B)}  ds \\
		& \lesssim \|\mb u \|  \int_{\tau}^{\infty} | \dot a(s) - \dot b(s)|ds + \|\mb u \|  \int_{\tau}^{\infty} | \dot a(s)| | a(s) -  b(s)|ds \\
		& \lesssim \|\mb u\|   \int_{\tau}^{\infty} e^{-\omega s} \|a -b \|_X ds.
	\end{align*} 
	Hence 
	\begin{align*}
		\nr{\big([\mb L'_{a(\t)}-\mb L'_{a_\infty}]-[\mb L'_{b(\t)}-\mb L'_{b_\infty}]\big)\Psi(\t)}\lesssim \d e^{-2\o\t}\nr{a-b}_X
	\end{align*}
	for $a,b \in X_\d$ and $\Psi \in \mc X_\d$, and this, together with \eqref{Eq:G_ests} concludes the proof.
\end{proof}


\subsection{Suppressing the instabilities}
 In this section we formalize the process of taming the instabilities. In particular, by introducing correction terms to the initial data we arrive at a modified equation, to which we prove existence of global and decaying solutions.\\

   We first derive the so-called modulation equation for the parameter $a$. Recall that $\pt_\tau \mb U_{a(\tau)} = \dot{a}_j(\tau)\mb q^{(j)}_{a(\tau)} =\sum_{j=1}^9 \dot{a}^j(\tau)\mb q^{(j)}_{a(\tau)}$, see Remark \ref{Rem:EF_0}. We introduce a smooth cut-off function $\chi:[0,\infty)\rightarrow[0,1]$ satisfying $\chi(\tau)=1$ for $\tau \in [0,1]$, $\chi(\tau)=0$ for $\tau\geq 4$, and $|\chi'(\tau)|\leq 1$ for all $\tau \in (0,\infty)$. The aim is to construct a function $a:[0,\infty) \mapsto \R^9$ such that it yields a solution $\Phi$ to Eq.~\eqref{Duhamel} for which
\begin{align} \label{eq.require}
	\mb Q_{a_\infty}^{(j)}\Phi(\tau)=\chi(\tau)\mb Q_{a_\infty}^{(j)} \Phi(0)
\end{align}
for all $\tau \geq 0$. In that case, although $\mb Q_{a_\infty}^{(j)}\Phi(0) \neq 0$ in general, we have that $\mb Q_{a_\infty}^{(j)}\Phi(\tau) = 0$ eventually in $\tau$. According to Eq.~\eqref{Duhamel} and Proposition~\ref{commute}, Eq.~\eqref{eq.require} adopts the following form
\begin{align*}
	(1-\chi(\tau))\mb Q^{(j)}_{a_\infty}\mb u+\int_0^\tau\left(\mb Q^{(j)}_{a_\infty}\mb G_{a(\s)}(\Phi(\s))-\mb Q^{(j)}_{a_\infty}\dot{a}_i(\s)\mb q^i_{a(\s)}\right) d\sigma=0,
\end{align*}
where for convenience we write $\mb u$ instead of $\Phi(0)$. Using $\mb Q^{(j)}_{a_\infty}\mb q^{(i)}_{a_\infty}=\d^{ij}\mb q^{(j)}_{a_\infty}$, we get the modulation equation
\begin{align*}
	a^j(\tau)\mb q^{(j)}_{a_\infty}=-\int_0^\tau \chi'(\s)\mb Q^{(j)}_{a_\infty}\mb u \,d\s+\int_0^\tau\left(\mb Q^{(j)}_{a_\infty}\mb G_{a(\s)} (\Phi(\s))-\mb Q^{(j)}_{a_\infty}\dot{a}_i(\s)(\mb q^{(i)}_{a(\s)}-\mb q^{(i)}_{a_\infty})\right)d\s,
\end{align*}
for $j=1,\dots,9$.
By introducing the notation
\begin{align*}
	\mb A_j(a,\Phi,\mb u)(\s) := \chi'(\s)\mb Q^{(j)}_{a_\infty}\mb u+\left(\mb Q^{(j)}_{a_\infty}\mb G_{a(\s)} (\Phi(\s))-\mb Q^{(j)}_{a_\infty}\dot{a}_i(\s)(\mb q^{(i)}_{a(\s)}-\mb q^{(i)}_{a_\infty})\right),
\end{align*}
the modulation equation can be written succinctly as
\begin{align} \label{eq-modulation}
	a_j(\tau)= A_j(\cdot, \Phi,\mb u) :=\nr{\mb q^{(j)}_{a_\infty}}^{-2}\int_0^\tau \left(\mb A_j(a,\Phi,\mb u)(\s)|\mb q^{(j)}_{a_\infty}\right)d\s, \quad j=1,\dots,9.
\end{align}
In the following we prove that for small enough $\Phi$ and $\mb u$, the system \eqref{eq-modulation} admits a global (in $\tau$) solution.

\begin{lemma} \label{lemma-modulation}
	For all sufficiently small $\d>0$ and all sufficiently large $C>0$ the following holds. For every $\mb u\in\HH$ satisfying $\nr{\mb u}\leq \frac{\d}{C}$ and every $\Phi \in \mathcal{X}_\d$, there exists a unique $a=a(\Phi, \mb u) \in X_\d$ such that \eqref{eq-modulation} holds for $\tau \geq 0$. Moreover,
	\begin{align}\label{Eq:Est_a}
		\nr{a(\Phi,\mb u)-a(\Psi,\mb v)}_X \lesssim \nr{\Phi-\Psi}_\mathcal{X} + \nr{\mb u-\mb v}
	\end{align}
	for all $\Phi, \Psi \in \mathcal{X}_\d$ and $\mb u,\mb v\in \mathcal{B}_{\d/C}$.
\end{lemma}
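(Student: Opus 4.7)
The plan is to solve the system \eqref{eq-modulation} by a Banach fixed-point argument in the complete metric space $X_\d$. For fixed $(\Phi,\mb u)\in \mc X_\d \times \mc B_{\d/C}$ I would define the operator $\mb K=(K_1,\dots,K_9):X_\d\to C^1([0,\infty),\R^9)$ componentwise by
\[
K_j(a)(\t):=\nr{\mb q^{(j)}_{a_\infty}}^{-2}\int_0^\t \bigl(\mb A_j(a,\Phi,\mb u)(\sigma)\,\big|\,\mb q^{(j)}_{a_\infty}\bigr)\,d\sigma,
\]
noting that $a_\infty$, $\mb Q^{(j)}_{a_\infty}$ and $\mb q^{(j)}_{a_\infty}$ are all well-defined since $a\in X_\d$ implies $|a(\t)|\leq \d/\omega<\d^*$; the normalization $\nr{\mb q^{(j)}_{a_\infty}}^{-2}$ is uniformly bounded by Lemma~\ref{Le:Pert_Spectrum}. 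A solution of \eqref{eq-modulation} is precisely a fixed point of $\mb K$.

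\textbf{Self-mapping.} The key is to bound $e^{\omega\t}|\dot K_j(a)(\t)|$ by each of the three contributions to $\mb A_j$. The first term $\chi'(\sigma)\mb Q^{(j)}_{a_\infty}\mb u$ is supported in $\sigma\in[1,4]$, so it contributes at most $C_0 e^{4\omega}\nr{\mb u}\leq C_0 e^{4\omega}\d/C$. The nonlinear term is controlled via Lemma~\ref{lemma.nonlinearbounds}, giving $e^{\omega\t}\|\mb Q^{(j)}_{a_\infty}\mb G_{a(\t)}(\Phi(\t))\|\lesssim \d^2 e^{-\omega\t}\leq \d^2$. For the modulation remainder, I use that $a\in X_\d$ yields $|\dot a(\t)|\leq \d e^{-\omega\t}$ and $|a(\t)-a_\infty|\lesssim \d e^{-\omega\t}/\omega$ via \eqref{inequality}, combined with the Lipschitz bound $\nr{\mb q^{(i)}_{a(\t)}-\mb q^{(i)}_{a_\infty}}\lesssim |a(\t)-a_\infty|$ from Lemma~\ref{Le:Pert_Spectrum}, yielding a contribution $\lesssim \d^2 e^{-\omega\t}$. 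Summing, $\sup_\t e^{\omega\t}|\dot K_j(a)(\t)|\leq C_1(\d/C+\d^2)$, which is $\leq \d$ for $\d$ small and $C$ large. Also $K_j(a)(0)=0$ by construction, so $\mb K(a)\in X_\d$.

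\textbf{Contraction.} For $a,b\in X_\d$, I need to estimate $\nr{\mb K(a)-\mb K(b)}_X$. The Lipschitz estimate from Lemma~\ref{lemma.nonlinearbounds} gives $\nr{\mb G_{a(\t)}(\Phi(\t))-\mb G_{b(\t)}(\Phi(\t))}\lesssim \d e^{-2\omega\t}\nr{a-b}_X$, and the Lipschitz dependence of $a\mapsto \mb q^{(j)}_a,\,\mb Q^{(j)}_a,\,\nr{\mb q^{(j)}_a}^{-2}$ on the parameter (Lemma~\ref{Le:Pert_Spectrum}, together with the observation $|a_\infty-b_\infty|\leq \nr{a-b}_X$) produces, for every remaining piece of $\mb A_j$, a gain of a factor $\d$ times $e^{-\omega\t}$ or better. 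Collecting,
\[
e^{\omega\t}|\dot K_j(a)(\t)-\dot K_j(b)(\t)|\leq C_2 \d\,\nr{a-b}_X,
\]
and $|K_j(a)(\t)-K_j(b)(\t)|\leq C_2\d\nr{a-b}_X$ follows by integration. Choosing $\d$ small enough that $C_2\d<\frac12$ makes $\mb K$ a contraction on $X_\d$, so Banach's theorem provides the unique fixed point $a=a(\Phi,\mb u)\in X_\d$.

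\textbf{Lipschitz dependence on $(\Phi,\mb u)$.} Writing $a=a(\Phi,\mb u)$, $b=a(\Psi,\mb v)$, I split
\[
\nr{a-b}_X=\nr{\mb K_{\Phi,\mb u}(a)-\mb K_{\Psi,\mb v}(b)}_X\leq \nr{\mb K_{\Phi,\mb u}(a)-\mb K_{\Phi,\mb u}(b)}_X+\nr{\mb K_{\Phi,\mb u}(b)-\mb K_{\Psi,\mb v}(b)}_X.
\]
The first term is $\leq \tfrac12 \nr{a-b}_X$ by the contraction. For the second, the $\chi'(\sigma)\mb Q^{(j)}_{b_\infty}(\mb u-\mb v)$ piece contributes $\lesssim \nr{\mb u-\mb v}$, while Lemma~\ref{lemma.nonlinearbounds} applied to $\mb G_{b(\sigma)}(\Phi(\sigma))-\mb G_{b(\sigma)}(\Psi(\sigma))$ yields $\lesssim \d\nr{\Phi-\Psi}_{\mc X}$. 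Absorbing the $\tfrac12\nr{a-b}_X$ on the left gives \eqref{Eq:Est_a}. The main obstacle throughout is the implicit nonlinear dependence of $a_\infty$ (and hence of the projections and normalizations) on $a$ itself, but the uniform Lipschitz constants from Lemmas~\ref{Le:Pert_Spectrum} and \ref{lemma.nonlinearbounds}, combined with the exponential weight and the decay $|a(\t)-a_\infty|\lesssim \d e^{-\omega\t}/\omega$, control all such terms with a small prefactor.
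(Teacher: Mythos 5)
Your proposal is correct and follows essentially the same route as the paper: a Banach fixed-point argument on $X_\d$ using the bounds of Lemma~\ref{lemma.nonlinearbounds}, the Lipschitz dependence of the eigenfunctions and projections on $a$ from Lemma~\ref{Le:Pert_Spectrum}, and the inequality \eqref{inequality}, followed by the standard splitting to absorb $\tfrac12\nr{a-b}_X$ for the Lipschitz dependence on $(\Phi,\mb u)$. You simply spell out the estimates on the three pieces of $\mb A_j$ that the paper states more compactly.
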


\begin{proof}
	We use a fixed point argument. Using the bounds from Lemma~\ref{lemma.nonlinearbounds}, one can show that given $\mb u$ and $\Phi$ that satisfy the above assumptions, the following estimates hold
	\begin{gather*}
		\|\mb A_j(a,\Phi,\mb u)(\t)\|\lesssim \left(\tfrac{\d}{C}+\d^2\right)e^{-2\o\t},\\
		\nr{\mb A_j(a,\Phi,\mb u)(\t)-\mb A_j(b,\Phi,\mb u)(\t)}\lesssim \d e^{-\o\t}\nr{a-b}_X,
	\end{gather*}
	for all $a,b \in X_\d.$ From here, according to the definition in \eqref{eq-modulation} we have that for all small enough $\d>0$ and all large enough $C>0$, given $\Phi \in \mc X_\d$ and $\mb u \in \mc B_{\d/C}$ the ball $X_\delta$ is invariant under the action of the operator $A(\cdot, \Phi,\mb u)$,   which is furthermore a contraction on $X_\d$.
	Hence, the equation \eqref{eq-modulation} has a unique solution in $X_\d$. The Lipschitz continuity of the solution map follows from the following estimate
	\begin{align*}
		\|a-b\|_X &\leq \| A(a,\Phi,\mb u)-A(b,\Phi,\mb u) \|_X + \| A(b,\Phi,\mb u)-A(b,\Phi,\mb v) \|_X \\&+ \| A(b,\Phi,\mb v)-A(b,\Psi,\mb v) \|_X
		\lesssim \d\|a-b \|_X + \|\mb u- \mb v \| + \|\Phi -\Psi \|_{\mc X},
	\end{align*}
by taking small enough $\d>0$.
\end{proof}

For the remaining instabilities, we introduce the following correction terms
\begin{align*}
	\mb C_1(\Phi,a,\mb u):=\mb P_{a_\infty}\left(\mb u+\int_0^\infty e^{-\s}\left(\mb G_{a(\s)}(\Phi(\s))-\pt_\s \mb U_{a(\s)} \right)d\s \right),
	\\
	\mb C_2(\Phi,a,\mb u):=\mb H_{a_\infty} \left(\mb u+\int_0^\infty e^{-3\s}\left(\mb G_{a(\s)}(\Phi(\s))-\pt_\s \mb U_{a(\s)} \right)d\s\right),
\end{align*}
and set $\mb C:=\mb C_1+\mb C_2$. Consequently, we investigate the modified integral equation
\begin{align} \label{modified-duhamel}
	\Phi(\t)&=\mb S_{a_\infty}(\t)\big( \mb u-\mb C(\Phi,a,\mb u)\big)+\int_0^\t \mb S_{a_\infty}(\t-\s)\left(\mb G_{a(\s)}(\Phi(\s))-\pt_\s  \mb U_{a(\s)} \right)d\s
	\\
	&=:\mb K(\Phi,a,\mb u)(\t). \nonumber
\end{align}
\begin{proposition} \label{prop.correctionterms}
	For all sufficiently small $\d>0$ and all sufficiently large $C>0$ the following holds.  For every $\mb u \in \mc H$ with $\nr{\mb u}\leq \frac{\d}{C}$ there exist functions $\Phi \in\mathcal{X}_\d$ and $a\in X_\d$ such that \eqref{modified-duhamel} holds for $\t\geq 0$. Furthermore, the solution map 
	$\mb u \mapsto (\Phi(\mb u),a(\mb u))$ is Lipschitz continuous, i.e.,
	\begin{align}\label{Eq:Lip_Phi}
		\nr{\Phi(\mb u)-\Phi(\mb v)}_\mathcal{X}+\nr{a(\mb u)-a(\mb v)}_X\lesssim \nr{\mb u-\mb v}
	\end{align}
	for all $\mb u,\mb v\in \mathcal{B}_{\d/C}$.
\end{proposition}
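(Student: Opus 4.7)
\smallskip

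The plan is to set up a Banach fixed-point argument for $\Phi$ alone in $\mathcal{X}_\delta$, with the modulation parameter $a = a(\Phi,\mb u) \in X_\delta$ always taken as the unique solution of the modulation equation \eqref{eq-modulation} guaranteed by Lemma \ref{lemma-modulation}. The key observation is that the correction terms $\mb C_1, \mb C_2$ live in $\ran\mb P_{a_\infty} + \ran\mb H_{a_\infty}$, so $\mb T_{a_\infty}\mb C = \mb Q_{a_\infty}\mb C = 0$, and the modified Duhamel equation \eqref{modified-duhamel} projected onto $\ran\mb T_{a_\infty}$ and $\ran\mb Q_{a_\infty}$ coincides with the unmodified one. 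This allows a decomposition of $\mb K(\Phi,a,\mb u)(\tau)$ according to the four mutually transversal spectral subspaces.

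\smallskip

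I would estimate each piece separately. On $\ran\mb H_{a_\infty}$: using $\mb S_{a_\infty}(\tau)\mb H_{a_\infty} = e^{3\tau}\mb H_{a_\infty}$ and the definition of $\mb C_2$, the Duhamel term collapses to
\[
\mb H_{a_\infty}\mb K(\Phi,a,\mb u)(\tau) = -e^{3\tau}\int_\tau^\infty e^{-3\sigma}\mb H_{a_\infty}\bigl(\mb G_{a(\sigma)}(\Phi(\sigma)) - \partial_\sigma\mb U_{a(\sigma)}\bigr)\,d\sigma.
\]
By Lemma \ref{lemma.nonlinearbounds} together with $\|\mb H_{a_\infty}\mb q_{a(\sigma)}^{(i)}\| \lesssim |a_\infty - a(\sigma)|\lesssim \delta e^{-\omega\sigma}/\omega$ (from Lemma \ref{Rieszprojections2} and transversality), the integrand is of size $\delta^2 e^{-2\omega\sigma}$, so this piece is bounded by $\delta^2 e^{-2\omega\tau}$. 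An identical computation, now with $e^\tau$ in place of $e^{3\tau}$, handles the $\mb P_{a_\infty}$ projection. On $\ran\mb Q_{a_\infty}$: differentiating \eqref{eq-modulation} in $\tau$ and using $\mb Q_{a_\infty}^{(j)}\mb q_{a_\infty}^{(i)} = \delta^{ij}\mb q_{a_\infty}^{(j)}$ one verifies exactly that $\mb Q_{a_\infty}\mb K(\Phi,a,\mb u)(\tau) = \chi(\tau)\mb Q_{a_\infty}\mb u$, which is compactly supported in $\tau$ and bounded by $\|\mb u\|\le\delta/C$. On $\ran\mb T_{a_\infty}$: using \eqref{Eq.exp-bound1} and Lemma \ref{lemma.nonlinearbounds},
\[
\|\mb T_{a_\infty}\mb K(\Phi,a,\mb u)(\tau)\| \lesssim e^{-\omega\tau}\|\mb u\| + \int_0^\tau e^{-\omega(\tau-\sigma)}\delta^2 e^{-2\omega\sigma}\,d\sigma \lesssim (\delta/C + \delta^2)e^{-\omega\tau}.
\]
Summing, $\|\mb K(\Phi,a,\mb u)\|_{\mathcal{X}} \le C_0(\delta/C + \delta^2)$, which is at most $\delta$ for $\delta$ small and $C$ large, so $\mb K(\cdot,a(\cdot,\mb u),\mb u)$ maps $\mathcal{X}_\delta$ into itself.

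\smallskip

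For the contraction estimate I would compare $\mb K(\Phi,a(\Phi,\mb u),\mb u)$ with $\mb K(\Psi,a(\Psi,\mb u),\mb u)$. Setting $a = a(\Phi,\mb u)$, $b = a(\Psi,\mb u)$, Lemma \ref{lemma-modulation} already yields $\|a-b\|_X \lesssim \|\Phi-\Psi\|_{\mathcal{X}}$. The differences enter in three places: (i) the Lipschitz dependence of the projections $\mb H_{a_\infty},\mb P_{a_\infty}, \mb Q_{a_\infty}, \mb T_{a_\infty}$ on $a_\infty$ from Lemma \ref{Rieszprojections2}; (ii) the semigroup Lipschitz bound \eqref{Eq.exp-bound2} for the $\mb T$-piece; and (iii) the nonlinear Lipschitz bound from Lemma \ref{lemma.nonlinearbounds}. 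Repeating the four subspace estimates above, every term gains either a factor $\delta$ (from the smallness of $\Phi,\Psi,\mb u$) or the prefactor $1/C$, so
\[
\|\mb K(\Phi,a,\mb u) - \mb K(\Psi,b,\mb u)\|_{\mathcal{X}} \le \tfrac{1}{2}\|\Phi-\Psi\|_{\mathcal{X}}
\]
once $\delta$ is small and $C$ large enough. The Banach fixed-point theorem then gives a unique $\Phi = \Phi(\mb u)\in\mathcal{X}_\delta$ with $\mb K(\Phi,a(\Phi,\mb u),\mb u) = \Phi$, and setting $a(\mb u) := a(\Phi(\mb u),\mb u)$ solves \eqref{modified-duhamel}. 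Lipschitz dependence \eqref{Eq:Lip_Phi} follows by the same kind of estimates applied to $\mb K(\Phi(\mb u),\ldots,\mb u) - \mb K(\Phi(\mb v),\ldots,\mb v)$, using additionally \eqref{Eq:Est_a}.

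\smallskip

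The main obstacle is bookkeeping rather than any single hard estimate: because $a_\infty$ itself depends on $\Phi$ and $\mb u$, the spectral projections change with the unknown, so every bound must simultaneously track (a) exponential decay in $\tau$, (b) Lipschitz continuity of $a \mapsto \mb H_a,\mb P_a,\mb Q_a,\mb T_a$, (c) the modulation identity forcing the $\mb Q$-component to be $\chi(\tau)\mb Q_{a_\infty}\mb u$, and (d) the cancellation built into $\mb C_1,\mb C_2$ that converts the formally growing $e^{\lambda_j\tau}$ terms into decaying integrals $\int_\tau^\infty e^{-\lambda_j(\sigma-\tau)}(\cdots)\,d\sigma$. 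Once these four ingredients are arranged, the fixed-point argument is standard.
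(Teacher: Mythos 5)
Your proposal is correct and follows essentially the same route as the paper's proof: fixing $a=a(\Phi,\mb u)$ via Lemma \ref{lemma-modulation}, decomposing $\mb K_{\mb u}$ along the four transversal spectral projections, exploiting the cancellation built into $\mb C_1,\mb C_2$ to turn the $e^{\tau}$ and $e^{3\tau}$ modes into decaying tail integrals, using the modulation identity for the $\mb Q$-component, the Gearhart--Pr\"uss decay for the $\mb T$-component, and then closing the contraction and the Lipschitz dependence with the same projection and nonlinearity estimates. No substantive differences.
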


\begin{proof}
 We choose $C>0$ and $\d>0$ such that Lemma~\ref{lemma-modulation} holds. Then for fixed $\mb u\in \mathcal{B}_{\d/C}$ there is a unique $a=a(\Phi,\mb u)\in X_\d$ associated to every $\Phi\in\mathcal{X}_\d$, such that the modulation equation~\eqref{eq-modulation} is satisfied. Hence we can define $\mb K_{\mb u}(\Phi):=\mb K(\Phi,a,\mb u)$. We intend to show that for small enough $\d>0$ the operator $\mb K_{\mb u}$ is a contraction on $\mathcal{X}_\d$. To show the necessary bounds, we first split $\mb K_{\mb u}(\Phi)$ according to projections $\mb P_{a_\infty}$, $\mb Q_{a_\infty}$, $\mb H_{a_\infty}$, and $\mb T_{a_\infty}$, and then estimate each part separately.
 
 First, note that the transversality of the projections implies that
	\begin{align*}
		\mb P_{a_\infty}\mb K_{\mb u}(\Phi)(\t)=-\int_\t^\infty e^{\t-\s}\mb P_{a_\infty}\left(\mb G_{a(\s)}(\Phi(\s))-\pt_\s \mb U_{a(\s)}\right)d\s,
		\\
		\mb H_{a_\infty}\mb K_{ \mb u}(\Phi)(\t)=-\int_\t^\infty e^{3(\t-\s)}\mb H_{a_\infty}\left(\mb G_{a(\s)}(\Phi(\s))-\pt_\s\mb U_{a(\s)}\right)d\s.
	\end{align*}
	Now, since  
	\[  \pt_\t \mb U_{a(\t)} =  \dot a_j(\tau) \mb q_{a_{\infty}}^{(j)} + \dot a_j(\tau) [\mb q_{a(\t)}^{(j)} - \mb q_{a_{\infty}}^{(j)}], \]
	and $\| \mb q_{a(\t)}^{(j)} - \mb q_{a_{\infty}}^{(j)} \| \lesssim \d e^{-\o \t}$,
	we have that 
	\begin{align}\label{Bound_ProjdtU}
		\nr{\mb H_{a_\infty} \pt_\t \mb U_{a(\t)}}+\nr{\mb P_{a_\infty}\pt_\t \mb U_{a(\t)}} + \| (1-\mb Q_{a_\infty})\pt_\t \mb U_{a(\t)}\| \lesssim \d^2 e^{-2\o\t},	\end{align}
	for all $a \in X_\d$. This, together with  Lemma~\ref{lemma.nonlinearbounds} and the fact that \begin{equation}\label{Eq:Modulation}
		\mb Q_{a_\infty}\mb K_{\mb u}(\Phi)(\t)=\chi(\t)\mb Q_{a_\infty}\mb u
	\end{equation} (see Eq.~\eqref{eq.require})
	yields the following bounds
	\begin{align}\label{Eq:HPQK}
		\nr{\mb H_{a_\infty}\mb K_{\mb u}(\Phi)(\t)}+\nr{\mb P_{a_\infty}\mb K_{\mb u}(\Phi)(\t)} \lesssim \d^2e^{-2\o\t} \quad \text{and} \quad
			\nr{\mb Q_{a_\infty}\mb K_{\mb u}(\Phi)(\t)}\lesssim \tfrac{\d}{C}e^{-2\o\t}
	\end{align}
	for all $\Phi \in \mc X_\d$. On the other hand, for the stable subspace we have
	\begin{align*}
		\mb T_{a_\infty}\mb K_{\mb u}(\Phi)(\t)=\mb S_{a_\infty}(\t)\mb T_{a_\infty}\mb u+\int_0^\t \mb S_{a_\infty}(\t-\s)\mb T_{a_\infty}\left(\mb G_{a(\s)}(\Phi(\s))-\pt_\s\mb U_{a(\s)}\right)d\s,
	\end{align*}
	and by Lemma~\ref{lemma.nonlinearbounds}, Proposition~\ref{commute}, and Eq.~\eqref{Bound_ProjdtU}, we get that
	\begin{align}\label{Eq:TK}
		\nr{\mb T_{a_\infty}\mb K_{\mb u}(\Phi)(\t)}\lesssim \left(\tfrac{\d}{C}+\d^2 \right)e^{-\o\t}
	\end{align}
	for all $\Phi \in \mc X_\d$. Now, from \eqref{Eq:HPQK} and \eqref{Eq:TK} we see that  $\mb K_{\mb u}$ maps $\mathcal{X}_\d$ into itself for all $\d>0$ sufficiently small and all $C>0$ sufficiently large. 
	The contraction property of $\mb K_{\mb u}$ 
	is established similarly. Namely, there is the analogue of Eq.~\eqref{Bound_ProjdtU}
	\begin{align*}
			\nr{ \mb H_{a_\infty} \pt_\t \mb U_{a(\t)} - \mb H_{b_\infty} \pt_\t \mb U_{b(\t)}} &+ \nr{ \mb P_{a_\infty}\pt_\t \mb U_{a(\t)} - \mb P_{b_\infty}\pt_\t \mb U_{b(\t)}}\\
			 &+ \| (1-\mb Q_{a_\infty})\pt_\t \mb U_{a(\t)} - (1-\mb Q_{b_\infty})\pt_\t \mb U_{b(\t)}\| \lesssim \d^2 e^{-2\o\t}
	\end{align*}
	for all $a,b \in X_\d$. Furthermore, by Lemma \ref{lemma.nonlinearbounds}, Eq.~\eqref{Eq:Modulation}, and Lemma \ref{lemma-modulation} we get the analogous estimates to \eqref{Eq:HPQK}, namely, we have that
	\begin{align*}
	\nr{ \mb H_{a_\infty} \mb K_{\mb u}(\Phi)(\t) - \mb H_{b_\infty} \mb K_{\mb u}(\Psi)(\t)} &+ \nr{ \mb P_{a_\infty}\mb K_{\mb u}(\Phi)(\t) - \mb P_{b_\infty}\mb K_{\mb u}(\Psi)(\t)}\\
	&+ \| \mb Q_{a_\infty}\mb K_{\mb u}(\Phi)(\t) - \mb Q_{b_\infty}\mb K_{\mb u}(\Psi)(\t)\| \lesssim \d e^{-2\o\t}\nr{\Phi-\Psi}_\mathcal{X}
\end{align*}
	for all $\Phi,\Psi \in \mc X_\d$, where $a=a(\Phi,\mb u)$ and $b=a(\Psi,\mb u)$.
	Also in line with \eqref{Eq:TK} we have that
	\begin{align*}
		\nr{\mb T_{a_\infty}\mb K_{\mb u}(\Phi)(\t)-\mb T_{b_\infty}\mb K_{\mb u}(\Psi)(\t)} \lesssim \d e^{-\o\t}\nr{\Phi-\Psi}_\mathcal{X}
	\end{align*}
	for all $\Phi,\Psi \in \mc X_\d$.
	By combining these estimates we get that
		\begin{align} \label{eq.contraction}
			\nr{\mb K_{\mb u}(\Phi)-\mb K_{\mb u}(\Psi)}_\mathcal{X}\lesssim \d \nr{\Phi-\Psi}_\mathcal{X}
		\end{align}
	for all $\Phi,\Psi \in \mc X_\d$, and contractivity follows by taking small enough $\d >0$.
	
	For the Lipschitz continuity, similarly to proving Eq.~\eqref{Eq:Est_a}, we use the integral equation \eqref{modified-duhamel} to show that given sufficiently small $\delta>0$,
	\begin{equation*}
		\| \Phi(\mb u) - \Phi(\mb v)  \|_{\mc X} \lesssim \| \mb u - \mb v\|
	\end{equation*}
for all $\mb u \in \mc B_{\d/C}$, and then Eq.~\eqref{Eq:Est_a} implies \eqref{Eq:Lip_Phi}.
\end{proof}

\subsection{Conditional stability in similarity variables} According to Proposition \ref{prop.correctionterms} and Eq.~\eqref{Def:Psi_modulation_ansatz} there exists a family of initial data close to $\mb U_0$ which lead to global (strong) solutions to Eq.~\eqref{Eq:Abstract_NLW_sim}, which furthermore converge to $\mb U_{a_\infty}$, for some $a_\infty$ close to $a=0$; with minimal modifications, the same argument can be carried out for $\mb U_a$ for any $a \neq 0$. In conclusion, we have conditional asymptotic orbital stability of the family $\{ \mb U_a : a \in \R^9 \}$, the condition being that the initial data belong to the set which ensures global existence and convergence. In this section we show that this set represents a Lipschitz manifold of co-dimension eleven.\\

 Let $\d>0$ and $C>0$ be as in Proposition~\ref{prop.correctionterms}, and let $\mb u\in \mc B_{\d/C}$. Also, let us denote
\[
\mb C(\mb u):=\mb C(\Phi(\mb u),a(\mb u),\mb u),
\]
where the mapping $\mb u\mapsto (\Phi(\mb u),a(\mb u))$ is defined in Proposition \ref{prop.correctionterms}. Moreover, we denote the projection corresponding to all unstable directions by
\[
\mb J_a:=\mb P_a + \mb H_a. 
\]
Note that by definition $\mb J_{a_\infty}\mb C(\mb u)=\mb C(\mb u)$, and we have the Lipschitz estimate
\[
\|\mb J_a-\mb J_b\|\lesssim |a-b|
\]
for all $a,b\in X_\d$.

\begin{proposition}\label{prop.manifold1}
	There exists $C>0$ such that for all sufficiently
	 small $\d>0$ there exists a co-dimension eleven Lipschitz manifold 
	$\mathcal{M}=\mc M_{\d,C} \subset \mathcal{H}$ with $\mb 0\in\mathcal{M}$, defined as the graph of a Lipschitz continuous function 
	$\mb M:  \ker \mb J_0 \cap \mc B_{\d/2C} \rightarrow \ran \mb  J_0$,
	\begin{align*}
		\mathcal{M}:=\left\{\mb v+\mb M(\mb v) \,\big|\, \mb v\in\ker\mb J_0,\|\mb v\|\leq \frac{\d}{2C}\right\} \subset
		\left\{\mb u\in \mc B_{\d/C}\, \big|\, \mb C( \mb u)=0\right\}.
	\end{align*} 
	Furthermore, for every $\mb u \in \mathcal{M}$ there exists $(\Phi, a)=(\Phi_{\mb u}, a_{\mb u}) \in \mathcal{X}_\d\times X_\d$ satisfying the equation
	\begin{align}\label{eq.Duhamel}
		\Phi(\t)&=\mb S_{a_\infty}(\t) \mb u +\int_0^\t \mb S_{a_\infty}(\t-\s)\left(\mb G_{a(\s)}(\Phi(\s))-\pt_\s  \mb U_{a(\s)} \right)d\s 
	\end{align}
	for all $\tau\geq 0$. Moreover, there exists $K>C$ such that
	$
		\left\{\mb u\in \mc B_{\d/K}\, \big|\, \mb C( \mb u)=0\right\} \subset \mc M_{\d,C}.
	$
\end{proposition}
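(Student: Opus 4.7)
The plan is to parametrize the zero set of $\mb C$ as a Lipschitz graph over $\ker\mb J_0$ via a finite-dimensional fixed point argument. I decompose $\HH=\ker\mb J_0\oplus\ran\mb J_0$, noting that by Lemma~\ref{projections} the range $\ran\mb J_0 = \ran\mb P_0\oplus\ran\mb H_0$ has dimension $10+1=11$; this is the source of the claimed codimension. For $\mb u\in\mathcal B_{\d/C}$ I write $\mb u=\mb v+\mb w$ with $\mb v=(\mb I-\mb J_0)\mb u$ and $\mb w=\mb J_0\mb u$, and rewrite the equation $\mb C(\mb v+\mb w)=0$ as a fixed-point equation
\[
\mb w=\mb w-\mb C(\mb v+\mb w)=:\mathbf N(\mb v,\mb w),
\]
viewed as a map $\mathbf N(\mb v,\cdot):\ran\mb J_0\to\ran\mb J_0$.

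The heart of the argument is showing that $\mathbf N(\mb v,\cdot)$ is a contraction on a small ball in $\ran\mb J_0$. For this I need the ``linearization'' relation $\mb C(\mb u)\approx \mb J_0\mb u$ up to terms that are $O(\d)\cdot\|\mb u\|$. Splitting
\[
\mb C(\mb u)=\mb J_{a_\infty(\mb u)}\mb u+\mb J_{a_\infty(\mb u)}\!\int_0^\infty\!\!\bigl(e^{-\s}\mb P_{a_\infty}\!+\!e^{-3\s}\mb H_{a_\infty}\bigr)\bigl(\mb G_{a(\s)}(\Phi(\s))-\pt_\s\mb U_{a(\s)}\bigr)d\s,
\]
I estimate the deviation $\mb C(\mb u)-\mb J_0\mb u$ by combining (i) the Lipschitz dependence $|a_\infty(\mb u_1)-a_\infty(\mb u_2)|\lesssim\|\mb u_1-\mb u_2\|$ from Proposition~\ref{prop.correctionterms}, (ii) the Lipschitz bound $\|\mb J_a-\mb J_b\|\lesssim|a-b|$, and (iii) the quadratic bounds on the integrand. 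The crucial point for the latter is that by transversality $\mb P_{a_\infty}\mb q^{(j)}_{a_\infty}=0$ and $\mb H_{a_\infty}\mb q^{(j)}_{a_\infty}=0$, so $\mb P_{a_\infty}\pt_\s\mb U_{a(\s)}=\dot a^j(\s)\mb P_{a_\infty}(\mb q^{(j)}_{a(\s)}-\mb q^{(j)}_{a_\infty})$ is quadratically small ($\lesssim\d^2 e^{-2\o\s}$), and likewise for $\mb H_{a_\infty}\pt_\s\mb U_{a(\s)}$; combined with Lemma~\ref{lemma.nonlinearbounds} for $\mb G$ this gives an $O(\d^2)$ bound on the integral. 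Together these yield
\[
\bigl\|\mb C(\mb u_1)-\mb C(\mb u_2)-\mb J_0(\mb u_1-\mb u_2)\bigr\|\lesssim\d\,\|\mb u_1-\mb u_2\|,
\]
which, since $\mb J_0(\mb u_1-\mb u_2)=\mb w_1-\mb w_2$ when the $\mb v$-components agree, produces contractivity of $\mathbf N(\mb v,\cdot)$ for sufficiently small $\d$. Evaluating at $\mb w=0$ gives $\|\mathbf N(\mb v,0)\|=\|\mb C(\mb v)\|\lesssim\d\|\mb v\|$ (using the same estimate with $\mb u_2=0$), so $\mathbf N(\mb v,\cdot)$ maps a ball of radius $\sim\d\|\mb v\|$ in $\ran\mb J_0$ into itself. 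The Banach fixed-point theorem on the finite-dimensional space $\ran\mb J_0$ then yields a unique fixed point $\mb w=\mb M(\mb v)$ with $\|\mb M(\mb v)\|\lesssim\d\|\mb v\|$; in particular, requiring $\|\mb v\|\le\d/(2C)$ ensures $\|\mb v+\mb M(\mb v)\|\le\d/C$ for small $\d$, so that the graph lies in $\mathcal B_{\d/C}$ where $\mb C$ is defined. Lipschitz dependence of $\mb M$ on $\mb v$ is standard from the contraction estimates, and $\mb M(0)=0$ since $\mathbf N(0,0)=0$.

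I expect the main obstacle to be the careful bookkeeping of the Lipschitz constants to ensure that the implicit self-referential dependence of $\mb C$ on $\mb u$ through $(\Phi(\mb u),a(\mb u))$ produces a genuine strict contraction (as opposed to a bound of the form $(1+O(\d))\|\mb w_1-\mb w_2\|$). This requires using Proposition~\ref{prop.correctionterms} at full strength, together with the quadratic cancellation noted above; small $\d$ absorbs all constants. Once $\mb M$ is constructed, the inclusion $\mathcal M\subset\{\mb u:\mb C(\mb u)=0\}$ is immediate, and substituting $\mb C(\mb u)=0$ into \eqref{modified-duhamel} yields exactly \eqref{eq.Duhamel}. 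For the converse inclusion $\{\mb u\in\mathcal B_{\d/K}:\mb C(\mb u)=0\}\subset\mathcal M_{\d,C}$, given any such $\mb u=\mb v+\mb w$ with $\|\mb u\|\le\d/K$, both $\|\mb v\|\le\d/K\le\d/(2C)$ for $K\ge2C$, and $\|\mb w\|\le\d/K$ lies in the uniqueness ball of the fixed-point argument (provided $K$ is taken large enough that $\d/K$ is smaller than the contraction radius); uniqueness then forces $\mb w=\mb M(\mb v)$, i.e. $\mb u\in\mathcal M$.
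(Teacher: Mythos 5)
Your strategy coincides with the paper's: split $\HH=\ker\mb J_0\oplus\ran\mb J_0$, turn $\mb C(\mb v+\mb w)=0$ into a fixed-point problem for $\mb w$ in the eleven-dimensional space $\ran\mb J_0$, and extract contractivity from the Lipschitz dependence of $a_\infty$ and of the projections on $a$, the $O(\d^2)$ bounds on the integral terms, and the cancellation $\mb P_{a_\infty}\pt_\s\mb U_{a(\s)}=\dot a^j(\s)\mb P_{a_\infty}(\mb q^{(j)}_{a(\s)}-\mb q^{(j)}_{a_\infty})$ (this is exactly \eqref{Bound_ProjdtU}). The Lipschitz continuity of $\mb M$, the identification of \eqref{eq.Duhamel} with \eqref{modified-duhamel} once $\mb C(\mb u)=0$, and the converse inclusion via uniqueness of the fixed point are all handled as in the paper.

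There is, however, one genuine defect in your setup. You define $\mathbf N(\mb v,\mb w)=\mb w-\mb C(\mb v+\mb w)$ and declare it a self-map of $\ran\mb J_0$, but $\mb C(\mb u)=\mb J_{a_\infty(\mb u)}\mb C(\mb u)$ takes values in $\ran\mb J_{a_\infty(\mb u)}$, which differs from $\ran\mb J_0$ whenever $a_\infty\neq 0$. So the iterates leave $\ran\mb J_0$, and your contraction estimate collapses precisely because it relies on the identity $\mb J_0(\mb u_1-\mb u_2)=\mb w_1-\mb w_2$, valid only for $\mb w_1,\mb w_2\in\ran\mb J_0$. The repair is what the paper does: run the fixed point for $\mb w\mapsto\mb w-\mb J_0\mb C(\mb v+\mb w)$ (equivalently, solve $\mb J_0\mb C(\mb v+\mb w)=0$), and add the short preliminary argument that $\mb J_0\mb C(\mb u)=0$ already forces $\mb C(\mb u)=0$, via $\|\mb C(\mb u)\|=\|\mb J_0\mb C(\mb u)+(\mb J_{a_\infty}-\mb J_0)\mb C(\mb u)\|\lesssim|a_\infty|\,\|\mb C(\mb u)\|$ and $|a_\infty|=O(\d)$. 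This equivalence is the first step of the paper's proof and is missing from yours. A second, minor point: in the converse inclusion you bound $\|\mb v\|=\|(\mb I-\mb J_0)\mb u\|$ by $\|\mb u\|\le\d/K$, but $\mb I-\mb J_0$ is only a bounded (not norm-one) projection, so $K$ must also absorb $\|\mb I-\mb J_0\|$, not merely satisfy $K\ge 2C$.
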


\begin{proof}
	First, we show that for small enough $\d>0$, $\mb C (\mb u)=0$ if and only if $\mb J_0 \mb C(\mb u)=0$. Assume that $\mb J_0 \mb C( \mb u)=0$. Then we obtain the estimate
	\[
	\|\mb C( \mb  u)\|\leq\|\mb J_0 \mb C(\mb u)+(\mb J_{a_{\mb u, \infty}}-\mb J_0)\mb C(\mb u) \|\lesssim |a_{\mb u, \infty}| \|\mb C(\mb u)\|.
	\]
	Since $a_{\mb u,\infty}= O(\d)$, we get $\mb C(\mb u)=0$. The other direction is obvious.
	Now we construct the mapping $\mb M$. Let $\mb u \in \mathcal{H}$ and decompose it as $\mb u = \mb v +\mb w \in \ker \mb J_0 \oplus \ran \mb J_0$. 
	Fix $\mb v\in \ker \mb J_0$ and define
	\[
	\tilde{\mb C}_{\mb v}: \ran \mb J_0 \rightarrow \ran \mb J_0, \quad \tilde{\mb C}_{\mb v}(\mb w)=\mb J_0\mb C(\mb v+\mb w).
	\]
	We establish that this mapping is invertible at zero, provided that $\mb v$ is small enough, and we obtain 
	$\mb w=\tilde{\mb C}^{-1}_{\mb v}(\mb 0)$. This defines a mapping
	\[
	\mb M:\ker \mb J_0\rightarrow \ran \mb J_0, \quad \mb M(\mb v):=\tilde{\mb C}^{-1}_{\mb v}(\mb 0).
	\]
	To show this, we use a fixed point argument. Recall the definition of the correction terms 
	$\mb C=\mb C_1 +\mb C_2$, $\mb C_1=\sum_{k=0}^9 \mb C^k$ with
	\[
	\mb C_1^k(\Phi, a,\mb u)=\mb P^{(k)}_{a_\infty}\mb u + \mb P^{(k)}_{a_\infty}\mb I_1(\Phi,a),
	\]
	and
	\[
	\mb C_2(\Phi,a,\mb u)+\mb H_{a_\infty}\mb u+\mb H_{a_\infty}\mb I_2(\Phi,a),
	\]
	where
	\begin{align*}
		\mb I_1(\Phi,a):=\int_0^\infty e^{-\s}[\mb G_{a(\s)}(\Psi(\s))-\pt_\s\mb U_{a(\s)}]d\s,
		\\
		\mb I_2(\Phi,a):=\int_0^\infty e^{-3\s}[\mb G_{a(\s)}(\Psi(\s)-\pt_\s \mb U_{s(\s)})]d\s.
	\end{align*}
	We denote
	\begin{align*}
		\mb F_1(\mb u):=\sum_{k=0}^9\mb F_1^k(\mb u)=\sum_{k=0}^9\mb P^{(k)}_{a_\infty} \mb I_1(\Phi_{\mb u},a_{\mb u}),
	\end{align*}
	and
	\[
	\mb F_2(\mb u):=\mb H_{a_\infty}\mb I_2(\Phi_{\mb u},a_{\mb u}).
	\]
	By Lemma~\ref{lemma.nonlinearbounds} and Eq.~\eqref{Bound_ProjdtU} we infer that
	\begin{align}\label{eq.boundsdelta}
		\|\mb F_1^k(\mb u)\|\lesssim \d^2,\quad \|\mb F_2(\mb u)\|\lesssim\d^2.
	\end{align}

	Now for $\mb v\in \ker \mb J_0$ we get
	\begin{align*}
		\tilde{\mb C}_{\mb v}(\mb w)&=\mb J_0\mb C(\mb v+\mb w)=\mb J_0\mb J_{a_\infty}(\mb v+\mb w)+\mb J_0(\mb F_1(\mb v+\mb w)+\mb F_2(\mb v+\mb w))
		\\
		&=\mb J^2_0\mb w+\mb J_0(\mb J_{a_\infty}-\mb J_0)\mb w +\mb J_0\mb J_{a_\infty}\mb v
		+\mb J_0(\mb F_1(\mb v+\mb w)+\mb F_2(\mb v+\mb w))
		\\
		&=\mb w+\mb J_0(\mb J_{a_\infty}-\mb J_0)(\mb v+\mb w)+\mb J_0(\mb F_1(\mb v+\mb w)+\mb F_2(\mb v+\mb w)).
	\end{align*}
	Introducing the notation
	\[
	\Omega_{\mb v}(\mb w):=\mb J_0(\mb J_0-\mb J_{a_\infty})(\mb v+\mb w)-\mb J_0(\mb F_1(\mb v+\mb w)+\mb F_2(\mb v+\mb w)),
	\]
	we rewrite equation $\tilde{\mb C}_{\mb v}(\mb w)=0$ as
	\begin{align}\label{eq.omega}
		\mb w=\Omega_{\mb v}(\mb w).
	\end{align}
	Now, for $\d>0$ and $C>0$ from Proposition \ref{prop.correctionterms}, we set
	\[
	\tilde{B}_{\d/C}(\mb v):=\left\{\mb w\in \ran \mb J_0 \, \big| \, \|\mb v+ \mb w\|\leq \frac{\d}{C} \right\}.
	\]
	We show that $\Omega_{\mb v} : \tilde{B}_{\d/C}(\mb v)\rightarrow \tilde{B}_{\d/C}(\mb v)$ is a contraction mapping for sufficiently small 
	$\mb v$. Let $\mb v\in \mathcal{H}$ with $\|\mb v\|\leq \frac{\d}{2C}$, and let $\mb w \in\tilde{B}_{\d/C}(\mb v)$. Using the 
	\eqref{eq.boundsdelta}, we estimate
	\begin{align*}
		\|\Omega_{\mb v}(\mb w)\|\leq \|\mb J_0-\mb J_{a_\infty}\|\|\mb v+\mb w\|+\|\mb F_1(\mb v+ \mb w)\|+\|\mb F_1(\mb v+\mb w)\|\lesssim
		\frac{\d^2}{C}+\d^2.
	\end{align*}
	Hence, by fixing $C>0$ we have that $\|\mb v+ \Omega_{\mb v}(\mb w)\|\leq \frac{\d}{C}$ for all small enough $\d>0$. So the ball $\tilde{B}_{\d/C}(\mb v)$ is invariant under the action of $\O_{\mb v}$. To prove contractivity, first for $\mb w, \tilde{\mb w} \in \tilde{B}_{\d/C}(\mb v)$, we associtate to $\mb v+\mb w$ and $\mb v+ \tilde{\mb w}$ the functions $(\Phi,a)$ and $(\Psi,b)$ in $\mathcal{X}_\d\times X_\d$ by Proposition~\ref{prop.correctionterms}. Then we obtain
	\begin{align*}
		\|\Omega_{\mb v}(\mb w)-\Omega_{\mb v}(\tilde{\mb w})\|&\leq \|\mb J_0(\mb J_0-\mb J_{a_\infty})(\mb v+\mb w)-\mb J_0(\mb J_0-\mb J_{b_\infty})(\mb v+ \tilde{\mb w})\|
		\\
		&+\|\mb F_1(\mb v+\mb w)-\mb F_1(\mb v+\tilde{\mb w})\|+\|\mb F_2(\mb v+\mb w)-\mb F_2(\mb v+\tilde{\mb w})\|,
	\end{align*}
	and writing
	\begin{align*}
		\mb J_0&(\mb J_0-\mb J_{a_\infty})(\mb v+\mb w)-\mb J_0(\mb J_0-\mb J_{b_\infty})(\mb v+\tilde{\mb w})
		\\
		&=\mb J_0(\mb J_0-\mb J_{a_\infty})(\mb w-\tilde{\mb w})-\mb J_0(\mb J_{a_\infty}-\mb J_{b_\infty})(\mb v+\tilde{\mb w}),
	\end{align*}
	we get by Proposition~\ref{prop.correctionterms} the following estimate
	\begin{align*}
		\|\mb J_0(\mb J_0-\mb J_{a_\infty})(\mb w-\tilde{\mb w})\|&+\|\mb J_0(\mb J_{a_\infty}-\mb J_{b_\infty})(\mb v+\tilde{\mb w})\|
		\lesssim
		|a_\infty|\|\mb w-\tilde{\mb w}\|+|a_\infty-b_\infty|\|\mb v+ \mb w\|
		\\
		&\lesssim \d \|\mb w-\tilde{\mb w}\|+\frac{\d}{C}\|a-b\|_X\lesssim \d\|\mb w-\tilde{\mb w}\|.
	\end{align*}
	On the other hand, by Lemma \ref{lemma.nonlinearbounds} and Eq.~\eqref{Bound_ProjdtU} we obtain for $k=0,\dots,9$ that
	\[
	\|\mb P^{(k)}_{a_\infty}\mb I_1(\Phi,a)-\mb P^{(k)}_{b_\infty}\mb I_2(\Psi,b)\|\lesssim\d(\|\Phi-\Psi\|_\mathcal{X}+\|a-b\|_X),
	\]
	and
	\[
	\|\mb H_{a_\infty}\mb I_2(\Phi,a)-\mb H_{b_\infty}\mb I_2(\Psi,b)\|\lesssim\d(\|\Phi-\Psi\|_\mathcal{X}+\|a-b\|_X).
	\]
	Thus we get the following Lipschitz estimate:
	\[
	\|\mb F_1(\mb v+\mb w)-\mb F_1(\mb v+\tilde{\mb w})\|+\|\mb F_2(\mb v+\mb w)-\mb F_2(\mb v+\tilde{\mb w})\|\lesssim \d \|\mb w-\tilde{\mb w}\|,
	\]
	and we conclude that for all small enough $\d>0$ the operator $\Omega_{\mb v} : \tilde{B}_{\d/C}(\mb v)\rightarrow \tilde{B}_{\d/C}(\mb v)$ is contractive, with the contraction constant $\tfrac{1}{2}$. Consequently, by the contraction map principle we get that for every $\mb v\in \ker \mb J_0 \cap \mc B_{\d/2C}$ there exists a unique 
	$\mb w\in \tilde{B}_{\d/C}(\mb v)$ that solves \eqref{eq.omega}, hence $\mb C(\mb v+\mb w)=\tilde{\mb C}_{\mb v}(\mb w)=0$.
	
	Next, we establish the Lipschitz-continuity of the mapping $\mb v\mapsto \mb M(\mb v)$. Let $\mb v,\tilde{\mb v}\in \ker \mb J_0 \cap \mc B_{\d/2C}$, and $\mb w, \tilde{\mb w}\in \tilde{B}_{\d/C}$ be the corresponding solutions to \eqref{eq.omega}. We get
	\begin{align*}
		\|\mb M(\mb v)-\mb M(\tilde{\mb v})\|&=\|\mb w-\tilde{\mb w}\|\leq \|\Omega_{\mb v}(\mb w)-\Omega_{\mb v}(\tilde{\mb w})\| + 
		\|\Omega_{\mb v}(\tilde{\mb w})-\Omega_{\tilde{\mb v}}(\tilde{\mb w})\|
		\\
		&\leq \frac{1}{2}\|\mb w-\tilde{\mb w}\|+\|\Omega_{\mb v}(\tilde{\mb w})-\Omega_{\tilde{\mb v}}(\tilde{\mb w})\|.
	\end{align*}
	The second term we estimate with
	\begin{align*}
		\|\Omega_{\mb v}(\tilde{\mb w})&-\Omega_{\tilde{\mb v}}(\tilde{\mb w})\|=\| \mb J_0(\mb J_0-\mb J_{a_{\mb v+\tilde{\mb w},\infty}})(\mb v+\tilde{\mb w})-\mb J_0(\mb F_1(\mb v+\tilde{\mb w})+\mb F_2(\mb v+\tilde{\mb w}))
		\\
		&-\mb J_0(\mb J_0-\mb J_{a_{\tilde{\mb v}+\tilde{\mb w},\infty}})+\mb J_0(\mb F_1(\tilde{\mb v}+\tilde{\mb w})+\mb F_2(\tilde{\mb v}+\tilde{\mb w}))\|
		\\
		\lesssim \, & \|\mb J_0(\mb J_{a_{\tilde{\mb v}+\tilde{\mb w}},\infty}-\mb J_{a_{\mb v+\tilde{\mb w},\infty}})\tilde{\mb w}\|
		+\|\mb J_0(\mb J_{a_{\mb v+\tilde{\mb w},\infty}}\mb v-\mb J_{a_{\tilde{\mb v}+\tilde{\mb w},\infty}}\mb w) \|
		\\
		&+\|\mb J_0(\mb F_1(\tilde{\mb v}+\tilde{\mb w})+\mb F_2(\tilde{\mb v})+\tilde{\mb w}) \|
		\\
		\lesssim \,& |a_{\tilde{\mb v}+\tilde{\mb w},\infty}-a_{\mb v+\tilde{\mb w}}|\|\tilde{\mb w}\|
		+\|\tilde{\mb v}-\mb v\|+|a_{\tilde{\mb v}+\tilde{\mb w},\infty}|\|\tilde{\mb v}\|+\d\|\tilde{\mb v}-\mb v\|
		\\
		\lesssim \, & \tfrac{\d}{C}\|\tilde{\mb v}-\mb v\|+\tfrac{\d}{2C}\|\tilde{\mb v}-\mb v\|+\d\|\tilde{\mb v}-\mb v\|\lesssim \|\tilde{\mb v}-\mb v\|.
	\end{align*}
	Thereby we obtain the claimed Lipschitz estimate
	\[
	\|\mb M(\mb v)-\mb M(\tilde{\mb v})\|\leq 2\|\Omega_{\mb v}(\tilde{\mb w})-\Omega(\tilde{\mb w})\|\lesssim \|\mb v-\tilde{\mb v}\|.
	\]
	
	We note that for $\mb u = 0$, the associated $(\Phi,a)$ is trivial, i.e., $\Phi=0$ and $a=0$. Thus, we have 
	$\mb C(\mb 0)=\mb F_1(\mb 0)+\mb F_2(\mb 0)=0$. Moreover, $\mb u=\mb v+\mb w=\mb 0$ if and only if $\mb v=\mb w=\mb 0$. Since in this case 
	$\mb v$ satisfies the smallness condition, $\mb w$ solving $\mb C(\mb 0+\mb w)=\mb 0$ is unique, hence $\mb M(\mb 0)=\mb 0$.
	
	Finally, let $\mb u\in \HH$ satisfying $\mb C(\mb u)=\mb 0$. Then, since $1-\mb J_0$ is a bounded operator on $\mc H$,
	\[
	\|(1-\mb J_0)\mb u \|\lesssim \|\mb u\|.
	\]
	We obtain $\mb v_{\mb u}:=(1-\mb J_0)\mb u\in \ker \mb J_0$ and $\|\mb v_{\mb u}\|\leq \frac{\d}{2C}$ for $\|\mb u\|\leq\frac{\d}{K}$ for $K>C$ large enough. Uniqueness yields $\mb w_{\mb u}:=\mb J_0\mb u=\mb M(\mb v_{\mb u})$, hence $\mb u\in \mathcal{M}_{\d,C}$.
\end{proof}

\begin{remark}
	For each correction term, the same argument yields the existence of Lipschitz manifolds $\mathcal{M}_1,\mathcal{M}_2\subset \HH$ of co-dimension ten, respectively, one, characterized by the vanishing of $\mb C_1$ and $\mb C_2$. In particular, in a small neighborhood around zero, $\mc M$ can be characterized as a subset of the intersection $\mathcal{M}_1 \cap \mathcal{M}_2$. 
\end{remark}

\subsection{Proof of Proposition ~\ref{prop.instability-similarity} and Proposition ~\ref{Prop:smoothness}}\label{Sec:Proof_Propsitions}

\begin{proof}[Proof of Proposition~\ref{prop.instability-similarity}.]
	Let 
	$\Phi_0\in\mathcal{M}_{\d,C}$, where $\mathcal{M}_{\d,C}$ is the manifold defined in Proposition~\ref{prop.manifold1}. In particular, $\|\Phi_0\|\leq\frac{\d}{C}$ and $\mb C(\Phi_0)=0$. By 
	Proposition~\ref{prop.manifold1} there is a pair $(\Phi,a)\in \mathcal{X}_\d\times X_\d$ which solves equation~\eqref{eq.Duhamel} with initial data $\mb u=\Phi_0$. Furthermore, after substituting the variation of constants formula
		\begin{equation*}
			\mb S_{a_{\infty}}(\tau)=\mb S(\tau) + \int_0^{\tau} \mb S(\tau -  \sigma) \mb L'_{a_\infty}\mb S_{a_{\infty}}(\sigma) d \sigma
		\end{equation*} 
		into Eq.~\eqref{eq.Duhamel}, a straightforward calculation yields that 
		$\Psi(\t):= \mb U_{a(\t)} +\Phi(\t)$ satisfies 
		\begin{align}\label{Duh:S}
			\begin{split}
				\Psi(\t)&=\mb S(\tau)(\mb U_0+\Phi_0) + \int_0^{\tau} \mb S(\tau -  \sigma)\mb F(\Psi(\sigma)) d \sigma
			\end{split},
	\end{align}
	for all $\tau \geq 0$.
		Then, based on \eqref{Eq:Selfsim_Sol_Lipschitz} and \eqref{inequality} we infer that
		\[
		\|\Psi(\t)-\mb U_{a_\infty}\|\leq \|\Phi(\t)\|+\|\mb U_{a(\t)}-\mb U_{a_\infty}\|\lesssim \delta e^{-\o\t},
		\]
		for all $\tau \geq 0$, as claimed.

\end{proof}

\begin{proof}[Proof of Proposition~\ref{Prop:smoothness}]
	Let $\Phi_0 \in  \mathcal{M}\cap (C^{\infty}(\overline{\B}) \times C^{\infty}(\overline{\B})) $ and let $\Psi  \in C([0,\infty),\HH)$ be the solution of Eq.~\eqref{Duh:S} associated to $\Phi_0$ via Proposition \ref{prop.instability-similarity}. To prove smoothness of $\Psi(\tau)$ (for fixed $\tau$) we use the representation \eqref{Duh:S}.
		Recall that we defined $\mb S(\tau) := \mb S_k(\tau)$ for $k=\frac{d+1}{2}=5$ with $(\mb S_k(\tau))_{\tau \geq 0}$ denoting the free wave evolution of Proposition \ref{time-evolution}.
		Now, by using Lemma \ref{lemma.restriction} we infer from ~\eqref{Duh:S} that 
		\begin{align*}
			\|\Psi(\tau)\|_{H^6(\mathbb B^9)\times H^{5}(\mathbb B^9)} &\lesssim e^{-\frac{\tau}{2}}\|\mb U_0+\Phi_0\|_{H^6(\mathbb B^9)\times H^{5}(\mathbb B^9)}
			 +
			\int_0^\tau e^{-\frac{1}{2} (\tau - \sigma) } \|\mb F(\Psi(\sigma))\|_{H^6(\mathbb B^9)\times H^{5}(\mathbb B^9)} d\sigma	 \\
			& \lesssim e^{-\frac{\tau}{2}}\|\mb U_0+\Phi_0\|_{H^6(\mathbb B^9)\times H^{5}(\mathbb B^9)}
			+
			\int_0^\tau e^{-\frac{1}{2} (\tau - \sigma) } \| \Psi(\sigma) \|_{H^5(\mathbb B^9)\times H^{4}(\mathbb B^9)}^2 d\sigma	\lesssim 1,
		\end{align*}
		for all $\tau \geq 0$. Then inductively, for $k \geq 5$ we get that
	$ \| \Psi(\tau) \|_{H^k(\mathbb B^9)\times H^{k-1}(\mathbb B^9)} \lesssim 1 $
		for all $\tau \geq 0$. Consequently, by Sobolev embedding we have that $\Psi(\tau) \in C^{\infty}(\overline{\B}) \times 
		C^{\infty}(\overline{\B})$ for all $\tau \geq 0.$\\
		To get regularity in $\tau$ we do the following. First, by local Lipschitz continuity of $\mb F: \mc H_k \mapsto \mc H_k$ for every $k \geq 5$, and Gronwall's lemma we get from Eq.~\eqref{Duh:S} that $\Psi: [0,\mc T] \mapsto \mc H_k$ is Lipschitz continuous for every $\mc T>0$ and $k \geq 5$. Consequently, $\mb F(\Psi(\cdot)), \mb L \mb F (\Psi(\cdot)) : [0,\mc T] \mapsto \mc H_k$ are Lipschitz continuous. The latter is immediate from interpreting $\mb L$ as a map from $\mc H_{k}$ to $\mc H_{k+2}$ and using the Lipschitz continuity of $\Psi$. Therefore $\Psi \in C^1([0,\infty),\mc H_k)$, with
		\begin{align}
				\partial_\tau \Psi(\tau) 
				& =\mb L \Psi (\tau) + \mb F(\Psi(\tau)) \label{Eq:Psi_classical}\\
				&=
				\mb S(\tau)\mb L(\mb U_0+\Phi_0)+\int_0^\tau \mb S(\tau-\sigma)\mb L\mb F(\Psi(\sigma))d\sigma + \mb F(\Psi(\tau)), \label{Eq:Psi_1st_der} 
		\end{align}
	for every $\tau \geq 0$ (see e.g.~\cite{Pazy}, p.~108, Corollary 2.6).  Consequently, by regularity of $\mb F$, $\mb F(\Psi(\cdot)),\mb L^m\mb F(\Psi(\cdot)) \in C^1([0,\infty),\mc H_k)$ for all $m \geq 0$ and $k \geq 5$. Therefore, from Eq.~\eqref{Eq:Psi_1st_der} we get that $\partial_{\tau}\Psi \in C^1([0,\infty),\mc H_k)$, with
	\begin{equation*}
		\partial^2_\tau \Psi(\tau) =
		\mb S(\tau)\mb L^2(\mb U_0+\Phi_0)+\int_0^\tau \mb S(\tau-\sigma)\mb L^2\mb F(\Psi(\sigma))d\sigma + \mb L \mb F(\Psi(\tau)) + \partial_{\tau}\mb F(\Psi(\tau))
	\end{equation*}
	for all $\tau \geq 0$. Inductively, we get that $\Psi \in C^m([0,\infty),\mc H_k)$ for all $m \geq 0$ and $k \geq 5$. In particular, by Sobolev embedding,  $\partial^m_\tau \Psi(\tau) \in C^{\infty}(\overline{\B}) \times 
	C^{\infty}(\overline{\B})$. Also, by Sobolev embedding $ H^k(\mathbb{B}^9) \hookrightarrow L^\infty(\mathbb{B}^9)$ for $k \geq 5$, we get that the derivatives in $\tau$ hold pointwise. As a consequence, by (a strong version of) the Schwarz theorem (see, e.g., \cite{Rud76}, p.~235, Theorem 9.41), we get that mixed derivatives of all orders in $\tau$ and $\xi$  exist, so $\Psi \in C^\infty(\mc Z) \times C^\infty(\mc Z)$, and Eq.~\eqref{Eq:Psi_classical} holds classically.
\end{proof}

\subsection{Variation of blow-up parameters and proof of Proposition \ref{Prop:Data_Manifold}}
In this section we prove boundedness and continuity properties of the initial data operator $\Upsilon$ (see  \ref{Data}) which are necessary to establish Proposition \ref{Prop:Data_Manifold}. We assume that $x_0\in \overline{\B_{1/2}}$ and $T\in \left[ \frac{1}{2},\frac{3}{2}\right]=:I$. We also introduce the notation
\begin{align*}
	\mathcal{Y}:=H^6(\B_2)\times H^5(\B_2),
\end{align*}
and denote by $\mathcal{B_Y}$ the unit ball in $\mathcal{Y}$.

\begin{lemma} \label{lemma-init.data}
	The initial data operator $\Upsilon: \mathcal{B_Y}\times I \times  \overline{\B_{1/2}}\rightarrow \HH$ is Lipschitz continuous, i.e.,
	\begin{align*}
		\nr{\Upsilon(\mb v,T_1,x_0)-\Upsilon(\mb w,T_2,y_0)}\lesssim \nr{\mb v-\mb w}_\mathcal{Y} + |T_1-T_2| + |x_0 - y_0 |
	\end{align*}
	for all $\mb v,\mb w \in \mathcal{B_Y}$, all $T_1,T_2 \in I$, and all $x_0,y_0 \in  \overline{\B_{1/2}}$. Furthermore, for $\d>0$ sufficiently small we have that 
	\begin{align*}
		\nr{\Upsilon(\mb v,T,x_0)}\lesssim \d,
	\end{align*}
	for all $\mb v \in \mc Y$ with $\nr{\mb v}_\mathcal{Y}\leq \d$, all $T\in [1-\d,1+\d] \subset I$ and all $x_0 \in \overline{\B_\d}$.
\end{lemma}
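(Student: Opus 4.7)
The plan is to reduce both assertions to boundedness and $C^1$-dependence of the rescaling operator $\mathcal{R}$, since $\Upsilon$ is built from $\mathcal{R}$ by addition and subtraction. Two structural observations drive everything. First, for $T \in I$, $x_0 \in \overline{\mathbb{B}_{1/2}^9}$ and $\xi \in \overline{\mathbb{B}^9}$, the point $T\xi + x_0$ lies in $\overline{\mathbb{B}_2^9}$, so the compositions $f(T\cdot + x_0)$ and $g(T\cdot + x_0)$ are well-defined on $\mathbb{B}^9$ whenever $(f,g)$ is defined on $\overline{\mathbb{B}_2^9}$. Second, the chain rule gives $\partial^{\alpha}[f(T\cdot+x_0)](\xi) = T^{|\alpha|}(\partial^{\alpha}f)(T\xi+x_0)$, so a straightforward change of variables $y = T\xi + x_0$ yields the boundedness estimate
\[
\|\mathcal{R}((f,g),T,x_0)\|_{\mathcal{H}} \lesssim \|(f,g)\|_{H^5(\mathbb{B}_2^9)\times H^4(\mathbb{B}_2^9)} \lesssim \|(f,g)\|_{\mathcal{Y}},
\]
uniformly for $(T,x_0) \in I \times \overline{\mathbb{B}_{1/2}^9}$.

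The Lipschitz dependence in the first argument then follows from linearity of $\mathcal{R}$ in $(f,g)$ combined with this bound. For the dependence on the parameters $(T,x_0)$, I use the fundamental theorem of calculus along a straight-line path. A direct computation yields
\[
\partial_s[s^2 f(s\xi + x_0)] = 2s\, f(s\xi+x_0) + s^2 \xi^j (\partial_j f)(s\xi + x_0), \quad \partial_{x_0^j}[s^2 f(s\xi+x_0)] = s^2 (\partial_j f)(s\xi+x_0),
\]
and analogous identities with $s^3$ in place of $s^2$ for the second component. The crucial point is that every such parameter derivative introduces one additional spatial derivative of $f$ or $g$ inside the composition. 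Applying the boundedness estimate to $\partial_j f$ and $\partial_j g$ and using that the definition of $\mathcal{Y} = H^6(\mathbb{B}_2^9)\times H^5(\mathbb{B}_2^9)$ provides exactly the extra regularity needed, I obtain
\[
\sup_{s \in I,\, x_0 \in \overline{\mathbb{B}_{1/2}^9}} \bigl( \|\partial_s \mathcal{R}((f,g),s,x_0)\|_{\mathcal{H}} + \|\partial_{x_0^j} \mathcal{R}((f,g),s,x_0)\|_{\mathcal{H}} \bigr) \lesssim \|(f,g)\|_{\mathcal{Y}}.
\]
Integrating along the segment joining $(T_1,x_0)$ to $(T_2,y_0)$ then gives
\[
\|\mathcal{R}((f,g),T_1,x_0) - \mathcal{R}((f,g),T_2,y_0)\|_{\mathcal{H}} \lesssim \|(f,g)\|_{\mathcal{Y}}\, (|T_1-T_2| + |x_0-y_0|).
\]

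Assembling the pieces, I split
\[
\Upsilon(\mathbf{v},T_1,x_0) - \Upsilon(\mathbf{w},T_2,y_0) = \mathcal{R}(\mathbf{v}-\mathbf{w},T_1,x_0) + \bigl[\mathcal{R}(\mathbf{w},T_1,x_0)-\mathcal{R}(\mathbf{w},T_2,y_0)\bigr] + \bigl[\mathcal{R}(\mathbf{U}_0,T_1,x_0)-\mathcal{R}(\mathbf{U}_0,T_2,y_0)\bigr].
\]
The first term is controlled by $\|\mathbf{v}-\mathbf{w}\|_{\mathcal{Y}}$ via the boundedness of $\mathcal{R}$. The second is controlled by $(|T_1-T_2|+|x_0-y_0|)\|\mathbf{w}\|_{\mathcal{Y}} \lesssim |T_1-T_2|+|x_0-y_0|$ since $\mathbf{w} \in \mathcal{B}_\mathcal{Y}$, and the third likewise, using that $\mathbf{U}_0|_{\mathbb{B}_2^9} \in \mathcal{Y}$ has a bounded $\mathcal{Y}$-norm (as $c_3>0$ ensures $U \in C^\infty(\overline{\mathbb{B}_2^9})$). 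This establishes the Lipschitz estimate. For the smallness assertion, observe that $\Upsilon(\mathbf{0},1,0) = \mathbf{0}$ by definition, so the Lipschitz bound just proved immediately yields
\[
\|\Upsilon(\mathbf{v},T,x_0)\| \lesssim \|\mathbf{v}\|_{\mathcal{Y}} + |T-1| + |x_0| \lesssim \delta
\]
under the stated hypotheses. No step is really the \emph{main} obstacle; the only subtlety is that the regularity gap between $\mathcal{Y}$ and $\mathcal{H}$ is forced precisely by the need to differentiate the composition in $(T,x_0)$, which is the very reason for the choice of $\mathcal{Y}$ in the statement of Theorem \ref{main}.
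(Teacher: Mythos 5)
Your proof is correct and follows essentially the same route as the paper: both arguments reduce the claim to the boundedness of $\mc R$ and a fundamental-theorem-of-calculus estimate along a segment in the $(T,x_0)$ parameters, which costs exactly one spatial derivative and thus explains the regularity gap between $\mc Y$ and $\HH$. The only cosmetic difference is that the paper varies $x_0$ and $T$ separately and extends from smooth $v$ by density, whereas you integrate the parameter derivative along the joint segment; the content is identical.
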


\begin{proof}
	Let $v\in C^\infty(\overline{\B_2})$. Let $T\in \left[\frac{1}{2},\frac{3}{3}\right]$ and $x_0,y_0 \in \overline{\B_{1/2}}$. Then we get by the fundamental theorem of calculus that
	\begin{align*}
		v(T\xi+x_0)-v(T\xi+y_0)=(x_0^i-y_0^i)\int_0^1 \pt_iv(T\xi+y_0+s(x_0-y_0))ds.
	\end{align*}
	This implies that $\nr{v(T\cdot +x_0)-v(T\cdot +y_0)}_{L^2(\B)} \lesssim \nr{v}_{H^1(\B_2)}|x_0-y_0|$. The same argument yields for all $k\in\N$ that
	\begin{align} \label{eq.translation}
		\nr{v(T\cdot+x_0)-v(T\cdot +y_0)}_{H^k(\B)}\lesssim \nr{v}_{H^{k+1}(\B_2)}|x_0-y_0|.
	\end{align} 
	Similarly,  we get for all $T_1,T_2\in \left[\frac{1}{2},\frac{3}{2}\right]$ and all $x_0\in \overline{\B_{1/2}}$ that
	\begin{align} \label{eq.dilation}
		\nr{v(T_1\cdot +x_0)-v(T_2\cdot +x_0)}_{H^k(\B)} \lesssim \nr{v}_{H^{k+1}(\B_2)}|T_1-T_2|,
	\end{align}
	where $k\in\N$. The estimates \eqref{eq.translation} and \eqref{eq.dilation} can be extended to $v\in H^{k+1}(\B_2)$ by density. Now let $\mb v,\mb w \in \mathcal{Y}$, $T_1,T_2 \in \left[\frac{1}{2},\frac{3}{2} \right]$, and $x_0,y_0 \in \overline{\B_{1/2}}$. Inequalities \eqref{eq.translation} and \eqref{eq.dilation} imply
	\begin{align} \label{eq-ineq1}
		\nr{\mc R(\mb v,T_1,x_0)-\mc R(\mb w,T_2,y_0)} \lesssim \nr{v}_\mathcal{Y}\big(|T_1-T_2|+|x_0-y_0|\big)+\nr{\mb v-\mb w}_\mathcal{Y}.
	\end{align}
	Moreover, since $\mb U_0$ is smooth, we have
	\begin{align}\label{eq-ineq2}
		\nr{\mc R(\mb U_0,T_1,x_0)-\mc R(\mb U_0,T_2,y_0)} \lesssim |T_1-T_2|+|x_0-y_0|,
	\end{align} 
	for all $T_1,T_2 \in \left[\frac{1}{2},\frac{3}{2} \right]$, and $x_0,y_0 \in \overline{\B_{1/2}}$. 
	Now the inequalities \eqref{eq-ineq1} and \eqref{eq-ineq2} imply the first part of the statement. The same inequalities imply
	\begin{align*}
		\nr{\Upsilon(\mb v,T,x_0)} \lesssim \nr{\mb v}_\mathcal{Y} + |T-1| + |x_0|,
	\end{align*}
	which proves the second part of the statement. 
\end{proof}

We have the following result, which has Proposition
\ref{Prop:Data_Manifold} as a direct consequence. To shorten the notation, we write $\mb h := \mb h_0$. 

\begin{lemma} \label{prop.parameters}
	There exist $M>0$ such that for all sufficiently small $\d>0$ the following holds. For every real-valued $ \mb v\in \mc Y$ that satisfies $\nr{\mb v}_{\mc Y} \leq \frac{\d}{M^2}$, there exist $\Phi \in \mathcal{X}_\d$, $a\in X_\d$,  and parameters $\a \in \left[-\frac{\d}{M},\frac{\d}{M} \right]$, $T \in \left[1-\frac{\d}{M}, 1+\frac{\d}{M} \right]\subset\left[\frac{1}{2},\frac{3}{2} \right]$, $x_0 \in \overline{\B_{\d/M}} \subset  \overline{\B_{1/2}}$, such that
	\begin{align} \label{eq.varofpar}
		\mb C(\Phi,a,\Upsilon(\mb v+\a \mb h_0, T, x_0))=0.
	\end{align}
	Moreover, the parameters depend Lipschitz continuously on the data, i.e.,
	\begin{align*}
		|\a(\mb v)-\a(\mb w)|+|T(\mb v)-T(\mb w)|+|x_0(\mb v)-x_0(\mb w)|\lesssim \nr{\mb v-\mb w}_{\mc Y}
	\end{align*}
	for all $\mb v,\mb w\in \mc Y$ satisfying the above assumptions. In particular, $\Upsilon(\mb v+\a \mb h,T,x_0)\in \mathcal{M}_{\d,C}$.
\end{lemma}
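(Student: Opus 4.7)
The plan is to solve the equation \eqref{eq.varofpar} by a fixed-point argument on the eleven-dimensional parameter space $(\alpha, T, x_0) \in \R \times \R \times \R^9$, matched against the eleven-dimensional unstable subspace $\ran \mb J_0 = \ran \mb H_0 \oplus \ran \mb P_0$. By Proposition \ref{prop.manifold1}, equation \eqref{eq.varofpar} is equivalent to $\mb J_0 \mb C(\mb u) = 0$ for $\mb u := \Upsilon(\mb v + \alpha \mb h, T, x_0)$ small. Using the decomposition $\mb J_0 = \mb H_0 + \sum_{k=0}^{9} \mb P_0^{(k)}$, this further splits into scalar equations
\[
(\mb C(\mb u) \mid \mb h)_{\mc H} = 0, \quad (\mb C(\mb u) \mid \mb g_0^{(k)})_{\mc H} = 0 \ \ (k=0,\dots,9).
\]
Each scalar equation should be ``matched'' to one parameter: $\alpha$ to $\mb h$ (the genuine instability), $T$ to $\mb g_0^{(0)}$ (time translation), and $x_0^k$ to $\mb g_0^{(k)}$ for $k = 1,\dots,9$ (spatial translation).

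The key computational step is to show that the Jacobian of the map
\[
\mathbf F: (\alpha, T, x_0, \mb v) \longmapsto \big( \mb H_0 \mb C(\mb u),\, \mb P_0^{(0)} \mb C(\mb u),\, \dots,\, \mb P_0^{(9)} \mb C(\mb u) \big)
\]
with respect to $(\alpha, T, x_0)$ at $(0, 1, 0, 0)$ is (essentially) block diagonal and invertible. To compute it, observe first that when $\mb v = 0$, $\alpha = 0$, $T = 1$, $x_0 = 0$, we have $\mb u = 0$, which yields $(\Phi, a) = (0, 0)$ in the fixed-point of Proposition \ref{prop.correctionterms}, hence $\mb C(0) = 0$. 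Next, a direct calculation gives
\[
\partial_\alpha \Upsilon(\mb v + \alpha \mb h, T, x_0)\big|_{(0,1,0,0)} = \mc R(\mb h, 1, 0) = \mb h \in \ran \mb H_0,
\]
while $\partial_T \mc R(\mb U_0, T, 0)|_{T=1}$ is proportional to the eigenfunction $\mb g_0^{(0)}$ (indeed, differentiating the explicit form \eqref{Def:InitialData_Operator} in $T$ extracts the scaling generator $\xi\cdot\nabla + 2$ of $\mb U_0$, which is $\mb g_0^{(0)}$ up to a nonzero factor), and similarly $\partial_{x_0^j} \mc R(\mb U_0, 1, x_0)|_{x_0=0}$ is a nonzero multiple of the spatial-translation eigenfunction $\mb g_0^{(j)}$. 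Passing through the (Lipschitz) dependence of $(\Phi, a)$ on $\mb u$ and the projections (which are continuous at zero), the Jacobian at the trivial point reduces to a diagonal matrix with nonzero diagonal entries, so it is invertible.

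With the Jacobian inverted, I rewrite \eqref{eq.varofpar} as a fixed-point equation
\[
\mathbf p = \mathbf p - A^{-1} \mathbf F(\mathbf p, \mb v) =: \mathbf G_{\mb v}(\mathbf p), \qquad \mathbf p := (\alpha, T-1, x_0) \in \R^{11},
\]
where $A$ is the invertible linear part. Lemma \ref{lemma-init.data}, combined with the Lipschitz bounds on $(\Phi, a)$ from Proposition \ref{prop.correctionterms} and the Lipschitz dependence of the projections $\mb H_{a_\infty}$, $\mb P_{a_\infty}^{(k)}$ on $a$ (Lemma \ref{Rieszprojections2}), yields that the remainder $\mathbf F(\mathbf p, \mb v) - A\mathbf p - (\text{contribution from }\mb v)$ is $O(\delta \|\mathbf p\|) + O(\delta \|\mb v\|_{\mc Y})$, uniformly for $\mathbf p$ in a ball of radius $O(\delta/M)$. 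Choosing $M$ large and $\delta$ small makes $\mathbf G_{\mb v}$ a contraction on a ball of radius $\delta/M$ that depends Lipschitz continuously on $\mb v \in \mc Y$ with $\|\mb v\|_{\mc Y} \leq \delta/M^2$. This produces the parameters $(\alpha(\mb v), T(\mb v), x_0(\mb v))$, with the required Lipschitz dependence inherited from the Banach fixed-point theorem.

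The main obstacle is verifying that the Jacobian is genuinely invertible, i.e., that the linear responses $\partial_\alpha \mb u$, $\partial_T \mb u$, $\partial_{x_0^k} \mb u$ indeed have nonzero projections onto the matching one-dimensional subspaces $\ran \mb H_0$, $\ran \mb P_0^{(0)}$, $\ran \mb P_0^{(k)}$, with negligible off-diagonal coupling. This reduces to an explicit inner-product computation using the closed-form expressions \eqref{two}--\eqref{zero} for the eigenfunctions and the definitions \eqref{Def:InitialData_Operator} of $\Upsilon$; the spatial parity of the eigenfunctions (odd in $\xi^j$ for $\mb g_0^{(j)}$, $j\geq 1$, even for $\mb g_0^{(0)}$ and $\mb h$) ensures the off-diagonal entries vanish to leading order, so the matrix is (block) diagonal and nondegenerate. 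Once this is in hand, all remaining estimates are routine perturbative bounds controlled by the Lipschitz theory already established.
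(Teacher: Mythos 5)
Your proposal is correct and follows essentially the same route as the paper: the paper also Taylor-expands $\Upsilon(\mb v+\a\mb h,T,x_0)$ around $(\a,T,x_0)=(0,1,0)$, identifies the first-order responses in $T-1$, $x_0^j$, $\a$ as nonzero multiples of $\mb g_0^{(0)}$, $\mb g_0^{(j)}$, $\mb h_{a_\infty}$ respectively, and then solves the eleven scalar equations $(\mb C_1^k(\cdot)\,|\,\mb g^{(k)}_{a_\infty})=0$, $(\mb C_2(\cdot)\,|\,\mb h_{a_\infty})=0$ by a Banach fixed-point argument on $\overline{\mathbb B^{11}_{\d/M}}$, with the $O(\d)$ remainders (including the $a_\infty$-dependence of the projections) absorbed into the contraction estimate exactly as you describe. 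The only cosmetic difference is that the paper gets the diagonal structure directly from transversality of the Riesz projections applied to the explicit eigenfunction multiples, rather than from a parity argument, but this changes nothing of substance.
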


\begin{proof}
	Fix constants $C>0$ and $K>0$ from Proposition \ref{prop.manifold1}. By Lemma \ref{lemma-init.data}, we have that for all $M>0$ large enough and all $\d>0$ small enough, the inequality
	\begin{align}\label{Est:Ups}
		\|\Upsilon(\mb v+\a \mb h,T,x_0)\| \leq \frac{\delta}{K}
	\end{align}
	holds for every $\| \mb v \|_{\mc Y} \leq \tfrac{\d}{M}$,
	$\a \in \left[-\frac{\d}{M},\frac{\d}{M} \right]$, $T \in \left[1-\frac{\d}{M}, 1+\frac{\d}{M} \right]$, and $x_0 \in \overline{\B_{\d/M}}$. Furthermore, in view of Eq.~\eqref{Est:Ups} and Proposition~\ref{prop.correctionterms} we get that given $\nr{\mb v}_\mathcal{Y}\leq \frac{\d}{M^2}$, for every $\a\in \left[-\frac{\d}{M},\frac{\d}{M} \right]$, $T\in \left[1-\frac{\d}{M},1+\frac{\d}{M} \right]$, and $x_0\in \overline{\B_{\d/M}}$, there are functions $\Phi=\Phi(\mb v+\a \mb h,T,x_0)$ and $a=a(\mb v+\a\mb h,T,x_0)$, which solve the modified integral equation 
	\begin{align}\label{ModInt_Ydata}
		\begin{split}
			\Phi(\t)= \mb S_{a_\infty}(\t)\big(\Upsilon(\mb v,T,x_0)&-\mb C(\Phi,a,\Upsilon(\mb v,T,x_0))\big) \\
			& +\int_0^\t\mb S_{a_\infty}(\t-\s)
			\left(\mb G_{a(\s)}(\Phi(\s))-\pt_\s\mb U_{a(\s)} \right) d\s
		\end{split}
	\end{align}
	for all $\t\geq 0$. For such $\Phi$ and $a$, we show that one can associate to any $\| \mb v\|_{\mc Y} \leq \tfrac{\d}{M^2}$ suitable parameters $T$, $x_0$ and $\alpha$, such that Eq.~\eqref{eq.varofpar} holds. From this, via Proposition \ref{prop.manifold1}, we conclude that $\Upsilon(\mb v+\a \mb h,T,x_0) \in \mc M_{\d,C}$. Recall that the correction terms can be written as $\mb C=\mb C_1+\mb C_2=\sum_{k=0}^9\mb C_1+\mb C_2$, where
	\begin{align*}
		\mb C_1^k(\Phi,a,\mb u)=\mb P^{(k)}_{a_\infty}\mb u+\mb P^{(k)}_{a_\infty}\mb I_1(\Phi,a),
		\\
		\mb C_2(\Phi,a,\mb u)=\mb H_{a_\infty}\mb u+\mb H_{a_\infty}\mb I_2(\Phi,a),
	\end{align*}
	and where the integrals are denoted by
	\begin{align*}
		\mb I_1(\Phi,a)=\int_0^\infty e^{-\s}\left(\mb G_{a(\s)}(\Phi(\s))-\pt_\s\mb U_{a(\s)}\right)d\s,
		\\
		\mb I_2(\Phi,a)=\int_0^\infty e^{-3\s}\left(\mb G_{a(\s)}(\Phi(\s))-\pt_\s\mb U_{a(\s)}\right)d\s,
	\end{align*}
	and we have
	\begin{align} \label{eq.estimate_on_integral}
		\nr{\mb P^{(k)}_{a_\infty}\mb I_1(\Phi,a)} \lesssim \d^2, \quad \nr{\mb H_{a_\infty}\mb I_2(\Phi,a)} \lesssim \d^2,
	\end{align}
	see Eq.~\eqref{eq.boundsdelta}. We will show that there are parameters $T$, $\a$, and $x_0$, such that for $k=0,\dots, 9$
	\begin{align} \label{eq.variation}
		\begin{split}
			\left(\mb C^k_1(\Phi,a,\Upsilon(\mb v+\a\mb h,T,x_0))|\mb g^{(k)}_{a_\infty}\right)=0,
			\\
			\left(\mb C_2(\Phi,a,\Upsilon(\mb v+\a\mb h,T,x_0))|\mb h_{a_\infty} \right)=0,
		\end{split}
	\end{align}
	which implies \eqref{eq.varofpar}. To this end we expand the initial data operator. First, by Taylor expansion we get that for $T\in \left[1-\frac{\d}{M},1+\frac{\d}{M} \right]$ and $x_0\in \overline{\B_{\d/M}}$
	\begin{align*}
		\mc R(\mb U_0,T,x_0)-\mc R(\mb U_0,1,0)=c_0(T-1)\mb g^{(0)}_0+\sum_{j=1}^9c_jx_0^j\mb g_0^{(j)}+r(T,x_0),
	\end{align*}
	where the remainder satisfies
	\begin{align*}
		\nr{r(T,x_0)-r(\tilde{T},\tilde{x}_0)} \lesssim \d \big(|T-\tilde{T}|+|x_0-\tilde{x}_0|\big).
	\end{align*}
	Hence we obtain
	\begin{align*}
		\Upsilon(\mb v+\a\mb h,T,x_0)=\mc R(\mb v+\a\mb h,T,x_0)+c_0(T-1)\mb g^{(0)}_{a_\infty}+\sum_{j=1}^9c_jx_0^j\mb g_{a_\infty}^{(j)}+r_{a_\infty}(T,x_0),
	\end{align*}
	where
	\begin{align*}
		r_{a_\infty}(T,x_0)=c_0(T-1)\left(\mb g_0^{(0)}-\mb g_{a_\infty}^{(0)} \right)+\sum_{j=1}^9c_jx^j_0\left(\mb g_0^{(j)}-\mb g^{(j)}_{a_\infty} \right)+r(T,x_0).
	\end{align*}
	It is straightforward to check that
	\begin{align} \label{eq.estimate_on_remainder}
		\nr{r_a(T,x_0)-r_b(\tilde{T},\tilde{x}_0)} \lesssim \d \big(|a-b|+|T-\tilde{T}|+|x_0-\tilde{x}_0|\big),
	\end{align}
	for all $a,b\in \B_\d$, $T,\tilde{T}\in \left[1-\frac{\d}{M},1+\frac{\d}{M} \right]$, and $x_0,\tilde{x}_0\in \overline{\B_{\d/M}}$. We now express
	\begin{align*}
		\mc R(\mb v+\a\mb h,T,x_0)=\mc R(\mb v,T,x_0)+\a \mc R(\mb h_{a_\infty},T,x_0)+\a \mc R(\mb h-\mb h_{a_\infty},T,x_0).
	\end{align*}
	The last term can be estimated by
	\begin{align*}
		\nr{ \mc R(\mb h- \mb  h_{a_\infty},T,x_0)} \lesssim  \lesssim |a_\infty|.
	\end{align*}
	By taking the Taylor expansion of $\mc R(\mb  h_{a_\infty},T,x_0)$ at $(T,x_0)=(1,0)$ we obtain
	\begin{align*}
		\mc R(\mb  v+\a \mb  h,T,x_0)=\mc R(\mb  v,T,x_0)+\a \mb  h_{a_\infty}+\a\tilde{r}_a(T,x_0),
	\end{align*}
	where the remainder satisfies
	\begin{align} \label{eq.estimate_on_remainder_a}
		\nr{\tilde{r}_a(T,x_0)-\tilde{r}_b(\tilde{T},\tilde{x}_0)}\lesssim |a-b|+|T-\tilde{T}|+|x_0-\tilde{x}_0|.
	\end{align}
	Hence we obtain the following expansion
	\begin{align*}
		\Upsilon(\mb v+\a\mb h, T,x_0)&=\mc R(\mb v,T,x_0)+\a\mb h_{a_\infty} +c_0(T-1)\mb g^{(0)}_{a_\infty}+\sum_{j=1}^9c_jx^j_0\mb g^{(j)}_{a_\infty}
		\\
		&+r_{a_\infty}(T,x_0)+\a\tilde{r}_{a_\infty}(T,x_0).
	\end{align*}
	By applying the projections to the initial data operator we get
	\begin{align*}
		\mb P^{(0)}_{a_\infty}\Upsilon(\mb v+\a\mb h,T,x_0)&=\mb P^{(0)}_{a_\infty}\mc R(\mb v,T,x_0)+c_0(T-1)\mb g^{(0)}_{a_\infty}+\mb P^{(0)}_{a_\infty}r_{a_\infty}(T,x_0)+\a\mb P^{(0)}_{a_\infty} \tilde{r}_{a_\infty}(T,x_0),
		\\
		\mb P^{(j)}_{a_\infty}\Upsilon(\mb v+\a\mb h,T,x_0)&=\mb P^{(j)}_{a_\infty}\mc R(\mb v,T,x_0)+c_jx^j_0\mb g^{(j)}_{a_\infty}+\mb P^{(j)}_{a_\infty}r_{a_\infty}(T,x_0)+\a\mb P^{(j)}_{a_\infty} \tilde{r}_{a_\infty}(T,x_0),
		\\
		\mb H_{a_\infty}\Upsilon(\mb v+\a h,T,x_0)&=\mb H_{a_\infty}\mc R(\mb v,T,x_0)+\a\mb h_{a_\infty}+\mb H_{a_\infty}r_{a_\infty}(T,x_0)+\a\mb H_{a_\infty} \tilde{r}_{a_\infty}(T,x_0).
	\end{align*}
	Hence, by introducing the notation $\b=T-1$, we define for $k=0,\dots,9$
	\begin{align*}
		\mb \Gamma^{(k)}_{\mb v}(\a,\b,x_0)&=\mb P^{(k)}_{a_\infty}\mc R(\mb v,\b+1,x_0)+\mb P^{(k)}_{a_\infty}r_{a_\infty}(\b,x_0)+\a\mb P^{(k)}_{a_\infty} \tilde{r}_{a_\infty}(\b,x_0)+\mb P^{(k)}_{a_\infty}\mb I_1(\a,\b,x_0),
		\\
		\mb\Gamma^{(10)}_{\mb v}(\a,\b,x_0)&=\mb H_{a_\infty}\mc R(\mb v,\b+1,x_0)+\mb H_{a_\infty}r_{a_\infty}(\b,x_0)+\a\mb H_{a_\infty} \tilde{r}_{a_\infty}(\b,x_0)+\mb H_{a_\infty}\mb I_2(\a,\b,x_0).
	\end{align*}
	Using this notation we can rewrite equation \eqref{eq.variation} as
	\begin{equation} \label{eq-rewrite}
		\begin{aligned}
			\b&= \Gamma^{(0)}_{\mb v}(\a,\b,x_0) := \tilde{c}_0 \left(\mb \Gamma^{(0)}_{\mb v}(\a,\b,x_0)|\mb g^{(0)}_{a_\infty}\right),
			\\
			x_0^j&=\Gamma^{(j)}_{\mb v}(\a,\b,x_0) := \tilde{c}_j \left(\mb \Gamma^{(j)}_{\mb v}(\a,\b,x_0)|\mb g^{(j)}_{a_\infty}\right),
			\\
			\a&=\Gamma^{(10)}_{\mb v}(\a,\b,x_0) := \tilde{c}_{10}\left(\mb \Gamma^{(10)}_{\mb v}(\a,\b,x_0)|\mb h_{a_\infty} \right),
		\end{aligned}
	\end{equation}
	for $j=1,\dots,9$, and some constants $\tilde{c}_0,\tilde{c}_j,\tilde{c}_{10}\in \R$. We will show that 
	$\Gamma_{\mb v}=\big( \Gamma^{(0)}_{\mb v},\dots, \Gamma^{(10)}_{\mb v}\big)$ is a contraction on $\overline{\mathbb{B}^{11}_{\d/M}}$ for sufficiently small $\d>0$ and for sufficiently large $M>0$. Thereby the first part of the statement follows by Banach's fixed point theorem.
	
	First we observe that $\Gamma_{\mb v}$ maps $\overline{\mathbb{B}^{11}_{\d/M}}$ into itself. Indeed, by the proof of Lemma~\ref{lemma-init.data} we know that $\nr{\mc R(\mb v,1+\b,x_0)}\lesssim \nr{\mb v}_\mathcal{Y}$. Now estimates \eqref{eq.estimate_on_remainder}-\eqref{eq.estimate_on_remainder_a}, and the integral estimates \eqref{eq.estimate_on_integral} imply
	\begin{align*}
		\Gamma^{(j)}_{\mb v}(\a,\b,x_0)= O\left(\tfrac{\d}{M^2}\right) + O(\d^2),
	\end{align*}
	for all $j=0,\dots,10$. Thus, there is a choice of large enough $M>0$ such that for all sufficiently small $\d>0$ the following inequality
	\begin{align*}
		\left|\Gamma_{\mb v}(\a,\b,x_0)\right| \leq \tfrac{\d}{M}
	\end{align*}
	holds for all $(\a,\b,x_0) \in \overline{\mathbb{B}^{11}_{\d/M}}$.
	Next we show that by restricting, if necessary, to even smaller $\d>0$, the operator $\Gamma_{\mb v}$ is a contraction on $\overline{\mathbb{B}^{11}_{\d/M}}$. Let $(\Phi,a)\in \mathcal{X}_\d\times X_\d$ be the functions solving Eq.~\eqref{ModInt_Ydata} corresponding to parameters $\mb v+\a \mb h$, $T=1+\b$, and $x_0$. Furthermore, let $(\Psi,b)\in \mathcal{X}_\d\times X_\d$ be the function corresponding to $\mb v+\tilde{\a} \mb h$, $\tilde{T}=1+\tilde{\b}$, and $\tilde{x}_0$. Then, we obtain
	\begin{align*}
		\nr{\Phi-\Psi}_\mathcal{X}+\nr{a-b}_X &\lesssim \| \Upsilon(\mb v+\a\mb h,T,x_0)-\Upsilon(\mb v+\tilde{\a}\mb h,\tilde{T},\tilde{x}_0) \|
		\\
		&\lesssim |\a-\tilde{\a}|+|\b-\tilde{\b}|+|x_0-\tilde{x}_0|.
	\end{align*}
	Hence, by Lemma~\ref{lemma.nonlinearbounds}, we get for $k=0,\dots,9$ that
	\begin{align*}
		\nr{\mb P^{(k)}_{a_\infty}\mb I_1(\Phi, a)-\mb P^{(k)}_{b_\infty}\mb I_1(\Psi,b)} \lesssim \d\big(\nr{\Phi-\Psi}_\mathcal{X} + \nr{a-b}_X\big) \lesssim \d \big(|\a-\tilde{\a}|+|\b-\tilde{\b}|+|x_0-\tilde{x}_0|\big),
	\end{align*}
	and
	\begin{align*}
		\nr{\mb H_{a_\infty}\mb I_2(\Phi,a)-\mb H_{b_\infty}\mb I_2(\Psi,b)} \lesssim \d \big(|\a-\tilde{\a}|+|\b-\tilde{\b}|+|x_0-\tilde{x}_0|\big).
	\end{align*}
	Furthermore, by \eqref{eq-ineq1} and the Lipschitz continuity of the Riesz projections $\mb P^{(k)}_a$ and $\mb H_a$, we obtain
	\begin{align*}
		&\nr{\mb P^{(k)}_{a_\infty} \mc R(\mb v,T,x_0)-\mb P^{(k)}_{b_\infty}\mc R(\mb v,\tilde{T},\tilde{x}_0)}+\nr{\mb H_{a_\infty}\mc R(\mb v,T,x_0)-\mb H_{b_\infty}\mc R(\mb v,\tilde{T},\tilde{x}_0)}
		\\
		&\lesssim \nr{\mb v}_\mathcal{Y}\big(\nr{a-b}_X+|T-\tilde{T}|+|x_0-\tilde{x}_0|\big) \lesssim \d \big(|\a-\tilde{\a}|+|\b-\tilde{\b}|+|x_0-\tilde{x}_0|\big).
	\end{align*} 
	Moreover, for $k=0,\dots,9$ we have
	\begin{align*}
		\nr{\mb P^{(k)}_{a_\infty}r_{a_\infty}(T,x_0)-\mb P^{(k)}_{b_\infty}r_{b_\infty}(\tilde{T},\tilde{x}_0)} &+
		\nr{\a \mb P^{(k)}_{a_\infty}\tilde{r}_{a_\infty}(T,x_0)-\tilde{\a}\mb P^{(k)}_{b_\infty}\tilde{r}_{b_\infty}(\tilde{T},\tilde{x}_0)}
		\\
		&\lesssim \d \big(|\a-\tilde{\a}|+|\b-\tilde{\b}|+|x_0-\tilde{x}_0|\big),
	\end{align*}
	and
	\begin{align*}
		\nr{\mb H_{a_\infty}r_{a_\infty}(T,x_0)-\mb H_{b_\infty}r_{b_\infty}(\tilde{T},\tilde{x}_0)} &+
		\nr{\a\mb H_{a_\infty}\tilde{r}_{a_\infty}(T,x_0)-\tilde{\a}\mb H_{b_\infty}\tilde{r}_{b_\infty}(\tilde{T},\tilde{x}_0)}
		\\
		&\lesssim \d \big(|\a-\tilde{\a}|+|\b-\tilde{\b}|+|x_0-\tilde{x}_0|\big).
	\end{align*}
	From these estimates we infer that
	\begin{align}\label{eq.contraction1}
		\nr{\Gamma^{(j)}_{\mb v}(\a,\b,x_0) -\Gamma^{(j)}_{\mb v}(\tilde{\a},\tilde{\b},\tilde{x}_0)} \lesssim \d \big(|\a-\tilde{\a}|+|\b-\tilde{\b}|+|x_0-\tilde{x}_0|\big),
	\end{align}
	for $j=0,\dots,10$. Therefore, $\Gamma_{\mb v}$ is a contraction for all small enough $\d>0$, and this concludes the proof of the first part of the statement.

	It remains to establish the Lipschitz continuity of the parameters with respect to the initial data. Let 
	$\mb v, \mb w\in\mathcal{Y}$ satisfy the smallness condition and let $(\a,\b,x_0)$ and $(\tilde{\a},\tilde{\b},\tilde{x}_0)$ be the corresponding set of parameters. The first line in \eqref{eq-rewrite} implies
	\begin{align*}
		|\b-\tilde{\b}|& = |\Gamma^{(0)}_{\mb v}(\a,\b,x_0)-\Gamma^{(0)}_{\mb w}(\tilde{\a},\tilde{\b},\tilde{x}_0)| \lesssim
		|\Gamma^{(0)}_{\mb v}(\a,\b,x_0)-\Gamma^{(0)}_{\mb w}(\a,\b,x_0)|
		\\
		&+|\Gamma^{(0)}_{\mb w}(\a,\b,x_0)-\Gamma^{(0)}_{\mb w}(\tilde{\a},\tilde{\b},\tilde{x}_0)|.
	\end{align*}
	The second term can be estimated with \eqref{eq.contraction1}. To estimate the first term, we use the Lipschitz continuity of the Riesz projections to get
	\begin{align*}
		&\nr{\mb P^{(0)}_{a_\infty(\mb v,\b,x_0)}\mc R(\mb v,1+\b,x_0)-\mb P^{(0)}_{a_\infty(\mb w,\b,x_0)}\mc R(\mb w,1+\b,x_0)}
		\\
		&\lesssim \nr{\mb v}_\mathcal{Y}\nr{a_\infty(\mb v,\b,x_0)-a_\infty(\mb w,\b,x_0)}_X +\nr{\mb v- \mb w}_\mathcal{Y} \lesssim
		\nr{\mb v- \mb w}_\mathcal{Y} 
	\end{align*}
	Similar estimates using \eqref{eq.estimate_on_remainder}-\eqref{eq.estimate_on_remainder_a} and Lemma~\ref{lemma.nonlinearbounds} yield
	\begin{align*}
		|\Gamma^{(0)}_{\mb v}(\a,\b,x_0) -\Gamma^{(0)}_{\mb w}(\a,\b,x_0)| \lesssim \nr{\mb v-\mb w}_\mathcal{Y}.
	\end{align*}
	In summary, we obtain
	\begin{align*}
		|\b-\tilde{\b}|\lesssim \d(|\a-\tilde{\a}|+|\b-\tilde{\b}|+|x_0-\tilde{x}_0|)+\nr{\mb v-\mb w}_\mathcal{Y},
	\end{align*}
	and similar estimates for the remaining components yield that
	\begin{align*}
		|\a-\tilde{\a}|+|\b-\tilde{\b}|+|x_0-\tilde{x}_0|\lesssim \d(|\a-\tilde{\a}|+|\b-\tilde{\b}|+|x_0-\tilde{x}_0|)+\nr{\mb v-\mb w}_\mathcal{Y},
	\end{align*}
	which concludes the proof.
	\end{proof}

\subsection{Proof of Theorem \ref{main}}\label{Sec:ProofMainTh}

\begin{proof}
	Let $M>0$ be from Proposition~\ref{Prop:Data_Manifold}. For $\d>0$ define $\d':=\frac{\d}{M}$, . Then consider 
	$
		(f,g) \in C^\infty (\overline{\B_2}) \times C^\infty (\overline{\B_2})
	$ satisfying
	\begin{align*}
		\nr{(f,g)}_{H^6(\B_2)\times H^5(\B_2)} \leq \frac{\d'}{M} = \frac{\d}{M^2}.
	\end{align*}
	
	By Propositions~\ref{Prop:Data_Manifold} and \ref{prop.instability-similarity} we have that for all $\d>0$ sufficiently small there exist $a \in \overline{\mathbb
	B^9_{M\d'/\o}}$, $T \in [1-\d',1+\d']$, $x_0 \in \overline{\B_{\d'}}$ and  $\a \in [-\d',\d']$, depending Lipschitz continuously on $(f,g)$ with respect to the norm on $\mathcal{Y}$, as well a function $\Psi\in C([0,\infty),\HH)$ that solves 
	\begin{align}\label{Eq:Evol_Psi_proof}
		\begin{split}
			\Psi(\t)&=\mb S(\tau)\big[\mb U_{0} + \Upsilon( (f,g) +\a\mb h,T,x_0)\big] + \int_0^{\tau} \mb S(\tau -  \sigma)\mb F(\Psi(\sigma)) d \sigma,
		\end{split}
	\end{align}
	and obeys the estimate
	\begin{align}\label{ExpDecay}
		\| \Psi(\t)-\mb U_{a} \|\lesssim   \delta e^{-\o\t}
	\end{align}
	for all $\tau \geq 0$. By standard arguments, $\Psi$ is the unique solution to Eq.~\eqref{Eq:Evol_Psi_proof} in $C([0,\infty),\HH).$ Now, from the smoothness of $f$ and $g$, we have that the initial data $\Psi(0)=\mb U_{0} + \Upsilon( (f,g) +\a\mb h,T,x_0)$ belongs to $C^\infty (\overline{\B_2}) \times C^\infty (\overline{\B_2})$, and therefore from Proposition \ref{Prop:smoothness} we infer that $\Psi$ is smooth and solves Eq.~\eqref{Eq:Abstract_NLW_sim} classically. More precisely, by writing $\Psi(\tau)=(\psi_1(\tau,\cdot),\psi_2(\tau,\cdot))$, we have that  $\psi_j \in C^{\infty}(\mc Z)$ for $j = 1,2$, and
	\begin{align*}
		\begin{split}
			\partial_{\tau} \psi_1(\tau,\xi) & = -\xi \cdot\nabla \psi_1(\tau,\xi) -2 \psi_1(\tau,\xi) +\psi_2(\tau,\xi),  \\
			\partial_{\tau} \psi_2(\tau,\xi)&  = \Delta\psi_1(\tau,\xi) -\xi\cdot\nabla \psi_2(\tau,\xi)-3\psi_2(\tau,\xi), + \psi_1(\tau,\xi)^2,
		\end{split}
	\end{align*}
	for $(\tau,\xi) \in \mc Z$, with 
	\begin{align*}
		\begin{split}
			\psi_1(0,\cdot) = T^2 [\mb U_0]_1(T\cdot + x_0) + T^2 f(T\cdot + x_0) + \alpha T^2 h_1(T\cdot + x_0),  \\
			\psi_2(0,\cdot) = T^3 [\mb U_0]_2(T\cdot + x_0) + T^3 g(T\cdot + x_0) + \alpha T^3 h_2(T\cdot + x_0).
		\end{split}
	\end{align*}
	Furthermore, by denoting $\Phi(\tau) = \Psi(\tau) - \mb U_{a}$ where $\Phi(\tau)=(\varphi_1(\tau,\cdot),\varphi_2(\tau,\cdot))$, from \eqref{ExpDecay} we have that
	\begin{equation}\label{Eq:Phi_1_2_estimates}
		\| \varphi_1(\tau,\cdot) \|_{H^5(\mathbb{B}^9)} \lesssim \delta e^{-\omega \tau} \quad \text{and} \quad  \| \varphi_2(\tau,\cdot) \|_{H^4(\mathbb{B}^9)} \lesssim	\d e^{-\omega \tau},
	\end{equation}
	for all $\tau \geq 0$. Furthermore, by Sobolev embedding we have for the first component that
	\begin{equation}\label{Eq:L_infty_bound}
		\| \varphi_1(\tau,\cdot) \|_{L^{\infty}(\B)} \lesssim \d e^{-\omega \tau},
	\end{equation}
	for all $\tau \geq 0$.
	Now, we translate these results back to physical coordinates and let
	\begin{align*}
		u(t,x)=\frac{1}{(T-t)^2}\psi_1\left(-\log(T-t)+\log T,\frac{x-x_0}{T-t}\right).
	\end{align*}
	Based on the smoothnes properties of $\psi_1$, we conclude that $u \in C^\infty(\mc C_{T,x_0})$. Furthermore, $u$ solves 
	\begin{equation*}
		(\partial^2_t - \Delta_x) u(t,x)=u(t,x)^2
	\end{equation*}
	on $\mc C_{T,x_0}$, and satisfies
	\[u(0,\cdot) = U(|\cdot|) + f + \alpha h_1 , \quad \partial_t u(0,\cdot)   = 2 U(|\cdot|) + |\cdot| U'(|\cdot|) + g + \alpha h_2 \]
	on $\overline{\mathbb  B^{9}_{T}(x_0)}$.  Uniqueness of $u$ follows from uniqueness of $\Psi$, though it also follows by standard results concerning wave equations in physical coordinates. Furthermore,
	\[ u(t,x) =  \frac{1}{(T-t)^2}  \left [ U_a\left (\frac{x-x_0}{T-t}\right ) + \varphi(t,x) \right ],\]
	with $\varphi(t,x) := \varphi_1(-\log(T-t)+\log T,\frac{x-x_0}{T-t})$. The bound \eqref{Eq:L_infty_bound} yields
	\begin{align}\label{decay_phi}
		\|\varphi(t,\cdot)\|_{L^{\infty}(\B_{T-t}(x_0))} = 
		\|\varphi_1(-\log(T-t)+\log T, \cdot) \|_{L^{\infty}(\B)} \lesssim \d(T-t)^{\omega} 
	\end{align}
	for all $t \in [0,T)$. Furthermore, by Eq.~\eqref{Eq:Phi_1_2_estimates} 
	\[ (T-t)^{k-\frac{9}{2}}\nr{\varphi(t,\cdot) }_{\dot H^k(\B_{T-t}(x_0))} = \| \varphi_1(-\log(T -t)+\log T, \cdot) \|_{\dot{H}^k(\B)}
	\lesssim  (T-t)^{\omega}
	\]
	for $k = 0, \dots, 5$, which implies the first line in Eq.~\eqref{Conv_Sol}. The second line follows also from Eq.~\eqref{Eq:Phi_1_2_estimates} and the fact that 
	\begin{align*}
		\partial_t u(t,x)=\frac{1}{(T-t)^3}\psi_2\left(-\log(T-t)+\log T,\frac{x-x_0}{T-t}\right).
	\end{align*}
	Relabelling $\d'$ with $\d$ concludes the proof of Theorem \ref{main} for Eq.~\eqref{NLW:p}.
	Now, let $c_0$ be the constant from Eq.~\eqref{Eq:PosU^*}. Recall that the above conclusions hold for all sufficiently small $\d>0$. Therefore, from Eq.~\eqref{decay_phi} we see that we can choose small enough $\delta>0$ so as to ensure
	\[  \|\varphi(t,\cdot)\|_{L^{\infty}(\B_{T-t}(x_0))} \leq \frac{c_0}{2} \]
	for all $t \in [0,T)$.	As a consequence,  $u$ is strictly positive on $\mc C_{T,x_0}$ and therefore provides a solution to Eq.~\eqref{NLW:Abs_p} as well.
\end{proof}

\section{Proof of Theorem  \ref{main_ODE} - Stable ODE blowup}\label{Sec:Stable_ODE_blowup}

The proof of Theorem \ref{main_ODE} follows mutatis mutandis the proof of Theorem \ref{main}. However, for convenience of the reader we outline  the most important steps and stick to the notation introduced above. Starting with Eq.~\eqref{Eq:Abstract_NLW_sim}, we consider solutions of the form  $\Psi(\tau) = \bm \kappa_{a(\tau)} + \Phi(\tau)$, which yields 
\begin{align} \label{rewritten_ODE}
	\begin{split}
		\pt_\t\Phi(\t)&=[\mb L+ \mb L_{\kappa_{a_{\infty}}}']\Phi(\t)+ \tilde{ \mb G}_{a(\tau)}(\Phi(\t))-\pt_\t \bm \kappa_{a(\t)},
	\end{split}
\end{align}
where
\[ \mb L_{\kappa_a}' \mb u(\xi) = \begin{pmatrix} 
	0 \\ 2 \kappa_a(\xi) u_1(\xi) \end{pmatrix},
\]
and 
\[
\tilde{ \mb G}_{a(\tau)}(\Phi(\t))=[\mb L'_{\kappa_{a(\t)}}-\mb L_{\kappa_{a_{\infty}}}']\Phi(\t)+\mb F(\Phi(\t)).
\]
In this equation, $\mb L: \mc D(\mb L) 
\subset \mc H \to \mc H$ denotes, as usual, the operator describing the free wave evolution. This is fully characterized for both $d=7$ and $d=9$ in Sec.~\ref{Sec:Free} (recall that $\mc H := \mc H_{k}$ for $k = \frac{d+1}{2}$). For the perturbation theory, the spectral analysis is crucial. Once this is obtained, most results are purely abstract and the proofs can be adapted from previous sections.

Since $ \mb L_{\kappa_a}'$ is compact and depends Lipschitz continuously on $a$, the results of Sec.~\ref{Sec:Lin} apply.
In particular, for small enough $a$, the spectrum of $\mb L +  \mb L_{\kappa_a}'$ in the right half plane consists of isolated eigenvalues confined to a compact region. Furthermore, an analogous result to Proposition \ref{Prop:Spectral_ODE} holds with $V$ replaced a constant. This substantially simplifies the spectral analysis and with the above prerequisites it is easy to derive the following statement. For all of the ensuing statements $d \in \{7,9\}$.

\begin{proposition}
	There are constants $\d^* >0$ and $\o>0$, such that the following holds. For any $a\in\overline{\mathbb{B}^d_\ds}$, the operator $\mb L +  \mb L_{\kappa_a}' : \mc D(
	\mb L) \subset \mc H \to \mc H$ generates a strongly continuous
	semigroup $( \mb S_{\kappa_a}(\tau))_{\tau \geq 0}$ on $\mc H$. Furthermore, 
	there exist projections $\tilde {\mb P}_a, \tilde {\mb Q}_a^{(k)} \in \mc B(\mc H)$, $k = 1, \dots, d$, of rank one that are mutually transversal and depend Lipschtiz continuously on $a$. Furthermore, they commute with $ \mb S_{\kappa_a}(\tau)$ and for all  $\mb u\in\HH$ and  $\tau \geq 0$,
	\[ \mb S_{\kappa_a}(\tau) \tilde {\mb P}_a  \mb u = e^{
		\tau} \mb u, \quad  \mb S_{\kappa_a}(\tau)  \tilde {\mb Q}_a^{(k)} 
	\mb u = \mb u, \]
	as well as 
	\begin{equation*}
		\nr{ \mb S_{\kappa_a}(\tau) [ 1 - \tilde {\mb P}_a - \tilde {\mb Q}_a  ] \mb u}\lesssim e^{-\o \tau}\nr{[ 1 - \tilde {\mb P}_a - \tilde {\mb Q}_a] \mb u}
	\end{equation*}
	with $\tilde {\mb Q}_a = \sum_{k=1}^{d}  \tilde {\mb Q}_a^{(k)}$.
	Moreover, 
	\begin{equation}\label{Eq.exp-bound2_ode}
		\nr{ \mb S_{\kappa_a}(\tau)  [ 1 - \tilde {\mb P}_a - \tilde {\mb Q}_a  ] -\mb S_{\kappa_b}(\tau) [ 1 - \tilde {\mb P}_b - \tilde {\mb Q}_b ]  }\lesssim e^{-\o \t}|a-b|,
	\end{equation}
	for all $a,b\in\overline{\mathbb{B}^9_\ds}$, and $\t\geq 0$. Also, 
	\begin{align}\label{ODESpec_Proj}
		\ran \tilde {\mb P}_a =\mathrm{span}( \tilde {\mb g}_a), 
		\quad  \ran \tilde {\mb Q}^{(k)}_a = \mathrm{span}(\tilde{\mb  q}_a^{(k)}),
	\end{align}
	where 
	\begin{align}\label{EF_boost_ODE}
		\tilde {\mb g}_a(\xi) = \begin{pmatrix} A_0(a)[A_0(a) - A_j \xi^{j} ]^{-3} \\ 3 A_0(a)^2[A_0(a) - A_j \xi^{j} ]^{-4} \end{pmatrix}, \quad \tilde {\mb q}_a^{(k)} = \partial_{a_k} \bm {\kappa}_a,
	\end{align}
	for $k = 1, \dots, d$. 
\end{proposition}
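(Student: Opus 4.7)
The proof proceeds in direct analogy with Sections 3--6 of the main argument for $\mb U_a$, but is substantially simpler because the linearized potential at $a=0$ is constant, namely $V_{\kappa_0}(\xi) = 2\kappa_0(\xi) = 12$. First I would establish semigroup generation and Lipschitz dependence: since $\kappa_a$ depends smoothly on both $\xi \in \overline{\mathbb{B}^d}$ and $a \in \overline{\mathbb{B}^d_{\delta^*}}$, the operator $\mb L'_{\kappa_a}:\mc H \to \mc H$ is compact and satisfies $\|\mb L'_{\kappa_a} - \mb L'_{\kappa_b}\| \lesssim |a-b|$ exactly as in Proposition \ref{group-Sa}. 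The Bounded Perturbation Theorem then yields the strongly continuous semigroup $(\mb S_{\kappa_a}(\tau))_{\tau \geq 0}$, and the analog of Proposition \ref{Prop:Structure_Spectrum} shows that its unstable spectrum is contained in a compact region and consists of finitely many isolated eigenvalues.

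The key step is the spectral analysis at $a=0$. By the analog of Proposition \ref{Prop:Spectral_ODE}, $\lambda \in \Hb$ lies in $\sigma(\mb L + \mb L'_{\kappa_0})$ iff for some $\ell \in \N_0$ the ODE
\begin{equation*}
(1-\rho^2)f''(\rho)+\left(\tfrac{d-1}{\rho}-2(\lambda+3)\rho\right)f'(\rho)-\left((\lambda-1)(\lambda+6)+\tfrac{\ell(\ell+d-2)}{\rho^2}\right)f(\rho)=0
\end{equation*}
admits a $C^\infty[0,1]$ solution. Under the substitution $f(\rho) = \rho^\ell g(\rho^2)$ this transforms into a Gauss hypergeometric equation, whose connection problem is classical. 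Inspection yields the polynomial solutions $f(\rho)\equiv 1$ for $(\ell,\lambda)=(0,1)$ and $f(\rho)=\rho$ for $(\ell,\lambda)=(1,0)$, which upon spherical-harmonic reconstruction give precisely $\tilde{\mb g}_0 = (1,3)^T$ and the $d$ functions $\tilde{\mb q}_0^{(k)} = 12\, \xi^k(1,2)^T$ (matching \eqref{EF_boost_ODE} at $a=0$). Using the termination condition for ${}_2F_1$ and a Frobenius analysis at $\rho=1$ (indices $\{0,1-\lambda\}$ for $d=7$ and $\{0,2-\lambda\}$ for $d=9$), one rules out further unstable eigenvalues. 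This is the main obstacle, though far milder than in the $\mb U_a$ case since only a three-singular-point hypergeometric equation arises.

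Next I would propagate the spectral picture to small $a \neq 0$ via the compactness argument of Lemma \ref{Le:Pert_Spectrum}: for $\delta^*$ small enough, the identity $\lambda - (\mb L + \mb L'_{\kappa_a}) = [1 - (\mb L'_{\kappa_a}-\mb L'_{\kappa_0})\mb R_{\mb L+\mb L'_{\kappa_0}}(\lambda)](\lambda - \mb L - \mb L'_{\kappa_0})$ together with the continuity of the associated Riesz projection of total rank $d+1$ shows that the unstable spectrum remains $\{0,1\}$ with the same total dimensions for all $a \in \overline{\mathbb{B}^d_{\delta^*}}$. Direct verification (or Lorentz transformation of the eigenfunctions at $a=0$) yields the explicit formulas \eqref{EF_boost_ODE}. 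I then define
\begin{equation*}
\tilde{\mb P}_a := \frac{1}{2\pi i}\int_{\gamma_1}\mb R_{\mb L + \mb L'_{\kappa_a}}(\lambda)\,d\lambda, \qquad \tilde{\mb Q}_a := \frac{1}{2\pi i}\int_{\gamma_0}\mb R_{\mb L + \mb L'_{\kappa_a}}(\lambda)\,d\lambda,
\end{equation*}
with the obvious small contours around the eigenvalues, and decompose $\tilde{\mb Q}_a = \sum_{k=1}^d \tilde{\mb Q}_a^{(k)}$ via the basis $\{\tilde{\mb q}_a^{(k)}\}$; the Lipschitz dependence and mutual transversality follow from the second resolvent identity exactly as in Lemma \ref{Rieszprojections2}.

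Finally, the commutation and growth relations $\mb S_{\kappa_a}(\tau)\tilde{\mb P}_a = e^\tau \tilde{\mb P}_a$ and $\mb S_{\kappa_a}(\tau)\tilde{\mb Q}_a^{(k)} = \tilde{\mb Q}_a^{(k)}$ follow from the spectral mapping theorem on the finite-dimensional invariant subspaces. The exponential decay estimate on $\ran(1 - \tilde{\mb P}_a - \tilde{\mb Q}_a)$ is obtained by applying the Gearhart--Pr\"uss theorem to the restriction, using the uniform resolvent bound of the analog of Proposition \ref{Prop:Structure_Spectrum}; this is the exact blueprint of the proof of \eqref{Eq.exp-bound1}. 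The Lipschitz bound \eqref{Eq.exp-bound2_ode} in $a$ then follows by the Duhamel argument applied to the difference quotient $\Phi_{a,b}(\tau) := (\mb S_{\kappa_a}(\tau)\mb T_a - \mb S_{\kappa_b}(\tau)\mb T_b)\mb u / |a-b|$ with $\mb T_a := 1 - \tilde{\mb P}_a - \tilde{\mb Q}_a$, invoking Lipschitz continuity of $a \mapsto \mb L'_{\kappa_a}$ and of the projections, in the same spirit as the proof of \eqref{Eq.exp-bound2}.
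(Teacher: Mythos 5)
Your proposal is correct and follows essentially the same route as the paper: Bounded Perturbation Theorem for generation, reduction of the spectral problem at $a=0$ to a Gauss hypergeometric equation whose connection coefficients force $\lambda\in\{0,1\}$, propagation to small $a$ via the Neumann-series/Riesz-projection continuity argument, and Gearhart--Pr\"uss plus the Duhamel difference-quotient trick for the decay and Lipschitz bounds. The only flaw is a small arithmetic slip in the explicit eigenfunction for $\lambda=0$: from $u_2=\xi\cdot\nabla u_1+(\lambda+2)u_1$ with $u_1=\xi^k$ one gets $u_2=3\xi^k$, so $\tilde{\mb q}_0^{(k)}$ is a multiple of $\xi^k(1,3)^T$ rather than $\xi^k(1,2)^T$; this does not affect the structure of the argument.
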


\begin{proof}
	We only sketch the main steps of the proof, since many parts are abstract operator theory and can be copied verbatim from previous sections. \\
	
	The results of Sec.~\ref{Sec:Free} together with the Bounded Perturbation Theorem immediately imply that $\mb L +  \mb L_{\kappa_a}'$ generates a strongly continuous semigroup, which we denote by $( \mb S_{\kappa_a}(\tau))_{\tau \geq 0}$. 
	Furthermore, the result of Proposition \ref{Prop:Structure_Spectrum} and \ref{Prop:Spectral_ODE} hold in particular for our case at hand and we infer that for $\mathrm{Re} \lambda > -\frac{1}{2}$ the spectrum of $\mb L +  \mb L_{\kappa_a}'$ consists of isolated eigenvalues confined to a compact region. For $a=0$, Proposition \ref{Prop:Spectral_ODE} holds mutatis mutandis with $V$ replaced by the constant potential $2 \kappa_0 = 12$. In this case, in the spectral ODE the number of regular singular points can be reduced to three, and we can therefore resolve the connection problem by using the standard theory of hypergeometric equations. This is outlined in the following, where we show that there exists $0<\mu_0\leq \frac{1}{2}$, such that
	\begin{align*}
		\sigma(\mb L + \mb L_{\kappa_0}') \subset \{\lambda \in\mathbb{C} : \Re \lambda \leq -\mu_0 \} \cup \{0, 1  \}.
	\end{align*}
	In fact, we convince ourselves that 
	\begin{align}\label{SpecODE_1}
		\{ \lambda \in \C: \Re \lambda \geq 0 \} \setminus \{0,1\} \subset \rho(\mb L + \mb L_{\kappa_0}').
	\end{align}
	
	We argue by contradiction. Let us assume that  $\lambda \in \sigma(\mb L + \mb L_{\kappa_0}') \setminus \{0,1\}$ and $\Re \lambda \geq  0$. Then, for some $\ell \in \N_0$,  Eq.~\eqref{diff-op} with potential $2 \kappa_0$ must have an analytic solution on $[0,1]$. We show that this cannot be the case. By changing variables and setting $f(\rho) = \rho^\ell v(\rho^2)$, Eq.~\eqref{diff-op} transforms into the standard hypergeometric form
	\begin{equation*}
		z(1-z)v''(z) + (c - (a+b+1)z)v'(z) + a b v(z) = 0,
	\end{equation*}
	with $a = \frac12 (\lambda + \ell  -1)$, $b=  \frac12 (\lambda + \ell + 6)$, $c = \frac{d}{2} + \ell$. Fundamental systems around the regular singular points $\rho =0$ and $\rho = 1$ are given by $\{v_0,\tilde v_0\}$ and $\{v_1,\tilde v_1\}$, where 
	\begin{align*}
		\begin{split}
			v_0(z)& = {}_2F_1(a,b;c;z), \\
			\tilde{v}_0(z)& = z^{1-c}{}_2F_1(a+1-c,b+1-c; 2-c; z), \\
			v_1(z)  & = {}_2F_1(a, b; a+b+1-c;1-z) \\
			\tilde v_1(z) & =(1-z)^{c-a-b} {}_2F_1(c-a,c-b; 1+c-a-b;1-z).
		\end{split}
	\end{align*}
	In fact,  this holds if $a+b-c  \neq 0$, and for  $a+b-c = 0$ the function  $\tilde v_1$ is behaves logarithmically.  In either case, the solutions $\tilde v_1$ and $\tilde{v}_0$ are not admissible  since they are not analytic for $\Re \lambda  \geq 0$. We therefore look for $\la$ which connect $v_0$ and $v_1$, i.e., for which $v_0$ and $v_1$ are constant multiples of each other. For the hypergeometric equation, the connection coefficients  are known explicitly and  we have 
	\begin{equation*}
		v_0(z) = \frac{\Gamma(c)\Gamma(c-a-b)}{\Gamma(c-a)\Gamma(c-b)} v_1(z) + \frac{\Gamma(c)\Gamma(a+b-c)}{\Gamma(a)\Gamma(b)} \tilde v_1(z)
	\end{equation*}
	see \cite{NIST10}. So the condition that quantifies our eigenvalues is
	\[ \frac{\Gamma(c)\Gamma(a+b-c)}{\Gamma(a)\Gamma(b)} = 0. \]
	This can only be the case if $a$ or $b$ are a poles of the gamma function, i.e., $-a \in \N_0$ or $- b \in \N_0$. In particular, this implies that $\lambda \in \R$.
	Since $\Re \lambda \geq 0$, $- b \in \N_0$ can be excluded. On the other hand,  $-a \in \N_0$ is possible only if $\lambda \in \{0,1\}$, which contradicts our assumption and proves \eqref{SpecODE_1}. For $\lambda =1$ and $\lambda = 0$, one can easily check that explicit solutions to the eigenvalue equation are given by $\tilde {\mb g}_0$ and $\tilde {\mb q}_0^{(k)}$, respectively, where
	\[ \tilde {\mb g}_0 = \begin{pmatrix} 1 \\ 3 \end{pmatrix}, \quad \tilde {\mb q}_0^{(k)} = \partial_{a_k} \bm {\kappa}_a|_{a=0}  =   6 \begin{pmatrix} 
		\xi_k \\ 3 \xi_k \end{pmatrix}, \]
	for $k = 1, \dots, d$. 
	Similar to the above reasoning one shows that these functions indeed span the eigenspaces for the corresponding eigenvalues, i.e., the geometric multiplicities of $\lambda = 1$ and $\lambda = 0$ are $1$ and $d$, respectively. The algebraic multiplicities are determined by the dimension of the ranges of the corresponding Riesz projections 
	\[ \tilde {\mb P}_0 = \frac{1}{2\pi i}\int_{\g_1}\mb R_{\mb L + 
		\mb L'_{\kappa_0}}(\l)d\l  , \quad  \tilde {\mb Q}_0 = \frac{1}{2\pi i}\int_{\g_0}\mb R_{\mb L + 
		\mb L'_{\kappa_0}}(\l)d\l, \] 
	where for $j \in \{0,1\}$, $\g_j(s)=\l_j+\frac{\o_0}{4}e^{2\pi is}$ for $s\in[0,1]$.   An ODE argument analogous to the proof of Lemma \ref{projections} shows that 
	\begin{align*}
		\ran \tilde {\mb P}_0 =\mathrm{span}( \tilde {\mb g}_0), 
		\quad  \ran \tilde {\mb Q}_0 = \mathrm{span}( \tilde{\mb  q}_0^{(1)}, 
		\dots \tilde{\mb  q}_0^{(d)}).
	\end{align*}
	The perturbative characterization of the spectrum of $\mb L + \mb L_{\kappa_a}'$ for $a \in \overline{\mathbb{B}^d_{\delta^*}}$ is purely abstract. Along the lines of the  the proof of Lemma \ref{Le:Pert_Spectrum}, one shows that
	\begin{align*}
		\sigma(\mb L + \mb L_{\kappa_a}') \subset \left\{ \l\in\C : \Re \l < - \frac{\omega_0}{2} \right\}\cup \left\{0,1\right\},
	\end{align*}
	where for $\lambda = 0$ and $\lambda =1$, the eigenfunctions are Lorentz boosted versions of $\tilde {\mb g}_0$ and $\tilde {\mb q}_0^{(k)}$. In fact, one can check by direct calculations that the functions $\tilde {\mb g}_a$ and $\tilde {\mb q}_a^{(k)}$ stated in Eq.\eqref{EF_boost_ODE} solve the corresponding eigenvalue equation. Eq.~\eqref{ODESpec_Proj} for the spectral projections
	\[ \tilde {\mb P}_a = \frac{1}{2\pi i}\int_{\g_1}\mb R_{\mb L + 
		\mb L'_{\kappa_a}}(\l)d\l  , \quad  \tilde {\mb Q}_a = \frac{1}{2\pi i}\int_{\g_0}\mb R_{\mb L + 
		\mb L'_{\kappa_a}}(\l)d\l \] 
	follows again from the same abstract arguments as provided in the proof of Lemma \ref{Rieszprojections2}. The same holds for the Lipschitz dependence of the projections on the parameter $a$. The growth bounds for the semigroup follow from the structure of the spectrum, resolvent bounds and the Gearhart-Pr\"uss theorem analogous to the proof of Theorem \ref{commute}. Finally, the proof for the Lipschitz continuity \eqref{Eq.exp-bound2_ode} can be copied verbatim. 
\end{proof}

The analysis of the integral equation
\begin{align*}
	\Phi(\t)=\mb S_{\kappa_{a_\infty}}(\tau)\mb u+\int_0^\t \mb S_{\kappa_{a_\infty}} (\t-\s)(\tilde{ \mb G}_{a(\s)}(\Phi(\s))-\partial_\s \bm \kappa_{a(\s)} )d\s,
\end{align*}
is completely analogous to Sec.~\ref{Sec:Nonl_U}. In particular, to derive the modulation equation for $a$, one uses the fact that $\partial_\t \bm \kappa_{a(\t)} = \dot a_k(\tau) \tilde {\mb q}_0^{(k)}$. By introducing the correction 
\begin{align*}
	\tilde{\mb C}(\Phi,a,\mb u)=\tilde{\mb P}_{a_\infty}\mb u+\tilde{\mb P}_{a_\infty}\int_0^\infty e^{-\s}\left(\tilde{ \mb G}_{a(\s)}(\Phi(\s))-\pt_\s \bm \kappa_{a(\s)} \right)d\s,
\end{align*}
it is straightforward to prove the following result. 

\begin{proposition}\label{Prop:ODE_blowup}
	There exists $\o>0$ such that for all sufficiently large $C>0$ and all sufficiently small $\d >0$ the following holds. For every $\nr{\mb v}_{\mathcal{Y}}\leq \frac{\d}{C}$, every $T\in \left[ 1-\frac{\d}{C},1+\frac{\d}{C}\right]$ and every $x_0\in \overline{\mathbb{B}^d_{\d/C}}$, there exist functions $\Phi \in \mathcal{X}_\d$ and $a\in X_\d$ such that the integral equation
	\begin{align*}
		\Phi(\t)= & \mb S_{\kappa_{a_\infty}} (\t)\left(\Upsilon(\mb v,T,x_0)-\tilde{\mb C}(\Phi,a,\Upsilon(\mb v,T,x_0))\right) \\
		& +\int_0^\t \mb S_{\kappa_{a_\infty}} (\t-\s)(\tilde{ \mb G}_{a(\s)}(\Phi(\s))-\partial_\s \bm \kappa_{a(\s)} )d\s,
	\end{align*}
	holds for $\t\geq 0$ and $\nr{\Phi(\t)}\lesssim\d e^{-\o \t}$ for all $\t\geq 0.$ Moreover, the solution map is Lipschitz continuous, i.e.,
	\begin{align*}
		\nr{\Phi(\mb v,T_1,x_0)-\Phi(\mb w,T_2,y_0)}_\mathcal{X}+\nr{a(\mb v,T_1,x_0)-a(\mb w,T_2,y_0)}_X
		\\
		\lesssim \nr{\mb v-\mb w}_\mathcal{Y}+|T_1-T_2|+|x_0-y_0|,
	\end{align*}
	for all $\mb v,\mb w \in \mathcal{Y}$ satisfying the smallness assumption, all $T_1,T_2 \in \left[ 1-\frac{\d}{C},1+\frac{\d}{C}\right]$, and all $x_0,y_0\in \overline{\mathbb{B}^d_{\d/C}}$.
\end{proposition}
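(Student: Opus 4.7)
The proof of Proposition \ref{Prop:ODE_blowup} is a direct adaptation of the argument in Sec.~\ref{Sec:Nonl_U}, where the role of the manifold-organizing linear operators is now played by $\mb L + \mb L'_{\kappa_a}$, the semigroup by $(\mb S_{\kappa_a}(\tau))_{\tau \geq 0}$, and the spectral projections by $\tilde{\mb P}_a$ and $\tilde{\mb Q}_a=\sum_{k=1}^d \tilde{\mb Q}_a^{(k)}$. Notably, only one correction term is needed, since $\la=1$ is the only genuinely growing mode of $\mb L + \mb L'_{\kappa_{a_\infty}}$ and it is associated with an apparent instability (time translation), to be absorbed later by varying $T$; the neutral eigenvalue $\la=0$ is tied to Lorentz boosts and is handled by modulation of the parameter $a$, exactly as in Lemma~\ref{lemma-modulation}.

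Concretely, the plan is as follows. First, one establishes analogs of the nonlinear bounds of Lemma~\ref{lemma.nonlinearbounds} for $\tilde{\mb G}_{a(\tau)}(\Phi(\tau))=[\mb L'_{\kappa_{a(\tau)}}-\mb L'_{\kappa_{a_\infty}}]\Phi(\tau)+\mb F(\Phi(\tau))$; the estimates on $\mb F$ follow verbatim from the Banach algebra property of $H^{(d-1)/2}(\mathbb B^d)$ valid in $d\in\{7,9\}$, while the potential difference $\mb L'_{\kappa_{a(\tau)}}-\mb L'_{\kappa_{a_\infty}}$ is controlled by smoothness of $\kappa_a$ in $a$ and the bound $|a_\infty-a(\tau)|\lesssim \delta e^{-\omega\tau}$ for $a\in X_\delta$. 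Second, using $\partial_\tau\bm\kappa_{a(\tau)}=\dot a^k(\tau)\tilde{\mb q}_{a(\tau)}^{(k)}$ together with commutativity of $(\mb S_{\kappa_{a_\infty}}(\tau))_{\tau\geq 0}$ with $\tilde{\mb Q}_{a_\infty}^{(k)}$, one derives a modulation equation of the form $a(\tau)=\tilde A(a,\Phi,\mb u)(\tau)$ enforcing the condition $\tilde{\mb Q}_{a_\infty}^{(k)}\Phi(\tau)=\chi(\tau)\tilde{\mb Q}_{a_\infty}^{(k)}\mb u$; this can be solved in $X_\delta$ by a contraction argument as in Lemma~\ref{lemma-modulation}, and the solution map $(\Phi,\mb u)\mapsto a$ is Lipschitz. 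Third, one defines
\[
\tilde{\mb K}(\Phi,a,\mb u)(\tau):=\mb S_{\kappa_{a_\infty}}(\tau)\bigl(\mb u-\tilde{\mb C}(\Phi,a,\mb u)\bigr)+\int_0^\tau \mb S_{\kappa_{a_\infty}}(\tau-\sigma)\bigl(\tilde{\mb G}_{a(\sigma)}(\Phi(\sigma))-\partial_\sigma\bm\kappa_{a(\sigma)}\bigr)d\sigma,
\]
and checks that, with $a=a(\Phi,\mb u)$, the subtraction of $\tilde{\mb C}$ cancels the $\tilde{\mb P}_{a_\infty}$-component of the forward integral, so that $\tilde{\mb P}_{a_\infty}\tilde{\mb K}(\Phi,a,\mb u)(\tau)=-\int_\tau^\infty e^{\tau-\sigma}\tilde{\mb P}_{a_\infty}(\tilde{\mb G}_{a(\sigma)}(\Phi(\sigma))-\partial_\sigma\bm\kappa_{a(\sigma)})d\sigma=O(\delta^2 e^{-2\omega\tau})$. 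The stable part is estimated using the exponential decay $\|\mb S_{\kappa_{a_\infty}}(\tau)(1-\tilde{\mb P}_{a_\infty}-\tilde{\mb Q}_{a_\infty})\|\lesssim e^{-\omega\tau}$, yielding $\|\tilde{\mb K}(\Phi,a,\mb u)\|_{\mc X}\lesssim \delta/C+\delta^2$, while the contraction property is inherited from the nonlinear bounds combined with the Lipschitz dependence of $a$ on $\Phi$ and of the projections on $a$; Banach's fixed point theorem on $\mc X_\delta$ produces the desired $\Phi$.

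The Lipschitz dependence of $(\Phi,a)$ on the initial datum $\mb u$ follows by the standard stability-of-fixed-point argument, combining the contraction estimate with the bound $\|\mb u-\tilde{\mb u}\|=\|\Upsilon(\mb v,T_1,x_0)-\Upsilon(\mb w,T_2,y_0)\|\lesssim \|\mb v-\mb w\|_{\mc Y}+|T_1-T_2|+|x_0-y_0|$ supplied by Lemma~\ref{lemma-init.data}, together with the Lipschitz continuity of $a\mapsto a_\infty$ and of $a\mapsto \tilde{\mb P}_a$, $\tilde{\mb Q}_a^{(k)}$. No step presents a serious obstacle beyond careful bookkeeping: the abstract machinery (semigroup generation, resolvent bounds, spectral mapping for the unstable part and the uniform decay estimate~\eqref{Eq.exp-bound2_ode}) is already packaged in Proposition~\ref{Prop:ODE_blowup}'s prerequisite, and the nonlinearity is identical to the one treated in Sec.~\ref{Sec:Nonl_U}. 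If any step deserves extra care, it is the uniform Lipschitz tracking of the map $a\mapsto \tilde{\mb P}_{a_\infty}\partial_\tau\bm\kappa_{a(\tau)}$, which mirrors the bound~\eqref{Bound_ProjdtU} and relies on the smoothness of $a\mapsto \bm\kappa_a$ together with the decay of $\dot a$ built into the norm on $X$.
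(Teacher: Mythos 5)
Your proposal follows essentially the same route the paper takes, which itself only sketches this proof by deferring to Sec.~\ref{Sec:Nonl_U}: a single correction term $\tilde{\mb C}$ for the genuine $\lambda=1$ instability, modulation of $a$ to handle the $\lambda=0$ modes generated by Lorentz boosts, a contraction argument for the modified Duhamel equation, and Lemma~\ref{lemma-init.data} for the Lipschitz dependence on $(\mb v,T,x_0)$. One small slip worth correcting: the product estimate for $\mb F$ rests on the Banach algebra property of $H^{\frac{d+1}{2}}(\mathbb B^d)$ rather than $H^{\frac{d-1}{2}}(\mathbb B^d)$ (the latter lies below the algebra threshold $d/2$ for $d\in\{7,9\}$); the chain actually used is $\|u_1^2\|_{H^{\frac{d-1}{2}}(\mathbb B^d)}\leq\|u_1^2\|_{H^{\frac{d+1}{2}}(\mathbb B^d)}\lesssim\|u_1\|^2_{H^{\frac{d+1}{2}}(\mathbb B^d)}$.
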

We note that similarly to the manifold $\mc M$ one can construct a manifold $\mc N \subset \ker \tilde{\mb P}_{0} \oplus \ran \tilde{\mb P}_{0} $ of co-dimensions $(1+d)$ characterized by the vanishing of the correction term $\tilde{\mb C}$. However, in the context of stable blowup this is not of much interest, since the existence of this manifold is solely caused by the translation instability.  In particular, no correction of the physical initial data is necessary, if blowup time and point are chosen appropriately, i.e., for suitably small $(f,g)$, there are $T$, $x_0$, such that $\Upsilon(\mb v,T,x_0) \in \mc N$. This is contained in the following result, where $\mc Y = H^{\frac{d+3}{2}}(\mathbb B^d) \times H^{\frac{d+1}{2}}(\mathbb B^d)$.

\begin{lemma}\label{Prop:VanishingCorrection_ODE}
	There exists $C > 0$ such that for all sufficiently small $\delta >0$ the following holds. For every $\mb v \in \mc Y$ satisfying $\|\mb v \|_{\mc Y} \leq \frac{\delta}{C^2}$, there is a choice of parameters 
	$T\in \left[ 1-\frac{\d}{C},1+\frac{\d}{C}\right]$ and $x_0\in \overline{\mathbb{B}^d_{\d/C}}$ in Proposition \ref{Prop:ODE_blowup} such that 
	\begin{align*}
		\tilde{\mb C}(\Phi, a, \Upsilon(\mb v,T,x_0)) = 0.
	\end{align*}
	Moreover, the parameters depend Lipschitz continuously on the data, i.e., 
	\begin{align*}
		| T(\mb v) - T(\mb w)| + |x_0(\mb v) - x_0(\mb w)|  \lesssim \| \mb v - \mb w\|_{\mc Y}
	\end{align*}
	for all $\mb v, \mb w \in \mc Y$ satisfying the above smallness assumption. 
\end{lemma}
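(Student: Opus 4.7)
The plan is to adapt the contraction mapping argument of Lemma~\ref{prop.parameters} to the simpler setting in which the correction $\tilde{\mb C}$ is rank one rather than rank $d+1$. Two structural simplifications are decisive. First, there is no genuine unstable direction to absorb (no analogue of the $\a\mb h$ term), since the $d$-dimensional Lorentz instabilities are already controlled by modulation of $a$ and the $\lambda=1$ eigenspace is spanned by the single mode $\tilde{\mb g}_a$. Second, the profile $\bm\kappa_0=(6,12)$ is \emph{constant} in $\xi$, hence $\mc R(\bm\kappa_0,T,x_0)=(6T^2,12T^3)$ does not depend on $x_0$. Consequently $x_0$ does not trigger $\tilde{\mb g}_{a_\infty}$ at leading order, and the problem reduces to a scalar fixed point for the blowup time; $x_0$ will be carried along as a (trivially Lipschitz) free parameter and one can simply take $x_0=0$.

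Concretely, I would first set $\b=T-1$ and Taylor-expand around $(T,x_0)=(1,0)$. A direct computation gives
\[
\mc R(\bm\kappa_0,1+\b,x_0)-\mc R(\bm\kappa_0,1,0)=12\b\,\tilde{\mb g}_0+r(\b),\qquad \|r(\b)\|\lesssim \b^2,
\]
independently of $x_0$. Applying $\tilde{\mb P}_{a_\infty}$ and using Lipschitz continuity of $a\mapsto\tilde{\mb P}_a$ together with $\tilde{\mb P}_{a_\infty}\tilde{\mb g}_{a_\infty}=\tilde{\mb g}_{a_\infty}$ yields
\[
\tilde{\mb P}_{a_\infty}\Upsilon(\mb v,T,x_0)=12\b\,\tilde{\mb g}_{a_\infty}+\tilde{\mb P}_{a_\infty}\mc R(\mb v,T,x_0)+\rho_{a_\infty}(\b,x_0),
\]
where $\rho_{a_\infty}$ is Lipschitz in $(\b,x_0)$ with $\|\rho_{a_\infty}(\b,x_0)\|\lesssim\d(|\b|+|x_0|)$, in analogy with \eqref{eq.estimate_on_remainder}--\eqref{eq.estimate_on_remainder_a}. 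Plugging the $(\Phi,a)$ produced by Proposition~\ref{Prop:ODE_blowup} into the definition of $\tilde{\mb C}$ and pairing with $\tilde{\mb g}_{a_\infty}$, the equation $\tilde{\mb C}(\Phi,a,\Upsilon(\mb v,T,x_0))=0$ becomes a scalar fixed point
\[
\b=\Gamma_{\mb v}(\b,x_0):=-\frac{1}{12\|\tilde{\mb g}_{a_\infty}\|^2}\Big(\tilde{\mb P}_{a_\infty}\mc R(\mb v,T,x_0)+\rho_{a_\infty}(\b,x_0)+\tilde{\mb P}_{a_\infty}\tilde{\mb I}(\Phi,a)\,\big|\,\tilde{\mb g}_{a_\infty}\Big),
\]
where $\tilde{\mb I}(\Phi,a)=\int_0^\infty e^{-\s}(\tilde{\mb G}_{a(\s)}(\Phi(\s))-\pt_\s\bm\kappa_{a(\s)})d\s$.

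To close the argument I would invoke Lemma~\ref{lemma-init.data} for the bound $\|\mc R(\mb v,T,x_0)\|\lesssim \|\mb v\|_{\mc Y}\lesssim \d/C^2$, the ODE analogue of Lemma~\ref{lemma.nonlinearbounds} to estimate $\|\tilde{\mb I}(\Phi,a)\|\lesssim \d^2$, and the Lipschitz dependence of $(\Phi,a)$ on $(\mb v,T,x_0)$ from Proposition~\ref{Prop:ODE_blowup}. Combining these yields $|\Gamma_{\mb v}(\b,x_0)|\leq \d/C$ on $[-\d/C,\d/C]\times\overline{\mathbb{B}^d_{\d/C}}$ and the contraction estimate $|\Gamma_{\mb v}(\b,x_0)-\Gamma_{\mb v}(\tilde\b,\tilde x_0)|\lesssim \d(|\b-\tilde\b|+|x_0-\tilde x_0|)$ for $C$ large and $\d$ small. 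Banach's fixed point theorem, applied in $\b$ for each fixed $x_0$ (we may simply set $x_0=0$, compatibly with Theorem~\ref{main_ODE}), produces the required $T=1+\b$. Lipschitz dependence of $T$ on $\mb v$ follows from the standard post-processing used in the last part of the proof of Lemma~\ref{prop.parameters}. The only (mild) technical obstacle is verifying the ODE analogues of the nonlinear and modulation estimates of Lemma~\ref{lemma.nonlinearbounds}, in particular $\|\pt_\s\bm\kappa_{a(\s)}\|\lesssim|\dot a(\s)|\lesssim \d e^{-\o\s}$ and the bilinear bounds for $\tilde{\mb G}_{a(\s)}(\Phi(\s))$; these are, however, immediate from the explicit form \eqref{Eq:kappa_a_explicit} and the smooth dependence of $\bm\kappa_a$ on $a$.
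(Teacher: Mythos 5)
Your proposal is correct and follows exactly the route the paper intends: it explicitly carries out the "obvious simplifications" of the contraction argument from Lemma~\ref{prop.parameters}, correctly identifying that $\tilde{\mb C}$ is rank one, that no analogue of the $\a\mb h$ correction is needed, and that $\mc R(\bm\kappa_0,T,x_0)$ is independent of $x_0$ (so the fixed point reduces to the scalar equation $\b=\Gamma_{\mb v}(\b,x_0)$ with $12\b\,\tilde{\mb g}_{a_\infty}$ as the leading term, and $x_0=0$ is an admissible choice consistent with Theorem~\ref{main_ODE}). The supporting estimates you invoke (the ODE analogues of Lemma~\ref{lemma.nonlinearbounds}, the bounds \eqref{eq.estimate_on_remainder}--\eqref{eq.estimate_on_remainder_a}, and the Lipschitz dependence from Proposition~\ref{Prop:ODE_blowup}) are exactly the ones the paper relies on, so no gap remains.
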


The proof is along the lines of the proof of Lemma \ref{prop.parameters} above with obvious simplifications. With these results, in combination with persistence of regularity that is completely analogous to Proposition \ref{Prop:smoothness}, Theorem \ref{main_ODE} is obtained by the same arguments as in the above Sec.~\ref{Sec:ProofMainTh}.

\appendix
\section{Proof of Lemma \ref{Le:Free_Evol_LP} }\label{App1}

\begin{proof}
	We will frequently use the following identities
	\begin{equation}\label{Eq:Intbyparts}
		2\Re[\xi^j\partial_jf(\xi)\overline{f}(\xi)]=\partial_j[\xi^j|f(\xi)|^2]-d|f(\xi)|^2
	\end{equation}
	and
	\begin{equation}\label{Eq:CommRel}
		\partial_{i_1}\partial_{i_2}\dots\partial_{i_k}[\xi^j\partial_jf]=k\partial_{i_1}\partial_{i_2}\dots\partial_{i_k}f(\xi)+\xi^j\partial_j\partial_{i_1}\partial_{i_2}\dots\partial_{i_k}f(\xi),
	\end{equation}
	which hold for all $k\in\N$ and $f \in C^{\infty}(\overline{\mathbb B^d})$. Furthermore, by the divergence theorem, we have
	\begin{align}\label{Eq:Div1}
		\begin{split}
			\int_{\mathbb B^d} \partial_i \Delta u(\xi) \overline{\partial^i v(\xi)} d\xi =    -  \int_{\mathbb B^d} \Delta u(\xi)  \overline{ \Delta  v(\xi) }d\xi  +
			\int_{\mathbb S^{d-1}}  \Delta  u(\omega) \overline{ \omega_i \partial^i v(\omega)} d\s(\o),
		\end{split}
	\end{align}
	for smooth $u, v$, and similarly 
	\begin{align}\label{Eq:Div2}
		\begin{split}
			\int_{\mathbb B^d} \partial_i \Delta u(\xi) \overline{\partial^i v(\xi)} d\xi =    -  \int_{\mathbb B^d} \partial_i \partial_j u(\xi)   \overline{ \partial^j \partial^i v(\xi) }d\xi  +
			\int_{\mathbb S^{d-1}} \omega_j \partial^{j} \partial_i u(\omega) \overline{  \partial^i v(\omega)} d\s(\o).
		\end{split}
	\end{align}
	We first prove the result for $d=9$ and start with those parts of  $(\cdot|\cdot)_{\mc  H_k}$ that correspond to the standard $\dot H^k \times \dot H^{k-1}$ inner product. 
	
	For the sake of concreteness, we consider the case $k = 5$, which  corresponds to the space we are going to use later on. Using the above identities, we infer that 
	\begin{align*}
		\begin{split}
			\Re\int_{\B}\partial_i\partial_j\partial_k\partial_l\partial_m[\tilde{ \mb L}  \mb u]_1(\xi)\overline{\partial^i\partial^j\partial^k\partial^l\partial^mu_1(\xi)}d\xi=\Re\int_{\B}\partial_i\partial_j\partial_k\partial_l\partial_mu_2(\xi)\overline{\partial^i\partial^j\partial^k\partial^l\partial^mu_1(\xi)}d\xi
			\\
			-\frac{5}{2}\int_{\B}\partial_i\partial_j\partial_k\partial_l\partial_mu_1(\xi)\overline{\partial^i\partial^j\partial^k\partial^l\partial^mu_1(\xi)}d\xi-\frac{1}{2}\int_{\mathbb{S}^8}\partial_i\partial_j\partial_k\partial_l\partial_mu_1(\o)\overline{\partial^i\partial^j\partial^k\partial^l\partial^mu_1(\o)}d\s(\o).
		\end{split}
	\end{align*}
	Similarly,
	\begin{align}\label{Eq:EstLP_2}
		\begin{split}
			\Re\int_{\B}\partial_i\partial_j\partial_k\partial_l[\tilde{\mb L} \mb u]_2(\xi)\overline{\partial^i\partial^j\partial^k\partial^lu_2(\xi)}d\xi=
			-\frac{5}{2}\int_{\B}\partial_i\partial_j\partial_k\partial_lu_2(\xi)\overline{\partial^i\partial^j\partial^k\partial^lu_2(\xi)}d\xi
			\\
			-\Re\int_{\B}\partial_i\partial_j\partial_k\partial_l\partial_mu_1(\xi)\overline{\partial^i\partial^j\partial^k\partial^l\partial^mu_2(\xi)}d\xi-\frac{1}{2}\int_{\Sp}\partial_i\partial_j\partial_k\partial_lu_2(\o)\overline{\partial^i\partial^j\partial^k\partial^lu_2(\o)}d\s(\o)
			\\
			+\Re\int_{\Sp}\omega^m\partial_i\partial_j\partial_k\partial_l\partial_mu_1(\o)\overline{\partial^i\partial^j\partial^k\partial^lu_2(\o)}d\s(\o),
		\end{split}
	\end{align}
	Hence,
	\begin{align}\label{Eq:EstLP_3}
		\begin{split}
			&\Re (\tilde{ \mb L} \mb u| \mb u)_5 \leq -\frac{5}{2}\nr{\mb u}^2_5-\frac{1}{2}\int_{\mathbb{S}^8}\partial_i\partial_j\partial_k\partial_l\partial_mu_1(\o)\overline{\partial^i\partial^j\partial^k\partial^l\partial^mu_1(\o)}d\s(\o)
			\\
			&-\frac{1}{2}\int_{\Sp}\partial_i\partial_j\partial_k\partial_lu_2(\o)\overline{\partial^i\partial^j\partial^k\partial^lu_2(\o)}d\s(\o)
			\\
			& +\Re\int_{\Sp}\omega^m\partial_i\partial_j\partial_k\partial_l\partial_mu_1(\o)\overline{\partial^i\partial^j\partial^k\partial^lu_2(\o)}d\s(\o)
			\leq -\frac{5}{2}\nr{ \mb u}^2_5,
		\end{split}
	\end{align}
	where we used that $\mathrm{Re}(a \overline{b}) \leq \frac{1}{2} (|a|^2+|b|^2)$ as well as the bound
	\begin{align*}
		| \sum_{k} \omega_k \partial_k u(\omega)|^2 \leq \sum_{k} (\omega_k)^2 \sum_{k} | \partial_k u(\omega)|^2  = \sum_{k} | \partial_k u(\omega)|^2. 
	\end{align*}
	A similar calculation yields 
	\begin{align*}
		\Re (\tilde{ \mb L} \mb u| \mb u)_4 \leq -\frac{3}{2}\nr{ \mb u}^2_4.
	\end{align*}
	In view of the logic of these estimates it is clear that we cannot use the standard  homogeneous inner products for integer regularities lower than $j =3$, since the bound shifts to the right and will be positive eventually. For this reason, we use tailor-made expressions for the remaining $ H^3(\B) \times  H^{2}(\B)$ part. In the following, we prove that 
	\begin{align}\label{Eq:LP_Mainbound}
		\sum_{j=1}^3  \Re(\tilde{ \mb L} \mb u|\mb u)_j \leq - \frac{1}{2} \sum_{j=1}^3  \|\mb u \|^2_{j},
	\end{align}
	which in combination with the above bounds implies the first claim in Lemma~\ref{Le:Free_Evol_LP} for $d=9$ and $k = 5$ (and in fact for any $3 \leq k \leq 5$.) For higher regularities, we add again the corresponding standard homogeneous parts. Analogous to the above calculations, one shows that  
	\begin{align*}
		&\Re\int_{\B}\pt_{i_1 \dots i_k}[\tilde{\mb L}\mb u]_1(\xi)\overline{\pt^{i_1 \dots i_k} u_1(\xi)}d\xi=
		\left(\frac{5}{2}-k\right)\int_{\B}\pt_{i_1 \dots i_k}u_1(\xi) \overline{\pt^{i_1 \dots i_k}u_1(\xi)}
		d\xi
		\\
		&-\frac{1}{2}\int_{\mathbb{S}^8}\pt_{i_1 \dots i_k}u_1(\o) \overline{\pt^{i_1 \dots i_k}u_1(\o)} d\s(\o)
		+\Re\int_{\B}\pt_{i_1 \dots i_k}u_1(\xi)\overline{\pt^{i_1 \dots i_k}u_2(\xi)}d\xi
	\end{align*}
	and
	\begin{align*}
		&\Re\int_{\B}\pt_{i_1 \dots i_{k-1}}[\tilde{\mb L}\mb u]_2(\xi)\overline{\pt^{i_1 \dots i_{k-1}}u_2(\xi)}d\xi=
		\\
		&\left(\frac{5}{2}-k\right)\int_{\B} \pt_{i_1 \dots i_{k-1}}u_2(\xi) \overline{\pt^{i_1 \dots i_{k-1}}u_2(\xi)   } d\xi
		-\Re\int_{\B}\pt_{i_1 \dots i_k}u_1(\xi)\overline{\pt^{i_1 \dots i_k}u_2(\xi)}d\xi
		\\
		&\Re\int_{\mathbb{S}^8}\o^{i_k} \pt_{i_1 \dots i_k}u_1(\o)\overline{\pt^{i_1 \dots i_{k-1}}u_2(\o)}d\s(\o)
		-\frac{1}{2}\int_{\mathbb{S}^8} \pt_{i_1 \dots i_{k-1}}u_2(\o) \overline{\pt^{i_1 \dots i_{k-1}}u_2(\o)}
		d\s(\o).
	\end{align*}
	As in Eq.~\eqref{Eq:EstLP_3} we thus obtain for $j \geq 6$ the bound
	\begin{align*}
		\Re(\tilde{ \mb L} \mb u|\mb u)_j  \leq \left(\frac{5}{2}-j \right) \| \mb u \|_j^2.
	\end{align*}

	It is left to prove Eq.~\eqref{Eq:LP_Mainbound}. We first consider  $\Re(\tilde{ \mb L} \mb u| \mb u)_3$. Using Eq.~\eqref{Eq:CommRel}, Eq.~\eqref{Eq:Intbyparts} and the divergence theorem we calculate
	\begin{align*}
		\begin{split}
			&\Re\int_{\B}\pt_i\pt_j\pt_k[\tilde{\mb L} \mb u]_1(\xi)\overline{\pt^i\pt^j\pt^ku_1(\xi)}d\xi=-\frac{1}{2} \int_{\B}\pt_i\pt_j\pt_k u_1(\xi)\overline{\pt^i\pt^j\pt^ku_1(\xi)}d\xi
			\\
			&-\frac{1}{2} \int_{\Sp}\pt_i\pt_j\pt_ku_1(\o)\overline{\pt^i\pt^j\pt^ku_1(\o)}d\s(\o)
			+\Re\int_{\B}\pt_i\pt_j\pt_ku_2(\xi)\overline{\pt^i\pt^j\pt^ku_1(\xi)}d\xi,
		\end{split}
	\end{align*}
	An application of Eq.~\eqref{Eq:Div2} shows that 
	\begin{align*}
		\begin{split}
			&\Re\int_{\B}\pt_i\pt_j[\tilde{\mb L} \mb u]_2(\xi)\overline{\pt^i\pt^ju_2(\xi)}d\xi=
			\Re\int_{\B}\pt_i\pt_j\pt^k\pt_ku_1(\xi)\overline{\pt^i\pt^ju_2(\xi)}d\xi
			\\
			&-\frac{1}{2}\int_{\B}\pt_i\pt_ju_2(\xi)\overline{\pt^i\pt^ju_2(\xi)}d\xi-\frac{1}{2}\int_{\Sp}\pt_i \pt_ju_2(\o)\overline{\pt^i\pt^ju_2(\o)}d\s(\o)
			\\
			&=\Re\int_{\Sp}\o^k \pt_k \pt_i\pt_j u_1(\o)\overline{\pt^i\pt^j u_2(\o)}d\s(\o)
			-\Re\int_{\B}\pt_i\pt_j\pt_ku_1(\xi)\overline{\pt^i\pt^j\pt^ku_2(\xi)}d\xi
			\\
			&-\frac{1}{2}\int_{\B}\pt_i\pt_ju_2(\xi)\overline{\pt^i\pt^ju_2(\xi)}d\xi-\frac{1}{2} \int_{\Sp}\pt_i \pt_ju_2(\o)\overline{\pt^i\pt^ju_2(\o)}d\s(\o),
		\end{split}
	\end{align*}
	and finally
	\begin{align*}
		\begin{split}
			\int_{\Sp}\pt_i\pt_j[\tilde{\mb L} \mb u]_1(\o)\overline{\pt^i\pt^ju_1(\o)}d\s(\o)=
			-\Re\int_{\Sp}\o^k  \pt_k\pt_i\pt_j u_1(\o)\overline{\pt^i\pt^ju_1(\o)}d\s(\o)
			\\
			-4 \int_{\Sp}\pt_i\pt_ju_1(\o)\overline{\pt^i\pt^ju_1(\o)}d\s(\o)
			+\Re\int_{\Sp}\pt_i\pt_ju_2(\o)\overline{\pt^i\pt^ju_1(\o)}d\s(\o).
		\end{split}
	\end{align*}
	In summary, we infer that 
	\begin{align*}
		\begin{split}
			\Re(\tilde{\mb L} \mb u|\mb u)_3=-\frac{1}{2}\nr{\mb u}^2_3 -12 \int_{\Sp}\pt_i\pt_ju_1(\o)\overline{\pt^i\pt^j u_1(\o)}d\s(\o)+ 4 \int_{\Sp}A(\o)d\s(\o),
		\end{split}
	\end{align*}
	where
	\begin{align*}
		\begin{split}
			A(\o)=-\frac{1}{2}\pt_i\pt_j\pt_ku_1(\o)\overline{\pt^i\pt^j\pt^ku_1(\o)}
			-\frac{1}{2}\pt_i\pt_ju_2(\o)\overline{\pt^i\pt^ju_2(\o)}
			\\
			-\frac{1}{2}\pt_i\pt_ju_1(\o)\overline{\pt^i\pt^ju_1(\o)}
			+\Re\left(\o^k\pt_i\pt_j\pt_ku_1(\o)\overline{\pt^i\pt^ju_2(\o)}\right)
			\\
			-\Re\left( \o^k\pt_i\pt_j\pt_ku_1(\o)\overline{\pt^i\pt^ju_1(\o)}\right)
			+\Re\left( \pt_i\pt_ju_2(\o)\overline{\pt^i\pt^ju_1(\o)}\right).
		\end{split}
	\end{align*}
	By using the inequality
	\begin{align*}
		\Re(a\overline{b})+\Re(a\overline{c})-\Re(b\overline{c})\leq \frac{1}{2}\left(|a|^2+|b|^2+|c|^2\right), \quad a,b,c\in\C,
	\end{align*}
	we get that $A(\o)\leq 0$. Analogously, to estimate $\Re(\tilde{ \mb L} \mb u|\mb u)_2$, we get
	\begin{align*}
		\begin{split}
			\Re\int_{\B}\pt_i\pt^j\pt_j[\tilde{ \mb L} \mb u]_1(\xi)\overline{\pt^i\pt_l\pt^lu_1(\xi)}d\xi=
			-\frac{1}{2}\Re\int_{\B}\pt_i\pt^j\pt_ju_1(\xi)\overline{\pt^i\pt_l\pt^lu_1(\xi)}d\xi
			\\
			-\frac{1}{2}\int_{\Sp}\pt_i\pt^j\pt_ju_1(\o)\overline{\pt^i\pt_l\pt^lu_1(\o)}d\s(\o)+
			\Re\int_{\B}\pt_i\pt^j\pt_ju_1(\xi)\overline{\pt^i\pt_l\pt^lu_2(\xi)}d\xi,
		\end{split}
	\end{align*}
	and 
	\begin{align*}
		\begin{split}
			\Re \int_{\Sp} \pt_i[\tilde{ \mb L} \mb u]_2(\o)\overline{\pt^iu_2(\o)}d\s(\o)=\Re \int_{\Sp} \pt_i\pt^j\pt_j u_1(\o)\overline{\pt^iu_2(\o)}d\s(\o)
			\\
			-\Re \int_{\Sp}\o^j\pt_i\pt_ju_2(\o)\overline{\pt^iu_2(\o)}d\s(\o)
			-4\Re\int_{\Sp}\pt_iu_2(\o)\overline{\pt^iu_2(\o)}d\s(\o).
		\end{split}
	\end{align*}
	For the remaining term, we do a similar calculation as in Eq.~\eqref{Eq:EstLP_2}, but we use instead Eq.~\eqref{Eq:Div1} in order to cancel the mixed term. In summary, we obtain 
	\begin{align*}
		\begin{split}
			\Re (\tilde{\mb L} \mb u| \mb u)_2=-\frac{1}{2}\nr{ \mb u}^2_2-3\int_{\Sp} \pt_iu_2(\o)\overline{\pt^iu_2(\o)}d\s(\o) +\int_{\Sp}B(\o)d\s(\o),
		\end{split}
	\end{align*}
	where
	\begin{align*}
		\begin{split}
			B(\o)=-\frac{1}{2}\pt_i\pt^j\pt_ju_1(\o)\overline{\pt^i\pt_l\pt^lu_1(\o)}
			-\frac{1}{2}\pt_i\pt_ju_2(\o)\overline{\pt^i\pt^ju_2(\o)}
			\\
			-\frac{1}{2}\pt_i u_2(\o)\overline{\pt^iu_2(\o)}
			+\Re\left( \o^k\pt_i\pt^j\pt_ju_1(\o)\overline{\pt^i\pt_ k u_2(\o)}\right)
			\\
			+\Re\left( \pt_i\pt^j\pt_ju_1(\o)\overline{\pt^iu_2(\o)}\right)
			-\Re\left( \o^j\pt_i\pt_ju_2(\o)\overline{\pt^iu_2(\o)}\right)
		\end{split}
	\end{align*}
	and we observe that  $B(\o)\leq 0$. Now, we consider $\Re(\tilde{ \mb L} \mb u|\mb u)_1$, which consists only of boundary integrals. For the first term, we get that
	\begin{align*}
		\begin{split}
			\Re \int_{\Sp} \pt_i[ \mb L \mb u]_1(\o)\overline{\pt^iu_1(\o)}d\s(\o)=-3\Re\int_{\Sp}\pt_iu_1(\o)\overline{\pt^iu_1(\o)}d\s(\o)
			\\
			-\Re\int_{\Sp}\o^j\pt_i\pt_ju_1(\o)\overline{\pt^iu_1(\o)}d\s(\o)
			+\Re\int_{\Sp}\pt_iu_2(\o)\overline{\pt^iu_1(\o)}d\s(\o).
		\end{split}
	\end{align*}
	By the Cauchy-Schwarz inequality,
	\begin{align*}
		\begin{split}
			\Re\int_{\Sp}\left( \pt_iu_2(\o)-\o^j\pt_i\pt_j u_1(\o)\right)\overline{\pt^iu_1(\o)}d\s(\o)\leq
			\frac{1}{2}\int_{\Sp}\pt_iu_1(\o)\overline{\pt^iu_1(\o)}d\s(\o)
			\\
			+\int_{\Sp}\pt_iu_2(\o)\overline{\pt^iu_2(\o)}d\s(\o)+\int_{\Sp}\pt_i\pt_ju_1(\o)\overline{\pt^i\pt^ju_1(\o)}d\s(\o),
		\end{split}
	\end{align*} 
	which implies that
	\begin{align*}
		\begin{split}
			\Re \int_{\Sp} \pt_i[ \mb L \mb u]_1(\o)\overline{\pt^iu_1(\o)}d\s(\o) \leq -\frac{5}{2}\Re\int_{\Sp}\pt_iu_1(\o)\overline{\pt^iu_1(\o)}d\s(\o)
			\\
			+\int_{\Sp}\pt_iu_2(\o)\overline{\pt^iu_2(\o)}d\s(\o)+\int_{\Sp}\pt_i\pt_ju_1(\o)\overline{\pt^i\pt^ju_1(\o)}d\s(\o).
		\end{split}
	\end{align*}
	Analogously, 
	\begin{align*}
		\begin{split}
			\Re\int_{\Sp}[\tilde{ \mb L} \mb u]_2(\o)&\overline{u_2(\o)}d\s(\o)=-3\int_{\Sp}|u_2(\o)|^2d\s(\o)
			+\Re \int_{\Sp}\pt^i\pt_iu_1(\o)\overline{u_2(\o)}d\s(\o)
			\\
			&-\Re\int_{\Sp}\o^i\pt_iu_2 (\o)\overline{u_2(\o)}d\s(\o)
			\\
			&\leq -\frac{5}{2}\int_{\Sp} |u_2(\o)|^2d\s(\o)+\int_{\Sp}|\D u_1(\o)|^2d\s(\o)
			+\int_{\Sp}\pt_iu_2(\o)\overline{\pt^iu_2(\o)}d\s(\o),
		\end{split}
	\end{align*}
	and
	\begin{align*}
		\begin{split}
			\Re\int_{\Sp} & [\tilde{\mb L} \mb u]_1(\o)\overline{u_1(\o)}d\s(\o)=-2\int_{\Sp}|u_1(\o)|^2d\s(\o)
			-\Re\int_{\Sp}\o^i\pt_iu_1(\o)\overline{u_1(\o)}d\s(\o)
			\\
			&+\Re\int_{\Sp}u_2(\o)\overline{u_1(\o)}d\s(\o)
			\\
			&\leq 
			-\frac{3}{2}\int_{\Sp}|u_1(\o)|^2d\s(\o)+\int_{\Sp}\pt_iu_1(\o)\overline{\pt^iu_1(\o)}d\s(\o)
			+\int_{\Sp} |u_2(\o)|^2d\s(\o),
		\end{split}
	\end{align*} 
	Hence,
	\begin{align*}
		\begin{split}
			\Re(\tilde{\mb L} \mb u| \mb u)_1 \leq - \frac{3}{2}  \|\mb u \|_1^2  &  +2  \int_{\Sp}\pt_iu_2(\o)\overline{\pt^iu_2(\o)}d\s(\o) \\
			& +\int_{\Sp}\pt_i\pt_ju_1(\o)\overline{\pt^i\pt^ju_1(\o)}d\s(\o) +\int_{\Sp}|\D u_1(\o)|^2d\s(\o).	
		\end{split}
	\end{align*}
	
	In conclusion, 
	\begin{align*}
		\begin{split}
			\sum_{j=1}^3 \Re(\tilde{ \mb L} \mb u|\mb u)_j  & \leq -\frac{1}{2}  \sum_{ j=1}^3  \|\mb u \|^2_{j} - 11 \int_{\Sp}\pt_i\pt_ju_1(\o)\overline{\pt^i\pt^ju_1(\o)}d\s(\o)+\int_{\Sp}|\D u_1(\o)|^2d\s(\o).
		\end{split}
	\end{align*}
	By the Cauchy-Schwarz inequality,
	\begin{align*}
		|\D u(\o)|^2\leq \left| \sum_{i=1}^9 \pt_i^2u(\o)\right|^2 \leq 9\sum_{i=1}^9|\pt_i^2u(\o)|^2\leq 9
		\sum_{i,j=1}^9|\pt_i\pt_ju(\o)|^2,
	\end{align*}
	which proves Eq.~\eqref{Eq:LP_Mainbound}. \\
	
	Analogous calculations for $d=7$ and $k \geq 3$ yield an even better bound, namely
	\begin{align}\label{Eq:BetterBound7}
		\mathrm{Re} (\tilde{\mb L} \mb u | \mb u )_{\HH_k} \leq  - \tfrac{3}{2} \| \mb u \|^2_{\mc H_k},
	\end{align}
	for all $\mb u \in \DD(\tilde{ \mb L})$ from which we obtain in particular the claimed estimate. Another way to see that \eqref{Eq:BetterBound7} holds is by Lemma $3.2$ of \cite{GlogicSchoerkhuber2021}, which is formulated in terms of above inner product for the specific case $d=7$, $k =3$. The operator considered there corresponds to $\tilde{ \mb L}$ shifted by a constant, which immediately gives the inequality \eqref{Eq:BetterBound7}. 
\end{proof}

\pagestyle{plain}
\bibliography{references_paper}
\bibliographystyle{plain}

\end{document}